\documentclass[11pt]{article}

\usepackage{amsmath,amssymb,amsthm}
\usepackage{mathrsfs}
\usepackage{cite}
\usepackage{geometry}
\newtheorem{theorem}{Theorem}[section]
\newtheorem{proposition}[theorem]{Proposition}
\newtheorem{lemma}[theorem]{Lemma}
\newtheorem{corollary}[theorem]{Corollary}

\newtheorem{definition}[theorem]{Definition}
\numberwithin{equation}{section}

\begin{document}

\title{Concentration, Local Uniqueness, and Morse Index of Multi-bubble Solutions for a Critical Exponential Biharmonic Choquard Equation}

\author{Wenjing Chen and Shengbing Deng\\
School of Mathematics and Statistics, Southwest University, Chongqing, China\\
Emails: wjchen@swu.edu.cn and shbdeng@swu.edu.cn}

\date{}

\maketitle

\begin{abstract}
Let $\Omega\subset\mathbb R^4$ be a bounded domain of class $C^6$,
let $0<\alpha<4$, and let $K\in C^4(\overline\Omega)$ be positive. We
study the Navier problem
\[
\Delta^2u=\varepsilon^{8-\alpha}K(x)e^{u(x)}
\left(\int_\Omega\frac{K(y)e^{u(y)}}{|x-y|^\alpha}\,dy\right),
\qquad
u=\Delta u=0\quad\text{on }\partial\Omega.
\]
Let $G$ be the Navier Green function, let $H$ be its regular part, and
put $M_\alpha=8\pi^2(8-\alpha)$. The concentration points are governed
by
\[
\mathcal F_m(\boldsymbol\xi)
=\sum_{i=1}^m
\left[\log K(\xi_i)+\frac{M_\alpha}{2}H(\xi_i,\xi_i)\right]
+M_\alpha\sum_{i<j}G(\xi_i,\xi_j).
\]
Every $C^1$-stable critical point of $\mathcal F_m$ produces a
positive $m$-bubble solution whose scales are of order
$\varepsilon^{-1}$ and whose nonlinear source converges to
$M_\alpha\sum_i\delta_{\xi_i^*}$. If the critical point is
nondegenerate, the corresponding $m$-bubble solution is locally
unique, modulo permutations, in a fixed scaled modulation
neighborhood. The linearized operator is
nondegenerate on $H^2(\Omega)\cap H_0^1(\Omega)$, and
\[
\operatorname{ind}(u_\varepsilon)
=m+\operatorname{ind}\!\left(-D^2\mathcal F_m(\boldsymbol\xi^*)\right).
\]
A critical four-dimensional capacity controls the scale directions.
The dilation block of the reduced Hessian is positive and equals
$8\pi^2b_\alpha^2|\log\varepsilon|^{-1}I_m+o(|\log\varepsilon|^{-1})$,
where $b_\alpha=(8-\alpha)/2$.
\end{abstract}

\noindent\textbf{2020 Mathematics Subject Classification.}
35J30, 35B33, 35B40, 35P15, 47G30.

\noindent\textbf{Key words and phrases.}
Biharmonic Choquard equation; multi-bubbles; Lyapunov--Schmidt reduction; uniqueness; Morse index.

\section{Introduction}

Two-dimensional elliptic equations with exponential nonlinearities are
critical with respect to the Trudinger--Moser inequality.  A basic
model is
\[
-\Delta u=V(x)e^u
\quad\hbox{in }\Omega\subset\mathbb R^2.
\]
The blow-up theory starts with the a priori estimates of
Brezis--Merle and the quantization theorem of Li--Shafrir
\cite{BrezisMerle,LiShafrir}.  Classification results for the limiting
Liouville equation are given in \cite{ChenLi,ChenLiOu}.  Concentrating
solutions, multi-peak configurations, local uniqueness, and Morse
index formulas were later obtained by finite-dimensional reduction;
see \cite{DelPinoKowalczykMusso,GladialiGrossiOhtsukaSuzuki}.

The corresponding fourth-order critical dimension is four.  The
quadratic form is
\[
\int_\Omega |\Delta u|^2\,dx,
\]
and the endpoint embedding is the Adams inequality
\cite{Adams}.  The limiting local equation is the constant
$Q$-curvature equation.  Entire classifications and blow-up
compactness were established in
\cite{Lin,WeiXu,Martinazzi,MartinazziCC}.  Concentration estimates and
finite-dimensional interactions for fourth-order exponential
problems were developed in
\cite{DruetRobert,LinWeiFourthOrder}.  The logarithmic fundamental
solution is the main difference from subcritical dimensions.  It also
produces a critical capacitary effect in scale directions.

We next recall the second-order Choquard theory with exponential
critical growth.  In the plane, a representative equation is
\[
-\varepsilon^2\Delta u+V(x)u
=\varepsilon^{\mu-2}
\bigl(I_\mu*F(u)\bigr)f(u),
\qquad 0<\mu<2.
\]
Existence and concentration of semiclassical states were proved by
Alves--Cassani--Tarsi--Yang and by Yang
\cite{AlvesCassaniTarsiYang,YangPlanarChoquard}.  Indefinite and
strongly indefinite potentials were treated in
\cite{QinTang,GaoChenQinWu}.  Further ground-state and concentration
results appear in
\cite{ZhangLiaoTangQin,ChenLiWangQuasilinear,DengTianXiong}.
For the logarithmic Newton kernel, Cassani--Tarsi introduced a
Pohozaev--Trudinger weighted framework
\cite{CassaniTarsi}.  Critical-growth Schr\"odinger--Newton systems
and positive logarithmic Choquard solutions were studied in
\cite{LiuRadulescuZhang,CassaniDuLiu}.
Sharp asymptotics and nondegeneracy of the planar logarithmic
Choquard ground state were obtained in
\cite{BonheureCingolaniVanSchaftingen}.  Recent work also concerns
nondegeneracy and blow-up quantization for nonlocal exponential
equations \cite{GaoLiMa,Gluck}.  For power-type critical Choquard problems, bubble
nondegeneracy, local uniqueness, and concentrating solutions were
studied in
\cite{Lenzmann,DuYang,SquassinaYangZhao,LiLiuTangXu,GhimentiHuangPistoia,CannoneCingolaniYangZhao,ChenWangSubcritical,ChenWangMultiple}.
A recent fourth-order Hartree construction in a pierced domain is
given in \cite{TanYangBubbleTower}.

The fourth-order exponential Choquard equation has a different local
profile. In the companion manuscript
\cite[Theorems 1.1 and 1.2]{DengSpectrum}, which is included in the
submission as a related manuscript and is available to the editors and
referees, we studied
\[
\Delta^2U
=\left(\int_{\mathbb R^4}
\frac{e^{U(y)}}{|x-y|^\alpha}\,dy\right)e^{U(x)},
\qquad 0<\alpha<4.
\]
There, normal finite-mass solutions are reduced to the known
classification theory, and the complete linearized spectrum is
computed on $\mathbb S^4$. The kernel is generated by four
translations and one dilation, the negative eigenspace is
one-dimensional, and the quadratic form is coercive above the
negative and neutral spaces. These spectral facts, rather than the
classification reduction itself, are the whole-space input used
below. The classification is consistent with Niu's general
higher-order theorem \cite{NiuClassification}. Whole-space
variational results for biharmonic exponential Choquard problems were
obtained in \cite{ChenLiWang}.

The aim of this paper is to develop a complete finite-dimensional
reduction for the bounded Navier problem. We construct positive
multi-bubble solutions and determine their local uniqueness,
nondegeneracy, and Morse index. Two mechanisms distinguish the present
problem from the usual second-order reductions. First, the direct
Riesz interaction of two separated cores is lower order; the leading
pair term is generated by the regular part of the Navier projection.
Second, differentiation in a scale direction exposes the negative
constant mode of the limiting operator. The orthogonal correction
cancels its order-one contribution, and the first nonzero scale
energy is the four-dimensional biharmonic capacity of a small ball.

We study
\begin{equation}\label{eq:perturbed-problem}
\begin{cases}
\displaystyle
\Delta^{2}u=\varepsilon^{8-\alpha}K(x)e^{u(x)}
\left(\int_{\Omega}\frac{K(y)e^{u(y)}}{|x-y|^{\alpha}}\,dy\right)
&\text{in }\Omega,\\[1.2ex]
u=0,\quad \Delta u=0
&\text{on }\partial\Omega,
\end{cases}
\end{equation}
where $\Omega\subset\mathbb R^4$ is bounded and of class $C^6$, and
\begin{equation*}
K\in C^4(\overline\Omega),
\qquad \min_{\overline\Omega}K>0.
\end{equation*}
The parameter $\varepsilon>0$ will tend to zero.

Set
\begin{equation*}
b_\alpha=\frac{8-\alpha}{2},
\qquad M_\alpha=8\pi^2(8-\alpha),
\qquad
C_\alpha=
\left(\frac{12(8-\alpha)(6-\alpha)(4-\alpha)}{\pi^2}\right)^{\!\frac1{8-\alpha}}.
\end{equation*}
Let $G$ be the Navier Green function; see \cite{GazzolaGrunauSweers}:
\begin{equation*}
\begin{cases}
\Delta_x^2G(x,\xi)=\delta_\xi&\text{in }\Omega,\\
G(x,\xi)=\Delta_xG(x,\xi)=0&\text{on }\partial\Omega.
\end{cases}
\end{equation*}
We write
\begin{equation*}
G(x,\xi)=-\frac1{8\pi^2}\log|x-\xi|+H(x,\xi).
\end{equation*}
The functions $G$ and $H$ are symmetric. On every compact subset of
$\mathcal C_2(\Omega)$, all derivatives used below are uniformly
bounded; the diagonal value $H(\xi,\xi)$ is the Navier Robin
function.
For $m\geq1$, define
\begin{equation*}
\mathcal C_m(\Omega)
=\{(\xi_1,\ldots,\xi_m)\in\Omega^m:\xi_i\neq\xi_j\text{ for }i\neq j\}
\end{equation*}
and
\begin{equation*}
\begin{aligned}
\mathcal F_m(\boldsymbol\xi)
={}\sum_{i=1}^m
\left[\log K(\xi_i)+\frac{M_\alpha}{2}H(\xi_i,\xi_i)\right]+M_\alpha\sum_{1\leq i<j\leq m}G(\xi_i,\xi_j).
\end{aligned}
\end{equation*}
The function $\mathcal F_m$ contains the coefficient, the Robin term,
and the pair interactions. The direct Riesz interaction between two
separated cores is lower order. The leading pair term comes from the
Navier projection. We write
\[
\mathcal H=H^2(\Omega)\cap H_0^1(\Omega),
\qquad
\|v\|_{\mathcal H}^2=\int_\Omega|\Delta v|^2\,dx.
\]

We use the following stability condition.

\begin{definition}\label{def:stable-critical-point}
An isolated critical point $\boldsymbol\xi^{*}\in\mathcal C_{m}(\Omega)$ of $\mathcal F_{m}$ is called $C^{1}$-stable if there is a bounded open neighborhood $D\Subset\mathcal C_{m}(\Omega)$ such that $\boldsymbol\xi^{*}$ is the only critical point of $\mathcal F_m$ in $\overline D$, $\nabla\mathcal F_{m}\neq0$ on $\partial D$, and
\[
\deg(\nabla\mathcal F_{m},D,0)\neq0.
\]
Equivalently, the local Brouwer degree of $\nabla\mathcal F_m$ at $\boldsymbol\xi^*$ is nonzero. A nondegenerate critical point is $C^{1}$-stable after $D$ is chosen sufficiently small.
\end{definition}

The existence statement is as follows.

\begin{theorem}\label{thm:existence}
Fix
\begin{equation}\label{eq:tau-fixed-intro}
0<\tau<\min\{1,\alpha\}.
\end{equation}
Let $m\geq1$. Assume that
$\boldsymbol\xi^{*}=(\xi_{1}^{*},\ldots,\xi_{m}^{*})$ is a
$C^{1}$-stable critical point of $\mathcal F_m$. Then there is
$\varepsilon_{0}>0$ such that \eqref{eq:perturbed-problem} has a
positive solution $u_\varepsilon$ for
$0<\varepsilon<\varepsilon_{0}$. Moreover,
$u_\varepsilon\in C^{4,\gamma}(\overline\Omega)$ for every fixed
$0<\gamma<\min\{1,4-\alpha\}$. There are points
\begin{equation}\label{eq:center-convergence-intro}
\boldsymbol\xi_\varepsilon
=(\xi_{1,\varepsilon},\ldots,\xi_{m,\varepsilon})
\longrightarrow\boldsymbol\xi^{*}
\end{equation}
and numbers
\begin{equation*}
t_{i,\varepsilon}
=O(\varepsilon^\tau|\log\varepsilon|)
\end{equation*}
such that
\begin{equation}\label{eq:scales-intro}
\begin{aligned}
\mu_{i,\varepsilon}
={}\frac{C_\alpha}{\varepsilon}
K(\xi_{i,\varepsilon})^{-\frac{2}{8-\alpha}}
\exp\left\{
-\frac{2M_\alpha}{8-\alpha}
\left[
H(\xi_{i,\varepsilon},\xi_{i,\varepsilon})
+\sum_{j\ne i}G(\xi_{i,\varepsilon},\xi_{j,\varepsilon})
\right]
+t_{i,\varepsilon}
\right\}.
\end{aligned}
\end{equation}
If $PU_{\mu,\xi}$ is the Navier projection in
\eqref{eq:projection-definition}, then
\begin{equation}\label{eq:solution-expansion-intro}
u_\varepsilon
=\sum_{i=1}^{m}PU_{\mu_{i,\varepsilon},\xi_{i,\varepsilon}}
+\phi_\varepsilon,
\qquad
\|\phi_\varepsilon\|_{L^\infty(\Omega)}
+\|\phi_\varepsilon\|_{\mathcal H}
\leq C\varepsilon^\tau|\log\varepsilon|.
\end{equation}
The nonlinear source satisfies
\begin{equation}\label{eq:measure-convergence-intro}
\varepsilon^{8-\alpha}K(x)e^{u_\varepsilon(x)}
\int_\Omega
\frac{K(y)e^{u_\varepsilon(y)}}{|x-y|^\alpha}\,dy
\rightharpoonup
M_\alpha\sum_{i=1}^{m}\delta_{\xi_i^{*}}
\end{equation}
as measures. For every
\begin{equation}\label{eq:outer-beta-range}
0<\beta<\min\{1,4-\alpha\},
\end{equation}
one has
\begin{equation}\label{eq:outer-convergence-intro}
u_\varepsilon
\longrightarrow
M_\alpha\sum_{i=1}^{m}G(\cdot,\xi_i^{*})
\quad\hbox{in }C^{4,\beta}_{\mathrm{loc}}
\left(\overline\Omega\setminus
\{\xi_{1}^{*},\ldots,\xi_{m}^{*}\}\right).
\end{equation}
The values at the modulation centers satisfy
\begin{equation}\label{eq:peak-height-intro}
\begin{aligned}
u_\varepsilon(\xi_{i,\varepsilon})
={}&(8-\alpha)\log\frac{C_\alpha}{\varepsilon}
-2\log K(\xi_{i,\varepsilon})-M_\alpha
\left[
H(\xi_{i,\varepsilon},\xi_{i,\varepsilon})
+\sum_{j\ne i}G(\xi_{i,\varepsilon},\xi_{j,\varepsilon})
\right]
+o(1).
\end{aligned}
\end{equation}
\end{theorem}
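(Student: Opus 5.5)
The proof is a Lyapunov--Schmidt reduction around the ansatz $W=\sum_{i=1}^m PU_{\mu_i,\xi_i}$. Here $U_{\mu,\xi}(x)=U(\mu(x-\xi))+(8-\alpha)\log\mu$ (up to the normalisation $C_\alpha$) is the scaled entire solution of the limiting equation $\Delta^2U=(\int|x-y|^{-\alpha}e^{U(y)}dy)e^{U}$. The parameters range over $\boldsymbol\xi\in\overline D$, with $\mu_i$ given by \eqref{eq:scales-intro} and $t_i$ free in a ball of radius $C\varepsilon^\tau|\log\varepsilon|$.

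\textbf{Step 1: projection and error.} The Navier projection satisfies
\[
PU_{\mu,\xi}=U_{\mu,\xi}-(8-\alpha)\log\mu-\log(C\ldots)+M_\alpha H(\cdot,\xi)+O(\mu^{-1}|\log\mu|)
\]
uniformly. The particular choice of $\mu_i$ in \eqref{eq:scales-intro} makes the constant term in $W$ near $\xi_i$ match the bubble exactly: the Robin term and the $M_\alpha G(\xi_i,\xi_j)$ terms are absorbed by the prefactor $\varepsilon^{8-\alpha}K(\xi_i)$. Expanding $K$, $H$ and $G$ to first order on the core scale $\mu_i^{-1}\sim\varepsilon$ then gives
\[
R=\Delta^2W-N(W)=O(\varepsilon^\tau)\times(\text{bubble weight})
\]
in a weighted $L^\infty$/$L^p$ norm, where $N$ is the Choquard nonlinearity of \eqref{eq:perturbed-problem}. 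The Riesz cross-interaction between cores is $O(\varepsilon^{\alpha})$ relative to the self-interaction, which is why $\tau<\alpha$. The condition $\tau<1$ comes from the Taylor remainder.

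\textbf{Step 2: linear theory.} The linearized operator is $L\phi=\Delta^2\phi-N'(W)\phi$. Its limiting form near each core is controlled by the spectral input from \cite{DengSpectrum}: the kernel consists of 4 translations and 1 dilation, and there is one negative direction. I would prove an a priori estimate
\[
\|\phi\|_{L^\infty}+\|\phi\|_{\mathcal H}\le C|\log\varepsilon|\,\|h\|_*
\]
for $L\phi=h+\sum c_{ij}\Delta^2 PZ_{ij}$, with $\phi$ orthogonal to the $PZ_{ij}$. The argument is by contradiction and blow-up: rescaled limits lie in the kernel and are killed by orthogonality, while the outer part is handled by Green representation. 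The dilation direction produces the logarithmic loss, and this is where the capacity estimate enters. The negative mode does not obstruct invertibility on the complement; it only affects the index.

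\textbf{Step 3: nonlinear problem.} A contraction mapping then yields $\phi$ with
\[
\|\phi\|\le C\varepsilon^\tau|\log\varepsilon|,
\]
depending $C^1$ on $(\boldsymbol\xi,\boldsymbol t)$, and solving the projected equation. The nonlinear term is controlled because $e^{\phi}-1-\phi=O(\phi^2)$ in $L^\infty$.

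\textbf{Step 4: reduced problem, the main obstacle.} It remains to make all $c_{ij}=0$. Testing the equation against $PZ_{ij}$:
\begin{itemize}
\item In the translation directions the projections equal $\varepsilon$-rescaled $\partial_{\xi}\mathcal F_m(\boldsymbol\xi)+o(1)$. This uses the cancellation built into the choice of $\mu_i$, so that the leading pair term is $M_\alpha\nabla G$ from the projection.
\item In the dilation directions the projections are of the form $a\,t_i+o(\varepsilon^\tau|\log\varepsilon|)$ with $a\ne0$ of size $|\log\varepsilon|^{-1}$. Here the order-one negative-mode contribution cancels against the orthogonal correction, and this cancellation has to be verified carefully.
\end{itemize}
I would then solve for $\boldsymbol t$ by a degree argument on the ball; the $t$-block is essentially linear, so its degree is $\pm1$. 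Combined with $\deg(\nabla\mathcal F_m,D,0)\ne0$ and a homotopy to the product map, this gives a zero $(\boldsymbol\xi_\varepsilon,\boldsymbol t_\varepsilon)$. Since $\boldsymbol\xi^*$ is the only critical point in $\overline D$ and the reduced map is a small perturbation of $\nabla\mathcal F_m$, we get $\boldsymbol\xi_\varepsilon\to\boldsymbol\xi^*$, which is \eqref{eq:center-convergence-intro}.

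\textbf{Step 5: remaining conclusions.}
\begin{itemize}
\item Positivity follows from the maximum principle applied twice to the Navier problem, since the right-hand side of \eqref{eq:perturbed-problem} is positive.
\item $C^{4,\gamma}$ regularity follows from bootstrapping: the Riesz potential of a bounded function is $C^{0,\gamma}$ for $\gamma<4-\alpha$.
\item The measure convergence \eqref{eq:measure-convergence-intro} comes from the mass of each bubble being $M_\alpha$ plus $o(1)$.
\item Away from the points, the Green representation gives the $C^{4,\beta}$ convergence \eqref{eq:outer-convergence-intro}.
\item Evaluating $W+\phi$ at $\xi_{i,\varepsilon}$ gives \eqref{eq:peak-height-intro}.
\end{itemize}
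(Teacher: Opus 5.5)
There is a genuine gap, and it is exactly the step you labelled the main obstacle: the scale equations.

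First, Steps~2 and~4 do not fit together. You keep the $t_i$ as free parameters and impose orthogonality to the dilation modes, yet you accept $\|\phi\|\le C|\log\varepsilon|\,\|h\|_*$ and attribute the loss to the dilation. Differentiating the projected problem in $t_i$ produces the order-one source $2b_\alpha F_{\mu_i,\xi_i}$, which comes from the term $(1-e^{2b_\alpha t_i})F_{\mu_i,\xi_i}$ of the residual. Your estimate therefore only yields $\|\partial_{t_i}\phi\|\le C|\log\varepsilon|$. The dilation coefficient you need is of size $|\log\varepsilon|^{-1}$ and emerges only after an exact order-one cancellation involving $\partial_{t_i}\phi$, so it cannot be identified from such a bound.

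Your remainder $o(\varepsilon^\tau|\log\varepsilon|)$ is also too large. On the sphere $|\boldsymbol t|=C\varepsilon^\tau|\log\varepsilon|$ the term $a t_i$ is only of order $C\varepsilon^\tau$, so the degree argument in $\boldsymbol t$ fails on that ball.

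The paper instead proves the log-free estimate $\|\phi\|_*+\|\phi\|_{\mathcal H}+\sum_{i,k}|c_{i,k}|\le C\|h\|_{**}$. This gives $|c_{i,k}(0,\boldsymbol\xi)|\le C\varepsilon^\tau$, hence $\|\nabla_{\boldsymbol t}\widetilde J_\varepsilon(0,\cdot)\|_{C^1}\le C\varepsilon^\tau$. The key device is the dilation test function:
\begin{itemize}
\item $PZ_{i0}\approx Z_{i,0}+b_\alpha$ cannot control $c_{i,0}$. In the core $L_\varepsilon PZ_{i0}\approx b_\alpha\mathcal L_\infty 1=-2b_\alpha F_{\mu_i,\xi_i}$, which pairs with the uncontrolled core mass of $\phi$.
\item The paper uses instead $\widehat Z_{i,0}$. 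It equals $Z_{i,0}$ on $B_\rho(\xi_i)$, $\rho=R_\varepsilon/\mu_i$, and is glued on $\rho<|x-\xi_i|<2\rho$ to the biharmonic capacity minimizer $V_{\rho,\xi_i}$ (with $V=-b_\alpha$ and $\partial_\nu V=0$ on $\partial B_\rho$, Navier conditions on $\partial\Omega$).
\item For this function $\sup_{\|v\|_{\mathcal H}=1}|\int_\Omega vL_\varepsilon\widehat Z_{i,0}|=o(|\log\varepsilon|^{-1/2})$.
\end{itemize}
A lossy estimate would be natural only if you froze the scales by the matching condition and dropped the dilation orthogonality, as in the second-order Liouville theory. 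Paying the loss and also solving a scale equation with a $|\log\varepsilon|^{-1}$ coefficient does not close with the bounds you state.

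Second, you assert that this coefficient is nonzero and of size $|\log\varepsilon|^{-1}$, but your outline gives no mechanism for it; the capacity appears only to explain a loss in Step~2. This is the heart of the theorem. The paper's argument runs in three steps.
\begin{itemize}
\item[(i)] A blow-up argument gives $\partial_{t_i}\phi_\varepsilon(\xi_i+z/\mu_i)\to-b_\alpha$ locally. This holds because $\mathcal L_\infty(-b_\alpha)=2b_\alpha F$ and the five orthogonality conditions remove any kernel component. Hence $\partial_{t_i}u\to Z_0$ in the core, and the identity $\mathcal Q_\infty(Z_0+b_\alpha,Z_0+b_\alpha)+2\mathcal Q_\infty(Z_0+b_\alpha,-b_\alpha)+\mathcal Q_\infty(-b_\alpha,-b_\alpha)=0$ cancels the raw Hessian $-2M_\alpha b_\alpha^2$.
\item[(ii)] A quantitative localized inverse for $\mathcal L_\infty$, including the Riesz commutator, upgrades this to $\partial_{t_i}\phi_\varepsilon+b_\alpha=o(|\log\varepsilon|^{-1/2})$, with two weighted derivatives, on the growing ball $|z|\le2R_\varepsilon$, $R_\varepsilon=|\log\varepsilon|^{3/4}$.
\item[(iii)] Outside $B_{2R_\varepsilon/\mu_i}(\xi_i)$ the scale tangent is $o(|\log\varepsilon|^{-1/2})$-close in $H^2$ to the capacity profile, whose energy is $8\pi^2b_\alpha^2/|\log\rho|+O(|\log\rho|^{-2})$.
\end{itemize}
Together these give $D^2_{\boldsymbol t\boldsymbol t}\widetilde J_\varepsilon=8\pi^2b_\alpha^2|\log\varepsilon|^{-1}I_m+o(|\log\varepsilon|^{-1})$ uniformly for $|\boldsymbol t|\le C\varepsilon^\tau|\log\varepsilon|$. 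With the $O(\varepsilon^\tau)$ bound above, the map $\boldsymbol t\mapsto\boldsymbol t-|\log\varepsilon|(8\pi^2b_\alpha^2)^{-1}\nabla_{\boldsymbol t}\widetilde J_\varepsilon$ contracts that ball, giving $t_{i,\varepsilon}=O(\varepsilon^\tau|\log\varepsilon|)$. Without (i)--(iii) you have neither $a\neq0$ nor the uniformity needed to solve the $\boldsymbol t$-equation.

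The rest of your outline matches the paper: the translation equations, the degree argument in $\boldsymbol\xi$, positivity, regularity, measure and outer convergence, and peak heights.

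Minor: the bubble shift is $b_\alpha\log\mu$ with $b_\alpha=(8-\alpha)/2$. Also $PU_{\mu,\xi}-U_{\mu,\xi}=b_\alpha\log\mu-b_\alpha\log C_\alpha+M_\alpha H(\cdot,\xi)+O(\mu^{-2})$, with the opposite sign on $\log\mu$ from yours.
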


The next theorem gives the qualitative information requested in the
title. At a solution $u$, the linearized Navier operator is
\[
\begin{aligned}
\mathscr L_{\varepsilon,u}\phi
={}&\Delta^2\phi
-\varepsilon^{8-\alpha}K(x)e^{u(x)}\phi(x)
\left(\int_\Omega\frac{K(y)e^{u(y)}}{|x-y|^\alpha}\,dy\right)\\
&-\varepsilon^{8-\alpha}K(x)e^{u(x)}
\int_\Omega\frac{K(y)e^{u(y)}\phi(y)}{|x-y|^\alpha}\,dy,
\end{aligned}
\]
with $\phi=\Delta\phi=0$ on $\partial\Omega$. The nonlocal part is
compact relative to the Navier biharmonic form. Hence the associated
realization on $L^2(\Omega)$ is self-adjoint with compact resolvent.
Its Morse index is the maximal dimension of a subspace of
$\mathcal H$ on which the second variation of $J_\varepsilon$ is
negative definite; equivalently, it is the number of negative
eigenvalues counted with multiplicity. For a critical point of a
$C^2$ function, $\operatorname{ind}$ denotes the number of negative
eigenvalues of its Hessian, counted with multiplicity.

The qualitative properties of the constructed solutions are summarized next.

\begin{theorem}\label{thm:qualitative}
Assume that $\boldsymbol\xi^{*}$ is a nondegenerate critical point
of $\mathcal F_m$. There are constants $C_0>0$, $r>0$, and
$\varepsilon_0>0$ with the following property. Let
\[
v_\varepsilon
=W_{\boldsymbol s,\boldsymbol\eta}+\psi
\]
be another solution of \eqref{eq:perturbed-problem}, written with an
admissible separated parameter $(\boldsymbol s,\boldsymbol\eta)$.
Assume that $\psi$ satisfies the orthogonality conditions
\eqref{eq:orthogonality} associated with these parameters, that
\begin{equation}\label{eq:uniqueness-scale-class}
|\boldsymbol s|
\leq C_0\varepsilon^\tau|\log\varepsilon|,
\end{equation}
and that
\begin{equation}\label{eq:branch-distance}
\begin{aligned}
d_\varepsilon(v_\varepsilon,u_\varepsilon)
:=\min_{\sigma\in S_m}\Bigg\{
&\sum_{i=1}^{m}|s_{\sigma(i)}-t_{i,\varepsilon}|+\sum_{i=1}^{m}\mu_{i,\varepsilon}
|\eta_{\sigma(i)}-\xi_{i,\varepsilon}| +\|\psi-\phi_\varepsilon\|_{L^\infty(\Omega)}
+\|\psi-\phi_\varepsilon\|_{\mathcal H}
\Bigg\}<r.
\end{aligned}
\end{equation}
Then $v_\varepsilon=u_\varepsilon$ after a permutation of the
peaks.

For $0<\varepsilon<\varepsilon_0$, the operator
$\mathscr L_{\varepsilon,u_\varepsilon}$ has trivial kernel in
$\mathcal H$. Its Morse index is
\begin{equation}\label{eq:Morse-formula-intro}
\operatorname{ind}(u_\varepsilon)
=m+\operatorname{ind}
\left(-D^2\mathcal F_m(\boldsymbol\xi^{*})\right).
\end{equation}
Equivalently,
\begin{equation}\label{eq:Morse-equivalent}
\operatorname{ind}(u_\varepsilon)
=5m-\operatorname{ind}
\left(D^2\mathcal F_m(\boldsymbol\xi^{*})\right).
\end{equation}
\end{theorem}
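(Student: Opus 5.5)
The plan is to run everything through the Lyapunov--Schmidt reduction that produces Theorem~\ref{thm:existence}. The parameters are $(\boldsymbol s,\boldsymbol\eta)$, where the scales are $\mu_i$ with $t_i=s_i$ as in \eqref{eq:scales-intro}. For each admissible separated parameter, the reduction gives a unique small $\phi_{\boldsymbol s,\boldsymbol\eta}$ in the orthogonal complement of the $5m$ approximate kernel elements $PZ_{i,j}$ ($j=0$ the dilation, $j=1,\dots,4$ the translations), together with the reduced energy $\widetilde J_\varepsilon(\boldsymbol s,\boldsymbol\eta)=J_\varepsilon(W_{\boldsymbol s,\boldsymbol\eta}+\phi_{\boldsymbol s,\boldsymbol\eta})$.

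\textbf{Local uniqueness.} Let $v_\varepsilon=W_{\boldsymbol s,\boldsymbol\eta}+\psi$ be a second solution with $\psi$ orthogonal and small, as in \eqref{eq:uniqueness-scale-class}--\eqref{eq:branch-distance}.
\begin{enumerate}
\item The linear operator restricted to the orthogonal complement is uniformly invertible. Using this and the contraction estimate, $\psi=\phi_{\boldsymbol s,\boldsymbol\eta}$ after permuting peaks. This uses that $\psi$ is close to $\phi_\varepsilon$ in $L^\infty\cap\mathcal H$ and that the fixed point is unique in a ball of radius $r$; the $L^\infty$ control is needed because of the exponential nonlinearity.
\item Hence $(\boldsymbol s,\boldsymbol\eta)$ is a critical point of $\widetilde J_\varepsilon$.
\item The reduced energy expands as
\[
\widetilde J_\varepsilon=c_\varepsilon-\tfrac{M_\alpha}{2}\mathcal F_m(\boldsymbol\eta)\cdot(\text{const})+Q_\varepsilon(\boldsymbol s)+o(1)
\]
in $C^1$, and in $C^2$ with the appropriate scalings. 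In the $\boldsymbol\eta$ directions the Hessian is, up to a positive factor, $-D^2\mathcal F_m(\boldsymbol\xi^*)$, which is nondegenerate. In the $\boldsymbol s$ directions the Hessian is the dilation block $8\pi^2b_\alpha^2|\log\varepsilon|^{-1}I_m+o(|\log\varepsilon|^{-1})$ stated in the abstract. The mixed terms are $o$ of the geometric mean of the two diagonal blocks.
\item After rescaling the $\boldsymbol s$ variables by $|\log\varepsilon|^{1/2}$, the rescaled reduced Hessian is uniformly invertible on the admissible neighborhood. A mean value argument applied to $\nabla\widetilde J_\varepsilon$ at the two critical points then forces them to coincide.
\end{enumerate}

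\textbf{Nondegeneracy and Morse index.}
\begin{enumerate}
\item Decompose $\mathcal H=E^\perp\oplus E$, where $E=\operatorname{span}\{PZ_{i,j}\}$. This uses the standard lemma that $J_\varepsilon''$ is nondegenerate along the branch $\phi_{\boldsymbol s,\boldsymbol\eta}$ exactly when $D^2\widetilde J_\varepsilon$ is, and that
\[
\operatorname{ind}J_\varepsilon''(u_\varepsilon)=\operatorname{ind}\bigl(J_\varepsilon''|_{E^\perp}\bigr)+\operatorname{ind}D^2\widetilde J_\varepsilon.
\]
This follows from a Schur complement computation.
\item $J_\varepsilon''|_{E^\perp}$ has exactly $m$ negative directions, one per bubble. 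These come from the whole-space spectral facts: one negative eigenvalue of the limiting linearized operator per bubble, the kernel spanned by translations and dilation, and coercivity above these spaces. Localizing with cutoffs on the scale $\mu_i^{-1}$ and using that the bubbles interact only weakly gives the count; the critical capacity effect only perturbs the neutral directions, which lie in $E$.
\item $\operatorname{ind}D^2\widetilde J_\varepsilon$ is computed from the block structure above. The scale block is positive, so it contributes nothing. The translation block contributes $\operatorname{ind}(-D^2\mathcal F_m(\boldsymbol\xi^*))$, since the reduced energy carries $-\mathcal F_m$ up to a positive factor.
\item Summing gives \eqref{eq:Morse-formula-intro}. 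Then \eqref{eq:Morse-equivalent} follows because $D^2\mathcal F_m(\boldsymbol\xi^*)$ is a nondegenerate $4m\times4m$ matrix, so $\operatorname{ind}(-D^2\mathcal F_m)=4m-\operatorname{ind}(D^2\mathcal F_m)$.
\item The trivial kernel follows from the same block invertibility.
\end{enumerate}

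\textbf{Main obstacle.} The hard step is the scale block: establishing the $C^2$ expansion in the $\boldsymbol s$ directions with the precise positive $|\log\varepsilon|^{-1}$ coefficient, and showing that the mixed and remainder terms are genuinely smaller. Differentiating $W$ in $s$ excites the negative constant mode, which has order-one size. That order-one piece must cancel against the derivative of $\phi_{\boldsymbol s,\boldsymbol\eta}$, leaving the biharmonic capacity of a ball of radius $\sim\mu^{-1}$, which is of order $|\log\varepsilon|^{-1}$. I would first compute $\partial_s^2$ of the explicit energy of $\sum PU$ using the expansion of $PU$ via $H$. Then I would bound the $\phi$-contributions by $O(\varepsilon^\tau|\log\varepsilon|^2)$, which is $o(|\log\varepsilon|^{-1})$ since $\tau>0$.
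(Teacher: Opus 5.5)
Your architecture (reduction, vanishing multipliers if and only if $(\boldsymbol s,\boldsymbol\eta)$ is a critical point of $\widetilde J_\varepsilon$, splitting $\mathcal H$ into the constraint space and the tangent space, index equal to $m$ plus the index of the reduced Hessian) is equivalent to the paper's. Your mean-value argument on the Hessian, rescaled by $|\log\varepsilon|^{1/2}$ in the scale variables, is an acceptable substitute for the paper's scale contraction followed by the inverse function theorem.

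\textbf{The scale block.} The step you single out as the main obstacle gives the wrong answer if carried out as you propose. Computing $\partial_s^2$ of $J_\varepsilon(\sum_i PU_{\mu_i,\xi_i})$ gives the raw block \eqref{eq:raw-scale-Hessian}, namely $-2M_\alpha b_\alpha^2I_m+O(\varepsilon^\tau)$, which is negative and of order one.

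The correction cannot then be treated as $O(\varepsilon^\tau|\log\varepsilon|^2)$ in $C^2$. The bound $\|\phi_\varepsilon\|_*+\|\phi_\varepsilon\|_{\mathcal H}\leq C(\varepsilon^\tau+|\boldsymbol s|)$ is linear in $|\boldsymbol s|$, because the residual contains $(1-e^{2b_\alpha s_i})F_{\mu_i,\xi_i}$. Hence $\partial_{s_i}\phi_\varepsilon$ is of order one. In fact $\partial_{t_i}\phi_\varepsilon(\xi_i+z/\mu_i)\to-b_\alpha$ (Lemma \ref{lem:scale-corrector}), and the correction contributes $+2M_\alpha b_\alpha^2$ through \eqref{eq:core-cancellation}.

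The positive term $8\pi^2b_\alpha^2|\log\varepsilon|^{-1}$ is carried entirely by the correction. It is the exterior energy of the tail of $\partial_{t_i}\phi_\varepsilon$ as it passes from $-b_\alpha$ on the core to $0$ on $\partial\Omega$. You say yourself that the constant mode must cancel against $\partial_s\phi$, which contradicts the estimate you then propose.

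The true block is itself only of size $|\log\varepsilon|^{-1}$. So you need its expansion with error $o(|\log\varepsilon|^{-1})$ for two things:
\begin{itemize}
\item its invertibility, uniformly on the box $|\boldsymbol s|\leq C_0\varepsilon^\tau|\log\varepsilon|$, which your uniqueness and nondegeneracy arguments require;
\item its sign, which the Morse count requires.
\end{itemize}
A $C^2$ expansion with $o(1)$ error cannot see it. The paper obtains the block from three ingredients:
\begin{itemize}
\item a quantitative expanding-core estimate for $\partial_{t_i}\phi_\varepsilon+b_\alpha$, at rate $o(|\log\varepsilon|^{-1/2})$ on $|z|\leq2|\log\varepsilon|^{3/4}$ (Lemma \ref{lem:expanding-core-scale});
\item an exterior comparison of $\partial_{t_i}u$ with the capacity minimizer of Lemma \ref{lem:biharmonic-capacity} (Lemma \ref{lem:scale-tangent-exterior});
\item a separate cancellation showing that the mixed block is $O(|\log\varepsilon|^{-1})+O(\varepsilon^\tau)$ (Lemma \ref{lem:scale-defect-C1}).
\end{itemize}
Your plan contains none of these.

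\textbf{The normal block.} Localizing with cutoffs $\chi(\mu_i|x-\xi_i|/R)$ on the scale $\mu_i^{-1}$ fails in dimension four.
\begin{itemize}
\item $\int|\Delta v|^2$ is scale invariant.
\item In the variable $s=\log|x-\xi_i|$, the radial operator $\partial_s^2+2\partial_s$ has a zero characteristic root, so there is no Rellich inequality for the spherical mean.
\item Hence a function in the constraint space can carry a plateau of height $c=O(1)$ from the edge of the core across the neck at energy cost $O(c^2|\log\varepsilon|^{-1})$.
\item The commutator $c\,\Delta\chi$ of a fixed-ratio cutoff costs $c^2\int|\Delta\chi|^2$, independently of $R$.
\item The six moment conditions control only weighted core averages, not this plateau.
\end{itemize}
Coercivity on the codimension-$m$ subspace therefore does not follow from the whole-space spectral gap through your localization.

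The paper uses logarithmically slow cutoffs between $R/\mu_i$ and $R^2/\mu_i$, together with a one-dimensional low-mode estimate (Lemma \ref{lem:quadratic-localization}). It builds the $m$ negative directions from the capacity profiles $-b_\alpha^{-1}V_{\rho_{i,\varepsilon},\xi_i}$ (Proposition \ref{prop:normal-inertia}). Some such device is needed, so your remark that the critical capacity effect only perturbs the neutral directions in $E$ is incorrect.

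A minor point: your reduction is orthogonal to $PZ_{i,j}$, whereas the statement imposes \eqref{eq:orthogonality} on $\psi$. Identifying $\psi$ with the fixed point requires the same constraint space.
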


For one peak,
\begin{equation*}
\mathcal F_{1}(\xi)
=\log K(\xi)+\frac{M_{\alpha}}{2}H(\xi,\xi).
\end{equation*}
For one concentration point, the preceding theorems give the following consequence.

\begin{corollary}\label{cor:single-peak}
Every $C^1$-stable critical point of $\mathcal F_{1}$ produces a positive
single-peak solution. If the critical point is nondegenerate, the
corresponding solution is locally unique and nondegenerate. A strict nondegenerate
local maximum produces a solution of Morse index one. A strict
nondegenerate local minimum produces a solution of Morse index five.
\end{corollary}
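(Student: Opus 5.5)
The corollary should follow directly from the case $m=1$ of Theorems \ref{thm:existence} and \ref{thm:qualitative}; all that needs checking is that the hypotheses specialize correctly and what the Morse formula gives at extrema. For $m=1$ the configuration space $\mathcal C_1(\Omega)=\Omega$ has no pair terms, and $\mathcal F_m$ reduces to $\mathcal F_1(\xi)=\log K(\xi)+\frac{M_\alpha}{2}H(\xi,\xi)$.

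\emph{Existence.} Let $\xi^*$ be a $C^1$-stable critical point of $\mathcal F_1$ in the sense of Definition \ref{def:stable-critical-point}. Theorem \ref{thm:existence} with $m=1$ then gives a positive solution $u_\varepsilon=PU_{\mu_\varepsilon,\xi_\varepsilon}+\phi_\varepsilon$ with $\xi_\varepsilon\to\xi^*$ and source converging to $M_\alpha\delta_{\xi^*}$. This is the single-peak solution of the corollary.

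\emph{Uniqueness and nondegeneracy.} If $\xi^*$ is nondegenerate, Definition \ref{def:stable-critical-point} already notes that it is $C^1$-stable, so the solution exists by the previous step. Theorem \ref{thm:qualitative} with $m=1$ gives local uniqueness in the scaled modulation neighborhood \eqref{eq:branch-distance}; here $S_1$ is trivial, so no permutation occurs. The same theorem gives that $\mathscr L_{\varepsilon,u_\varepsilon}$ has trivial kernel in $\mathcal H$, which is the nondegeneracy claim.

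\emph{Morse index.} Formula \eqref{eq:Morse-formula-intro} with $m=1$ reads $\operatorname{ind}(u_\varepsilon)=1+\operatorname{ind}(-D^2\mathcal F_1(\xi^*))$, where $D^2\mathcal F_1(\xi^*)$ is a $4\times4$ symmetric matrix because $\Omega\subset\mathbb R^4$.
\begin{itemize}
\item At a strict nondegenerate local maximum, $D^2\mathcal F_1(\xi^*)$ is negative definite. Then $-D^2\mathcal F_1(\xi^*)$ is positive definite, its index is $0$, and $\operatorname{ind}(u_\varepsilon)=1$.
\item At a strict nondegenerate local minimum, $D^2\mathcal F_1(\xi^*)$ is positive definite. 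Then $-D^2\mathcal F_1(\xi^*)$ has four negative eigenvalues, and $\operatorname{ind}(u_\varepsilon)=5$. The equivalent form \eqref{eq:Morse-equivalent} gives the same count, $5-0=5$.
\end{itemize}
Here we read ``strict nondegenerate local maximum'' as a nondegenerate critical point that is a local maximum. Nondegeneracy excludes zero eigenvalues of the Hessian, and the second-order necessary condition excludes positive ones, so the Hessian is negative definite; the minimum case is symmetric.

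No step here is a real obstacle, since all the analysis lives in the two theorems. The only points to check are that $\mathcal C_1(\Omega)=\Omega$ is the correct configuration space for $m=1$, and that the neighborhood $D$ in Definition \ref{def:stable-critical-point} can be taken as a small ball around $\xi^*$. The latter holds because a nondegenerate critical point is isolated and the local degree of $\nabla\mathcal F_1$ there equals $\operatorname{sign}\det D^2\mathcal F_1(\xi^*)\neq0$.
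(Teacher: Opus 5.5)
Your proposal is correct and follows the paper's proof: both specialize Theorems \ref{thm:existence} and \ref{thm:qualitative} to $m=1$ and then read off $\operatorname{ind}(u_\varepsilon)=1+\operatorname{ind}(-D^2\mathcal F_1(\xi^*))$, giving index $1$ at a nondegenerate local maximum and index $5$ at a nondegenerate local minimum. Your additional remarks fill in details the paper leaves implicit: nondegeneracy is what makes the Hessian at a strict local extremum definite, and a nondegenerate critical point is $C^1$-stable because its local degree is $\operatorname{sign}\det D^2\mathcal F_1(\xi^*)$.
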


The proof uses a finite-dimensional Lyapunov--Schmidt reduction
\cite{AmbrosettiMalchiodi}. Three points require separate arguments.
First, the nonlocal error must be estimated simultaneously in the
core, neck, and exterior regions. On bounded scales in the $i$-th
core,
\begin{equation*}
\frac{\text{field produced by the $j$-th core}}
{\text{self field of the $i$-th core}}
=O(\mu_i^{-\alpha})=O(\varepsilon^\alpha),
\qquad i\ne j.
\end{equation*}
Second,
\begin{equation*}
\partial_{\log\mu}PU_{\mu,\xi}
=Z_0(\mu(\cdot-\xi))+b_\alpha+o(1).
\end{equation*}
The derivative of the orthogonal correction cancels the constant
negative mode.  The remaining dilation energy is
\begin{equation*}
\frac{8\pi^2b_\alpha^2}{|\log\varepsilon|}
+o(|\log\varepsilon|^{-1}).
\end{equation*}
Third, the Morse index requires a localization formula for the
nonlocal quadratic form.  Related low-mode arguments appear in
\cite{BatesShi,ChoiKimLee,GladialiGrossiOhtsukaSuzuki}.  Each bubble contributes one negative normal
direction.  The dilation block is positive.  The translation block is
congruent to $-M_\alpha D^2\mathcal F_m(\boldsymbol\xi^*)$.

The paper is organized as follows.  Section 2 contains the analytic
preliminaries and the approximate solution.  Section 3 proves the
projected linear estimate.  Section 4 solves the nonlinear projected
problem.  Section 5 studies the reduced functional and proves Theorem
\ref{thm:existence}.  Section 6 proves local uniqueness and
nondegeneracy.  Section 7 gives the Morse index.  Section 8 contains
the technical estimates.

\section{Preliminaries and construction of the approximate solution}

\subsection{Variational setting and analytic estimates}

Set
\begin{equation}\label{eq:energy-space}
\mathcal H:=H^2(\Omega)\cap H_0^1(\Omega),
\qquad
\|v\|_{\mathcal H}^2:=\int_\Omega|\Delta v|^2\,dx.
\end{equation}
This norm is equivalent to the usual $H^2$ norm on $\mathcal H$.

We first record the domain version of the Hardy--Littlewood--Sobolev estimate.

\begin{lemma}\label{lem:HLS-domain}
Let $p=8/(8-\alpha)$. For $f,g\in L^p(\Omega)$,
\begin{equation}\label{eq:HLS-domain}
\int_\Omega\int_\Omega
\frac{|f(x)g(y)|}{|x-y|^\alpha}\,dxdy
\leq C_{\Omega,\alpha}\|f\|_{L^p(\Omega)}
\|g\|_{L^p(\Omega)}.
\end{equation}
\end{lemma}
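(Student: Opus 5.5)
The plan is to reduce Lemma~\ref{lem:HLS-domain} to the classical Hardy--Littlewood--Sobolev inequality on $\mathbb R^4$ by extension by zero. Given $f,g\in L^p(\Omega)$, we let $\tilde f,\tilde g$ denote their extensions by zero to $\mathbb R^4$. Then $\|\tilde f\|_{L^p(\mathbb R^4)}=\|f\|_{L^p(\Omega)}$, and likewise for $g$. The double integral over $\Omega\times\Omega$ coincides with the double integral of $|\tilde f(x)\tilde g(y)|\,|x-y|^{-\alpha}$ over $\mathbb R^4\times\mathbb R^4$.

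We then apply the sharp-exponent HLS inequality in dimension $N=4$. It states that
\[
\int\!\!\int_{\mathbb R^4\times\mathbb R^4}\frac{|F(x)G(y)|}{|x-y|^\alpha}\,dx\,dy\le C(4,\alpha)\|F\|_{L^{p}}\|G\|_{L^{q}}
\]
whenever $1/p+1/q+\alpha/4=2$ and $1<p,q<\infty$. We take $p=q$, which forces $2/p=2-\alpha/4=(8-\alpha)/4$, that is, $p=8/(8-\alpha)$. This is exactly the exponent in the statement. Since $0<\alpha<4$, we have $1<p<2$, so the hypotheses of HLS hold. This gives \eqref{eq:HLS-domain} with $C_{\Omega,\alpha}=C(4,\alpha)$, which is in fact independent of $\Omega$.

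No step is a genuine obstacle. The only points to verify are the exponent arithmetic and the range $1<p<\infty$.

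A self-contained alternative avoids invoking HLS and uses boundedness of $\Omega$. By Young's inequality for convolutions, with the kernel $|z|^{-\alpha}\mathbf 1_{|z|\le \operatorname{diam}\Omega}$ lying in $L^r$ for every $r<4/\alpha$, one bounds the form by $\|f\|_{L^{p_1}}\|g\|_{L^{p_2}}$. This requires $1/p_1+1/p_2+1/r=2$. At the critical exponent $p=8/(8-\alpha)$ one gets $1/r=\alpha/4$, which is borderline and not integrable. So this route only gives slightly larger exponents, and the genuine HLS (weak-type $L^{4/\alpha,\infty}$ kernel) is needed. I would therefore cite HLS, for instance in Lieb's form, rather than attempt the Young argument.
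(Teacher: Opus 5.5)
Your proposal is correct and follows essentially the same route as the paper: extend by zero, check that $\frac1p+\frac1p+\frac\alpha4=2$ with $1<p<2$, and apply the Hardy--Littlewood--Sobolev inequality on $\mathbb R^4$, so the constant depends only on $\alpha$. Your remark that the truncated-kernel Young argument fails exactly at the critical exponent, so genuine HLS is needed, is also accurate.
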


\begin{proof}
Extend $f$ and $g$ by zero to $\mathbb R^4$. Since
\[
\frac1p+\frac1p+\frac\alpha4=2,
\]
the Hardy--Littlewood--Sobolev inequality on $\mathbb R^4$ gives
\eqref{eq:HLS-domain}. The constant depends only on $\alpha$ and the
normalization of the Riesz kernel. Restriction to $\Omega$ completes
the proof.
\end{proof}

The Adams inequality gives the exponential integrability needed below.

\begin{lemma}\label{lem:Adams-consequence}
For every $R>0$ and every $q<\infty$, there is $C=C(R,q,\Omega)$ such
that
\begin{equation*}
\sup_{\|u\|_{\mathcal H}\leq R}
\|e^{|u|}\|_{L^q(\Omega)}\leq C.
\end{equation*}
Moreover, multiplication by finitely many functions of $\mathcal H$
preserves the integrability required in \eqref{eq:HLS-domain}.
\end{lemma}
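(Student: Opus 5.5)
The statement to prove is Lemma \ref{lem:Adams-consequence}. The plan is to reduce it to the Adams inequality on bounded domains of $\mathbb R^4$ with Navier boundary conditions. In the form valid on $H^2(\Omega)\cap H_0^1(\Omega)$ with the norm $\|\Delta u\|_{L^2}$, that inequality reads
\[
\sup_{\|u\|_{\mathcal H}\le 1}\int_\Omega e^{32\pi^2u^2}\,dx\le C_\Omega .
\]
From this we get the exponential integrability of $e^{|u|}$ by elementary inequalities, with no need for the sharp constant.

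\textbf{Step 1 (bounding $e^{q|u|}$).} Fix $R$ and $q$, and let $\|u\|_{\mathcal H}\le R$. For $u\neq0$ put $w=u/\|u\|_{\mathcal H}$. Young's inequality gives
\[
q|u|=q\|u\|_{\mathcal H}|w|\le 32\pi^2 w^2+\frac{q^2\|u\|_{\mathcal H}^2}{128\pi^2}\le 32\pi^2w^2+\frac{q^2R^2}{128\pi^2}.
\]
Hence
\[
\int_\Omega e^{q|u|}\le e^{q^2R^2/(128\pi^2)}\int_\Omega e^{32\pi^2w^2}\le C_\Omega\,e^{q^2R^2/(128\pi^2)}.
\]
This is the first assertion. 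If one prefers not to invoke the Navier form of Adams, the Dirichlet version can be used instead. Write $u=(-\Delta)^{-1}_{\rm Dir} f$ with $f=-\Delta u\in L^2$. Comparison with the Riesz potential $|x|^{-2}*|f|$ on the ball containing $\Omega$ then gives the same exponential-integrability bound with a nonsharp constant, by Adams' original potential estimate. Any constant in the exponent suffices, since Young's inequality absorbs it.

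\textbf{Step 2 (stability under products).} Let $v_1,\dots,v_k\in\mathcal H$ and $u\in\mathcal H$. We want $F=v_1\cdots v_k e^{u}\in L^p(\Omega)$ with $p=8/(8-\alpha)$, indeed in every $L^s$ with $s<\infty$. By Sobolev embedding in dimension four, $\mathcal H\hookrightarrow L^r(\Omega)$ for all $r<\infty$; this also follows from Step 1, since $|v|^r\le C_r e^{|v|}$. Apply Hölder with $k+1$ exponents, all equal to $(k+1)s$:
\[
\|F\|_{L^s}\le \prod_{j}\|v_j\|_{L^{(k+1)s}}\;\|e^{u}\|_{L^{(k+1)s}}<\infty,
\]
with bounds uniform on $\mathcal H$-balls. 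Then Lemma \ref{lem:HLS-domain} applies to $f=F$ and $g=G$ of the same type. This yields finiteness and continuity of the Choquard energy and its derivatives.

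\textbf{Main obstacle.} The only genuinely delicate point is justifying the Adams inequality in the Navier space $H^2\cap H^1_0$ rather than $H_0^2$. I would handle this by the potential-comparison argument above. The maximum principle gives $|u|\le (-\Delta)^{-1}_{\rm Dir}|f|\le c\,|x|^{-2}*|f|$ pointwise. Adams' theorem for Riesz potentials of order $2$ in $\mathbb R^4$, applied to $|f|\in L^2$, then gives $\int e^{\beta u^2/\|f\|_2^2}\le C$ for some $\beta>0$. This is enough for Step 1, with $32\pi^2$ replaced by $\beta$.
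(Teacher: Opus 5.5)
Your proof is correct and follows essentially the same route as the paper: Young's inequality $q|s|\le \delta s^2+q^2/(4\delta)$ reduces the bound to Adams' inequality for $e^{\delta u^2}$ with $\delta R^2$ below the threshold, and H\"older's inequality together with $\mathcal H\hookrightarrow L^r(\Omega)$ for all finite $r$ gives the product assertion. Your justification of the Navier ($H^2\cap H^1_0$) form of Adams' inequality, via the comparison $|u|\le c\,|x|^{-2}*|f|$ with $f=-\Delta u$, is a useful detail that the paper leaves implicit.
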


\begin{proof}
Fix $q<\infty$. For every $\delta>0$,
\begin{equation*}
q|s|\leq\delta s^2+\frac{q^2}{4\delta}.
\end{equation*}
Choose $\delta$ so that $\delta R^2$ is below the Adams threshold.
Then
\[
\int_\Omega e^{q|u|}\,dx
\leq e^{q^2/(4\delta)}\int_\Omega e^{\delta u^2}\,dx
\leq C.
\]
The embedding $\mathcal H\hookrightarrow L^r(\Omega)$ holds for
every finite $r$. H\"older's inequality therefore permits any fixed
finite product of $\mathcal H$ functions. This proves the last
assertion.
\end{proof}

We shall also use a boundary version of the local H\"older estimate for
Riesz potentials.

\begin{lemma}\label{lem:boundary-Riesz-holder}
Let $f\in C^{0,\beta}(\overline\Omega)$ for some $0<\beta<1$ and set
\[
\mathcal I_\alpha f(x)
:=\int_\Omega\frac{f(y)}{|x-y|^\alpha}\,dy.
\]
Then $\mathcal I_\alpha f\in C^{0,\gamma}(\overline\Omega)$ for every
$0<\gamma<\min\{1,4-\alpha\}$. Moreover,
\[
\|\mathcal I_\alpha f\|_{C^{0,\gamma}(\overline\Omega)}
\leq C\|f\|_{C^{0,\beta}(\overline\Omega)},
\]
where $C$ depends on $\Omega$, $\alpha$, $\beta$, and $\gamma$.
\end{lemma}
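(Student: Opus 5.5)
We prove the bound directly by splitting the kernel difference into a near part and a far part. Sup bound: since $\Omega$ is bounded and $\alpha<4$, we have $\sup_x\int_\Omega|x-y|^{-\alpha}dy\le C(\Omega,\alpha)$. Hence $\|\mathcal I_\alpha f\|_{L^\infty}\le C\|f\|_{L^\infty}$.

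Fix $x,z\in\overline\Omega$ and set $\rho=|x-z|$. We may assume $\rho\le 1$; otherwise the sup bound already gives the difference estimate. Let $B=B_{2\rho}(x)$ and write
\[
\mathcal I_\alpha f(x)-\mathcal I_\alpha f(z)
=\int_{\Omega\cap B}\frac{f(y)}{|x-y|^\alpha}dy-\int_{\Omega\cap B}\frac{f(y)}{|z-y|^\alpha}dy
+\int_{\Omega\setminus B}f(y)\Bigl(\frac1{|x-y|^\alpha}-\frac1{|z-y|^\alpha}\Bigr)dy .
\]

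\textbf{Near part.} Bound each of the first two integrals crudely by $\|f\|_\infty\int_{B_{3\rho}}|w|^{-\alpha}dw\le C\|f\|_\infty\rho^{4-\alpha}$. The boundary of $\Omega$ plays no role, because we only integrate $|f|$ over a subset of a ball. No cancellation or smoothness of $\partial\Omega$ is needed. The H\"older continuity of $f$ is not used either, since we only claim $\gamma<\min\{1,4-\alpha\}$.

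\textbf{Far part.} For $y\notin B$ we have $|z-y|\ge |x-y|/2$. The mean value theorem gives
\[
\bigl||x-y|^{-\alpha}-|z-y|^{-\alpha}\bigr|\le C\rho|x-y|^{-\alpha-1}.
\]
The far part is therefore at most $C\|f\|_\infty\,\rho\int_{2\rho\le|x-y|\le \operatorname{diam}\Omega}|x-y|^{-\alpha-1}dy$, and this integral splits into three cases:
\begin{itemize}
\item If $\alpha<3$, the integral is bounded, so the far part is $O(\rho)$.
\item If $\alpha=3$, the integral is $O(|\log\rho|)$, so the far part is $O(\rho|\log\rho|)$.
\item If $\alpha>3$, the integral is $O(\rho^{3-\alpha})$, so the far part is $O(\rho^{4-\alpha})$.
\end{itemize}

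Combining the near and far parts, in every case $|\mathcal I_\alpha f(x)-\mathcal I_\alpha f(z)|\le C\|f\|_\infty\rho^{\gamma}$ for any $\gamma<\min\{1,4-\alpha\}$ and $\rho\le1$. The strict inequality absorbs the logarithm in the borderline case $\alpha=3$. Since $\|f\|_\infty\le\|f\|_{C^{0,\beta}}$, the estimate follows with $C=C(\Omega,\alpha,\gamma)$, and the dependence on $\beta$ is harmless.

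The only delicate point is the borderline $\alpha=3$, where the logarithmic loss forces $\gamma<1$ rather than $\gamma=1$. The strict inequality in the statement handles it.
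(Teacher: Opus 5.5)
Your proof is correct and follows the paper's argument. It uses the same split at radius $2|x-z|$, the same crude $\|f\|_\infty\rho^{4-\alpha}$ bound on the near part, the same mean value bound $C\rho|x-y|^{-\alpha-1}$ on the far part, and the same three cases $\alpha<3$, $\alpha=3$, $\alpha>3$. The only difference is that you integrate directly over $\Omega$ out to $\operatorname{diam}\Omega$ instead of first extending $f$ to $\mathbb R^4$ as the paper does; this is harmless, and cleaner, since the operator only involves $\Omega$ and only $\|f\|_\infty$ is ever used.
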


\begin{proof}
Extend $f$ to a compactly supported function in
$C^{0,\beta}(\mathbb R^4)$. Let $h=x'-x$ and split the difference of the
potentials into $|x-y|\leq2|h|$ and its complement. The near part is
bounded by
\[
C\|f\|_\infty\int_0^{3|h|}r^{3-\alpha}\,dr
\leq C\|f\|_\infty |h|^{4-\alpha}.
\]
On the complementary region, the mean value theorem gives
\[
\bigl||x'-y|^{-\alpha}-|x-y|^{-\alpha}\bigr|
\leq C|h||x-y|^{-\alpha-1}.
\]
Integration from $2|h|$ to a fixed radius gives $C|h|$ if
$\alpha<3$, $C|h||\log|h||$ if $\alpha=3$, and
$C|h|^{4-\alpha}$ if $3<\alpha<4$. The remaining far part is
Lipschitz. Hence the difference is bounded by $C|h|^\gamma$ for every
$\gamma<\min\{1,4-\alpha\}$, uniformly for $x,x'\in\overline\Omega$.
The supremum estimate follows from the local integrability of
$|x-y|^{-\alpha}$. This proves the assertion.
\end{proof}

Problem \eqref{eq:perturbed-problem} is the Euler equation of
\begin{equation*}
\begin{aligned}
J_\varepsilon(u)
={}\frac12\int_\Omega|\Delta u|^2\,dx-\frac{\varepsilon^{8-\alpha}}2
\int_\Omega\int_\Omega
\frac{K(x)K(y)e^{u(x)+u(y)}}{|x-y|^\alpha}\,dxdy.
\end{aligned}
\end{equation*}

We next verify the smooth variational structure of the problem.

\begin{lemma}
The functional $J_\varepsilon$ belongs to $C^\infty(\mathcal H,\mathbb R)$.
Its first derivative is
\begin{equation}\label{eq:first-variation}
\begin{aligned}
J_\varepsilon'(u)[\phi]
={}\int_\Omega\Delta u\Delta\phi\,dx-\varepsilon^{8-\alpha}
\int_\Omega\int_\Omega
\frac{K(x)K(y)e^{u(x)+u(y)}\phi(x)}{|x-y|^\alpha}\,dxdy.
\end{aligned}
\end{equation}
Weak critical points are weak solutions of
\eqref{eq:perturbed-problem}.
\end{lemma}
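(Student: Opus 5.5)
The plan is to split $J_\varepsilon=\frac12\|u\|_{\mathcal H}^2-\varepsilon^{8-\alpha}\mathcal N(u)$, where
\[
\mathcal N(u)=\frac12\int_\Omega\int_\Omega\frac{K(x)K(y)e^{u(x)+u(y)}}{|x-y|^\alpha}\,dxdy .
\]
The quadratic part is a bounded quadratic form on $\mathcal H$, so it is $C^\infty$, and its derivative is $\int_\Omega\Delta u\Delta\phi\,dx$. Everything then reduces to showing that $\mathcal N$ is $C^\infty$ on $\mathcal H$.

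\textbf{Structure of $\mathcal N$.} Write $\mathcal N(u)=\frac12 B(E(u),E(u))$, where
\[
E(u)=Ke^{u}, \qquad B(f,g)=\int_\Omega\int_\Omega\frac{f(x)g(y)}{|x-y|^\alpha}\,dxdy .
\]
By Lemma \ref{lem:HLS-domain}, $B$ is a bounded symmetric bilinear form on $L^p(\Omega)$ with $p=8/(8-\alpha)$, hence smooth on $L^p\times L^p$. It therefore suffices to show that $E:\mathcal H\to L^p(\Omega)$ is $C^\infty$, with $k$-th derivative
\[
E^{(k)}(u)[\phi_1,\dots,\phi_k]=Ke^u\phi_1\cdots\phi_k .
\]
The chain rule then gives $\mathcal N\in C^\infty$. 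The first derivative is
\[
\mathcal N'(u)[\phi]=B(Ke^u,Ke^u\phi),
\]
using the symmetry of $B$, and this is exactly \eqref{eq:first-variation}.

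\textbf{Smoothness of $E$.} This is the main step. Fix $u$ and $h$ with $\|h\|_{\mathcal H}\le1$. Taylor's formula with integral remainder gives
\[
e^{u+h}-\sum_{j\le k}\frac{e^u h^j}{j!}=\frac{h^{k+1}}{k!}\int_0^1(1-s)^k e^{u+sh}\,ds .
\]
We estimate the $L^p$ norm of the right-hand side by Hölder's inequality. The factor $e^{u+sh}$ is bounded in $L^{2p}$ uniformly in $s\in[0,1]$ and $\|h\|_{\mathcal H}\le1$, by Lemma \ref{lem:Adams-consequence} with $R=\|u\|_{\mathcal H}+1$. The factor $h^{k+1}$ is bounded in $L^{2p}$ by $C\|h\|_{\mathcal H}^{k+1}$, via the embedding $\mathcal H\hookrightarrow L^r$ for all finite $r$. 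This yields
\[
\Bigl\|E(u+h)-\sum_{j\le k}E^{(j)}(u)[h]^j/j!\Bigr\|_{L^p}=O(\|h\|^{k+1}_{\mathcal H}) .
\]

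We must also check that $u\mapsto E^{(k)}(u)$ is continuous into the space of bounded $k$-linear maps $\mathcal H^k\to L^p$. The same Hölder splitting applies, and reduces this to the continuity of $u\mapsto e^u$ into $L^{q}$ for some large $q$. That continuity follows from
\[
|e^{u}-e^{v}|\le|u-v|(e^{|u|}+e^{|v|})
\]
together with Lemma \ref{lem:Adams-consequence} again. With the remainder estimate, this gives Fréchet differentiability of every order, and hence $E\in C^\infty$. The only delicate point is the uniform exponential integrability on $\mathcal H$-balls, and this is precisely Lemma \ref{lem:Adams-consequence}.

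\textbf{Weak critical points are weak solutions.} A weak critical point satisfies $J_\varepsilon'(u)[\phi]=0$ for all $\phi\in\mathcal H$. Since the first variation \eqref{eq:first-variation} has that form, this is the weak formulation of the first equation in \eqref{eq:perturbed-problem}. The right-hand side $\varepsilon^{8-\alpha}Ke^u\,\mathcal I_\alpha(Ke^u)$ lies in $L^1$, and in fact in $L^q$, by Lemma \ref{lem:HLS-domain} and Lemma \ref{lem:Adams-consequence}. The condition $u=0$ on $\partial\Omega$ is built into $\mathcal H$. The Navier condition $\Delta u=0$ is the natural boundary condition: test functions in $\mathcal H$ have free normal derivative. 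This is the usual weak formulation of the Navier problem, as in \cite{GazzolaGrunauSweers}.
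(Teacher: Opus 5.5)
Your proposal is correct and follows essentially the paper's route. Both arguments rest on three ingredients: Adams integrability of $e^{|u|}$ on $\mathcal H$-balls (Lemma \ref{lem:Adams-consequence}), H\"older's inequality with the embedding $\mathcal H\hookrightarrow L^r$, and the HLS bound of Lemma \ref{lem:HLS-domain}. Both then read $J_\varepsilon'(u)=0$ as the weak Navier problem with $\Delta u=0$ as the natural boundary condition.

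Writing the nonlocal term as $\frac12 B(Ke^u,Ke^u)$ and proving smoothness of $u\mapsto Ke^u$ into $L^{8/(8-\alpha)}$ only repackages the paper's direct bound on the $n$-th derivatives. It does have a small advantage: it states the Fr\'echet remainder estimates explicitly, which the paper leaves implicit.
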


\begin{proof}
Let $B_R$ be a bounded ball of $\mathcal H$. Lemma
\ref{lem:Adams-consequence} gives
\begin{equation}\label{eq:energy-exp-integrability}
\sup_{u\in B_R}\|e^u\|_{L^q(\Omega)}<\infty
\quad\text{for every }q<\infty.
\end{equation}
For $n\geq1$ and $\phi_1,\ldots,\phi_n\in\mathcal H$, the $n$-th
derivative of the nonlocal part is a finite sum of terms
\begin{equation*}
\int_\Omega\int_\Omega
\frac{K(x)K(y)e^{u(x)+u(y)}
\prod_{j=1}^n(\phi_j(x)+\phi_j(y))}
{|x-y|^\alpha}\,dxdy.
\end{equation*}
Choose $q$ large. Apply H\"older's inequality separately in $x$ and
$y$. Use $\mathcal H\hookrightarrow L^r(\Omega)$ for every finite
$r$. The two remaining factors belong to $L^{8/(8-\alpha)}$ by
\eqref{eq:energy-exp-integrability}. Lemma \ref{lem:HLS-domain} then
gives
\begin{equation*}
|D^nJ_\varepsilon(u)[\phi_1,\ldots,\phi_n]|
\leq C_{R,n}\prod_{j=1}^n\|\phi_j\|_{\mathcal H}.
\end{equation*}
The same estimate applied to differences proves continuity of
$D^nJ_\varepsilon$. Thus $J_\varepsilon\in C^\infty$.

The symmetry in $x$ and $y$ combines the two first-variation terms
and gives \eqref{eq:first-variation}. For
$\phi\in C_c^\infty(\Omega)$, integration by parts yields
\[
\int_\Omega\Delta u\Delta\phi
=\langle\Delta^2u,\phi\rangle.
\]
The weak boundary space is $\mathcal H$. Hence the natural boundary
conditions are $u=\Delta u=0$. The Euler equation is
\eqref{eq:perturbed-problem}.
\end{proof}

\subsection{Entire bubbles and the limiting linear theory}

The limiting equation is
\begin{equation*}
\Delta^2U=
\left(\int_{\mathbb R^4}\frac{e^{U(y)}}{|x-y|^\alpha}\,dy\right)e^{U(x)}
\quad\hbox{in }\mathbb R^4.
\end{equation*}
For $\mu>0$ and $\xi\in\mathbb R^4$, set
\begin{equation*}
U_{\mu,\xi}(x)
=b_\alpha\log\left(
\frac{C_\alpha\mu}{1+\mu^2|x-\xi|^2}\right).
\end{equation*}
Then
\begin{equation*}
\Delta^2U_{\mu,\xi}(x)
=\frac{48(8-\alpha)\mu^4}{(1+\mu^2|x-\xi|^2)^4}
=:F_{\mu,\xi}(x),
\end{equation*}
\begin{equation}\label{eq:bubble-mass}
\int_{\mathbb R^4}F_{\mu,\xi}\,dx=M_\alpha,
\end{equation}
and
\begin{equation}\label{eq:bubble-nonlocal}
\left(\int_{\mathbb R^4}
\frac{e^{U_{\mu,\xi}(y)}}{|x-y|^\alpha}\,dy\right)e^{U_{\mu,\xi}(x)}
=F_{\mu,\xi}(x).
\end{equation}

We record the part of \cite{DengSpectrum} used in this paper. Let
\begin{equation*}
U(x):=U_{1,0}(x)
=b_{\alpha}\log\frac{C_{\alpha}}{1+|x|^{2}},
\qquad F:=F_{1,0}.
\end{equation*}
The linearized operator is
\begin{equation*}
\begin{aligned}
\mathcal L_{\infty}\phi(x)
={}&\Delta^{2}\phi(x)
-e^{U(x)}\int_{\mathbb R^{4}}
\frac{e^{U(y)}\phi(y)}{|x-y|^{\alpha}}\,dy-e^{U(x)}\phi(x)
\int_{\mathbb R^{4}}
\frac{e^{U(y)}}{|x-y|^{\alpha}}\,dy.
\end{aligned}
\end{equation*}
Its symmetric quadratic form is
\begin{equation*}
\begin{aligned}
\mathcal Q_\infty(\phi,\psi)
={}\int_{\mathbb R^4}\Delta\phi\,\Delta\psi\,dx-\frac12\int_{\mathbb R^4}\int_{\mathbb R^4}
\frac{e^{U(x)+U(y)}}{|x-y|^\alpha}
(\phi(x)+\phi(y))(\psi(x)+\psi(y))\,dxdy.
\end{aligned}
\end{equation*}
The five kernel functions are
\begin{equation*}
Z_{\ell}(z)=\frac{(8-\alpha)z_{\ell}}{1+|z|^{2}},
\qquad 1\leq\ell\leq4,\qquad
Z_{0}(z)=b_{\alpha}\frac{1-|z|^{2}}{1+|z|^{2}}.
\end{equation*}
The index $0$ is used for dilation in this paper.

The conformal energy space used below is
\begin{equation*}
\mathcal E_\alpha
:=\left\{\phi\in H^2_{\rm loc}(\mathbb R^4):
\Delta\phi\in L^2(\mathbb R^4),\
\int_{\mathbb R^4}F\phi^2\,dx<+\infty\right\},
\end{equation*}
endowed with
\begin{equation*}
\|\phi\|_{\mathcal E_\alpha}^2
:=\int_{\mathbb R^4}|\Delta\phi|^2\,dx
+\int_{\mathbb R^4}F\phi^2\,dx.
\end{equation*}
Constants belong to this weighted space. Thus the negative constant
mode is admissible. The Hardy--Littlewood--Sobolev inequality and the
stereographic representation show that $\mathcal Q_\infty$ extends
continuously to $\mathcal E_\alpha\times\mathcal E_\alpha$.

The companion spectral analysis supplies the exact low-mode information
needed in the reduction.

\begin{theorem}\label{thm:limit-input}
If $\phi$ is a bounded distributional solution of
$\mathcal L_\infty\phi=0$, then
\begin{equation*}
\phi\in\operatorname{span}\{Z_0,Z_1,Z_2,Z_3,Z_4\}.
\end{equation*}
The same kernel statement holds for solutions in
$\mathcal E_\alpha$. Under stereographic projection to $\mathbb S^4$,
the negative eigenspace is $\mathcal H_0$ and the kernel is
$\mathcal H_1$. In Euclidean variables,
\begin{equation}\label{eq:negative-mode-value}
\mathcal Q_\infty(1,1)=-2M_\alpha.
\end{equation}
There is $c_\alpha>0$ such that
\begin{equation}\label{eq:limit-coercivity}
\mathcal Q_\infty(\phi,\phi)
\geq c_\alpha\int_{\mathbb R^4}|\Delta\phi|^2\,dx
\end{equation}
for every $\phi\in\mathcal E_\alpha$ satisfying
\begin{equation}\label{eq:limit-coercivity-orthogonality}
\int_{\mathbb R^4}F\phi\,dx=0,
\qquad
\int_{\mathbb R^4}FZ_k\phi\,dx=0,
\quad 0\leq k\leq4.
\end{equation}
The kernel, the negative direction, and the coercivity constant are
transported by translations and dilations.
\end{theorem}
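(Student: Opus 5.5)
The identity \eqref{eq:negative-mode-value} can be checked directly; the other assertions of Theorem~\ref{thm:limit-input} come from a spectral decomposition on $\mathbb S^4$, as in \cite{DengSpectrum}.

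\emph{The negative constant mode.} Put $\phi=\psi=1$. Then $\Delta\phi=0$, and $(\phi(x)+\phi(y))(\psi(x)+\psi(y))=4$. Using \eqref{eq:bubble-nonlocal} and \eqref{eq:bubble-mass},
\[
\mathcal Q_\infty(1,1)=-2\int_{\mathbb R^4}\int_{\mathbb R^4}\frac{e^{U(x)+U(y)}}{|x-y|^\alpha}\,dxdy=-2\int_{\mathbb R^4}F\,dx=-2M_\alpha .
\]

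\emph{Transfer to the sphere.} Let $\pi:\mathbb S^4\setminus\{N\}\to\mathbb R^4$ be stereographic projection, and write $e^{w(x)}=2/(1+|x|^2)$, so that $\pi^*g_{\mathbb R^4}=e^{-2w}g_{\mathbb S^4}$. Three facts are needed.
\begin{itemize}
\item The conformal covariance of the Paneitz operator $P=\Delta_g^2-2\Delta_g$ on $\mathbb S^4$ gives $\Delta^2\phi=e^{4w}\,P(\phi\circ\pi)$.
\item The chordal identity $|X-Y|^2=e^{w(x)}e^{w(y)}|x-y|^2$ holds for $X=\pi^{-1}x$ and $Y=\pi^{-1}y$.
\item Since $e^{U}=C_\alpha^{b_\alpha}e^{b_\alpha w}2^{-b_\alpha}$ and $b_\alpha=(8-\alpha)/2$, the weights combine exactly: $e^{U(x)}\,dx$ paired with $|x-y|^{-\alpha}$ produces $e^{4w(x)}$ times the spherical volume.
\end{itemize}
With these, $\mathcal L_\infty\phi=0$ becomes
\[
P\varphi=\kappa_\alpha\bigl(\varphi+c_0^{-1}T_\alpha\varphi\bigr),\qquad
T_\alpha\varphi(X)=\int_{\mathbb S^4}\frac{\varphi(Y)}{|X-Y|^\alpha}\,dV(Y).
\]
Here $\varphi=\phi\circ\pi$, $c_0$ is the eigenvalue of $T_\alpha$ on constants, and $\kappa_\alpha$ is an explicit constant. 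The term $\varphi$ arises because $\int e^{U(y)}|x-y|^{-\alpha}\,dy$ is constant after the conformal rescaling. The quadratic form $\mathcal Q_\infty$ corresponds to the form of $P-\kappa_\alpha(1+c_0^{-1}T_\alpha)$ on $H^2(\mathbb S^4)$. The norm $\|\cdot\|_{\mathcal E_\alpha}$ is equivalent to the $H^2(\mathbb S^4)$ norm, since $F\,dx$ is a multiple of $dV$ and $\int|\Delta\phi|^2\,dx=\int\varphi P\varphi\,dV$.

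\emph{Spectral computation.} Everything diagonalizes on spherical harmonics $\mathcal H_k$ of degree $k$. The Paneitz operator acts on $\mathcal H_k$ by $p_k=k(k+1)(k+2)(k+3)$. By Funk--Hecke, $T_\alpha$ acts on $\mathcal H_k$ by
\[
\lambda_k=c_0\,\frac{\Gamma(k+\alpha/2)\,\Gamma(4-\alpha/2)}{\Gamma(\alpha/2)\,\Gamma(k+4-\alpha/2)}.
\]
These values are positive and strictly decreasing in $k$ for $0<\alpha<4$. Hence the multiplier on $\mathcal H_k$ is
\[
m_k=p_k-\kappa_\alpha\bigl(1+\lambda_k/c_0\bigr).
\]
One checks three things.
\begin{itemize}
\item $m_0=-2\kappa_\alpha<0$. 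This matches $\mathcal Q_\infty(1,1)=-2M_\alpha$ and fixes $\kappa_\alpha$ in terms of $M_\alpha$.
\item $m_1=0$. This must hold because $Z_0,\dots,Z_4$ pull back to the linear functions on $\mathbb S^4$; equivalently, it is the identity $24=\kappa_\alpha\bigl(1+\tfrac{\alpha/2}{4-\alpha/2}\bigr)$ for the explicit $C_\alpha$.
\item $m_k\ge m_2>0$ for $k\ge2$, since $p_k$ increases and $\lambda_k$ decreases.
\end{itemize}
The orthogonality conditions \eqref{eq:limit-coercivity-orthogonality} say exactly that $\varphi\perp\mathcal H_0\oplus\mathcal H_1$ in $L^2(\mathbb S^4)$. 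On that complement, $m_k\ge\theta p_k$ for some $\theta>0$ uniform in $k\ge2$, because $m_k/p_k\to1$. This gives \eqref{eq:limit-coercivity} with $c_\alpha=\theta$.

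\emph{Regularity and transport.} For the kernel statement, a bounded distributional solution $\phi$ gives a bounded $\varphi$ solving the spherical equation on $\mathbb S^4\setminus\{N\}$. A point has zero $H^2$-capacity in dimension four for bounded functions, so the singularity at $N$ is removable. Elliptic regularity for $P$, with the smoothing operator $T_\alpha$ on the right, then makes $\varphi$ smooth. Its $\mathcal H_k$-components vanish unless $m_k=0$, so $\varphi\in\mathcal H_1$, i.e.\ $\phi\in\operatorname{span}\{Z_0,\dots,Z_4\}$. For solutions in $\mathcal E_\alpha$, $\varphi\in H^2(\mathbb S^4)$ directly and the same argument applies. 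Finally, invariance under $x\mapsto\mu(x-\xi)$ follows from the scaling $U_{\mu,\xi}=U(\mu(\cdot-\xi))+b_\alpha\log\mu$ together with the homogeneity of $\Delta^2$ and of the Riesz kernel.

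\emph{Main obstacle.} The delicate step is verifying the conformal weights exactly, so that the nonlocal term becomes a genuine $\mathbb S^4$-convolution. This is where $b_\alpha=(8-\alpha)/2$ and the explicit $C_\alpha$ enter, and it is what produces $m_1=0$. The sign $m_2>0$ then follows from the monotonicity of the Funk--Hecke eigenvalues.
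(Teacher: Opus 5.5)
Your route is the paper's route. The paper's proof only invokes the spherical spectral theorem of the companion paper, and your explicit computation supplies what it cites: $\kappa_\alpha=3(8-\alpha)$, $p_k=k(k+1)(k+2)(k+3)$, the Funk--Hecke ratios, $m_0=-2\kappa_\alpha$, $m_1=0$, and $m_k\ge\theta p_k$ for $k\ge2$. This correctly proves the bounded-kernel statement, the value of $\mathcal Q_\infty(1,1)$, and the coercivity for pullbacks of $H^2(\mathbb S^4)$.

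The gap is your sentence identifying $\mathcal E_\alpha$ with $H^2(\mathbb S^4)$, which you use for both $\mathcal E_\alpha$ assertions. It is false. Each linear function $x_\ell$, $1\le\ell\le4$, lies in $\mathcal E_\alpha$: $\Delta x_\ell=0$, and $\int Fx_\ell^2\,dx<\infty$ since $F\sim|x|^{-8}$. But $x_\ell\circ\pi=X_\ell/(1-X_5)$ is of size $|X-N|^{-1}$ near $N$, so it lies in $L^2(\mathbb S^4)$ and not in $H^2(\mathbb S^4)$. Moreover $\int|\Delta x_\ell|^2\,dx=0$ while its Paneitz energy is infinite. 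In fact $\mathcal E_\alpha$ is the pullback of $H^2(\mathbb S^4)$ plus $\operatorname{span}\{x_1,\dots,x_4\}$, and on these extra directions $\mathcal Q_\infty$ is not $\sum_k m_k\|\varphi_k\|^2$. For instance, $\mathcal Q_\infty(x_1,Z_1)=-M_\alpha\neq0$ although $Z_1=b_\alpha X_1\in\mathcal H_1$. Indeed, $\mathcal L_\infty Z_1=0$ turns the nonlocal part into $\int x_1\Delta^2Z_1\,dx=M_\alpha$, which is a pure boundary term at infinity, while $\int\Delta x_1\Delta Z_1\,dx=0$. Likewise an $\mathcal E_\alpha$ solution of $\mathcal L_\infty\phi=0$ is not a priori in $H^2(\mathbb S^4)$, so ``the same argument applies'' is unjustified. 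The paper's short proof passes over this point too. Later sections only apply the coercivity to compactly supported functions, constants and the $Z_k$, where your argument suffices.

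Both points can be repaired. \textbf{Kernel.} For $\varphi\in L^2(\mathbb S^4)$, the distribution $\bigl(P-\kappa_\alpha(1+c_0^{-1}T_\alpha)\bigr)\varphi$ is supported at $N$. $L^2$-integrability allows only $c\,\delta_N+\sum_jc_j\partial_j\delta_N$, which are exactly the terms producing $\log|X-N|$ and the $x_j$. Pairing with $X_1,\dots,X_5$, which span the kernel, gives $c=c_1=\dots=c_4=0$. Then $\varphi\in H^2(\mathbb S^4)$ and your spectral argument applies. \textbf{Coercivity.} Rotate so that $\phi=ax_1+\psi$ with $\psi$ a pullback of $H^2(\mathbb S^4)$, and put $\lambda=X_1/(1-X_5)$ and $q_k=p_k-m_k$. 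Then $\int|\Delta\phi|^2\,dx=\langle P\psi,\psi\rangle$, and the six conditions force $\psi_0=0$ and $\psi_1=-a\lambda^{(1)}$. Minimize $p_k\|\psi_k\|^2-q_k\|a\lambda^{(k)}+\psi_k\|^2$ separately for each $k\ge2$, using $\|\lambda\|^2=4\pi^2/3$, $\|\lambda^{(1)}\|^2=5\pi^2/6$, and $p_kq_k/m_k\le30$ for $k\ge2$ (since $q_k<q_1=24$ and $p_k\ge120$). This yields $\mathcal Q_\infty(\phi,\phi)\ge a^2\bigl(24\|\lambda^{(1)}\|^2-\sum_{k\ge2}\frac{p_kq_k}{m_k}\|\lambda^{(k)}\|^2\bigr)\ge a^2\bigl(20\pi^2-15\pi^2\bigr)$. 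Repeating the computation with $p_k$ replaced by $(1-c)p_k$ for small $c$ gives the coercivity on all of $\mathcal E_\alpha$, with a smaller constant. This step does not follow from $m_k\ge\theta p_k$ alone and must be added.
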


\begin{proof}
The spectral theorem in \cite[Theorem 1.2]{DengSpectrum} conjugates
$\mathcal L_\infty$ to a self-adjoint operator on $\mathbb S^4$. The
spherical harmonics of degree zero give the simple negative
eigenvalue, degree one gives the five conformal kernel functions, and
all modes of degree at least two are separated from zero. The
weighted $L^2$ space on the sphere is equivalent, under
stereographic projection, to the $F$-weighted term in
$\mathcal E_\alpha$. Hence both bounded kernel elements and
$\mathcal E_\alpha$ kernel elements are exhausted by the five
conformal modes. Pulling the quadratic decomposition back gives
\eqref{eq:negative-mode-value} and \eqref{eq:limit-coercivity}. The
$\mathcal H_0$ and $\mathcal H_1$ orthogonality conditions become
exactly \eqref{eq:limit-coercivity-orthogonality}. Translation and
dilation preserve this decomposition.
\end{proof}

The exact spherical eigenvalues are stated in \cite[Theorem 1.2]{DengSpectrum}. The first three spectral levels are enough here. The negative mode is simple. The kernel has dimension five. The next eigenvalue is separated from zero. This separation is uniform under translations and dilations.

The spherical spectral gap also controls the weighted low modes.

\begin{lemma}\label{lem:weighted-low-mode-control}
There is a constant $C>0$ such that every
$\phi\in\mathcal E_\alpha$ satisfies
\begin{equation}\label{eq:weighted-low-mode-control}
\begin{aligned}
\int_{\mathbb R^4}F\phi^2\,dx
\leq C\Bigg[
&\int_{\mathbb R^4}|\Delta\phi|^2\,dx
+\left|\int_{\mathbb R^4}F\phi\,dx\right|^2+\sum_{k=0}^{4}
\left|\int_{\mathbb R^4}FZ_k\phi\,dx\right|^2
\Bigg].
\end{aligned}
\end{equation}
The same estimate holds for every translated and dilated bubble, with
the same constant after the natural change of variables.
\end{lemma}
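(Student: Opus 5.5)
The plan is to pass to $\mathbb S^4$ by stereographic projection and read the estimate off as a weighted Poincaré inequality for the conformal Paneitz operator, using the spectral structure recorded in Theorem \ref{thm:limit-input}.

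\textbf{Step 1: transfer to the sphere.} Let $\pi:\mathbb S^4\to\mathbb R^4$ be the inverse stereographic map, and set $\tilde\phi=\phi\circ\pi$. Up to a positive constant, $F\,dx$ is the round volume form $dV_{g_0}$, because
\[
F(x)=\frac{48(8-\alpha)}{(1+|x|^2)^4}
\]
is a constant multiple of the conformal factor $(2/(1+|x|^2))^4$. Hence
\[
\int_{\mathbb R^4}F\phi^2\,dx=c\int_{\mathbb S^4}\tilde\phi^2\,dV_{g_0},
\qquad
\int_{\mathbb R^4}F\phi\,dx=c\int_{\mathbb S^4}\tilde\phi\,dV_{g_0},
\]
and each $Z_k$ becomes, up to normalization, one of the first-order spherical harmonics $x_1,\dots,x_5$ (the dilation mode $Z_0$ corresponds to the height coordinate). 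Conformal covariance of the Paneitz operator in dimension four gives
\[
\int_{\mathbb R^4}|\Delta\phi|^2\,dx=\int_{\mathbb S^4}\tilde\phi\,P_{g_0}\tilde\phi\,dV_{g_0},
\qquad
P_{g_0}=\Delta_{g_0}^2-2\Delta_{g_0}.
\]
Since $\phi\in\mathcal E_\alpha$, we know $\tilde\phi\in L^2(\mathbb S^4)$ and $\Delta\phi\in L^2$. I would first justify these identities for $\phi\in C_c^\infty(\mathbb R^4)+\mathbb R$, which is dense in $\mathcal E_\alpha$, and then pass to the limit. The density statement is the step I expect to be the main technical obstacle. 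I would handle it by a logarithmic cutoff $\chi(\log|x|/\log R)$, whose cost in $\|\Delta(\cdot)\|_{L^2}$ tends to zero in dimension four. The point of this cutoff is that it removes the possible $H^2$-capacity issue at the north pole.

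\textbf{Step 2: spectral decomposition on the sphere.} On spherical harmonics of degree $k$, $P_{g_0}$ acts by $\lambda_k=k(k+3)\bigl(k(k+3)+2\bigr)$. This gives $\lambda_0=0$, $\lambda_1=24$, and $\lambda_k\ge\lambda_2=140$ for $k\ge2$. Decompose $\tilde\phi=a_0+\sum_{j=1}^5a_jx_j+\tilde\phi_{\ge2}$ in $L^2(\mathbb S^4)$. Then
\[
\|\tilde\phi_{\ge2}\|_{L^2}^2\le\lambda_2^{-1}\int\tilde\phi P_{g_0}\tilde\phi=\lambda_2^{-1}\int_{\mathbb R^4}|\Delta\phi|^2 .
\]
The coefficient $a_0$ is a constant multiple of $\int F\phi$. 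The coefficients $a_j$ are controlled by $\int FZ_k\phi$ through an invertible $5\times5$ Gram-type matrix, namely the change of basis between the $Z_k\circ\pi$ and the $x_j$. Summing these three contributions gives \eqref{eq:weighted-low-mode-control}. In fact the degree-one part could alternatively be absorbed by $\lambda_1>0$, so the $Z_k$ terms are not even needed. Keeping them costs nothing and matches the form in which the estimate is used later.

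\textbf{Step 3: invariance under translation and dilation.} For $\phi_{\mu,\xi}(x)=\phi(\mu(x-\xi))$, every term in \eqref{eq:weighted-low-mode-control} is invariant once $F$ is replaced by $F_{\mu,\xi}$ and $Z_k$ by $Z_k(\mu(\cdot-\xi))$. This follows from $F_{\mu,\xi}(x)\,dx=F(z)\,dz$ with $z=\mu(x-\xi)$ and from the dilation invariance of $\int|\Delta\cdot|^2$ in dimension four. Therefore the same constant $C$ works for every translated and dilated bubble.
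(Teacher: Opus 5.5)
Your overall strategy is the paper's: stereographic projection, moments for the degree-$0$ and degree-$1$ parts, and the gap of $P_{g_0}=(-\Delta_{g_0})(-\Delta_{g_0}+2)$ on degrees $\ge2$. But the step everything rests on is false. You identify $\int_{\mathbb R^4}|\Delta\phi|^2\,dx$ with the spherical Paneitz energy on all of $\mathcal E_\alpha$, via density of $C_c^\infty(\mathbb R^4)+\mathbb R$.

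\textbf{Where it breaks.} The space $\mathcal E_\alpha$ contains the linear functions $x_1,\dots,x_4$. They are harmonic, and $\int_{\mathbb R^4}Fx_\ell^2\,dx<\infty$ because $F\sim|x|^{-8}$.
\begin{itemize}
\item In the chart $y=x/|x|^2$ at the pole, $x_\ell=y_\ell/|y|^2$. This is in $L^2$ but not even in $H^1$ near $y=0$. So $\tilde x_\ell\notin H^2(\mathbb S^4)$, and $\tilde x_\ell$ has a nonzero component of degree $\ge2$, while $\int|\Delta x_\ell|^2=0$.
\item Hence your bound $\|\tilde\phi_{\ge2}\|_{L^2}^2\le\lambda_2^{-1}\int|\Delta\phi|^2$ is false for $\phi=x_\ell$.
\item The density claim also fails. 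If $\phi_n\in C_c^\infty+\mathbb R$ converged to $x_1$ in $\mathcal E_\alpha$, then $\tilde\phi_n\to\tilde x_1$ in $L^2(\mathbb S^4)$ with Paneitz energies tending to $0$. Closedness of the Paneitz form would then force $\tilde x_1$ to be constant.
\item Concretely, with $\eta_R=g(\log|x|)$ one has $\Delta(\eta_Rx_1)=x_1(g''+4g')/|x|^2$, with $g'\sim(\log R)^{-1}$ on $R<|x|<R^2$. Its squared $L^2$ norm blows up like a power of $R$. The logarithmic cutoff only handles sub-logarithmic growth.
\item Your side remark that the $Z_k$ terms are unnecessary is wrong for the same reason. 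For $\phi=x_1$ one has $\int|\Delta\phi|^2=\int F\phi=0$ and $\int F\phi^2>0$. Only $\int FZ_1\phi=(8-\alpha)\int Fx_1^2(1+|x|^2)^{-1}\,dx>0$ detects it.
\end{itemize}

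\textbf{The missing step.} You need a removable-singularity step at the pole: every $\phi\in\mathcal E_\alpha$ splits as $\phi=\psi+b\cdot x$ with $\tilde\psi\in H^2(\mathbb S^4)$. One way to get it:
\begin{itemize}
\item Solve $\Delta\psi=\Delta\phi$ with $D^2\psi\in L^2$ and $\nabla\psi\in L^4$.
\item The harmonic difference $\phi-\psi$ is then affine by the weighted bound, after adjusting constants.
\item The four-dimensional Hardy inequality $\int|x|^{-2}|\nabla\psi|^2\le\int|D^2\psi|^2$ gives $\psi(y/|y|^2)\in H^2$ near $y=0$.
\end{itemize}
On $\psi$ your density and spectral argument are valid, since points have zero $H^2$-capacity in dimension four. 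The linear coefficients $b_\ell$ are then recovered from the moments. For $1\le k,\ell\le4$ one has $\int FZ_kx_\ell\,dx=\kappa\,\delta_{k\ell}$ with $\kappa>0$, while $\int Fx_\ell=\int FZ_0x_\ell=0$.

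The paper's brief proof also passes over this point. Its only uses, in Lemma~\ref{lem:quadratic-localization} and Proposition~\ref{prop:normal-inertia}, are for compactly supported rescaled cutoffs. There $\tilde\phi\in H^2(\mathbb S^4)$ is automatic, and your argument goes through as written.

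Minor slip: $\lambda_k=k(k+1)(k+2)(k+3)$, so $\lambda_2=120$, not $140$.
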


\begin{proof}
Use stereographic projection.  The weighted integral on the left is
equivalent to the $L^2(\mathbb S^4)$ norm of the spherical
representative.  Decompose this representative into
$\mathcal H_0\oplus\mathcal H_1$ and its orthogonal complement.  The
coefficients of the $\mathcal H_0$ component and of the five
$\mathcal H_1$ components are equivalent to the six moments on the
right-hand side of \eqref{eq:weighted-low-mode-control}.  On the
orthogonal complement, the Paneitz energy controls the spherical
$L^2$ norm by the spectral gap in Theorem
\ref{thm:limit-input}.  Pulling the estimate back to $\mathbb R^4$
gives \eqref{eq:weighted-low-mode-control}.  Translation and
dilation preserve the conformal decomposition.
\end{proof}

We also use the following integral identities. They follow from scaling and symmetry.

\begin{lemma}\label{lem:bubble-integrals}
With $F=F_{1,0}$ as above,
\begin{equation}\label{eq:moment-zero}
\int_{\mathbb R^{4}}z_{k}F(z)\,dz=0,
\qquad 1\leq k\leq4,
\end{equation}
\begin{equation*}
\int_{\mathbb R^{4}}|z|^{2}F(z)\,dz<+\infty,
\end{equation*}
and
\begin{equation*}
B_{\alpha}:=\int_{\mathbb R^{4}}F(z)U(z)\,dz
\end{equation*}
 is finite. Moreover,
\begin{equation}\label{eq:self-energy-scaling}
\int_{\mathbb R^{4}}F_{\mu,\xi}(x)U_{\mu,\xi}(x)\,dx
=B_{\alpha}+b_{\alpha}M_{\alpha}\log\mu.
\end{equation}
\end{lemma}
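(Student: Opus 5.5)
The plan is to treat the four assertions separately, using only the explicit form of $F$ and the substitution $x=\xi+z/\mu$.

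\emph{Vanishing first moments.} $F(z)=48(8-\alpha)(1+|z|^2)^{-4}$ is radial, so $z_kF(z)$ is odd under $z_k\mapsto -z_k$. Integrability follows from $|z|F(z)\le C(1+|z|)^{-7}$ in dimension four. Hence \eqref{eq:moment-zero} holds.

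\emph{Finiteness of the second moment and of $B_\alpha$.} The second moment is finite since $|z|^2F(z)\le C(1+|z|)^{-6}$, and $\int_{\mathbb R^4}(1+|z|)^{-6}\,dz<\infty$ because $6>4$. For $B_\alpha$, note that $|U(z)|\le C(1+\log(1+|z|^2))$, so $|FU|\le C(1+|z|)^{-8}\log(2+|z|)$, which is integrable. One could compute $B_\alpha$ in closed form in polar coordinates, but finiteness is all that is claimed.

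\emph{Scaling identity.} Substitute $x=\xi+z/\mu$, so $dx=\mu^{-4}dz$. Directly from the definitions,
\[
F_{\mu,\xi}(\xi+z/\mu)=\mu^4F(z),
\qquad
U_{\mu,\xi}(\xi+z/\mu)=b_\alpha\log\frac{C_\alpha\mu}{1+|z|^2}=U(z)+b_\alpha\log\mu .
\]
Therefore
\[
\int_{\mathbb R^4}F_{\mu,\xi}U_{\mu,\xi}\,dx=\int_{\mathbb R^4}F(z)\bigl(U(z)+b_\alpha\log\mu\bigr)\,dz
=B_\alpha+b_\alpha\log\mu\int_{\mathbb R^4}F\,dz .
\]
The last integral is $M_\alpha$ by \eqref{eq:bubble-mass} with $\mu=1$, $\xi=0$. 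This gives \eqref{eq:self-energy-scaling}.

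If one wants to check \eqref{eq:bubble-mass} independently, use $|\mathbb S^3|=2\pi^2$ and $\int_0^\infty r^3(1+r^2)^{-4}\,dr=\tfrac12 B(2,2)=\tfrac1{12}$. Then $48(8-\alpha)\cdot2\pi^2\cdot\tfrac1{12}=8\pi^2(8-\alpha)=M_\alpha$.

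None of these steps is hard. The only point needing care is the logarithmic growth of $U$ in the integrability of $FU$, which the polynomial decay of $F$ absorbs.
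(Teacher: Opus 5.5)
Your proposal is correct and follows the same route as the paper. Both arguments use the oddness of $z_kF$ for the vanishing first moments, the $|z|^{-8}$ decay of $F$ against the logarithmic growth of $U$ for integrability, and the substitution $z=\mu(x-\xi)$ together with the mass identity for the scaling formula. Your independent beta-integral check of $\int F=M_\alpha$ is correct and goes slightly beyond what the paper writes out.
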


\begin{proof}
The density $F$ is radial. This gives \eqref{eq:moment-zero}. It decays like $|z|^{-8}$. Hence the second moment is finite in dimension four. The function $U$ grows logarithmically. Thus $FU$ is integrable. Finally, put $z=\mu(x-\xi)$. Then
\[
F_{\mu,\xi}(x)\,dx=F(z)\,dz
\]
and
\[
U_{\mu,\xi}(x)=U(z)+b_{\alpha}\log\mu.
\]
Equation \eqref{eq:self-energy-scaling} follows from \eqref{eq:bubble-mass}.
\end{proof}

\subsection{Green projection}

For $\xi\in\Omega$ and $\mu>0$, define the Navier projection by
\begin{equation}\label{eq:projection-definition}
\begin{cases}
\Delta^{2}P U_{\mu,\xi}=F_{\mu,\xi}
&\text{in }\Omega,\\
P U_{\mu,\xi}=\Delta P U_{\mu,\xi}=0
&\text{on }\partial\Omega.
\end{cases}
\end{equation}
Thus
\begin{equation}\label{eq:projection-Green}
P U_{\mu,\xi}(x)
=\int_{\Omega}G(x,y)F_{\mu,\xi}(y)\,dy.
\end{equation}
We work with separated configurations. For $\eta>0$, set
\begin{equation*}
\mathcal C_{m,\eta}(\Omega)
:=\left\{\boldsymbol\xi\in\mathcal C_{m}(\Omega):
\operatorname{dist}(\xi_{i},\partial\Omega)\geq\eta,
\ |\xi_{i}-\xi_{j}|\geq\eta
\right\}.
\end{equation*}

We now expand the Navier projection of one entire bubble.

\begin{lemma}\label{lem:projection-expansion}
Fix $\eta>0$. Uniformly for $\xi$ with $\operatorname{dist}(\xi,\partial\Omega)\geq\eta$ and $\mu\to+\infty$,
\begin{equation}\label{eq:projection-expansion}
P U_{\mu,\xi}(x)
=U_{\mu,\xi}(x)
+b_{\alpha}\log\mu-b_{\alpha}\log C_{\alpha}
+M_{\alpha}H(x,\xi)
+R_{\mu,\xi}(x),
\end{equation}
where
\begin{equation}\label{eq:projection-remainder}
\|R_{\mu,\xi}\|_{C^{2}(\overline\Omega)}
\leq C\mu^{-2}.
\end{equation}
The same estimate holds after one derivative with respect to $\xi$. After one derivative with respect to $\log\mu$, the right-hand side is $O(\mu^{-2})$ in $C^{2}$.
\end{lemma}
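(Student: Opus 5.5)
The idea is to write down the remainder explicitly and show that it solves a homogeneous Navier problem whose boundary data are $O(\mu^{-2})$ in smooth norms; elliptic regularity then gives the claim. Write $r=|x-\xi|$. From the definition of $U_{\mu,\xi}$,
\[
U_{\mu,\xi}(x)+b_\alpha\log\mu-b_\alpha\log C_\alpha=-b_\alpha\log\bigl(\mu^{-2}+r^2\bigr).
\]
So \eqref{eq:projection-expansion} amounts to defining
\[
R_{\mu,\xi}(x):=PU_{\mu,\xi}(x)+b_\alpha\log\bigl(\mu^{-2}+r^2\bigr)-M_\alpha H(x,\xi),
\]
and proving \eqref{eq:projection-remainder} for it.

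\textbf{Interior equation.} The function $-b_\alpha\log(\mu^{-2}+r^2)$ differs from $U_{\mu,\xi}$ by a constant, so its bilaplacian is $F_{\mu,\xi}$. The function $H(\cdot,\xi)$ is biharmonic in $\Omega$, because $\Delta^2G(\cdot,\xi)=\delta_\xi=\Delta^2\bigl(-\frac1{8\pi^2}\log|\cdot-\xi|\bigr)$. Together with \eqref{eq:projection-definition} this gives $\Delta^2R_{\mu,\xi}=0$ in $\Omega$.

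\textbf{Boundary values of $R$.} On $\partial\Omega$ we have $G=0$, hence $H(x,\xi)=\frac1{8\pi^2}\log r$. Since $M_\alpha/(8\pi^2)=8-\alpha=2b_\alpha$, this gives $M_\alpha H=b_\alpha\log r^2$ there. Since also $PU_{\mu,\xi}=0$ on $\partial\Omega$,
\[
R_{\mu,\xi}\big|_{\partial\Omega}=b_\alpha\log\bigl(1+\mu^{-2}r^{-2}\bigr).
\]

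\textbf{Boundary values of $\Delta R$.} On $\partial\Omega$ we have $\Delta G=0$, so
\[
M_\alpha\Delta H=\frac{M_\alpha}{8\pi^2}\Delta\log r=\frac{4b_\alpha}{r^2}.
\]
With $s=\mu^{-2}+r^2$, a direct computation in $\mathbb R^4$ gives
\[
\Delta\bigl(b_\alpha\log s\bigr)=b_\alpha\Bigl(\frac8s-\frac{4r^2}{s^2}\Bigr).
\]
Since $\Delta PU_{\mu,\xi}=0$ on $\partial\Omega$,
\[
\Delta R_{\mu,\xi}\big|_{\partial\Omega}
=b_\alpha\Bigl(\frac8s-\frac{4r^2}{s^2}\Bigr)-\frac{4b_\alpha}{r^2}.
\]
At $\mu^{-2}=0$ this vanishes identically.

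\textbf{Size of the boundary data.} On $\partial\Omega$ we have $r\ge\eta$. Both boundary expressions are therefore smooth functions of $(x,\xi,\mu^{-2})$ in a neighbourhood of $\mu^{-2}=0$, and both vanish at $\mu^{-2}=0$. By Taylor expansion in $\mu^{-2}$, their $C^{k}(\partial\Omega)$ norms are $O(\mu^{-2})$ for any fixed $k$, uniformly for $\operatorname{dist}(\xi,\partial\Omega)\ge\eta$.

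\textbf{Elliptic estimate.} Split the Navier problem as the second-order system $\Delta R=w$, $\Delta w=0$, with $w|_{\partial\Omega}=\Delta R|_{\partial\Omega}$ and $R|_{\partial\Omega}$ as above. Schauder estimates for the Dirichlet Laplacian on the $C^6$ domain give, for some $0<\beta<1$,
\[
\|w\|_{C^{0,\beta}(\overline\Omega)}\le C\mu^{-2},
\qquad
\|R\|_{C^{2,\beta}(\overline\Omega)}\le C\bigl(\|w\|_{C^{0,\beta}}+\|R\|_{C^{2,\beta}(\partial\Omega)}\bigr)\le C\mu^{-2}.
\]
This proves \eqref{eq:projection-remainder}.

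\textbf{Parameter derivatives.} Differentiate the representation in $\xi$ or in $\log\mu$. The functions $\partial_\xi R$ and $\partial_{\log\mu}R$ are again biharmonic with Navier data. For $\partial_\xi R$ this uses that $\partial_\xi H(\cdot,\xi)$ is biharmonic and smooth up to the boundary for $\xi$ away from $\partial\Omega$, and that $\partial_\xi PU_{\mu,\xi}$ has zero Navier data. The data are the $\xi$- and $\log\mu$-derivatives of the two boundary expressions. These are still smooth in $(x,\xi,\mu^{-2})$ and vanish at $\mu^{-2}=0$. For $\log\mu$, note that $\partial_{\log\mu}=-2\mu^{-2}\partial_{\mu^{-2}}$, which produces an explicit factor $\mu^{-2}$. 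The same Schauder argument then gives $O(\mu^{-2})$ in $C^2$.

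The only real point needing care is uniformity of the elliptic constants and of the Taylor remainders in $\xi$. This follows from $r\ge\eta$ on $\partial\Omega$ and the $C^6$ regularity of $\partial\Omega$. I do not expect a genuine obstacle beyond the algebraic check that the $\mu^{-2}=0$ limits of both boundary data cancel exactly. That check is precisely the identity $M_\alpha/(8\pi^2)=2b_\alpha$.
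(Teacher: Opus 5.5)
Your argument is correct, and it takes a different route from the paper's.

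\textbf{The paper's route.} The paper subtracts the whole-space logarithmic representation of $U_{\mu,\xi}$ from the Green representation \eqref{eq:projection-Green}. This gives $R_{\mu,\xi}$ as the explicit integral $\int_\Omega[H(x,y)-H(x,\xi)]F_{\mu,\xi}(y)\,dy$ plus two exterior tails of size $O(\mu^{-4}\log\mu)$. The $O(\mu^{-2})$ bound then comes from three ingredients:
\begin{itemize}
\item a second-order Taylor expansion of $H(x,\cdot)$ at $\xi$;
\item the vanishing first moment \eqref{eq:moment-zero};
\item the finite second moment of $F$.
\end{itemize}
The parameter derivatives are obtained by differentiating under the integral after rescaling.

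\textbf{Your route.} You observe that $R_{\mu,\xi}$ is exactly biharmonic in $\Omega$ with explicit Navier data. These data vanish at $\mu^{-2}=0$ precisely because $M_\alpha=16\pi^2b_\alpha$, and Schauder theory finishes the proof. Compared with the paper, this buys several things.
\begin{itemize}
\item You do not need the whole-space logarithmic representation of $U_{\mu,\xi}$, a Liouville-type fact the paper invokes without proof.
\item You do not need uniform bounds on $D_y^2H(x,y)$. These fail when $x$ and $y$ both approach $\partial\Omega$; for instance, $H(x,y)=\frac1{8\pi^2}\log|x-y|$ for $x\in\partial\Omega$. So the paper's Taylor step tacitly requires splitting off the region $|y-\xi|\geq\eta/2$.
\item The $\xi$- and $\log\mu$-derivatives follow from the same explicit data, with no moving domains of integration.
\item The data are sharper than needed. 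On $\partial\Omega$ one has $R_{\mu,\xi}=b_\alpha\mu^{-2}|x-\xi|^{-2}+O(\mu^{-4})$ and $\Delta R_{\mu,\xi}=-4b_\alpha\mu^{-4}|x-\xi|^{-2}(|x-\xi|^2+\mu^{-2})^{-2}$.
\end{itemize}

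\textbf{What the paper's route buys.} It uses the same moment-cancellation mechanism as Lemma \ref{lem:pair-interaction} and the energy expansion. It keeps an explicit integral formula for the remainder. It does not rely on $F_{\mu,\xi}$ being the exact bilaplacian of an explicit global function.
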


\begin{proof}
By the whole-space logarithmic representation,
\begin{equation}\label{eq:bubble-log-representation}
U_{\mu,\xi}(x)
=-\frac1{8\pi^2}\int_{\mathbb R^4}
\log|x-y|F_{\mu,\xi}(y)\,dy
+b_\alpha\log C_\alpha-b_\alpha\log\mu.
\end{equation}
Subtract \eqref{eq:bubble-log-representation} from
\eqref{eq:projection-Green}. Since
$G(x,y)=-(8\pi^2)^{-1}\log|x-y|+H(x,y)$, we obtain
\begin{equation}\label{eq:projection-remainder-formula}
\begin{aligned}
R_{\mu,\xi}(x)
={}&\int_\Omega[H(x,y)-H(x,\xi)]F_{\mu,\xi}(y)\,dy+\frac1{8\pi^2}\int_{\mathbb R^4\setminus\Omega}
\log|x-y|F_{\mu,\xi}(y)\,dy\\
&-H(x,\xi)\int_{\mathbb R^4\setminus\Omega}
F_{\mu,\xi}(y)\,dy.
\end{aligned}
\end{equation}
The center is at distance at least $\eta$ from the boundary. Hence,
for $y\notin\Omega$,
$|y-\xi|\geq\eta$. The explicit source gives
\begin{equation}\label{eq:projection-tail}
\int_{\mathbb R^4\setminus\Omega}
(1+|\log|x-y||)F_{\mu,\xi}(y)\,dy
\leq C\mu^{-4}\log(2+\mu).
\end{equation}
The estimate holds after at most two $x$-derivatives.

For the first term in \eqref{eq:projection-remainder-formula}, set
$y=\xi+z/\mu$. Then
$F_{\mu,\xi}(y)\,dy=F(z)\,dz$. Taylor's formula gives
\begin{equation}\label{eq:H-Taylor}
H(x,\xi+z/\mu)-H(x,\xi)
=\mu^{-1}\nabla_yH(x,\xi)\cdot z
+\frac1{2\mu^2}z^TD_y^2H(x,\xi+\theta z/\mu)z.
\end{equation}
The first integral vanishes by \eqref{eq:moment-zero}. The second is
bounded by $C\mu^{-2}\int|z|^2F(z)\,dz$. Thus
\eqref{eq:projection-remainder} follows. The same computation is valid
for $D_x^jR$, $0\leq j\leq2$.

Differentiate \eqref{eq:projection-remainder-formula} with respect to
$\xi$. After the change of variables, the derivative falls on the
smooth factor in \eqref{eq:H-Taylor}; the odd first moment still
vanishes. Hence the bound remains $O(\mu^{-2})$. A derivative with
respect to $\log\mu$ differentiates $z/\mu$ and the tail. The resulting
integrands are bounded by $C\mu^{-2}|z|^2F(z)$ and by the right-hand
side of \eqref{eq:projection-tail}. This proves the parameter
estimates.
\end{proof}

The projection expansion immediately gives the interaction between separated bubbles.

\begin{lemma}\label{lem:pair-interaction}
Let $\boldsymbol\xi\in\mathcal C_{m,\eta}(\Omega)$ and let the scales be comparable and tend to infinity. If $i\neq j$, then
\begin{equation}\label{eq:pair-projection}
P U_{\mu_{j},\xi_{j}}(x)
=M_{\alpha}G(x,\xi_{j})+O(\mu_{j}^{-2})
\end{equation}
uniformly for $x\in B_{\eta/4}(\xi_{i})$. Moreover,
\begin{equation}\label{eq:quadratic-cross}
\int_{\Omega}
\Delta P U_{\mu_{i},\xi_{i}}
\Delta P U_{\mu_{j},\xi_{j}}\,dx
=M_{\alpha}^{2}G(\xi_{i},\xi_{j})+O(\mu_{i}^{-2}+\mu_{j}^{-2}).
\end{equation}
\end{lemma}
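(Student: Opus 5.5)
For \eqref{eq:pair-projection} I will start from Lemma \ref{lem:projection-expansion} applied to the $j$-th bubble,
\[
PU_{\mu_j,\xi_j}(x)=U_{\mu_j,\xi_j}(x)+b_\alpha\log\mu_j-b_\alpha\log C_\alpha+M_\alpha H(x,\xi_j)+R_{\mu_j,\xi_j}(x),
\]
with $\|R_{\mu_j,\xi_j}\|_{C^2}\le C\mu_j^{-2}$. For $x\in B_{\eta/4}(\xi_i)$ we have $|x-\xi_j|\ge 3\eta/4$, and then
\[
U_{\mu_j,\xi_j}(x)+b_\alpha\log\mu_j-b_\alpha\log C_\alpha
=-b_\alpha\log\bigl(|x-\xi_j|^2+\mu_j^{-2}\bigr).
\]
This equals $-2b_\alpha\log|x-\xi_j|+O(\mu_j^{-2})$, uniformly in $x$. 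Since $2b_\alpha=8-\alpha=M_\alpha/(8\pi^2)$, the main term is exactly $-\frac{M_\alpha}{8\pi^2}\log|x-\xi_j|$. Adding $M_\alpha H(x,\xi_j)$ gives $M_\alpha G(x,\xi_j)+O(\mu_j^{-2})$, and the same bound holds in $C^2$ on this ball, by direct differentiation and by \eqref{eq:projection-remainder}.

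For \eqref{eq:quadratic-cross} I will integrate by parts using the Navier conditions. Since $PU_{\mu_j,\xi_j}=0$ and $\Delta PU_{\mu_i,\xi_i}=0$ on $\partial\Omega$, both boundary terms vanish, and
\[
\int_\Omega\Delta PU_{\mu_i,\xi_i}\,\Delta PU_{\mu_j,\xi_j}\,dx=\int_\Omega F_{\mu_i,\xi_i}\,PU_{\mu_j,\xi_j}\,dx .
\]
I split this integral over $B_{\eta/4}(\xi_i)$ and its complement.

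On the complement, $F_{\mu_i,\xi_i}\le C\mu_i^{-4}$. I also need a uniform bound on $PU_{\mu_j,\xi_j}$, or at least an $L^1(\Omega)$ bound. This follows from \eqref{eq:projection-Green}, since $G$ has only a logarithmic singularity and $F_{\mu_j,\xi_j}$ has mass $M_\alpha$. Alternatively, $PU_{\mu_j,\xi_j}\le C\log\mu_j$ from the expansion, and the $L^1$ norm is bounded. Either way the outer part is $O(\mu_i^{-4}\log\mu_j)$, which is $O(\mu_i^{-2})$ because the scales are comparable.

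On the ball I insert \eqref{eq:pair-projection}. The error term contributes $O(\mu_j^{-2})\int F_{\mu_i,\xi_i}\le CM_\alpha\mu_j^{-2}$. For the main term I write
\[
M_\alpha\int_{B_{\eta/4}(\xi_i)}F_{\mu_i,\xi_i}(x)\,G(x,\xi_j)\,dx
\]
and Taylor expand $G(\cdot,\xi_j)$ at $\xi_i$, exactly as in \eqref{eq:H-Taylor}. After the change of variables $x=\xi_i+z/\mu_i$:
\begin{itemize}
\item the zeroth-order term gives $G(\xi_i,\xi_j)\int_{B_{\mu_i\eta/4}}F$, which is $G(\xi_i,\xi_j)\bigl(M_\alpha-O(\mu_i^{-4})\bigr)$ by \eqref{eq:bubble-mass} and the $|z|^{-8}$ decay;
\item the first-order term vanishes by radial symmetry, i.e.\ \eqref{eq:moment-zero} on a ball;
\item the second-order term is bounded by $C\mu_i^{-2}\int|z|^2F$, which is finite by Lemma \ref{lem:bubble-integrals}.
\end{itemize}
Collecting terms gives $M_\alpha^2G(\xi_i,\xi_j)+O(\mu_i^{-2}+\mu_j^{-2})$.

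The only point requiring any care is the uniform control of $PU_{\mu_j,\xi_j}$ away from $\xi_i$, in particular near $\xi_j$, where it is of size $\log\mu_j$. The weight $\mu_i^{-4}$ absorbs this easily, so I expect no real obstacle. Uniformity in $\boldsymbol\xi\in\mathcal C_{m,\eta}(\Omega)$ follows from the uniform $C^2$ bounds on $G(\cdot,\xi_j)$ over $B_{\eta/4}(\xi_i)$ for separated configurations, together with the uniformity already stated in Lemma \ref{lem:projection-expansion}.
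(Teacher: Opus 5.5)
Your proof is correct. For \eqref{eq:quadratic-cross} it coincides with the paper's argument: integrate by parts twice using the Navier conditions, split at $B_{\eta/4}(\xi_i)$, and Taylor expand $G(\cdot,\xi_j)$ at $\xi_i$, where the first moment vanishes. Your treatment of the outer region fills in a step the paper only states as ``mass $O(\mu_i^{-4})$''. You combine $F_{\mu_i,\xi_i}\le C\mu_i^{-4}$ there with an $L^1$ or $\log\mu_j$ bound on $PU_{\mu_j,\xi_j}$, which is needed because $PU_{\mu_j,\xi_j}$ is of size $\log\mu_j$ near $\xi_j$.

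For \eqref{eq:pair-projection} your route differs. You obtain it as an algebraic corollary of Lemma~\ref{lem:projection-expansion}. Away from the core you use the closed form $U_{\mu_j,\xi_j}+b_\alpha\log\mu_j-b_\alpha\log C_\alpha=-b_\alpha\log(|x-\xi_j|^2+\mu_j^{-2})$ together with $2b_\alpha=M_\alpha/(8\pi^2)$. The paper instead returns to the Green representation \eqref{eq:projection-Green} and Taylor expands $G(x,\cdot)$ at $y=\xi_j$. It kills the linear term by \eqref{eq:moment-zero}, bounds the quadratic term by the second moment of $F$, and discards the exterior mass.

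Your version is shorter and reuses a remainder bound already proved. It also makes explicit why the two expansions agree: the logarithmic tail of the bubble plus $M_\alpha H$ is exactly $M_\alpha G$. It inherits the derivative versions from the derivative statements in Lemma~\ref{lem:projection-expansion} plus explicit differentiation of the logarithm. The paper's version is self-contained and does not use the explicit profile of $U$. However, it implicitly needs the $y$-integral split at $B_{\eta/4}(\xi_j)$, since $G(x,\cdot)$ is smooth only away from $x$; your route gets this for free from the earlier lemma.
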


\begin{proof}
Let $x\in B_{\eta/4}(\xi_i)$ and $i\neq j$. The separation condition
implies $|x-y|\geq\eta/2$ for
$y\in B_{\eta/4}(\xi_j)$. From
\eqref{eq:projection-Green},
\[
PU_{\mu_j,\xi_j}(x)-M_\alpha G(x,\xi_j)
=\int_\Omega[G(x,y)-G(x,\xi_j)]F_{\mu_j,\xi_j}(y)\,dy
-G(x,\xi_j)\int_{\mathbb R^4\setminus\Omega}F_{\mu_j,\xi_j}.
\]
Expand $G(x,y)$ at $y=\xi_j$. The linear term integrates to zero by
\eqref{eq:moment-zero}. The quadratic remainder is bounded by
$C\mu_j^{-2}\int|z|^2F(z)\,dz$. The exterior mass is
$O(\mu_j^{-4})$. This proves \eqref{eq:pair-projection}. The argument
is unchanged after two $x$-derivatives and one scaled parameter
derivative.

For the energy interaction, use the Navier boundary conditions twice:
\begin{equation*}
\int_\Omega\Delta PU_{\mu_i,\xi_i}
\Delta PU_{\mu_j,\xi_j}\,dx
=\int_\Omega F_{\mu_i,\xi_i}(x)
PU_{\mu_j,\xi_j}(x)\,dx.
\end{equation*}
Split the last integral into $B_{\eta/4}(\xi_i)$ and its complement.
On the ball, insert \eqref{eq:pair-projection} and expand
$G(x,\xi_j)$ at $x=\xi_i$. The first moment vanishes. The error is
$O(\mu_i^{-2}+\mu_j^{-2})$. Outside the ball,
$F_{\mu_i,\xi_i}$ has mass $O(\mu_i^{-4})$. Therefore
\eqref{eq:quadratic-cross} follows.
\end{proof}

\subsection{Construction of the approximate solution and residual estimates}

\subsubsection{Choice of scales}

Fix $\eta>0$ and $\boldsymbol\xi\in\mathcal C_{m,\eta}(\Omega)$. Put
\begin{equation*}
S_i(\boldsymbol\xi)
:=H(\xi_i,\xi_i)+\sum_{j\ne i}G(\xi_i,\xi_j).
\end{equation*}
Define
\begin{equation}\label{eq:leading-scale}
\overline\mu_i(\varepsilon,\boldsymbol\xi)
:=\frac{C_\alpha}{\varepsilon}
K(\xi_i)^{-\frac{2}{8-\alpha}}
\exp\left\{-\frac{2M_\alpha}{8-\alpha}S_i(\boldsymbol\xi)\right\}.
\end{equation}
Let
\begin{equation}\label{eq:modulated-scale}
\mu_i=\overline\mu_i e^{t_i},
\qquad |\boldsymbol t|\leq t_0,
\end{equation}
where $t_0>0$ is fixed and small. Uniformly in the separated configuration set,
\begin{equation*}
c\varepsilon^{-1}\leq\mu_i\leq C\varepsilon^{-1}.
\end{equation*}
Set
\begin{equation*}
W_{\boldsymbol t,\boldsymbol\xi}
:=\sum_{i=1}^{m}PU_{\mu_i,\xi_i}.
\end{equation*}
The following expansion identifies the matching constant in each core.

\begin{lemma}
For $x\in B_{\eta/4}(\xi_i)$,
\begin{equation}\label{eq:local-W}
\begin{aligned}
W_{\boldsymbol t,\boldsymbol\xi}(x)
={}&U_{\mu_i,\xi_i}(x)+A_i
+M_\alpha\nabla_x\left[H(x,\xi_i)+\sum_{j\ne i}G(x,\xi_j)\right]_{x=\xi_i}\cdot(x-\xi_i)\\
&+O(|x-\xi_i|^2+\varepsilon^2),
\end{aligned}
\end{equation}
uniformly for separated configurations, where
\begin{equation}\label{eq:A-i}
A_i=b_\alpha\log\mu_i-b_\alpha\log C_\alpha+M_\alpha S_i(\boldsymbol\xi).
\end{equation}
Moreover,
\begin{equation}\label{eq:balance}
\varepsilon^{8-\alpha}K(\xi_i)^2e^{2A_i}
=e^{(8-\alpha)t_i}=e^{2b_\alpha t_i}.
\end{equation}
The estimates remain valid after one scaled center derivative and one
logarithmic scale derivative.
\end{lemma}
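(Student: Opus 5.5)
Split $W_{\boldsymbol t,\boldsymbol\xi}(x)=PU_{\mu_i,\xi_i}(x)+\sum_{j\ne i}PU_{\mu_j,\xi_j}(x)$ for $x\in B_{\eta/4}(\xi_i)$ and expand each piece with the lemmas already proved.

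For the $i$-th term, Lemma \ref{lem:projection-expansion} gives
\[
PU_{\mu_i,\xi_i}(x)=U_{\mu_i,\xi_i}(x)+b_\alpha\log\mu_i-b_\alpha\log C_\alpha+M_\alpha H(x,\xi_i)+O(\mu_i^{-2}),
\]
with $\mu_i^{-2}=O(\varepsilon^2)$ because $c\varepsilon^{-1}\le\mu_i\le C\varepsilon^{-1}$. For $j\neq i$, \eqref{eq:pair-projection} gives
\[
PU_{\mu_j,\xi_j}(x)=M_\alpha G(x,\xi_j)+O(\varepsilon^2).
\]
The function
\[
x\mapsto H(x,\xi_i)+\sum_{j\ne i}G(x,\xi_j)
\]
is smooth on $B_{\eta/4}(\xi_i)$. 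Its $C^2$ norms are uniformly bounded on separated configurations, by the stated regularity of $H$ and $G$ away from the diagonal. Taylor expanding it at $x=\xi_i$ to first order gives the value $S_i(\boldsymbol\xi)$, then the gradient term, then a remainder $O(|x-\xi_i|^2)$. Collecting the constants produces exactly $A_i$ in \eqref{eq:A-i}, which proves \eqref{eq:local-W}.

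The balance identity \eqref{eq:balance} is pure algebra. Exponentiate twice \eqref{eq:A-i}:
\[
e^{2A_i}=\mu_i^{8-\alpha}C_\alpha^{-(8-\alpha)}e^{2M_\alpha S_i}.
\]
Then insert $\mu_i=\overline\mu_ie^{t_i}$ with $\overline\mu_i$ from \eqref{eq:leading-scale}, so that
\[
\overline\mu_i^{\,8-\alpha}=C_\alpha^{8-\alpha}\varepsilon^{-(8-\alpha)}K(\xi_i)^{-2}e^{-2M_\alpha S_i}.
\]
All factors cancel except $e^{(8-\alpha)t_i}$, and $8-\alpha=2b_\alpha$.

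For the parameter derivatives, differentiate the decomposition above term by term.
\begin{itemize}
\item \textbf{Logarithmic scale derivative.} A $\partial_{\log\mu_i}$ of $R_{\mu_i,\xi_i}$ is $O(\mu^{-2})$ by the last assertion of Lemma \ref{lem:projection-expansion}. The scale derivatives of the $j\ne i$ projections are $O(\varepsilon^2)$ by the remark after \eqref{eq:pair-projection}.
\item \textbf{Center derivative.} A derivative in $\xi_i$ or $\xi_j$ of the remainders is again $O(\mu^{-2})$ by the same lemmas. The derivative of the Taylor remainder of the smooth function is $O(|x-\xi_i|)$ plus $O(|x-\xi_i|^2)$.
\end{itemize}

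The main obstacle is the meaning of the \emph{scaled} center derivative, $\mu_i^{-1}\partial_{\xi_i}$. After multiplying by $\mu_i^{-1}$, the error terms become $O(\varepsilon|x-\xi_i|+\varepsilon^3)$, which fits within the stated bound. The delicate point is that the $U_{\mu_i,\xi_i}$ and gradient terms must be differentiated exactly rather than absorbed into errors. I would check this by direct computation, using that the uniform $C^3$ bounds on $H$ and $G$ hold on compact subsets of the separated configuration set.
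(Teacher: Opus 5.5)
Your proposal is correct and follows the paper's proof essentially step by step. You expand the $i$-th projection by Lemma~\ref{lem:projection-expansion} and the others by \eqref{eq:pair-projection}, Taylor-expand $H(\cdot,\xi_i)+\sum_{j\ne i}G(\cdot,\xi_j)$ at $\xi_i$, obtain \eqref{eq:balance} by inserting $\mu_i=\overline\mu_i e^{t_i}$ into $2A_i$, and inherit the derivative statements from the parameter remainder bounds of those two lemmas. The one point worth making explicit is that at fixed $\boldsymbol t$ the scales $\mu_j=\overline\mu_j(\varepsilon,\boldsymbol\xi)e^{t_j}$ depend on all centers, so a center derivative of a remainder also produces $\partial_{\log\mu_j}R_{\mu_j,\xi_j}\cdot\partial_{\xi}\log\overline\mu_j$; this is still $O(\varepsilon^2)$ because $\partial_\xi\log\overline\mu_j$ is bounded on separated configurations.
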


\begin{proof}
Apply Lemma \ref{lem:projection-expansion} to the $i$-th bubble:
\[
PU_{\mu_i,\xi_i}(x)
=U_{\mu_i,\xi_i}(x)+b_\alpha\log\mu_i-b_\alpha\log C_\alpha
+M_\alpha H(x,\xi_i)+O(\mu_i^{-2}).
\]
For $j\neq i$, Lemma \ref{lem:pair-interaction} gives
\[
PU_{\mu_j,\xi_j}(x)=M_\alpha G(x,\xi_j)+O(\mu_j^{-2}).
\]
Taylor-expand the regular functions at $x=\xi_i$:
\begin{align*}
H(x,\xi_i)&=H(\xi_i,\xi_i)
+\nabla_xH(\xi_i,\xi_i)\cdot(x-\xi_i)+O(|x-\xi_i|^2),\\
G(x,\xi_j)&=G(\xi_i,\xi_j)
+\nabla_xG(\xi_i,\xi_j)\cdot(x-\xi_i)+O(|x-\xi_i|^2).
\end{align*}
Since $\mu_j^{-2}=O(\varepsilon^2)$, summation gives
\eqref{eq:local-W} and \eqref{eq:A-i}.

From \eqref{eq:leading-scale} and
$\mu_i=\overline\mu_ie^{t_i}$,
\[
(8-\alpha)\log\mu_i
=(8-\alpha)\log C_\alpha-(8-\alpha)\log\varepsilon
-2\log K(\xi_i)-2M_\alpha S_i+(8-\alpha)t_i.
\]
Insert this identity into $2A_i$. Exponentiation gives
\eqref{eq:balance}. Parameter derivatives follow from the
corresponding estimates in Lemmas \ref{lem:projection-expansion} and
\ref{lem:pair-interaction}. The center derivatives are divided by
$\mu_i$, which is the natural core normalization.
\end{proof}

\subsubsection{Weighted norms and parameter derivatives}

Throughout the reduction, $\tau$ is the number fixed in
\eqref{eq:tau-fixed-intro}. Define
\begin{equation}\label{eq:error-weight}
\Theta(x):=\sum_{i=1}^{m}
\frac{\mu_i^4}{(1+\mu_i|x-\xi_i|)^{8-\tau}},
\end{equation}
and
\begin{equation*}
\|h\|_{**}:=\sup_{x\in\Omega}\frac{|h(x)|}{\Theta(x)},
\qquad
\|\phi\|_*:=\|\phi\|_{L^\infty(\Omega)}.
\end{equation*}
For a parameter-dependent function $h$, we also use
\begin{equation*}
\begin{aligned}
\|h\|_{**,1}:={}&\|h\|_{**}
+\sum_{i=1}^{m}\|\partial_{t_i}h\|_{**}
+\sum_{i=1}^{m}\sum_{\ell=1}^{4}
\mu_i^{-1}\|\partial_{\xi_{i,\ell}}h\|_{**}.
\end{aligned}
\end{equation*}
The factor $\mu_i^{-1}$ is part of the definition. A displacement of order $\mu_i^{-1}$ is a displacement of order one in the $i$-th core.

Define
\begin{equation*}
\mathcal N_\varepsilon(v)(x)
:=\varepsilon^{8-\alpha}K(x)e^{v(x)}
\int_\Omega\frac{K(y)e^{v(y)}}{|x-y|^\alpha}\,dy
\end{equation*}
and
\begin{equation*}
R_{\boldsymbol t,\boldsymbol\xi}
:=\Delta^2W_{\boldsymbol t,\boldsymbol\xi}
-\mathcal N_\varepsilon(W_{\boldsymbol t,\boldsymbol\xi}).
\end{equation*}

We next estimate the residual and all parameter derivatives needed in the reduction.

\begin{proposition}\label{prop:residual}
Put
\[
E_{\boldsymbol t,\boldsymbol\xi}
:=R_{\boldsymbol t,\boldsymbol\xi}
-\sum_{i=1}^{m}(1-e^{2b_\alpha t_i})F_{\mu_i,\xi_i}.
\]
Uniformly for $|\boldsymbol t|\leq t_0$ and
$\boldsymbol\xi\in\mathcal C_{m,\eta}(\Omega)$,
\begin{equation}\label{eq:residual-estimate}
\|E_{\boldsymbol t,\boldsymbol\xi}\|_{**}\leq C\varepsilon^\tau,
\end{equation}
\begin{equation}\label{eq:residual-t-derivative}
\|\partial_{t_i}E_{\boldsymbol t,\boldsymbol\xi}\|_{**}
\leq C\varepsilon^\tau,
\end{equation}
and
\begin{equation}\label{eq:residual-xi-derivative}
\mu_i^{-1}\|\partial_{\xi_{i,\ell}}E_{\boldsymbol t,\boldsymbol\xi}\|_{**}
\leq C\varepsilon^\tau.
\end{equation}
The same estimate holds for every second parameter derivative, after
multiplying by one factor $\mu_i^{-1}$ for each derivative with
respect to $\xi_{i,\ell}$. The constants are uniform when two
derivatives act on different peaks. If
$|\boldsymbol t|\leq C\varepsilon^\tau|\log\varepsilon|$, then
\begin{equation}\label{eq:residual-small}
\|R_{\boldsymbol t,\boldsymbol\xi}\|_{**}
\leq C\varepsilon^\tau|\log\varepsilon|.
\end{equation}
\end{proposition}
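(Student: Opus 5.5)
Since $\Delta^2W=\sum_iF_{\mu_i,\xi_i}$, we have
\[
E_{\boldsymbol t,\boldsymbol\xi}=\sum_i e^{2b_\alpha t_i}F_{\mu_i,\xi_i}-\mathcal N_\varepsilon(W).
\]
The plan is to show that $\mathcal N_\varepsilon(W)$ agrees with $\sum_ie^{2b_\alpha t_i}F_{\mu_i,\xi_i}$ up to an error of size $\varepsilon^\tau\Theta$. We split $\Omega$ into the cores $B_{\eta/4}(\xi_i)$ and the exterior region $\Omega\setminus\bigcup_iB_{\eta/4}(\xi_i)$.

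\textbf{Step 1: the local factor $e^{W(x)}$ in a core.} In $B_{\eta/4}(\xi_i)$ we use \eqref{eq:local-W} and write
\[
e^{W(x)}=e^{U_{\mu_i,\xi_i}(x)}e^{A_i}\bigl(1+O(|x-\xi_i|+\varepsilon^2)\bigr).
\]
Then $K(x)=K(\xi_i)(1+O(|x-\xi_i|))$. The linear terms need no cancellation. With $z=\mu_i(x-\xi_i)$ we have $|x-\xi_i|=|z|/\mu_i\le C\varepsilon|z|$. Also $e^{U_{\mu_i,\xi_i}}\sim\mu_i^{b_\alpha}(1+|z|)^{-(8-\alpha)}$.

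\textbf{Step 2: the nonlocal factor.} We split $\int_\Omega K e^W|x-y|^{-\alpha}dy$ into the same-core part and the other cores plus the exterior. For the same-core part, insert the analogous expansion in $y$ and extend the integral to $\mathbb R^4$; the tail costs $O(\mu_i^{-4})$ in relative terms. By \eqref{eq:bubble-nonlocal} and \eqref{eq:balance}, the main term of $\varepsilon^{8-\alpha}K(x)e^{W(x)}\int\cdots$ is exactly $e^{2b_\alpha t_i}F_{\mu_i,\xi_i}(x)$. The error carries an extra factor $O(\varepsilon(1+|z|))$. Taking $\varepsilon(1+|z|)\le\varepsilon^\tau(1+|z|)^{\tau}$ for $|z|\le\mu_i$ (since $\tau<1$), this error is bounded by $\varepsilon^\tau\mu_i^4(1+|z|)^{-8+\tau}\le C\varepsilon^\tau\Theta$.

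The $j$-th core contributes at most about $\mu_j^{-b_\alpha}$ times $e^{A_j}$-mass. Compared with the self-field, this is the ratio $O(\mu_i^{-\alpha})=O(\varepsilon^\alpha)$ quoted in the introduction, and $\varepsilon^\alpha\le\varepsilon^\tau$ because $\tau<\alpha$. The self-field decays like $|x-\xi_i|^{-\alpha}\mu_i^{-b_\alpha+\alpha}$ times constants, whereas the cross field is bounded. I expect this bookkeeping to be the main obstacle: one needs a uniform comparison of self-field and cross-field at every $|z|\le\eta\mu_i$, not only for bounded $z$. The self-field behaves like $\mu_i^{\alpha-b_\alpha}(1+|z|)^{-\alpha}$, so the relative cross contribution is $\varepsilon^\alpha(1+|z|)^{\alpha}$. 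Multiplied by $F\sim(1+|z|)^{-8}$, this gives $\varepsilon^\alpha(1+|z|)^{\alpha-8}\mu_i^4$. For $\alpha\le\tau$ this would fail, but $\tau<\alpha$ forces me to redo the bound as $\varepsilon^\tau(1+|z|)^{\tau-8}\cdot\varepsilon^{\alpha-\tau}(1+|z|)^{\alpha-\tau}\le\varepsilon^\tau(\ldots)$, since $(1+|z|)\le C\varepsilon^{-1}$. This is exactly where $\tau<\alpha$ enters.

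\textbf{Step 3: the exterior region.} Here $W=O(1)$, so $e^W$ is bounded. The nonlocal integral is $O(\mu^{\alpha-b_\alpha}\cdot\mu^{b_\alpha}\ldots)$; more simply, $\mathcal N_\varepsilon(W)=O(\varepsilon^{8-\alpha}\cdot\varepsilon^{-b_\alpha}\varepsilon^{\alpha-b_\alpha})=O(\varepsilon^{\alpha})$. Meanwhile $\Theta\sim\varepsilon^{4-\tau}$ there, and $F_{\mu_i,\xi_i}=O(\varepsilon^4)$. I need $\varepsilon^{\alpha}\lesssim\varepsilon^{\tau}\varepsilon^{4-\tau}$, which fails; I therefore expect to need the sharper estimate. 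By \eqref{eq:balance}, $\varepsilon^{8-\alpha}e^{W(x)}\cdot\int_{\text{core}}e^{W}|x-y|^{-\alpha}$ with $e^{W(x)}=O(1)$ and $\int_{\text{core}}e^W=O(\varepsilon^{b_\alpha-4}\cdot\ldots)$. I will track powers carefully via $e^{A_i}=\varepsilon^{-b_\alpha}$. This gives $\varepsilon^{8-\alpha}\varepsilon^{-b_\alpha}\varepsilon^{b_\alpha}\cdots$; I will verify it yields $O(\varepsilon^{8-\alpha-b_\alpha+\ldots})\le\varepsilon^4$, in the worst case using the $(1+|z|)^{-(8-\alpha)}$ decay of $e^{U}$ integrated against $\mu^{-4}$.

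\textbf{Step 4: derivatives and \eqref{eq:residual-small}.} Parameter derivatives are handled identically: by the lemmas, $\partial_{t_i}$ and $\mu_i^{-1}\partial_{\xi_i}$ act on the expansions without changing their orders. Differentiating $F_{\mu_i,\xi_i}$ produces $FZ_k$-type profiles with the same decay, and the same holds for second derivatives. Finally, \eqref{eq:residual-small} follows because $|1-e^{2b_\alpha t_i}|\le C|t_i|$ and $F_{\mu_i,\xi_i}\le C\Theta$.
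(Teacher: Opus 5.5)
Your route is the paper's. You write $E=\sum_i e^{2b_\alpha t_i}F_{\mu_i,\xi_i}-\mathcal N_\varepsilon(W)$ and Taylor-expand $W$ and $K$ in each core. You then use \eqref{eq:balance} with \eqref{eq:bubble-nonlocal} to identify the main term exactly, absorb the relative cross-core error $\varepsilon^\alpha(1+|z|)^\alpha$ using $\tau<\alpha$, and use $\varepsilon(1+|z|)\le C\varepsilon^\tau(1+|z|)^\tau$ for $|z|\lesssim\mu_i$. Treating all of $B_{\eta/4}(\xi_i)$ at once, instead of the paper's inner/middle split at $|z|=\varepsilon^{-\delta}$, is legitimate, since $\exp(O(|x-\xi_i|+\varepsilon^2))-1=O(|x-\xi_i|+\varepsilon^2)$ on a bounded ball. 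The paper's middle-zone argument uses exactly your conversion.

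The gap is Step 3, which you leave open because you miscount the size of a core. Your explicit exterior bound $\mathcal N_\varepsilon(W)=O(\varepsilon^\alpha)$ is wrong and would not suffice, and ``I will verify'' does not close it. There is no slack here: $\varepsilon^\tau\Theta\asymp\varepsilon^4$ off the cores, so you need exactly $O(\varepsilon^4)$.

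The missing input is as follows. By \eqref{eq:balance}, $e^{A_j}\asymp\mu_j^{b_\alpha}$. By \eqref{eq:bubble-L1}, the mass of $e^{U_{\mu_j,\xi_j}}$ is $C\mu_j^{-\alpha/2}$, not $\mu_j^{-b_\alpha}$. Combined with \eqref{eq:bubble-Riesz-bound}, the field of the $j$-th core at any $x$ with $|x-\xi_j|\ge\eta/4$ is at most $C\mu_j^{b_\alpha}\mu_j^{\alpha/2}(\mu_j\eta)^{-\alpha}\le C\mu_j^{4-\alpha}$. This includes $x$ just outside $\partial B_{\eta/4}(\xi_j)$, where the kernel is not bounded. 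Since $e^W=O(1)$ in the exterior, $\mathcal N_\varepsilon(W)\le C\varepsilon^{8-\alpha}\mu^{4-\alpha}=O(\varepsilon^4)$, which is \eqref{eq:cross-potential-detailed} and the paper's outer-region argument.

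The same miscount appears in Step 2. After multiplication by $e^{A_i}$, the self field is $\asymp\mu_i^{4}(1+|z|)^{-\alpha}$ (that is, $\mu_i^{\alpha/2}(1+|z|)^{-\alpha}$ before the factor $e^{A_i}$), not $\mu_i^{\alpha-b_\alpha}(1+|z|)^{-\alpha}$. The cross field is $\asymp\mu_j^{4-\alpha}$, not bounded. Your ratio $\varepsilon^\alpha(1+|z|)^\alpha$ survives only because both quantities are off by the same factor $\mu^{4-\alpha}$. With the corrected sizes, Step 2 goes through as you wrote it.

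One minor point: relative to the self field, the extension tail is $O(\mu_i^{-4}(1+|z|)^\alpha)$, not $O(\mu_i^{-4})$. It is still at most $C\varepsilon^{4-\alpha}$ and harmless. Step 4 is at the same level of detail as the paper's own treatment of the derivatives.
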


\begin{proof}
Fix $0<\delta<1/4$. We split each peak ball into
\[
\mathcal C_i^{\rm in}:=
\{|x-\xi_i|\leq\mu_i^{-1}\varepsilon^{-\delta}\},
\qquad
\mathcal C_i^{\rm mid}:=
\{\mu_i^{-1}\varepsilon^{-\delta}<|x-\xi_i|<\eta/4\},
\]
and set
\[
\mathcal C^{\rm out}:=
\Omega\setminus\bigcup_{i=1}^{m}B_{\eta/4}(\xi_i).
\]
We choose $\delta$ so small that
\[
\delta(2+\tau)<1-\tau.
\]

Let $x=\xi_i+z/\mu_i$ in $\mathcal C_i^{\rm in}$. Formula \eqref{eq:local-W} gives
\begin{equation}\label{eq:core-exponential-expansion}
\begin{aligned}
e^{W(x)}
=e^{A_i}e^{U_{\mu_i,\xi_i}(x)}
\bigl[1+\mathcal E_i(x)\bigr],
\qquad
|\mathcal E_i(x)|
\leq C\mu_i^{-1}(1+|z|)+C\mu_i^{-2}(1+|z|^2).
\end{aligned}
\end{equation}
The right-hand side is $O(\varepsilon^{1-\delta})$. It is $O(\varepsilon^\tau(1+|z|)^\tau)$ after decreasing $\delta$. Taylor's formula can therefore be used without a truncation.

We split the Riesz potential into the $i$-th core, the other cores, and the complement. On the $i$-th core, the change of variables $y=\xi_i+w/\mu_i$, the $C^1$ expansion of $K$, and Lemma \ref{lem:weighted-Riesz} give
\begin{equation}\label{eq:self-potential-detailed}
\begin{aligned}
&\int_{B_{\eta/4}(\xi_i)}
\frac{K(y)e^{W(y)}}{|x-y|^\alpha}\,dy=K(\xi_i)e^{A_i}
\int_{\mathbb R^4}
\frac{e^{U_{\mu_i,\xi_i}(y)}}{|x-y|^\alpha}\,dy
\bigl[1+O(\varepsilon^\tau(1+|z|)^\tau)\bigr].
\end{aligned}
\end{equation}
The omitted tail satisfies
\[
\int_{\mathbb R^4\setminus B_{\eta/4}(\xi_i)}
\frac{e^{U_{\mu_i,\xi_i}(y)}}{|x-y|^\alpha}\,dy
\leq C\mu_i^{\alpha/2-4}.
\]
Relative to the whole-space self potential, this is $O(\mu_i^{-4})$ in the inner core.

For $j\ne i$, separation and Lemma \ref{lem:separated-core-estimates} give
\begin{equation}\label{eq:cross-potential-detailed}
\int_{B_{\eta/4}(\xi_j)}
\frac{K(y)e^{W(y)}}{|x-y|^\alpha}\,dy
\leq C\mu_j^{4-\alpha}.
\end{equation}
The self potential has size
$\mu_i^4(1+|z|)^{-\alpha}$. Hence the quotient of \eqref{eq:cross-potential-detailed} by the self term is bounded by
$C\varepsilon^\alpha(1+|z|)^\alpha$. This is absorbed by
$C\varepsilon^\tau(1+|z|)^\tau$ in the range where the latter is used. The exterior contribution is estimated in the same way.

Multiplying \eqref{eq:self-potential-detailed} by \eqref{eq:core-exponential-expansion}, and using \eqref{eq:bubble-nonlocal} and \eqref{eq:balance}, we obtain
\begin{equation}\label{eq:core-nonlinearity}
\mathcal N_\varepsilon(W)(x)
=e^{2b_\alpha t_i}F_{\mu_i,\xi_i}(x)
+O(\varepsilon^\tau\Theta(x))
\qquad\hbox{in }\mathcal C_i^{\rm in}.
\end{equation}
Since
$\Delta^2W=\sum_jF_{\mu_j,\xi_j}$, this proves
\eqref{eq:residual-estimate} in the inner core.

On $\mathcal C_i^{\rm mid}$, the Taylor factor in
\eqref{eq:core-exponential-expansion} need not be small. We do not use it. The projection expansion gives
\[
e^{W(x)}\leq C e^{A_i}e^{U_{\mu_i,\xi_i}(x)}
\exp(C|x-\xi_i|).
\]
The exponential factor is bounded. The bubble density satisfies
\[
F_{\mu_i,\xi_i}(x)
\leq C\mu_i^4(1+\mu_i|x-\xi_i|)^{-8}.
\]
Every first-order coefficient error is bounded by
$C|x-\xi_i|$. Thus
\[
|x-\xi_i|F_{\mu_i,\xi_i}(x)
\leq C\varepsilon^\tau
\frac{\mu_i^4}{(1+\mu_i|x-\xi_i|)^{8-\tau}}.
\]
The cross potentials are handled by \eqref{eq:cross-potential-detailed}. This proves the same estimate in the middle core.

On $\mathcal C^{\rm out}$,
\[
F_{\mu_i,\xi_i}(x)\leq C\mu_i^{-4}\leq C\varepsilon^4.
\]
The projected bubbles are bounded there. A peak contributes
$O(\mu_i^{4-\alpha})$ to the Riesz potential. Therefore
$\mathcal N_\varepsilon(W)=O(\varepsilon^4)$. On the other hand,
$\varepsilon^\tau\Theta(x)\geq c\varepsilon^4$ after the constants are adjusted. This proves \eqref{eq:residual-estimate} on the outer region.

For the differentiated estimates, write
$z=\mu_i(x-\xi_i)$. Direct differentiation gives
\begin{equation}\label{eq:bubble-parameter-derivative-bounds}
\left|\partial_{t_i}F_{\mu_i,\xi_i}(x)\right|
\leq CF_{\mu_i,\xi_i}(x),
\qquad
\mu_i^{-1}
\left|\partial_{\xi_{i,\ell}}F_{\mu_i,\xi_i}(x)\right|
\leq CF_{\mu_i,\xi_i}(x).
\end{equation}
The same bounds hold for $e^{U_{\mu_i,\xi_i}}$ after replacing the
right-hand side by the corresponding bubble weight. Differentiate
the self-potential formula under the integral sign. Lemma
\ref{lem:weighted-Riesz} controls every differentiated kernel
integral. In the inner region, the additional factors are bounded by
$C(1+|z|)$ and are absorbed by the choice of $\delta$. In the middle
region, \eqref{eq:bubble-parameter-derivative-bounds} and the
algebraic decay give the same weight $\Theta$. In the outer region,
all differentiated bubble terms are $O(\varepsilon^4)$ after the
normalization by $\mu_i^{-1}$ in a position derivative. This proves
\eqref{eq:residual-t-derivative} and
\eqref{eq:residual-xi-derivative}. Repeating the differentiation once
more produces at most quadratic polynomials in the core variables.
The inner--middle--outer decomposition above, with the same choice of
$\delta$, absorbs these factors. Each center derivative is normalized
by the corresponding $\mu_i^{-1}$, and derivatives falling on two
different cores are smaller by separation. This proves the asserted
second-derivative bounds. Finally,
$|1-e^{2b_\alpha t_i}|\leq C|t_i|$ for small $t_i$.
Equation \eqref{eq:residual-small} follows.
\end{proof}

\section{The projected linear problem}

The space $\mathcal H$ and its norm are defined in
\eqref{eq:energy-space}.

\subsection{Approximate kernel functions and the capacitary dilation tail}

Let $\chi\in C_c^\infty(B_{\eta/3}(0))$ satisfy
$\chi=1$ in $B_{\eta/4}(0)$. Put
\begin{equation*}
\chi_i(x):=\chi(x-\xi_i),
\end{equation*}
and
\begin{equation*}
Z_{i,k}(x):=Z_k(\mu_i(x-\xi_i)),
\qquad 0\leq k\leq4.
\end{equation*}
The orthogonality conditions are
\begin{equation}\label{eq:orthogonality}
\int_\Omega\chi_iF_{\mu_i,\xi_i}Z_{i,k}\phi\,dx=0,
\qquad 1\leq i\leq m,
\quad 0\leq k\leq4.
\end{equation}

We next define the fourth-order capacity used for the dilation
mode. Let $0<\rho<\eta/8$ and
$\operatorname{dist}(\xi,\partial\Omega)>2\rho$. Set
\begin{equation*}
\begin{aligned}
\mathcal A_{\rho,\xi}:=
\{v\in H^2(\Omega\setminus B_\rho(\xi)):\;&
v=-b_\alpha,\quad \partial_\nu v=0
\quad\hbox{on }\partial B_\rho(\xi),\ v=0\quad\hbox{on }\partial\Omega\}.
\end{aligned}
\end{equation*}
All traces are understood in the Sobolev sense.

The dilation direction requires the critical biharmonic capacity of a small ball.

\begin{lemma}\label{lem:biharmonic-capacity}
The minimization problem
\begin{equation}\label{eq:capacity-minimization}
\inf_{v\in\mathcal A_{\rho,\xi}}
\int_{\Omega\setminus B_\rho(\xi)}|\Delta v|^2\,dx
\end{equation}
has a unique minimizer $V_{\rho,\xi}$. It satisfies
\begin{equation}\label{eq:capacity-Euler}
\begin{cases}
\Delta^2V_{\rho,\xi}=0
&\text{in }\Omega\setminus\overline{B_\rho(\xi)},\\
V_{\rho,\xi}=-b_\alpha,\quad
\partial_\nu V_{\rho,\xi}=0
&\text{on }\partial B_\rho(\xi),\\
V_{\rho,\xi}=\Delta V_{\rho,\xi}=0
&\text{on }\partial\Omega.
\end{cases}
\end{equation}
Uniformly for $\xi$ in a compact subset of $\Omega$,
\begin{equation}\label{eq:capacity-asymptotic}
\int_{\Omega\setminus B_\rho(\xi)}
|\Delta V_{\rho,\xi}|^2\,dx
=\frac{8\pi^2b_\alpha^2}{|\log\rho|}
+O(|\log\rho|^{-2}).
\end{equation}
Fix $0<r_0<R$ and assume $B_{4R}(\xi)\Subset\Omega$. Then
there is a smooth function $\mathcal B_\xi$ such that
\begin{equation}\label{eq:capacity-profile-fixed}
V_{\rho,\xi}(x)
=\frac{b_\alpha\log|x-\xi|+\mathcal B_\xi(x)}
{|\log\rho|}
+O(|\log\rho|^{-2})
\end{equation}
in $C^3(B_R(\xi)\setminus B_{r_0}(\xi))$. On the inner transition
annulus,
\begin{equation}\label{eq:capacity-boundary-layer}
\sup_{\rho\leq|x-\xi|\leq2\rho}
\left(
|V_{\rho,\xi}(x)+b_\alpha|
+\rho|\nabla V_{\rho,\xi}(x)|
+\rho^2|D^2V_{\rho,\xi}(x)|
\right)
\leq\frac{C}{|\log\rho|}.
\end{equation}
\end{lemma}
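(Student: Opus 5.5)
The plan has four steps: existence and uniqueness by the direct method, an explicit radial comparison function giving an upper bound on the energy, a duality argument giving the matching lower bound, and the profile estimates, which come out of the same comparison.

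\textbf{Step 1: existence, uniqueness and the Euler system.} The set $\mathcal A_{\rho,\xi}$ is a nonempty closed affine subspace of $H^2(\Omega\setminus B_\rho(\xi))$. On the associated linear space (zero traces of $v$ and $\partial_\nu v$ on $\partial B_\rho$, and $v=0$ on $\partial\Omega$), $\int|\Delta v|^2$ is coercive. This follows because the trace conditions exclude nonzero harmonic functions, via the usual Poincar\'e-type argument. The functional is strictly convex, so the direct method gives a unique minimizer $V_{\rho,\xi}$. Taking variations $w$ with $w=\partial_\nu w=0$ on $\partial B_\rho$ and $w=0$ on $\partial\Omega$, integrating by parts gives $\Delta^2V=0$ together with the natural condition $\Delta V=0$ on $\partial\Omega$. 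This is \eqref{eq:capacity-Euler}. Regularity up to both boundaries follows from elliptic theory for the biharmonic operator with these complementing conditions, using $\Omega\in C^6$.

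\textbf{Step 2: upper bound.} I would use the comparison function
\[
v_\rho(x)=\frac{-b_\alpha}{|\log\rho|}\Bigl(\log\frac{|x-\xi|}{\rho}\cdots\Bigr)
\]
built from the radial biharmonic functions $1$, $\log r$, $r^2$, $r^2\log r$ in dimension four. On $B_{2R}$ I take
\[
w(r)=-b_\alpha+b_\alpha\,\frac{\log(r/\rho)}{|\log\rho|}+\text{correction},
\]
and choose the $r^2$ and $r^{-2}$ terms so that $w'(\rho)=0$. Since $\Delta\log r=2/r^2$ in $\mathbb R^4$, the $\log$ part contributes
\[
\int_{\rho}^{R}\frac{4}{r^4}\cdot\frac{b_\alpha^2}{|\log\rho|^2}\,2\pi^2r^3\,dr
\]
to the energy. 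This looks divergent, so the true profile must be more delicate. The correct radial minimizer on an annulus $\rho<r<R$ combines $\log r$ with $r^{-2}$, whose Laplacian vanishes in dimension four, and $r^2$, to meet $w'(\rho)=0$. Solving the $4\times4$ linear system explicitly, the energy comes from the $r^2$ and $\log r$ coefficients and equals $8\pi^2b_\alpha^2/|\log\rho|+O(|\log\rho|^{-2})$. I would verify this with the explicit ODE, then cut off to the outer region using the regular part $\mathcal B_\xi$, which solves a fixed Navier problem. That gives the upper bound.

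\textbf{Step 3: lower bound.} For $v\in\mathcal A_{\rho,\xi}$, I would pair $\Delta v$ with $\Delta\Gamma$, where $\Gamma$ is the Navier Green function $G(\cdot,\xi)$ modified to be biharmonic in the annulus. Integration by parts reduces $\int\Delta v\,\Delta\Gamma$ to boundary terms on $\partial B_\rho$, which involve $v=-b_\alpha$ and the flux $\partial_\nu\Delta\Gamma\approx-1/(2\pi^2\rho^3)$. This gives roughly $b_\alpha$. Cauchy--Schwarz then yields
\[
\int|\Delta v|^2\ge b_\alpha^2\Big/\int|\Delta\Gamma|^2,
\qquad \int_{|x-\xi|>\rho}|\Delta\Gamma|^2\approx\frac{|\log\rho|}{8\pi^2}.
\]
This matches the upper bound to leading order.

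\textbf{Step 4: profiles.} Since the minimizer is unique and equals the explicit construction up to an $H^2$ error controlled by the energy gap, which is $O(|\log\rho|^{-2})$ relative, interior elliptic estimates on the fixed annulus $B_R\setminus B_{r_0}$ give \eqref{eq:capacity-profile-fixed}. For \eqref{eq:capacity-boundary-layer}, I would rescale $x=\xi+\rho y$, so that $\tilde V(y)=V(\xi+\rho y)$ is biharmonic on $|y|>1$ with $\tilde V=-b_\alpha$ and $\partial_\nu\tilde V=0$ at $|y|=1$. The energy is scale-invariant in dimension four, so $\|\Delta\tilde V\|_{L^2(1<|y|<4)}\le C|\log\rho|^{-1/2}$. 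Boundary regularity for $\tilde V+b_\alpha$ then gives a $C^2$ bound of order $|\log\rho|^{-1/2}$ on $1\le|y|\le2$. Upgrading this to $|\log\rho|^{-1}$ is the main obstacle. I would handle it by comparing with the explicit radial profile, whose deviation from $-b_\alpha$ on $[\rho,2\rho]$ is $O(1/|\log\rho|)$. The remainder has energy $O(|\log\rho|^{-2})$, which gives the sharp bound.
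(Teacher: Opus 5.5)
\textbf{The gap is in Step 4, in the proof of \eqref{eq:capacity-profile-fixed}.} Since $V_{\rho,\xi}$ minimizes a quadratic functional over an affine class, every admissible competitor $v_*$ satisfies exactly $\int|\Delta(v_*-V_{\rho,\xi})|^2=E(v_*)-E(V_{\rho,\xi})$. Your upper and lower bounds agree only up to $O(|\log\rho|^{-2})$. So all you get is $\|v_*-V_{\rho,\xi}\|_{H^2}=O(|\log\rho|^{-1})$: the square root costs a full power of $|\log\rho|$. On $B_R(\xi)\setminus B_{r_0}(\xi)$ this error is as large as the profile $(b_\alpha\log|x-\xi|+\mathcal B_\xi)/|\log\rho|$ itself, so you only learn $V_{\rho,\xi}=O(|\log\rho|^{-1})$ there.

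Sharper constants would not rescue your competitor either. It is radial on $B_{2R}(\xi)$, while the minimizer contains $|\log\rho|^{-1}$ times the non-radial part of $H(\cdot,\xi)$ (see below). That part is nonzero unless $\Omega$ is a ball centred at $\xi$, so the energy gap of your competitor is genuinely of order $|\log\rho|^{-2}$. The paper's proof also reads the profile off the matched radial construction without measuring its distance to the minimizer, so it does not supply this step either.

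By contrast, \eqref{eq:capacity-boundary-layer} only requires accuracy $O(|\log\rho|^{-1})$, and your argument for it works. The remainder $w=V_{\rho,\xi}-v_*$ has zero Cauchy data on $\partial B_\rho(\xi)$. Its zero extension lies in $H^2(\Omega)\cap H_0^1(\Omega)$ with $\|D^2w\|_{L^2(\Omega)}\le C\|\Delta w\|_{L^2}=O(|\log\rho|^{-1})$. You need this global $D^2$ control, because a local bound on $\Delta w$ near the inner sphere is not enough. Rescaling, a Poincar\'e inequality for zero inner Cauchy data, and boundary regularity then give the claim.

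\textbf{The missing idea is to use the Euler system instead of the energy gap.} Put $Q:=\int_{\partial B_\rho(\xi)}\partial_r\Delta V_{\rho,\xi}\,d\sigma$, where $\partial_r$ is the radial derivative from $\xi$. Green's identity and the boundary conditions in \eqref{eq:capacity-Euler} give $\int|\Delta V_{\rho,\xi}|^2=-b_\alpha Q$. Hence $Q=-8\pi^2b_\alpha|\log\rho|^{-1}+O(|\log\rho|^{-2})$ by \eqref{eq:capacity-asymptotic}.

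Now take $x$ in the fixed annulus and apply the Rayleigh--Green formula with $G(x,\cdot)$. There is no contribution from $\partial\Omega$, since both functions have Navier data there. The term carrying $V_{\rho,\xi}=-b_\alpha$ vanishes because $\Delta_yG(x,\cdot)$ is harmonic in $B_\rho(\xi)$. Thus
\[
V_{\rho,\xi}(x)=\int_{\partial B_\rho(\xi)}\bigl(\partial_r\Delta V_{\rho,\xi}(y)\,G(x,y)-\Delta V_{\rho,\xi}(y)\,\partial_{r}G(x,y)\bigr)\,d\sigma_y .
\]
The rescaled boundary bounds of your Step 4, taken one derivative higher and with the same global $D^2$ control, give $|\Delta V_{\rho,\xi}|\le C\rho^{-2}|\log\rho|^{-1/2}$ and $|\nabla\Delta V_{\rho,\xi}|\le C\rho^{-3}|\log\rho|^{-1/2}$ on $\partial B_\rho(\xi)$. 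Combined with $G(x,y)=G(x,\xi)+O(\rho)$, this yields $V_{\rho,\xi}=Q\,G(\cdot,\xi)+O(\rho|\log\rho|^{-1/2})$ in $C^3(B_R(\xi)\setminus B_{r_0}(\xi))$. That is \eqref{eq:capacity-profile-fixed} with $\mathcal B_\xi=-8\pi^2b_\alpha H(\cdot,\xi)$.

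\textbf{The rest of your plan is sound.}
\begin{itemize}
\item Step 3 is a genuinely different and cleaner lower bound than the paper's spherical-mean reduction. The identity $\int_{\Omega\setminus B_\rho(\xi)}\Delta v\,\Delta G(\cdot,\xi)\,dx=-b_\alpha$ holds exactly for every $v\in\mathcal A_{\rho,\xi}$. No modification of $G$ is needed, since it is already biharmonic with Navier data in $\Omega\setminus\overline{B_\rho(\xi)}$, and no analysis of the exterior problem is required.
\item Step 2 is the paper's construction, but fix the slips. The radial biharmonic basis in $\mathbb R^4$ is $1,\log r,r^2,r^{-2}$, not $r^2\log r$. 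The logarithmic energy $8\pi^2B^2\log(R/\rho)$ is finite and is exactly the leading term. Because $r^{-2}$ is harmonic, the choice $B=b_\alpha/|\log\rho|$, $C=0$, $D=B\rho^2/2$ already gives the upper bound.
\end{itemize}
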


\begin{proof}
The functional in \eqref{eq:capacity-minimization} is coercive on
the affine class. Indeed, the difference of two admissible functions
has zero Dirichlet trace on both boundary components and zero normal
trace on the inner sphere. If its Laplacian vanishes, the function is
harmonic with zero boundary trace and is therefore zero. The direct
method gives a minimizer. Strict convexity gives uniqueness.

Interior variations give the biharmonic equation. Variations whose
outer Dirichlet trace vanishes but whose outer normal derivative is
arbitrary give $\Delta V_{\rho,\xi}=0$ on $\partial\Omega$. This proves
\eqref{eq:capacity-Euler}.

Fix $R>0$ with $B_{4R}(\xi)\Subset\Omega$. A radial biharmonic
function on $B_R(\xi)\setminus B_\rho(\xi)$ has the form
\begin{equation}\label{eq:radial-biharmonic-form}
A+B\log r+Cr^2+Dr^{-2}.
\end{equation}
The two inner conditions imply
\begin{equation}\label{eq:radial-inner-relations}
D=\frac12B\rho^2+C\rho^4,
\qquad
A=-b_\alpha-B\log\rho-\frac12B-2C\rho^2.
\end{equation}
For $(a,d)\in\mathbb R^2$, let
$\mathcal E_{\xi,R}(a,d)$ be the least exterior energy among
functions on $\Omega\setminus B_R(\xi)$ with Cauchy data
$w=a$, $\partial_\nu w=d$ on $\partial B_R(\xi)$ and
$w=0$ on $\partial\Omega$. The minimizer is biharmonic and satisfies
the natural condition $\Delta w=0$ on $\partial\Omega$.
Elliptic estimates show that
$\mathcal E_{\xi,R}$ is a positive quadratic form in $(a,d)$ whose
coefficients are uniformly bounded for $\xi$ in a compact set.

Attach the radial function \eqref{eq:radial-biharmonic-form} to this
exterior minimizer by matching its value and radial derivative at
$r=R$. Minimize the sum of the annular and exterior energies subject
to \eqref{eq:radial-inner-relations}. The Euler equations form a
uniformly invertible linear system. With
$L_\rho=|\log\rho|$, they give
\begin{equation}\label{eq:capacity-coefficients}
B=\frac{b_\alpha}{L_\rho}+O(L_\rho^{-2}),
\quad
C=O(L_\rho^{-1}),
\quad
D=\frac{b_\alpha\rho^2}{2L_\rho}
+O(\rho^2L_\rho^{-2}).
\end{equation}
The resulting piecewise function has matching value and normal
derivative at $r=R$, hence belongs to $H^2(\Omega\setminus
B_\rho(\xi))$ and is admissible. Its logarithmic
part has energy
\begin{equation}\label{eq:capacity-upper-log}
\begin{aligned}
4B^2|\mathbb S^3|\int_\rho^R\frac{dr}{r}
=8\pi^2B^2\log(R/\rho)=\frac{8\pi^2b_\alpha^2}{|\log\rho|}
+O(|\log\rho|^{-2}).
\end{aligned}
\end{equation}
The remaining radial terms, their cross terms, and the exterior
correction are of smaller order. More precisely, their total
contribution is
\begin{equation*}
O(|\log\rho|^{-2}).
\end{equation*}
This proves the upper bound.

For the lower bound, take the spherical mean
$\overline V_{\rho,\xi}$ on
$B_R(\xi)\setminus B_\rho(\xi)$. Orthogonality of spherical
harmonics yields
\begin{equation*}
\int_{B_R(\xi)\setminus B_\rho(\xi)}
|\Delta V_{\rho,\xi}|^2\,dx
\geq
\int_{B_R(\xi)\setminus B_\rho(\xi)}
|\Delta\overline V_{\rho,\xi}|^2\,dx.
\end{equation*}
For fixed Cauchy data of the spherical mean at $r=R$, the exterior
part of the minimizer has energy at least
$\mathcal E_{\xi,R}(a,d)$. Thus the full energy is bounded below by
the same one-dimensional quadratic minimization used for the
comparison function. Its Euler system is the system above. Hence its
logarithmic coefficient satisfies the first formula in
\eqref{eq:capacity-coefficients}. Substitution in
\eqref{eq:capacity-upper-log} gives the matching lower bound. This
proves \eqref{eq:capacity-asymptotic}.

On every set with $|x-\xi|\geq r_0$, the term
$D|x-\xi|^{-2}$ is $O(\rho^2|\log\rho|^{-1})$. The remaining
coefficients and the exterior biharmonic correction give
\eqref{eq:capacity-profile-fixed} by interior estimates. On
$\rho\leq r\leq2\rho$, use
\eqref{eq:radial-inner-relations}--\eqref{eq:capacity-coefficients}.
The $Dr^{-2}$ term must be retained there. Direct differentiation
gives \eqref{eq:capacity-boundary-layer}. Uniformity follows from
compactness of the set of centers.
\end{proof}

Extend $V_{\rho,\xi}$ by the constant $-b_\alpha$ inside
$B_\rho(\xi)$. The value and the normal derivative match, so the
extension belongs to $H^2(\Omega)$.

Choose
\begin{equation}\label{eq:R-epsilon-choice}
R_\varepsilon\longrightarrow+\infty,
\qquad
\log R_\varepsilon=o(|\log\varepsilon|),
\qquad
\varepsilon R_\varepsilon\longrightarrow0,
\qquad
\rho_{i,\varepsilon}=\frac{R_\varepsilon}{\mu_i}.
\end{equation}
All estimates below are uniform for choices satisfying
\eqref{eq:R-epsilon-choice}. The explicit logarithmic choice of $R_\varepsilon$ will be fixed
in Lemma \ref{lem:scale-tangent-exterior}.
Let $\zeta\in C^\infty([0,\infty))$ satisfy
$\zeta=1$ on $[0,1]$ and $\zeta=0$ on $[2,\infty)$. Define
\begin{equation}\label{eq:dilation-test}
\widehat Z_{i,0}(x)
:=\zeta\left(\frac{|x-\xi_i|}{\rho_{i,\varepsilon}}\right)
Z_{i,0}(x)
+\left[1-\zeta\left(\frac{|x-\xi_i|}{\rho_{i,\varepsilon}}\right)\right]
V_{\rho_{i,\varepsilon},\xi_i}(x).
\end{equation}
For $1\leq k\leq4$, set
\begin{equation}\label{eq:translation-test}
\widehat Z_{i,k}:=\chi_iZ_{i,k}.
\end{equation}

The capacity profile is combined with the kernel functions in the following test estimates.

\begin{lemma}\label{lem:test-functions}
The functions in \eqref{eq:dilation-test}--\eqref{eq:translation-test}
belong to $\mathcal H$ and satisfy
\begin{equation}\label{eq:test-H-bound}
\|\widehat Z_{i,k}\|_{\mathcal H}
+\|\widehat Z_{i,k}\|_{L^\infty(\Omega)}
\leq C.
\end{equation}
Let $L_\varepsilon$ be defined in \eqref{eq:linearized-W}. Then
\begin{equation}\label{eq:test-L-small}
\sup_{\|v\|_{\mathcal H}=1}
\left|
\int_\Omega vL_\varepsilon\widehat Z_{i,k}\,dx
\right|
=o(|\log\varepsilon|^{-1/2})
\end{equation}
for $0\leq k\leq4$. Moreover,
\begin{equation}\label{eq:dilation-test-energy}
\int_\Omega|\Delta\widehat Z_{i,0}|^2\,dx\leq C,
\qquad
\int_{\Omega\setminus B_{2\rho_{i,\varepsilon}}(\xi_i)}
|\Delta\widehat Z_{i,0}|^2\,dx
=\frac{8\pi^2b_\alpha^2}{|\log\varepsilon|}
+o(|\log\varepsilon|^{-1}).
\end{equation}
If $i\ne j$, then
\begin{equation}\label{eq:dilation-test-cross-energy}
\int_\Omega
\Delta\widehat Z_{i,0}\Delta\widehat Z_{j,0}\,dx
=O(|\log\varepsilon|^{-2}).
\end{equation}
\end{lemma}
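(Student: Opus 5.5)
The plan is to treat the translation functions $\widehat Z_{i,k}$, $1\le k\le4$, and the dilation function $\widehat Z_{i,0}$ separately. In both cases I write $\int_\Omega vL_\varepsilon\widehat Z\,dx=\int_\Omega\Delta v\,\Delta\widehat Z\,dx-\langle\mathcal N_\varepsilon'(W)\widehat Z,v\rangle$.

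\textbf{Membership in $\mathcal H$ and the bound \eqref{eq:test-H-bound}.} For $1\le k\le4$ the function $\chi_iZ_{i,k}$ is smooth and compactly supported in $\Omega$. Its $L^\infty$ norm is bounded, and $\int|\Delta(\chi_iZ_{i,k})|^2$ is bounded because the energy $\int|\Delta Z_k|^2$ is dilation invariant in dimension four. For $k=0$, $\widehat Z_{i,0}$ coincides with $V_{\rho_{i,\varepsilon},\xi_i}$ near $\partial\Omega$, so the Navier traces are those in \eqref{eq:capacity-Euler}.

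On the transition annulus $\rho\le|x-\xi_i|\le2\rho$ (with $\rho=\rho_{i,\varepsilon}$) I use two facts:
\[
Z_{i,0}=-b_\alpha+O(R_\varepsilon^{-2}),
\]
with the matching derivative bounds, and \eqref{eq:capacity-boundary-layer} for $V$. Writing $\widehat Z_{i,0}=-b_\alpha+\zeta(Z_{i,0}+b_\alpha)+(1-\zeta)(V+b_\alpha)$, every term of $\Delta\widehat Z_{i,0}$ is $O(\rho^{-2}(R_\varepsilon^{-2}+|\log\rho|^{-1}))$ on a region of volume $\rho^4$. Hence the annular energy is $O(R_\varepsilon^{-4}+|\log\rho|^{-2})$.

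The exterior energy is exactly $\int_{\Omega\setminus B_{2\rho}}|\Delta V|^2$. This equals the full capacity \eqref{eq:capacity-asymptotic} minus the annular contribution $\int_{B_{2\rho}\setminus B_\rho}|\Delta V|^2=O(|\log\rho|^{-2})$. Since $|\log\rho_{i,\varepsilon}|=|\log\varepsilon|-\log R_\varepsilon+O(1)=|\log\varepsilon|(1+o(1))$ by \eqref{eq:R-epsilon-choice}, this gives \eqref{eq:dilation-test-energy}.

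\textbf{Cross energies \eqref{eq:dilation-test-cross-energy}.} Near $\xi_j$, the function $\Delta\widehat Z_{i,0}=\Delta V_{\rho_i,\xi_i}$ is smooth. By \eqref{eq:capacity-profile-fixed} it is $O(|\log\varepsilon|^{-1})$ there. I integrate by parts against $\widehat Z_{j,0}$. Away from both cores the product of the two profiles is $O(|\log\varepsilon|^{-2})$. Near $\xi_j$, I transfer the derivatives onto the smooth factor, using that $\widehat Z_{j,0}$ carries only $O(|\log\varepsilon|^{-1})$ flux. Each piece is then $O(|\log\varepsilon|^{-2})$.

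\textbf{The smallness \eqref{eq:test-L-small}.} In the core, $Z_{i,k}$ lies in the kernel of $\mathcal L_\infty$ after scaling (Theorem \ref{thm:limit-input}). So $L_\varepsilon\widehat Z_{i,k}$ consists of three kinds of terms:
\begin{itemize}
\item the linearization error $(\mathcal N_\varepsilon'(W)-\text{bubble linearization})\widehat Z$, bounded by $\varepsilon^\tau\Theta$ through the expansions in the proof of Proposition \ref{prop:residual};
\item the mass of $F$ outside the core, $O(R_\varepsilon^{-4})$;
\item cutoff commutators.
\end{itemize}
To pair a $\Theta$- or $F$-weighted error with $v\in\mathcal H$, I use Lemma \ref{lem:weighted-low-mode-control} for the bubble at $(\mu_i,\xi_i)$. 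The weighted moments satisfy $|\int F_{\mu_i,\xi_i}v|\le C\sqrt{\log\mu_i}\,\|v\|_{\mathcal H}$, because $v$ has logarithmic modulus of continuity in $H^2(\mathbb R^4)$. Therefore
\[
\int\Theta|v|\le C\sqrt{|\log\varepsilon|}\,\|v\|_{\mathcal H},
\]
and a factor $\varepsilon^\tau$ or $R_\varepsilon^{-2}$ kills this growth provided $R_\varepsilon$ is a suitable power of $|\log\varepsilon|$. For the translation modes the commutator terms are supported where $|x-\xi_i|\sim\eta$, and there $Z_{i,k}=O(\mu_i^{-1})$, so these terms are harmless.

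\textbf{Main obstacle: the dilation mode.} The difficulty is the exterior and annular part of $\int\Delta v\,\Delta\widehat Z_{i,0}$. Integrating by parts on $\Omega\setminus B_{2\rho}$, where $V$ is biharmonic, leaves boundary fluxes on $\partial B_{2\rho}$ of size
\[
|\log\rho|^{-1}\rho^{-3}\cdot\rho^{3}\cdot\bigl(\text{spherical mean of }v\bigr).
\]
The spherical mean of $v$ can be as large as $\sqrt{|\log\varepsilon|}\,\|v\|_{\mathcal H}$, so this flux is only $O(|\log\varepsilon|^{-1/2})$, not $o(|\log\varepsilon|^{-1/2})$.

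My plan is to show that the leading flux cancels. The total $\Delta^2$-mass carried by $\widehat Z_{i,0}$ across the annulus equals $8\pi^2B=8\pi^2b_\alpha/|\log\rho|$ to leading order. This should be matched by the nonlocal constant-mode term $\langle\mathcal N_\varepsilon'(W)\widehat Z_{i,0},v\rangle$, evaluated through the negative-mode identity \eqref{eq:negative-mode-value}. After the cancellation, what remains is
\[
\bigl(\text{mean of }v\text{ on }\partial B_{2\rho}\bigr)-\frac1{M_\alpha}\int F_{\mu_i,\xi_i}v .
\]
This difference is controlled by the energy of $v$ on the range of scales between $\mu_i^{-1}$ and $R_\varepsilon\mu_i^{-1}$, so it is $O(\sqrt{\log R_\varepsilon}\,\|v\|_{\mathcal H})$. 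Since $\log R_\varepsilon=o(|\log\varepsilon|)$, the resulting contribution is $o(|\log\varepsilon|^{-1/2})$.

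If this cancellation fails as stated, the fallback is to absorb the constant-mode component into the orthogonality condition $\int F\phi=0$ in the reduction, and prove \eqref{eq:test-L-small} only on that subspace.
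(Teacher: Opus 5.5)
Your arguments for \eqref{eq:test-H-bound}, \eqref{eq:dilation-test-energy}, \eqref{eq:dilation-test-cross-energy} and for the translation modes follow the paper's route and are fine, modulo the remark on $\boldsymbol t$ at the end. Your diagnosis for $k=0$ is also right. Write $\rho=\rho_{i,\varepsilon}$. The transition annulus carries the charge $\int_{B_{2\rho}(\xi_i)\setminus B_\rho(\xi_i)}\Delta^2\widehat Z_{i,0}\,dx=-8\pi^2b_\alpha|\log\rho|^{-1}(1+o(1))$. This charge pairs with the spherical mean $\overline v(2\rho)$, which can be of order $|\log\varepsilon|^{1/2}\|v\|_{\mathcal H}$. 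The paper's proof asserts that the transition commutator pairs with unit vectors of $\mathcal H$ at size $O(|\log\varepsilon|^{-1})$, and so misses exactly this term.

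The gap is the cancellation you propose to remove that term: the nonlocal term has no such mass near $\xi_i$. By the symmetry of the kernel and Proposition \ref{prop:residual},
\[
\int_{B_{\eta/4}(\xi_i)}D\mathcal N_\varepsilon(W)[\widehat Z_{i,0}]\,dx=2e^{2b_\alpha t_i}\int_{|z|<R_\varepsilon}FZ_0\,dz+O(R_\varepsilon^{-4}+\varepsilon^\tau)=O(R_\varepsilon^{-4}+\varepsilon^\tau),
\]
where the last step uses Lemma \ref{lem:kernel-Gram}. The identity \eqref{eq:negative-mode-value} concerns $\mathcal L_\infty1=-2F$. But the constant part of $Z_0$ is already cancelled inside $\mathcal L_\infty Z_0=0$, because $\mathcal L_\infty(-b_\alpha)=2b_\alpha F=-\mathcal L_\infty(Z_0+b_\alpha)$. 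So nothing of order $|\log\rho|^{-1}$ is left to match the annular charge, and the difference $\overline v(2\rho)-M_\alpha^{-1}\int F_{\mu_i,\xi_i}v$ never arises.

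In fact \eqref{eq:test-L-small} is false for $k=0$, so no argument can close this step. Let $\Pi=-b_\alpha^{-1}V_{\rho,\xi_i}$, extended by $1$ on $B_\rho(\xi_i)$; this is the function $\Pi_{i,\varepsilon}$ of Proposition \ref{prop:normal-inertia}. Put $v=\Pi/\|\Pi\|_{\mathcal H}$, so that $\|\Pi\|_{\mathcal H}^2=8\pi^2|\log\rho|^{-1}(1+o(1))$ by \eqref{eq:capacity-asymptotic}. Three facts hold: $\widehat Z_{i,0}=-b_\alpha\Pi$ outside $B_{2\rho}(\xi_i)$; $\Delta v=0$ in $B_\rho(\xi_i)$; and the annulus contributes $O(|\log\rho|^{-1/2}(R_\varepsilon^{-2}+|\log\rho|^{-1}))$ by \eqref{eq:capacity-boundary-layer}. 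Hence
\[
\int_\Omega\Delta v\,\Delta\widehat Z_{i,0}\,dx=-\frac{b_\alpha}{\|\Pi\|_{\mathcal H}}\int_{\Omega\setminus B_{2\rho}(\xi_i)}|\Delta\Pi|^2\,dx+o(|\log\varepsilon|^{-1/2})=-2\sqrt2\,\pi b_\alpha|\log\varepsilon|^{-1/2}(1+o(1)).
\]
For the nonlocal pairing, note that $v\equiv\|\Pi\|_{\mathcal H}^{-1}$ on $B_\rho(\xi_i)$, that $|v|\leq C|\log\varepsilon|^{1/2}$, and that $v=O(|\log\varepsilon|^{-1/2})$ near the other centers. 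The mass computation above then bounds this pairing by $O(|\log\varepsilon|^{1/2}(R_\varepsilon^{-4}+\varepsilon^\tau)+|\log\varepsilon|^{-3/2})$. For the choice \eqref{eq:diagonal-radius-rate} this is $o(|\log\varepsilon|^{-1/2})$.

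So the supremum in \eqref{eq:test-L-small} is at least $2\sqrt2\,\pi b_\alpha|\log\varepsilon|^{-1/2}(1+o(1))$. Cauchy--Schwarz against the exterior capacity energy is attained, and only $O(|\log\varepsilon|^{-1/2})$ is true. If $\|v\|_{L^\infty}$ is added to the norm, one gets $O(|\log\varepsilon|^{-1}+R_\varepsilon^{-2}+\varepsilon^\tau)$, because then $|\overline v(2\rho)|\leq\|v\|_{L^\infty}$. Either weaker form suffices where \eqref{eq:test-L-small} is used, namely the multiplier bound in Proposition \ref{prop:linear-estimate}, which only needs an error $o(1)\|\phi\|_{\mathcal H}$.

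Your fallback proves a true estimate on $\{\int\chi_iF_{\mu_i,\xi_i}v=0\}$, but that is a different statement. Adding the mass constraint to \eqref{eq:orthogonality} would create $6m$ multipliers for $5m$ parameters and break Lemma \ref{lem:multipliers-gradient}. Weakening $o$ to $O$ is the right repair.

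Finally, your bound of the core linearization error by $\varepsilon^\tau\Theta$ tacitly takes $\boldsymbol t=0$. Proposition \ref{prop:residual} gives $e^{2b_\alpha t_i}F_{\mu_i,\xi_i}$, which adds an $O(|\boldsymbol t|)$ term to the pairing for every $k$. So the $o(\cdot)$ claims also require $\boldsymbol t\to0$.
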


\begin{proof}
For $1\leq k\leq4$, the translation kernels satisfy
$Z_k(z)=O(|z|^{-1})$ and
$\nabla^jZ_k(z)=O(|z|^{-1-j})$. The cutoff commutator is supported
where $|x-\xi_i|\asymp1$. The bubble coefficients are
$O(\varepsilon^4)$ on this set. Rescaling and the limiting kernel
equation give \eqref{eq:test-H-bound} and
\eqref{eq:test-L-small} for translations.

Consider the dilation mode. On the transition annulus
$\rho_{i,\varepsilon}<|x-\xi_i|<2\rho_{i,\varepsilon}$,
\begin{equation}\label{eq:dilation-core-mismatch}
Z_{i,0}+b_\alpha=O(R_\varepsilon^{-2}),
\qquad
\rho_{i,\varepsilon}|\nabla Z_{i,0}|
+\rho_{i,\varepsilon}^2|D^2Z_{i,0}|
=O(R_\varepsilon^{-2}).
\end{equation}
By \eqref{eq:capacity-boundary-layer},
\begin{equation}\label{eq:dilation-capacity-mismatch}
|V_{\rho_{i,\varepsilon},\xi_i}+b_\alpha|
+\rho_{i,\varepsilon}|\nabla V_{\rho_{i,\varepsilon},\xi_i}|
+\rho_{i,\varepsilon}^2
|D^2V_{\rho_{i,\varepsilon},\xi_i}|
=O(|\log\varepsilon|^{-1}).
\end{equation}
Two derivatives of the cutoff produce
$\rho_{i,\varepsilon}^{-2}$. The transition annulus has volume
$O(\rho_{i,\varepsilon}^4)$. Hence the interpolation energy is
$O(|\log\varepsilon|^{-2})$. Lemma
\ref{lem:biharmonic-capacity} gives
\eqref{eq:test-H-bound} and \eqref{eq:dilation-test-energy}.

Inside $B_{\rho_{i,\varepsilon}}(\xi_i)$, rescaling gives
$\mathcal L_\infty Z_0=0$. Proposition \ref{prop:residual} bounds
the coefficient error by $O(\varepsilon^\tau)$. Outside
$B_{2\rho_{i,\varepsilon}}(\xi_i)$, the capacity profile is
biharmonic. The two nonlocal coefficients have uniformly bounded
total mass and are concentrated in the bubble cores. H\"older's
inequality, the Hardy--Littlewood--Sobolev inequality, and
\eqref{eq:capacity-asymptotic} show that their pairing with a unit
vector of $\mathcal H$ is
$O(|\log\varepsilon|^{-1})+O(\varepsilon^\tau)$. The transition
commutator has the same bound by
\eqref{eq:dilation-core-mismatch}--\eqref{eq:dilation-capacity-mismatch}.
This proves \eqref{eq:test-L-small}.

For $i\ne j$, each capacity tail has amplitude
$O(|\log\varepsilon|^{-1})$ on a fixed neighborhood of the other
center. Their core supports are disjoint. Green's identity and
\eqref{eq:capacity-profile-fixed} give
\eqref{eq:dilation-test-cross-energy}.
\end{proof}

The linearized operator at $W=W_{\boldsymbol t,\boldsymbol\xi}$ is
\begin{equation}\label{eq:linearized-W}
\begin{aligned}
L_\varepsilon\phi(x)
={}&\Delta^2\phi(x)
-\varepsilon^{8-\alpha}K(x)e^{W(x)}\phi(x)
\int_\Omega\frac{K(y)e^{W(y)}}{|x-y|^\alpha}\,dy\\
&-\varepsilon^{8-\alpha}K(x)e^{W(x)}
\int_\Omega\frac{K(y)e^{W(y)}\phi(y)}{|x-y|^\alpha}\,dy.
\end{aligned}
\end{equation}
We solve
\begin{equation}\label{eq:projected-linear-problem}
\begin{cases}
\displaystyle
L_\varepsilon\phi
=h+\displaystyle\sum_{i=1}^{m}\sum_{k=0}^{4}
 c_{i,k}\chi_iF_{\mu_i,\xi_i}Z_{i,k}
&\text{in }\Omega,\\
\phi=\Delta\phi=0&\text{on }\partial\Omega,\\
\phi\text{ satisfies }\eqref{eq:orthogonality}.&
\end{cases}
\end{equation}

The main linear estimate is the uniform invertibility of the projected problem.

\begin{proposition}\label{prop:linear-estimate}
There are $\varepsilon_0>0$ and $C>0$ such that every solution of
\eqref{eq:projected-linear-problem} satisfies
\begin{equation}\label{eq:linear-estimate}
\|\phi\|_*+\|\phi\|_\mathcal H
+\sum_{i=1}^{m}\sum_{k=0}^{4}|c_{i,k}|
\leq C\|h\|_{**}
\end{equation}
for $0<\varepsilon<\varepsilon_0$, $|\boldsymbol t|\leq t_0$, and
$\boldsymbol\xi\in\mathcal C_{m,\eta}(\Omega)$.
\end{proposition}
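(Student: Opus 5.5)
We argue by contradiction and blow-up, in the standard way for projected linear problems. Suppose there are sequences $\varepsilon_n\to0$, parameters $(\boldsymbol t_n,\boldsymbol\xi_n)$, right-hand sides $h_n$, coefficients $c^n_{i,k}$ and solutions $\phi_n$ of \eqref{eq:projected-linear-problem} such that
\[
\|\phi_n\|_*+\|\phi_n\|_{\mathcal H}+\sum_{i,k}|c^n_{i,k}|=1,
\qquad
\|h_n\|_{**}\to0 .
\]
We may assume $\boldsymbol\xi_n\to\boldsymbol\xi^\infty\in\mathcal C_{m,\eta}(\Omega)$. The proof is organized in five steps.

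\textbf{Step 1: the multipliers are small.} Test the equation against $\widehat Z_{j,l}$. The right-hand side contains the matrix $\int\chi_iF_{\mu_i,\xi_i}Z_{i,k}\widehat Z_{j,l}$. Since $\widehat Z_{j,l}=Z_{j,l}$ on the support of the weighted cores up to $O(R_\varepsilon^{-2})$ and $O(\mu^{-4})$ errors, this matrix is diagonal up to $o(1)$, with entries $\int FZ_k^2>0$. The contribution of $h_n$ is bounded by $C\|h_n\|_{**}$, because $\Theta$ is integrable with bounded mass and $\widehat Z$ is bounded. On the left, $\int\phi_nL_\varepsilon\widehat Z_{j,l}$ is $o(|\log\varepsilon|^{-1/2})\|\phi_n\|_{\mathcal H}$ by \eqref{eq:test-L-small}. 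Hence
\[
\sum_{i,k}|c^n_{i,k}|\le C\|h_n\|_{**}+o(1)\to0 .
\]

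\textbf{Step 2: an a priori $\mathcal H$ bound.} Test the equation against $\phi_n$ itself. Decompose $\phi_n$ near each core: on $B_{\rho_{i}}(\xi_i)$, rescale $\tilde\phi_{n,i}(z)=\phi_n(\xi_i+z/\mu_i)$. The quadratic form $\int|\Delta\phi|^2-\mathcal N$-terms localizes, up to $O(\varepsilon^\alpha)$ cross-core Riesz interactions (as in \eqref{eq:cross-potential-detailed}), into a sum of copies of $\mathcal Q_\infty$ evaluated on $\tilde\phi_{n,i}$ plus the exterior energy. The orthogonality conditions \eqref{eq:orthogonality} kill the $Z_k$ moments. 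The only dangerous direction is the constant mode $\int F\tilde\phi_{n,i}$, on which $\mathcal Q_\infty$ is negative by \eqref{eq:negative-mode-value}. This is \emph{the main obstacle}. To handle it, write $\phi_n=\sum_i a_i\widehat Z_{i,0}$-type capacity corrections plus a remainder. Because $\phi_n=0$ on $\partial\Omega$, a nonzero average $a_i$ on the $i$-th core forces exterior energy at least $a_i^2\cdot 8\pi^2/|\log\rho_i|$ by Lemma \ref{lem:biharmonic-capacity} (with $b_\alpha$ replaced by $a_i$). This energy is small, so coercivity cannot come from the form alone. Instead, we extract $a_i$ directly from the equation. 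Integrate the equation against the weight: $\int L_\varepsilon\phi\cdot1$ near the core gives $-M_\alpha a_i\cdot2+o(1)$, using the self-potential identity $\mathcal L_\infty 1=-2F$. This must be matched with $\int h+\sum c\cdots=o(1)$, plus the boundary flux of $\Delta^2\phi$, which is controlled by the capacity energy. Hence $a_i=o(1)$. With $a_i$ and the $Z_k$ moments $o(1)$, Lemma \ref{lem:weighted-low-mode-control} and \eqref{eq:limit-coercivity} yield $\|\phi_n\|_{\mathcal H}^2\le C\|h_n\|_{**}\|\phi_n\|_*+o(1)$, so $\|\phi_n\|_{\mathcal H}\to0$.

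\textbf{Step 3: $L^\infty$ convergence in the cores.} By Green's representation,
\[
\phi_n=\int_\Omega G\,\bigl[\mathcal N\text{-terms}(\phi_n)+h_n+\textstyle\sum c\,\chi F Z\bigr].
\]
The coefficients $\mathcal N\text{-terms}(\phi_n)$ are bounded by $C\Theta$-type weights times $\|\phi_n\|_*$. The rescaled $\tilde\phi_{n,i}$ are therefore bounded and equicontinuous, and converge locally uniformly to a bounded solution of $\mathcal L_\infty\tilde\phi=0$. By Theorem \ref{thm:limit-input}, $\tilde\phi\in\operatorname{span}\{Z_0,\dots,Z_4\}$. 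The orthogonality conditions pass to the limit by dominated convergence, so $\tilde\phi=0$.

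\textbf{Step 4: $L^\infty$ smallness everywhere.} The Green representation splits into two parts.
\begin{itemize}
\item Away from the cores, $G$ is bounded. The weighted source has mass $o(1)$: the core contributions vanish by Step 3, the $h_n$ part is $O(\|h_n\|_{**})$, and the tails are controlled by decay of $F$. Hence $\phi_n\to0$ uniformly away from the cores.
\item In the necks, the logarithmic kernel is integrated against a weight of total mass $o(1)$ with decay $|z|^{-8+\tau}$. This gives $o(1)$ uniformly.
\end{itemize}
Thus $\|\phi_n\|_*\to0$, contradicting the normalization.

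\textbf{Step 5: existence.} Solvability follows from the estimate by Fredholm alternative on the orthogonal subspace of $\mathcal H$. The nonlocal part is compact relative to $\int|\Delta\cdot|^2$ by Lemma \ref{lem:HLS-domain} and Lemma \ref{lem:Adams-consequence}, so the a priori bound gives uniqueness and hence existence.

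The step I expect to be delicate is Step 2's treatment of the constant negative mode. There, the Navier boundary condition and the capacity of a ball of radius $\mu^{-1}$ are needed to show that the average of $\phi_n$ on each core vanishes in the limit. The fallback is to prove it via the integrated equation and the explicit far-field $\phi_n\approx a_i b_\alpha^{-1}V_{\rho_i,\xi_i}$.
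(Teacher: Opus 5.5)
\textbf{Step 4 has a genuine gap at the necks.} Take $x$ with $R/\mu_i<|x-\xi_i|<\rho$. The Green representation of $\phi_n$ then contains the monopole term $-\frac{1}{8\pi^2}\log|x-\xi_i|\,q_n$, where $q_n=\int_{B_\rho(\xi_i)}g_n$ and $g_n=\Delta^2\phi_n$. Step 3 and dominated convergence give only $q_n=o(1)$. But $|\log|x-\xi_i||$ is of order $|\log\varepsilon_n|$ at the inner end of the neck. So ``a weight of total mass $o(1)$ against the logarithmic kernel'' yields $o(1)\,|\log\varepsilon_n|$, not $o(1)$.

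Nothing in your argument gives $q_n=o(|\log\varepsilon_n|^{-1})$. The contribution of $h_n$ to $q_n$ is only $O(\|h_n\|_{**})$, with no rate. The $\mathcal H$-smallness you seek in Step 2 would not help either, because a logarithmic plateau is invisible in the energy norm. For example, $-b_\alpha^{-1}V_{\rho,\xi}$ has $\mathcal H$-norm $O(|\log\rho|^{-1/2})$ and sup norm $1$.

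The missing idea is the one in Lemma~\ref{lem:remove-defects}:
\begin{itemize}
\item Evaluate the Green representation at a fixed rescaled core point, where $\phi_n\to0$, and combine this with the outer convergence. This gives $|\log\mu_n|\,|q_n(\rho)|\to0$, as in \eqref{eq:neck-charge}.
\item Control the spherical mean by integrating the radial flux identity \eqref{eq:neck-radial-flux}, using the Navier data at both ends of the neck.
\item Control the nonzero spherical modes by the annular biharmonic interpolation estimate.
\end{itemize}
Only after this does $\|\phi_n\|_*\to0$ follow.

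\textbf{Step 2 is unnecessary, and as written it is unjustified.} The constant mode is not in the kernel of $\mathcal L_\infty$, since $\mathcal L_\infty 1=-2F$. The blow-up therefore needs only the kernel classification, not coercivity, and the negative direction never has to be confronted. Test the equation with $\phi$ and use orthogonality to remove the multiplier term. This gives $\|\phi\|_{\mathcal H}^2=\mathcal B_\varepsilon(\phi)+\int_\Omega h\phi$, where $\mathcal B_\varepsilon(\phi)$ collects the two nonlocal linearized terms. Bounded total nonlinear mass gives $|\mathcal B_\varepsilon(\phi)|\le C\|\phi\|_*^2$, hence $\|\phi\|_{\mathcal H}\le C(\|\phi\|_*+\|h\|_{**})$. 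So with your normalization, the contradiction already follows from $\|\phi_n\|_*\to0$ and $c^n_{i,k}\to0$.

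Your claimed coercivity of $\mathcal Q_\varepsilon$ on functions with small core moments does not follow from Lemma~\ref{lem:weighted-low-mode-control} and \eqref{eq:limit-coercivity}. Those are whole-space statements. Localizing $\int|\Delta\phi|^2$ to the cores in dimension four produces cutoff errors that are not small for standard cutoffs. The paper needs the logarithmic partition of Lemma~\ref{lem:quadratic-localization} and the argument of Proposition~\ref{prop:normal-inertia} to prove such a statement. Drop Step 2, normalize $\|\phi_n\|_*=1$, and put the work into the neck analysis.
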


\begin{proof}
We first estimate the multipliers. Test
\eqref{eq:projected-linear-problem} with
$\widehat Z_{j,\ell}$. The operator $L_\varepsilon$ is symmetric.
Hence
\begin{equation*}
\int_\Omega L_\varepsilon\phi\,\widehat Z_{j,\ell}\,dx
=
\int_\Omega\phi L_\varepsilon\widehat Z_{j,\ell}\,dx.
\end{equation*}
Lemma \ref{lem:test-functions} gives
\begin{equation*}
\left|
\int_\Omega\phi L_\varepsilon\widehat Z_{j,\ell}\,dx
\right|
=o(|\log\varepsilon|^{-1/2})\|\phi\|_{\mathcal H}.
\end{equation*}
Since
\begin{equation}\label{eq:theta-L1}
\int_\Omega\Theta(x)\,dx\leq C,
\end{equation}
one also has
\begin{equation*}
\left|\int_\Omega h\widehat Z_{j,\ell}\,dx\right|
\leq C\|h\|_{**}.
\end{equation*}
The multiplier matrix is
\begin{equation}\label{eq:linear-multiplier-matrix}
\mathcal A_{(i,k),(j,\ell)}
:=
\int_\Omega
\chi_iF_{\mu_i,\xi_i}Z_{i,k}\widehat Z_{j,\ell}\,dx.
\end{equation}
Its off-diagonal peak blocks tend to zero. On a diagonal block,
rescaling and $R_\varepsilon\to\infty$ give
\begin{equation*}
\mathcal A_{(i,k),(i,\ell)}
=
\int_{\mathbb R^4}FZ_kZ_\ell\,dz+o(1).
\end{equation*}
Lemma \ref{lem:kernel-Gram} implies uniform invertibility. Therefore
\begin{equation}\label{eq:coefficient-preestimate}
\sum_{i,k}|c_{i,k}|
\leq C\|h\|_{**}
+o(|\log\varepsilon|^{-1/2})\|\phi\|_{\mathcal H}.
\end{equation}

We next derive an energy bound in terms of $\|\phi\|_*$.
Testing \eqref{eq:projected-linear-problem} with $\phi$ gives
\begin{equation*}
\|\phi\|_{\mathcal H}^2
=\mathcal B_\varepsilon(\phi)
+\int_\Omega h\phi\,dx
+\sum_{i,k}c_{i,k}
\int_\Omega\chi_iF_{\mu_i,\xi_i}Z_{i,k}\phi\,dx,
\end{equation*}
where $\mathcal B_\varepsilon(\phi)$ is the sum of the two nonlocal
linearized terms. The total nonlinear mass is uniformly bounded.
Thus the Hardy--Littlewood--Sobolev inequality gives
\begin{equation}\label{eq:linear-nonlocal-energy-bound}
|\mathcal B_\varepsilon(\phi)|
\leq C\|\phi\|_*^2.
\end{equation}
By \eqref{eq:theta-L1},
\begin{equation}\label{eq:linear-source-energy-bound}
\left|\int_\Omega h\phi\,dx\right|
\leq C\|h\|_{**}\|\phi\|_*.
\end{equation}
The multiplier term vanishes exactly by the orthogonality
conditions \eqref{eq:orthogonality}. Combining
\eqref{eq:linear-nonlocal-energy-bound} and
\eqref{eq:linear-source-energy-bound}, and then using Young's
inequality, yields
\begin{equation}\label{eq:linear-H-from-Linfty}
\|\phi\|_{\mathcal H}
\leq C\bigl(\|\phi\|_*+\|h\|_{**}\bigr).
\end{equation}

Suppose that the $L^\infty$ estimate is false. Then there are
$\varepsilon_n\to0$, admissible parameters, and solutions such that
\begin{equation}\label{eq:linear-contradiction-normalization}
\|\phi_n\|_*=1,
\qquad
\|h_n\|_{**}\to0.
\end{equation}
Equations \eqref{eq:linear-H-from-Linfty} and
\eqref{eq:coefficient-preestimate} give a uniform $\mathcal H$ bound
and $c_{i,k,n}\to0$.

Rescale around the $i$-th center:
\begin{equation*}
\widetilde\phi_{i,n}(z)
:=\phi_n(\xi_{i,n}+z/\mu_{i,n}).
\end{equation*}
Local $W^{4,p}$ estimates, valid for every finite $p$, give
convergence in $C^{3,\beta}_{\rm loc}(\mathbb R^4)$ for every
$0<\beta<1$, after passage to a subsequence. Lemma
\ref{lem:weighted-Riesz} permits passage to the nonlocal terms. The
limit is bounded and solves
$\mathcal L_\infty\widetilde\phi_i=0$. The orthogonality conditions
give
\begin{equation*}
\int_{\mathbb R^4}FZ_k\widetilde\phi_i\,dz=0,
\qquad 0\leq k\leq4.
\end{equation*}
Theorem \ref{thm:limit-input} and Lemma \ref{lem:kernel-Gram} imply
$\widetilde\phi_i=0$.

On compact subsets away from the limiting centers, the Green
representation and the uniform $\mathcal H$ bound give compactness in
$C^{3,\beta}$. The source measures are uniformly bounded. Any weak
limit is biharmonic away from the centers and may contain only point
masses at those centers. A nonzero point mass would generate a
nonzero multiple of $-(8\pi^2)^{-1}\log|x-\xi_i|$, which is
incompatible with the uniform $L^\infty$ normalization. Thus the
limit solves the homogeneous Navier problem on all of $\Omega$ and is
zero. Lemma \ref{lem:remove-defects} then excludes maxima escaping
through the annular necks. It follows that
$\|\phi_n\|_\infty\to0$, contrary to
\eqref{eq:linear-contradiction-normalization}. Thus
\begin{equation*}
\|\phi\|_*\leq C\|h\|_{**}.
\end{equation*}
Finally, \eqref{eq:linear-H-from-Linfty} and
\eqref{eq:coefficient-preestimate} give
\eqref{eq:linear-estimate}.
\end{proof}

The a priori estimate yields solvability and smooth parameter dependence.

\begin{proposition}\label{prop:linear-solvability}
For small $\varepsilon$, problem \eqref{eq:projected-linear-problem} has a unique solution. The solution operator is bounded from the $\|\cdot\|_{**}$ space to
$\mathcal H\cap L^\infty(\Omega)$. If the source and the parameters are $C^2$, then the solution, the multipliers, and the orthogonal projections are $C^2$ in
$(\boldsymbol t,\boldsymbol\xi)$. After the natural rescaling of position derivatives, the first and second parameter derivatives satisfy the estimate in Proposition \ref{prop:linear-estimate}.
\end{proposition}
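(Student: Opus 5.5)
We derive existence and uniqueness from the a priori bound of Proposition \ref{prop:linear-estimate} through a Fredholm argument on a closed subspace of $\mathcal H$. Define
\[
\mathcal K:=\Bigl\{\phi\in\mathcal H:\int_\Omega\chi_iF_{\mu_i,\xi_i}Z_{i,k}\phi\,dx=0\ \text{for all }i,k\Bigr\},
\]
and let $\Pi$ denote the $\mathcal H$-orthogonal projection onto $\mathcal K$. The $\mathcal H$-orthogonal complement of $\mathcal K$ is spanned by the Navier solutions $Y_{i,k}$ of $\Delta^2Y_{i,k}=\chi_iF_{\mu_i,\xi_i}Z_{i,k}$. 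With this notation, \eqref{eq:projected-linear-problem} is equivalent to finding $\phi\in\mathcal K$ with
\[
\phi-\Pi\,T_\varepsilon\phi=\Pi\,(\Delta^2)^{-1}h,
\]
where $T_\varepsilon\phi:=(\Delta^2)^{-1}\bigl[\text{the two nonlocal terms in }\eqref{eq:linearized-W}\bigr]$ and $(\Delta^2)^{-1}$ is the Navier inverse. Once $\phi$ is found, the multipliers $c_{i,k}$ are recovered from the component along $\operatorname{span}\{Y_{i,k}\}$, using the invertibility of the Gram matrix of the $Y_{i,k}$ (Lemma \ref{lem:kernel-Gram}).

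The operator $T_\varepsilon$ is compact on $\mathcal H$. For fixed $\varepsilon$, the weights $\varepsilon^{8-\alpha}Ke^W$ and $\int_\Omega K e^W|x-y|^{-\alpha}\,dy$ are bounded. The compact embedding $\mathcal H\hookrightarrow L^q$, together with Lemma \ref{lem:HLS-domain}, then gives compactness. By the Fredholm alternative it suffices to show that the homogeneous problem has only the trivial solution. This follows from \eqref{eq:linear-estimate} with $h=0$. The bound requires $\phi\in L^\infty$, and this holds because any $\mathcal H$ solution has a source in $L^p$ for $p$ large by Lemma \ref{lem:Adams-consequence}, so elliptic regularity gives boundedness. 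Hence the solution exists, is unique, and is bounded from the $\|\cdot\|_{**}$ space to $\mathcal H\cap L^\infty$, again by \eqref{eq:linear-estimate}.

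For parameter dependence, we argue with the implicit function theorem. The map $(\boldsymbol t,\boldsymbol\xi,\phi,c)\mapsto$ (equation residual, orthogonality constraints) is $C^2$ because $W$, $F_{\mu_i,\xi_i}$, $Z_{i,k}$, and $\chi_i$ depend smoothly on the parameters. Its differential in $(\phi,c)$ is the invertible operator just constructed. Differentiating the equation with respect to $t_j$ gives
\[
L_\varepsilon(\partial_{t_j}\phi)=\partial_{t_j}h-(\partial_{t_j}L_\varepsilon)\phi+\sum c_{i,k}\,\partial_{t_j}\bigl(\chi_iF_{\mu_i,\xi_i}Z_{i,k}\bigr)+\sum\partial_{t_j}c_{i,k}\,\chi_iF_{\mu_i,\xi_i}Z_{i,k}.
\]
The derivative $\partial_{t_j}\phi$ does not satisfy the orthogonality conditions. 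Differentiating those conditions shows that its defect is $O(\|\phi\|_*)$. We correct it by subtracting $\sum_{i,k}\beta_{i,k}\widehat Z_{i,k}$ with $|\beta_{i,k}|\le C\|\phi\|_*$, which is possible by Lemma \ref{lem:test-functions} and the invertibility of \eqref{eq:linear-multiplier-matrix}. The error this introduces is controlled through \eqref{eq:test-L-small}.

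\textbf{Main obstacle.} The delicate step is checking that the right-hand side lies in the $\|\cdot\|_{**}$ space with norm $C(\|h\|_{**,1}+\|\phi\|_*)$. The relevant bounds come from \eqref{eq:bubble-parameter-derivative-bounds}: $(\partial_{t_j}L_\varepsilon)\phi$ is bounded by $C\Theta\|\phi\|_*$, and $\mu_j^{-1}\partial_{\xi_{j,\ell}}$ of the coefficients is bounded by the bubble weight. One then applies Proposition \ref{prop:linear-estimate} to the corrected derivative.

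The hard part is the dilation correction, since $\widehat Z_{i,0}$ only satisfies \eqref{eq:test-L-small} with an $o(|\log\varepsilon|^{-1/2})$ error rather than an $\|\cdot\|_{**}$ bound. For this reason I would instead subtract the truncated kernel functions $\chi_iZ_{i,k}$, cut off at scale $\rho_{i,\varepsilon}$. In the $\mathcal H$ estimate, the mismatch can be absorbed exactly as in \eqref{eq:coefficient-preestimate}.

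Second derivatives are handled by repeating the same procedure once more.
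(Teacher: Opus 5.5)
Your existence and uniqueness argument is correct and is the paper's argument in more explicit form: the system is a compact perturbation of the Navier isomorphism on the constrained space, and it is injective by Proposition \ref{prop:linear-estimate}. Your remark that an $\mathcal H$-solution is automatically bounded fills a point the paper leaves implicit: $\mathcal H\hookrightarrow L^p$ for every finite $p$, and the coefficients are bounded at fixed $\varepsilon$, so $\phi\in W^{4,p}$. The implicit function theorem, applied to the map that includes the constraints, gives the qualitative $C^2$ dependence. (A small correction: at fixed $\varepsilon$, invertibility of the $\mathcal H$-Gram matrix of the $Y_{i,k}$ is just linear independence; it is not the matrix of Lemma \ref{lem:kernel-Gram}.)

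The gap is in the uniform derivative bound, at exactly the step you call the hard part. Your replacement correction, the kernel functions truncated at $\rho_{i,\varepsilon}=R_\varepsilon/\mu_i$, does not have an $L_\varepsilon$-image bounded in $\|\cdot\|_{**}$.

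\begin{itemize}
\item For $k=0$: on the annulus $\rho_{i,\varepsilon}<|x-\xi_i|<2\rho_{i,\varepsilon}$, the commutator term $Z_{i,0}\,\Delta^2[\zeta(|x-\xi_i|/\rho_{i,\varepsilon})]$ has size $\mu_i^4R_\varepsilon^{-4}$. There $\Theta\sim\mu_i^4R_\varepsilon^{-(8-\tau)}$, so the $\|\cdot\|_{**}$ norm is of order $R_\varepsilon^{4-\tau}\to\infty$.
\item For $k\geq1$ the same computation gives order $R_\varepsilon^{3-\tau}$.
\item The translation tests $\chi_iZ_{i,k}$ of your first attempt are worse still: their commutator is $O(\mu_i^{-1})$ at $|x-\xi_i|\sim\eta/4$, where $\Theta\sim\mu_i^{-4+\tau}$.
\end{itemize}

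Since $\beta_{i,k}=O(\|\phi\|_*)$ is not small, you get at best a bound carrying a factor $R_\varepsilon^{4-\tau}$, not the claimed uniform estimate. Absorbing the mismatch as in \eqref{eq:coefficient-preestimate} does not repair this. There the small factor multiplies the unknown; here it multiplies data. Moreover, the $L^\infty$ half of Proposition \ref{prop:linear-estimate} is a blow-up argument that needs the source in the $\|\cdot\|_{**}$ space, and a source that is merely small in $\mathcal H'$ gives no $L^\infty$ control in dimension four.

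Either of two repairs works.

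\begin{itemize}
\item Truncate at a fixed rescaled radius $R_0$ instead. Then $\psi_{i,k}=\zeta(\mu_i|x-\xi_i|/R_0)Z_{i,k}$ satisfies $\|L_\varepsilon\psi_{i,k}\|_{**}\leq C_{R_0}$. Its biharmonic part is supported where $\Theta\geq c_{R_0}\mu_i^4$, and both nonlocal parts are bounded by $\|\psi_{i,k}\|_\infty\mathcal N_\varepsilon(W)\leq C\Theta$. Its moment matrix is $\int F\zeta(|z|/R_0)Z_kZ_\ell\,dz$, which is diagonal and positive by parity.
\item Alternatively, skip the correction altogether, which is what the paper's ``terms produced by the differentiated constraints'' amount to. The proof of Proposition \ref{prop:linear-estimate} works verbatim with inhomogeneous constraints $\int_\Omega\chi_iF_{\mu_i,\xi_i}Z_{i,k}\phi=g_{i,k}$ and yields $\|\phi\|_*+\|\phi\|_{\mathcal H}+|c|\leq C(\|h\|_{**}+|g|)$. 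The energy identity only acquires the term $\sum c_{i,k}g_{i,k}$, and the blow-up limits still have vanishing moments when $g\to0$.
\end{itemize}
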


\begin{proof}
For fixed parameters, let $\mathcal X(\boldsymbol t,\boldsymbol\xi)$
be the subspace defined by \eqref{eq:orthogonality}. The Navier
biharmonic operator is an isomorphism from $\mathcal H$ to
$\mathcal H'$. The two nonlocal terms are compact from $\mathcal H$
to $\mathcal H'$. This follows from the compact embedding
$\mathcal H\hookrightarrow L^q(\Omega)$ for every finite $q$ and
the HLS inequality. Hence the projected system on
$\mathcal X(\boldsymbol t,\boldsymbol\xi)\times\mathbb R^{5m}$ is
Fredholm of index zero. Proposition \ref{prop:linear-estimate}
excludes a nonzero homogeneous solution. Fredholm's alternative gives
existence and uniqueness.

We record the parameter argument. Let $p$ be one of the $t_i$ or a
normalized position coordinate. Differentiate the equation and the
orthogonality conditions. The derivative $(\partial_p\phi,
\partial_pc)$ solves the same projected operator with source
\begin{equation*}
\partial_ph-(\partial_pL_\varepsilon)\phi
+\text{terms produced by the differentiated constraints}.
\end{equation*}
Proposition \ref{prop:residual}, the bubble derivative bounds
\eqref{eq:bubble-parameter-derivative-bounds}, and Lemma
\ref{lem:weighted-Riesz} show that this source has uniformly bounded
$\|\cdot\|_{**}$ norm. Proposition
\ref{prop:linear-estimate} therefore controls the first derivative.

Differentiate once more. The second derivative solves the same
projected operator. Its source contains
\begin{equation*}
\partial_{pq}^2h
-(\partial_{pq}^2L_\varepsilon)\phi
-(\partial_pL_\varepsilon)\partial_q\phi
-(\partial_qL_\varepsilon)\partial_p\phi
\end{equation*}
and the twice differentiated constraints. The first-derivative
estimate and the same weighted bounds control every term. A position
derivative is multiplied by $\mu_i^{-1}$, and a second position
derivative by the product of the corresponding factors. Applying
Proposition \ref{prop:linear-estimate} again gives the second
derivative estimate.

Equivalently, identify the moving orthogonal spaces with a fixed
reference space by the uniformly invertible Gram projection. The
projected operator is a $C^2$ family of isomorphisms. The standard
inverse-map theorem then gives the asserted $C^2$ dependence.
\end{proof}

\section{The nonlinear projected problem}

Put
\begin{equation*}
Q_\varepsilon(\phi)
:=\mathcal N_\varepsilon(W+\phi)
-\mathcal N_\varepsilon(W)-D\mathcal N_\varepsilon(W)[\phi].
\end{equation*}
We solve
\begin{equation}\label{eq:projected-nonlinear-problem}
\begin{cases}
\displaystyle
L_\varepsilon\phi
=-R_{\boldsymbol t,\boldsymbol\xi}+Q_\varepsilon(\phi)
+\displaystyle\sum_{i=1}^{m}\sum_{k=0}^{4}
 c_{i,k}\chi_iF_{\mu_i,\xi_i}Z_{i,k}
&\text{in }\Omega,\\
\phi=\Delta\phi=0&\text{on }\partial\Omega,\\
\phi\text{ satisfies }\eqref{eq:orthogonality}.&
\end{cases}
\end{equation}

We record the quadratic estimates for the nonlinear remainder.

\begin{lemma}\label{lem:nonlinear-estimates}
There is $C>0$ such that, if $\|\phi\|_*+\|\psi\|_*\leq1$, then
\begin{equation}\label{eq:quadratic-estimate}
\|Q_\varepsilon(\phi)\|_{**}
\leq C\|\phi\|_*^2,
\end{equation}
and
\begin{equation}\label{eq:Lipschitz-estimate}
\|Q_\varepsilon(\phi)-Q_\varepsilon(\psi)\|_{**}
\leq C(\|\phi\|_*+\|\psi\|_*)\|\phi-\psi\|_*.
\end{equation}
The same estimates hold after one or two parameter derivatives, with the natural scaling for the position variables.
\end{lemma}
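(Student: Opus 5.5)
We prove \eqref{eq:quadratic-estimate} and \eqref{eq:Lipschitz-estimate} by writing $Q_\varepsilon$ as an integral remainder of the product structure of $\mathcal N_\varepsilon$. Put
\[
a(x)=\varepsilon^{(8-\alpha)/2}K(x)e^{W(x)},
\qquad
\mathcal I[f](x)=\int_\Omega\frac{f(y)}{|x-y|^\alpha}\,dy .
\]
Then $\mathcal N_\varepsilon(W+\phi)=a e^{\phi}\,\mathcal I[ae^{\phi}]$. Set $g(\phi)=e^\phi-1-\phi$. Expanding the bilinear form $(f_1,f_2)\mapsto f_1\mathcal I[f_2]$ gives
\[
Q_\varepsilon(\phi)=a\,g(\phi)\,\mathcal I[ae^\phi]+a\,\mathcal I[a\,g(\phi)]+a\phi\,\mathcal I[a\phi].
\]
Since $\|\phi\|_*\le1$, we have the pointwise bounds $|g(\phi)|\le C\phi^2\le C\|\phi\|_*^2$ and $|e^\phi|\le e$. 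It therefore suffices to prove one weighted estimate for the bilinear map:
\begin{equation*}
\bigl\|\,a f_1\,\mathcal I[a f_2]\,\bigr\|_{**}\le C\|f_1\|_\infty\|f_2\|_\infty .
\end{equation*}
Applying it with $(f_1,f_2)=(g(\phi),e^\phi)$, $(1,g(\phi))$ and $(\phi,\phi)$ gives \eqref{eq:quadratic-estimate}.

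The weighted estimate reduces to the bound
\[
|a(x)\,\mathcal I[|a|](x)|\le C\,\mathcal N_\varepsilon(W)(x)\le C\,\Theta(x),
\]
and this is exactly what the proof of Proposition~\ref{prop:residual} establishes. In the inner core, \eqref{eq:core-nonlinearity} together with $|t_i|\le t_0$ gives $\mathcal N_\varepsilon(W)\le CF_{\mu_i,\xi_i}\le C\Theta$. In the middle region the bound $e^W\le Ce^{A_i}e^{U_{\mu_i,\xi_i}}$ combined with \eqref{eq:cross-potential-detailed} gives $C\mu_i^4(1+\mu_i|x-\xi_i|)^{-8}$, which is dominated by $\Theta$ because $8-\tau<8$. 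In the outer region one has $\mathcal N_\varepsilon(W)=O(\varepsilon^4)$, and $\Theta\ge c\varepsilon^{4-\tau}$ there. The positivity of $a$ is what lets us pull $\|f_2\|_\infty$ out of $\mathcal I$. This is why we work in the $L^\infty$ norm $\|\cdot\|_*$ rather than $\mathcal H$: no HLS or Adams step is needed.

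For \eqref{eq:Lipschitz-estimate} we write $Q_\varepsilon(\phi)-Q_\varepsilon(\psi)$ in the same three-term form and telescope each bilinear term. We use the mean value bounds
\[
|g(\phi)-g(\psi)|\le C(|\phi|+|\psi|)|\phi-\psi|,\qquad |e^\phi-e^\psi|\le C|\phi-\psi|,
\]
and the identity
\[
\phi\,\mathcal I[a\phi]-\psi\,\mathcal I[a\psi]=(\phi-\psi)\mathcal I[a\phi]+\psi\,\mathcal I[a(\phi-\psi)].
\]
Every resulting term has the form $a f_1\mathcal I[af_2]$ with $\|f_1\|_\infty\|f_2\|_\infty\le C(\|\phi\|_*+\|\psi\|_*)\|\phi-\psi\|_*$. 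The one exception is the term $\bigl(g(\phi)-g(\psi)\bigr)\mathcal I[ae^\phi]$ together with its partner, where the factor $e^\phi$ is only bounded; there the smallness comes from $g$, which still gives the product bound above. The bilinear weighted estimate then concludes.

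For the parameter derivatives we differentiate $a$ in $t_i$ and $\xi_{i,\ell}$, with $\phi$ held fixed in this lemma. By \eqref{eq:bubble-parameter-derivative-bounds} and the analogous bounds for $e^{U}$, the derivatives satisfy $|\partial_{t_i}a|\le C a$ and $\mu_i^{-1}|\partial_{\xi_{i,\ell}}a|\le C a$ up to the bubble weight. More precisely, we use the differentiated projection expansions of Lemmas~\ref{lem:projection-expansion} and \ref{lem:pair-interaction}, which show that $\partial W$ grows at most like $C(1+|z|)$ in core variables. Since the differentiated quantities are not pointwise dominated by $a$ itself, the main obstacle is checking that the differentiated versions of $a\,\mathcal I[a]$ are still controlled by $\Theta$, not by $\Theta$ times a growing factor. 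We handle this with the same inner/middle/outer splitting used for \eqref{eq:residual-t-derivative}--\eqref{eq:residual-xi-derivative}:
\begin{itemize}
\item in the inner region the factor $(1+|z|)$ is absorbed, using Lemma~\ref{lem:weighted-Riesz} for the differentiated kernels;
\item elsewhere the loss $(1+|z|)^{\le2}$ is compensated by the margin between the decay $(1+|z|)^{-8}$ and the weight exponent $8-\tau$, together with the choice of $\delta$.
\end{itemize}
If derivatives of $\phi$ are also tracked, as in the reduced equation, the chain rule produces additional terms in which $\partial_p\phi$ replaces $\phi$. These terms have the same bilinear form and obey the same estimates.
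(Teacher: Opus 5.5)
Your proof of \eqref{eq:quadratic-estimate} and \eqref{eq:Lipschitz-estimate} is the paper's argument. You dominate $Q_\varepsilon(\phi)$ pointwise by $C\|\phi\|_*^2\,\mathcal N_\varepsilon(W)$ using positivity of $a$ and of the kernel, and then use $\mathcal N_\varepsilon(W)\le C\Theta$. The paper gets this last bound in one line from Proposition \ref{prop:residual} and $\Delta^2W=\sum_iF_{\mu_i,\xi_i}$; your region-by-region check is equivalent. Your three-term identity is slightly off, though. Expanding $e^{\phi(x)+\phi(y)}-1-\phi(x)-\phi(y)$ gives
\[
Q_\varepsilon(\phi)=a\,g(\phi)\,\mathcal I[ae^{\phi}]+a(1+\phi)\,\mathcal I[a\,g(\phi)]+a\phi\,\mathcal I[a\phi],
\]
so you dropped the cubic term $a\phi\,\mathcal I[a\,g(\phi)]$. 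The symmetric form \eqref{eq:nonlinear-remainder-exact} avoids this bookkeeping. The missing term has the same bilinear structure and is $O(\|\phi\|_*^3)$, so nothing breaks.

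The parameter-derivative part, however, contains a step that fails. You assert that the differentiated coefficients are not pointwise dominated by $a$. You then propose to absorb a loss $(1+|z|)^{\le2}$ through the gap between the decay exponent $8$ and the weight exponent $8-\tau$. Since $\tau<1$, not even one power can be absorbed, in any region:
\begin{itemize}
\item On the middle region, $\mu_i^4(1+|z|)^{-7}/\Theta\sim(1+|z|)^{1-\tau}$, which is unbounded because $|z|$ reaches order $\varepsilon^{-1}$.
\item The outer region would give $\varepsilon^{3}$ against $\Theta\sim\varepsilon^{4-\tau}$.
\item In the inner region the choice of $\delta$ does not help. Here you need $C\Theta$, not $\varepsilon^\tau\Theta$. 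Unlike in Proposition \ref{prop:residual}, the factor $(1+|z|)$ comes without a compensating $\mu_i^{-1}$.
\end{itemize}

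Fortunately the premise is false. By the parameter statements in Lemmas \ref{lem:projection-expansion} and \ref{lem:pair-interaction}, uniformly on $\overline\Omega$,
\[
\partial_{t_i}W=Z_{i,0}+b_\alpha+O(\mu_i^{-2}),\qquad \mu_i^{-1}\partial_{\xi_{i,\ell}}W=\pm Z_{i,\ell}+O(\mu_i^{-1}).
\]
The $O(\mu_i^{-1})$ collects the $\xi$-dependence of the scales $\mu_j$ and of the Green terms. The normalized second derivatives are likewise bounded, so none of these grow in $|z|$. Hence $|\partial_pa|+|\partial^2_{pq}a|\le Ca$ pointwise, with the $\mu_i^{-1}$ normalization. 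This is the analogue for $e^W$ of \eqref{eq:bubble-parameter-derivative-bounds}. Positivity then gives directly
\[
\bigl|\partial_p\bigl(af_1\,\mathcal I[af_2]\bigr)\bigr|\le C\|f_1\|_\infty\|f_2\|_\infty\,\mathcal N_\varepsilon(W)\le C\|f_1\|_\infty\|f_2\|_\infty\,\Theta,
\]
with no region splitting. With this replacement, and your remark on the terms containing $\partial_p\phi$, the differentiated estimates follow exactly as the undifferentiated ones.
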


\begin{proof}
Write
\[
E(s)=e^s-1-s.
\]
For $|s|,|t|\leq1$,
\begin{equation}\label{eq:scalar-exponential-remainders}
|E(s)|\leq Cs^2,
\qquad
|E(s)-E(t)|\leq C(|s|+|t|)|s-t|.
\end{equation}
Using the symmetry of the two exponential factors, we have
\begin{equation}\label{eq:nonlinear-remainder-exact}
\begin{aligned}
\mathcal Q_\varepsilon(\phi)(x)
={}&\varepsilon^{8-\alpha}K(x)e^{W(x)}
\int_\Omega\frac{K(y)e^{W(y)}}{|x-y|^\alpha}\\
&\times\bigl[E(\phi(x))+E(\phi(y))
+(e^{\phi(x)}-1)(e^{\phi(y)}-1)\bigr]dy.
\end{aligned}
\end{equation}
Thus, for $\|\phi\|_*\leq1$,
\[
|\mathcal Q_\varepsilon(\phi)(x)|
\leq C\|\phi\|_*^2\mathcal N_\varepsilon(W)(x).
\]
Proposition \ref{prop:residual} and
$\Delta^2W=\sum_iF_{\mu_i,\xi_i}$ give
\begin{equation*}
\mathcal N_\varepsilon(W)(x)\leq C\Theta(x).
\end{equation*}
This proves \eqref{eq:quadratic-estimate}.

Subtract two copies of \eqref{eq:nonlinear-remainder-exact}. Apply the
second inequality in \eqref{eq:scalar-exponential-remainders} and
$|e^s-e^t|\leq C|s-t|$. We obtain
\[
|\mathcal Q_\varepsilon(\phi)-\mathcal Q_\varepsilon(\psi)|
\leq C(\|\phi\|_*+\|\psi\|_*)
\|\phi-\psi\|_*\Theta.
\]
This is \eqref{eq:Lipschitz-estimate}.

A parameter derivative of \eqref{eq:nonlinear-remainder-exact}
contains a derivative of $W$, of the scale, or of the correction.
After multiplying a center derivative by $\mu_i^{-1}$, all bubble
derivatives are bounded by a fixed polynomial in the core variable
times the weight in \eqref{eq:error-weight}. The first and second
parameter estimates in Propositions \ref{prop:residual} and
\ref{prop:linear-solvability} therefore apply term by term. This proves
the differentiated estimates.
\end{proof}

The projected nonlinear problem can now be solved uniformly.

\begin{proposition}\label{prop:nonlinear-correction}
There are $t_0>0$, $\varepsilon_0>0$, and $C>0$ such that, for every admissible configuration and every $|\boldsymbol t|\leq t_0$, problem \eqref{eq:projected-nonlinear-problem} has a unique solution
$\phi_\varepsilon(\boldsymbol t,\boldsymbol\xi)$ in the ball
\[
\|\phi\|_*+\|\phi\|_\mathcal H
\leq C(\varepsilon^\tau+|\boldsymbol t|).
\]
It satisfies
\begin{equation}\label{eq:correction-estimate}
\|\phi_\varepsilon\|_*+\|\phi_\varepsilon\|_\mathcal H
+\sum_{i,k}|c_{i,k}|
\leq C(\varepsilon^\tau+|\boldsymbol t|).
\end{equation}
If
$|\boldsymbol t|\leq C\varepsilon^\tau|\log\varepsilon|$, then
\begin{equation}\label{eq:correction-sharp}
\|\phi_\varepsilon\|_*+\|\phi_\varepsilon\|_\mathcal H
+\sum_{i,k}|c_{i,k}|
\leq C\varepsilon^\tau|\log\varepsilon|.
\end{equation}
The map $(\boldsymbol t,\boldsymbol\xi)\mapsto\phi_\varepsilon$ is $C^2$. Its first and second derivatives satisfy the bounds obtained by differentiating \eqref{eq:correction-estimate}; every position derivative is measured after multiplication by the corresponding factor $\mu_i^{-1}$.
\end{proposition}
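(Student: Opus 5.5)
We will solve \eqref{eq:projected-nonlinear-problem} as a fixed point problem built on the linear theory. Let $\mathcal T=\mathcal T_{\boldsymbol t,\boldsymbol\xi}$ denote the solution operator of Proposition \ref{prop:linear-solvability}. It sends $h$ to the unique solution $\phi$ of \eqref{eq:projected-linear-problem}, and its bound is
\[
\|\mathcal T h\|_*+\|\mathcal T h\|_{\mathcal H}\le C_1\|h\|_{**}.
\]
Problem \eqref{eq:projected-nonlinear-problem} is then equivalent to
\[
\phi=\mathcal A(\phi):=\mathcal T\bigl(-R_{\boldsymbol t,\boldsymbol\xi}+Q_\varepsilon(\phi)\bigr),
\]
posed on the closed set
\[
\mathcal B=\bigl\{\phi\in\mathcal H\cap L^\infty:\ \phi \text{ satisfies } \eqref{eq:orthogonality},\ \|\phi\|_*+\|\phi\|_{\mathcal H}\le \Lambda(\varepsilon^\tau+|\boldsymbol t|)\bigr\}.
\]
Here $\Lambda$ is a large constant that we fix below. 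We equip $\mathcal B$ with the norm $\|\cdot\|_*+\|\cdot\|_{\mathcal H}$, which makes it complete.

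\textbf{Residual bound.} From the decomposition in Proposition \ref{prop:residual} and $|1-e^{2b_\alpha t_i}|\le C|t_i|$ we get
\[
\|R_{\boldsymbol t,\boldsymbol\xi}\|_{**}\le C(\varepsilon^\tau+|\boldsymbol t|),
\]
using also $F_{\mu_i,\xi_i}\le C\Theta$.

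\textbf{Contraction.} Lemma \ref{lem:nonlinear-estimates} gives, for $\phi\in\mathcal B$,
\[
\|\mathcal A(\phi)\|_*+\|\mathcal A(\phi)\|_{\mathcal H}\le C_1\bigl[C(\varepsilon^\tau+|\boldsymbol t|)+C\Lambda^2(\varepsilon^\tau+|\boldsymbol t|)^2\bigr].
\]
We choose $\Lambda=2C_1C$, then $t_0$ and $\varepsilon_0$ small so that $C\Lambda(\varepsilon^\tau+t_0)\le\tfrac12$. With these choices $\mathcal A$ maps $\mathcal B$ into itself. By \eqref{eq:Lipschitz-estimate},
\[
\|\mathcal A(\phi)-\mathcal A(\psi)\|\le 2C_1C\Lambda(\varepsilon^\tau+t_0)\|\phi-\psi\|_*,
\]
which is a contraction after shrinking $t_0,\varepsilon_0$ once more. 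Banach's theorem then gives existence and uniqueness in $\mathcal B$.

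The multipliers come from the linear estimate \eqref{eq:linear-estimate} applied with $h=-R+Q_\varepsilon(\phi)$; this proves \eqref{eq:correction-estimate}. When $|\boldsymbol t|\le C\varepsilon^\tau|\log\varepsilon|$, we use \eqref{eq:residual-small} in place of the rough residual bound. The quadratic term is then $O(\varepsilon^{2\tau}|\log\varepsilon|^2)$, which is smaller than the residual, so we obtain \eqref{eq:correction-sharp}.

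\textbf{Regularity in the parameters.} We apply the implicit function theorem on a parameter-independent space. Following the end of the proof of Proposition \ref{prop:linear-solvability}, we use the uniformly invertible Gram projection to identify the moving constraint spaces $\mathcal X(\boldsymbol t,\boldsymbol\xi)$ with a fixed reference space. On that space consider
\[
\Phi(\boldsymbol t,\boldsymbol\xi,\phi)=\phi-\mathcal T_{\boldsymbol t,\boldsymbol\xi}\bigl(-R_{\boldsymbol t,\boldsymbol\xi}+Q_\varepsilon(\phi)\bigr).
\]
This map is $C^2$ by three inputs:
\begin{itemize}
\item the $C^2$ dependence of $\mathcal T$ (Proposition \ref{prop:linear-solvability});
\item the $C^2$ bounds on $R$ (Proposition \ref{prop:residual});
\item the differentiated bounds on $Q_\varepsilon$ (Lemma \ref{lem:nonlinear-estimates}).
\end{itemize}
Its $\phi$-derivative is $I-\mathcal T\,DQ_\varepsilon(\phi)$. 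Since $\|\mathcal T\,DQ_\varepsilon(\phi)\|\le C\Lambda(\varepsilon^\tau+t_0)<\tfrac12$, this derivative is invertible. Hence the fixed point is $C^2$ in $(\boldsymbol t,\boldsymbol\xi)$.

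For the quantitative derivative bounds, we differentiate the fixed point identity:
\[
\partial_p\phi=\mathcal T\bigl(-\partial_pR+\partial_pQ_\varepsilon+DQ_\varepsilon(\phi)[\partial_p\phi]\bigr)+(\partial_p\mathcal T)(\cdots).
\]
The term in $\partial_p\phi$ is absorbed into the left side by the same smallness. Each position derivative carries the factor $\mu_i^{-1}$. Second derivatives are handled in the same way.

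\textbf{Main obstacle.} We expect the main difficulty to be the parameter differentiation, specifically controlling $(\partial_p\mathcal T)h$ in the $\|\cdot\|_{**}$ framework with the $\mu_i^{-1}$ normalization. The constraint functions $\chi_iF_{\mu_i,\xi_i}Z_{i,k}$ move with the parameters, and their derivatives must remain bounded by $\Theta$; we would rely on \eqref{eq:bubble-parameter-derivative-bounds} to ensure this. Since Proposition \ref{prop:linear-solvability} already asserts these bounds, this step reduces to bookkeeping, and we take that proposition as given.
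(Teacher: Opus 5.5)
Your proposal is correct and follows essentially the same route as the paper: a Banach fixed point for $\phi\mapsto T_\varepsilon[-R_{\boldsymbol t,\boldsymbol\xi}+Q_\varepsilon(\phi)]$ on a ball of radius proportional to $\varepsilon^\tau+|\boldsymbol t|$, followed by the implicit function theorem with $D_\phi\mathscr G=I-T_\varepsilon DQ_\varepsilon(\phi)$ inverted by a Neumann series. Your explicit identification of the moving constraint spaces via the Gram projection makes precise a step the paper leaves implicit. Also, \eqref{eq:correction-sharp} follows directly from \eqref{eq:correction-estimate}, since $\varepsilon^\tau+|\boldsymbol t|\le C\varepsilon^\tau|\log\varepsilon|$ in that regime.
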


\begin{proof}
Let $T_\varepsilon$ denote the solution operator in Proposition
\ref{prop:linear-solvability}. Put
\begin{equation*}
\mathscr T_\varepsilon(\phi)
:=T_\varepsilon[-R_{\boldsymbol t,\boldsymbol\xi}
+Q_\varepsilon(\phi)].
\end{equation*}
Choose $C_0>2C$, where $C$ is the constant in Propositions
\ref{prop:linear-estimate} and \ref{prop:residual}. Set
\begin{equation*}
\rho_\varepsilon
=C_0(\varepsilon^\tau+|\boldsymbol t|)
\end{equation*}
and
\[
\mathbb B_\varepsilon
=\{\phi\in\mathcal H:\phi\text{ satisfies }
\eqref{eq:orthogonality},\
\|\phi\|_*+\|\phi\|_\mathcal H\leq\rho_\varepsilon\}.
\]
For $\phi\in\mathbb B_\varepsilon$, Propositions
\ref{prop:linear-estimate}, \ref{prop:residual}, and Lemma
\ref{lem:nonlinear-estimates} give
\begin{equation*}
\begin{aligned}
\|\mathscr T_\varepsilon(\phi)\|_*
+\|\mathscr T_\varepsilon(\phi)\|_\mathcal H
\leq C(\varepsilon^\tau+|\boldsymbol t|)
+C\rho_\varepsilon^2\leq\rho_\varepsilon,
\end{aligned}
\end{equation*}
provided $t_0$ and then $\varepsilon_0$ are small. For
$\phi,\psi\in\mathbb B_\varepsilon$,
\begin{equation}\label{eq:fixed-point-contraction}
\begin{aligned}
\|\mathscr T_\varepsilon(\phi)
-\mathscr T_\varepsilon(\psi)\|_*
+\|\mathscr T_\varepsilon(\phi)
-\mathscr T_\varepsilon(\psi)\|_\mathcal H
\leq C\rho_\varepsilon\|\phi-\psi\|_*.
\end{aligned}
\end{equation}
After reducing $t_0$ and $\varepsilon_0$, the right-hand coefficient
is less than $1/2$. Banach's fixed-point theorem gives a unique
solution in $\mathbb B_\varepsilon$. The multiplier estimate is part
of Proposition \ref{prop:linear-estimate}. This proves
\eqref{eq:correction-estimate}.

If
$|\boldsymbol t|\leq C\varepsilon^\tau|\log\varepsilon|$, use
\eqref{eq:residual-small} instead of the first residual bound. The
same calculation, with radius
$C_0\varepsilon^\tau|\log\varepsilon|$, gives
\eqref{eq:correction-sharp}.

We finally prove parameter regularity. Define
\[
\mathscr G(\phi,\boldsymbol t,\boldsymbol\xi)
=\phi-T_\varepsilon[-R_{\boldsymbol t,\boldsymbol\xi}
+Q_\varepsilon(\phi)].
\]
At the fixed point,
\begin{equation*}
D_\phi\mathscr G
=I-T_\varepsilon DQ_\varepsilon(\phi).
\end{equation*}
By \eqref{eq:fixed-point-contraction}, this operator is invertible and
its inverse is the convergent Neumann series. Propositions
\ref{prop:residual}, \ref{prop:linear-solvability}, and Lemma
\ref{lem:nonlinear-estimates} show that $\mathscr G$ is $C^2$ in all
variables after the natural center normalization. The implicit
function theorem gives $C^2$ dependence. Differentiating
$\mathscr G=0$ once and twice and applying
$(D_\phi\mathscr G)^{-1}$ gives the stated derivative estimates.
\end{proof}

\section{Finite-dimensional reduction and proof of the existence theorem}

Define
\begin{equation}\label{eq:reduced-energy}
\widetilde J_\varepsilon(\boldsymbol t,\boldsymbol\xi)
:=J_\varepsilon(W_{\boldsymbol t,\boldsymbol\xi}
+\phi_\varepsilon(\boldsymbol t,\boldsymbol\xi)).
\end{equation}

\subsection{Energy expansion for the approximate solution}

We begin the finite-dimensional reduction with the energy of the approximate solution.

\begin{proposition}\label{prop:energy-expansion}
There is a function
\[
\mathcal R_\varepsilon^{\rm app}
\in C^2([-t_0,t_0]^m\times\mathcal C_{m,\eta}(\Omega))
\]
such that
\begin{equation}\label{eq:energy-expansion}
\begin{aligned}
J_\varepsilon(W_{\boldsymbol t,\boldsymbol\xi})
={}&A_{m,\varepsilon}-M_\alpha\mathcal F_m(\boldsymbol\xi)+\sum_{i=1}^{m}\left[
M_\alpha b_\alpha t_i
-\frac{M_\alpha}{2}(e^{2b_\alpha t_i}-1)
\right]
+\mathcal R_\varepsilon^{\rm app}(\boldsymbol t,\boldsymbol\xi),
\end{aligned}
\end{equation}
where
\begin{equation}\label{eq:energy-constant}
A_{m,\varepsilon}
=m\left[
\frac{B_\alpha}{2}
+\frac{M_\alpha b_\alpha}{2}\log C_\alpha
-M_\alpha b_\alpha\log\varepsilon
-\frac{M_\alpha}{2}
\right].
\end{equation}
Uniformly for $|\boldsymbol t|\leq t_0$ and
$\boldsymbol\xi\in\mathcal C_{m,\eta}(\Omega)$,
\begin{equation}\label{eq:energy-remainder-C1}
|\mathcal R_\varepsilon^{\rm app}|
+|\nabla_{\boldsymbol t}\mathcal R_\varepsilon^{\rm app}|
+|\nabla_{\boldsymbol\xi}\mathcal R_\varepsilon^{\rm app}|
\leq C\varepsilon^\tau,
\end{equation}
and
\begin{equation}\label{eq:energy-remainder-C2}
\|D_{\boldsymbol t\boldsymbol t}^2
\mathcal R_\varepsilon^{\rm app}\|
+\|D_{\boldsymbol t\boldsymbol\xi}^2
\mathcal R_\varepsilon^{\rm app}\|
+\|D_{\boldsymbol\xi\boldsymbol\xi}^2
\mathcal R_\varepsilon^{\rm app}\|
\leq C\varepsilon^\tau.
\end{equation}
The derivatives in \eqref{eq:energy-remainder-C1}--\eqref{eq:energy-remainder-C2}
are taken in the original variables
$(\boldsymbol t,\boldsymbol\xi)$.
\end{proposition}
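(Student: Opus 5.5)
We split $J_\varepsilon(W)$ into the quadratic part and the nonlocal part,
\[
J_\varepsilon(W)=\tfrac12\int_\Omega|\Delta W|^2\,dx-\tfrac12\int_\Omega\mathcal N_\varepsilon(W)\,dx,
\]
and expand each separately. This is the natural splitting, because $\int_\Omega\mathcal N_\varepsilon(W)=\varepsilon^{8-\alpha}\iint K K e^{W(x)+W(y)}|x-y|^{-\alpha}$.

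\textbf{Quadratic part.} Using $\Delta^2PU_{\mu_i,\xi_i}=F_{\mu_i,\xi_i}$ and the Navier conditions, we write $\int|\Delta W|^2=\sum_{i,j}\int F_{\mu_i,\xi_i}PU_{\mu_j,\xi_j}$.

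For the diagonal terms, Lemma \ref{lem:projection-expansion} gives
\[
\int F_{\mu_i,\xi_i}PU_{\mu_i,\xi_i}=\int F_{\mu_i,\xi_i}U_{\mu_i,\xi_i}+M_\alpha(b_\alpha\log\mu_i-b_\alpha\log C_\alpha)+M_\alpha^2H(\xi_i,\xi_i)+O(\mu_i^{-2}).
\]
Here the $H$ term is obtained by Taylor expansion at $\xi_i$: the first moment vanishes by \eqref{eq:moment-zero}, and the exterior mass is $O(\mu_i^{-4})$. Lemma \ref{lem:bubble-integrals} evaluates $\int FU=B_\alpha+b_\alpha M_\alpha\log\mu_i$.

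The off-diagonal terms are given by \eqref{eq:quadratic-cross}. Hence
\[
\tfrac12\int|\Delta W|^2=\sum_i\Bigl[\tfrac{B_\alpha}2+M_\alpha b_\alpha\log\mu_i-\tfrac{M_\alpha b_\alpha}2\log C_\alpha+\tfrac{M_\alpha^2}{2}H(\xi_i,\xi_i)\Bigr]+\tfrac{M_\alpha^2}{2}\sum_{i\neq j}G(\xi_i,\xi_j)+O(\varepsilon^2).
\]
Next we insert $\log\mu_i=\log C_\alpha-\log\varepsilon-\tfrac{2}{8-\alpha}\log K(\xi_i)-\tfrac{2M_\alpha}{8-\alpha}S_i+t_i$, which follows from \eqref{eq:leading-scale}. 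Since $M_\alpha b_\alpha\cdot\frac{2}{8-\alpha}=M_\alpha$, this produces $-M_\alpha\log K(\xi_i)$, the term $-M_\alpha^2S_i$, and $M_\alpha b_\alpha t_i$. Then $-M_\alpha^2S_i$ combined with $+\frac{M_\alpha^2}2 H(\xi_i,\xi_i)$ and the pair sum yields $-M_\alpha\bigl[\tfrac{M_\alpha}2H(\xi_i,\xi_i)\bigr]-M_\alpha^2\sum_{i<j}G(\xi_i,\xi_j)$. Together with the $\log K$ terms this is exactly $-M_\alpha\mathcal F_m$, and the constants assemble into the first three terms of $A_{m,\varepsilon}$.

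\textbf{Nonlocal part.} We integrate \eqref{eq:core-nonlinearity} and its middle- and outer-region analogues from the proof of Proposition \ref{prop:residual}:
\[
\int_\Omega\mathcal N_\varepsilon(W)=\sum_ie^{2b_\alpha t_i}\int F_{\mu_i,\xi_i}+O\Bigl(\varepsilon^\tau\int\Theta\Bigr)=M_\alpha\sum_ie^{2b_\alpha t_i}+O(\varepsilon^\tau),
\]
using \eqref{eq:bubble-mass}, \eqref{eq:theta-L1}, and the exterior mass bound $O(\mu_i^{-4})$. Then
\[
-\tfrac{M_\alpha}{2}e^{2b_\alpha t_i}=-\tfrac{M_\alpha}{2}(e^{2b_\alpha t_i}-1)-\tfrac{M_\alpha}{2},
\]
which supplies the last constant in \eqref{eq:energy-constant}. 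We define $\mathcal R^{\rm app}_\varepsilon$ as everything left over. So far this gives the $C^0$ bound $O(\varepsilon^\tau)$.

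\textbf{Derivative bounds.} This is the main obstacle. The remainder is a sum of three kinds of terms: the $O(\mu^{-2})$ projection remainders, the Taylor/moment errors, and $\int E_{\boldsymbol t,\boldsymbol\xi}$. For the first kind, Lemmas \ref{lem:projection-expansion} and \ref{lem:pair-interaction} already give the $t$- and $\xi$-derivatives at order $O(\mu^{-2})$. For the nonlocal remainder, we use $\partial\int\mathcal N_\varepsilon(W)=\int\partial\mathcal N_\varepsilon(W)$ together with \eqref{eq:residual-t-derivative}, \eqref{eq:residual-xi-derivative}, and the second-derivative bounds of Proposition \ref{prop:residual}, via $\int\Theta\le C$.

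The difficulty is that those bounds carry the factor $\mu_i^{-1}$ for $\xi$-derivatives, whereas \eqref{eq:energy-remainder-C1} is stated in the original variables. Integrating $\partial_{\xi}E$ against the constant $1$ loses a factor $\mu_i$ unless cancellation is exploited. We would handle this by moving the $\xi$-derivative off the bubble: change variables $x=\xi_i+z/\mu_i$ in each core. Then $\xi$ enters the core integrals only through the smooth coefficients $K(\xi_i+z/\mu_i)$, $H$, and $G$, whose $\xi$-derivatives cost no $\mu_i$. Alternatively, we would identify the leading part of the $\xi$-derivative as an odd moment, which vanishes by \eqref{eq:moment-zero}, as in the proof of Lemma \ref{lem:projection-expansion}. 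After this change of variables, the $\xi$-derivatives of the explicit main terms are exactly the derivatives of $-M_\alpha\mathcal F_m$, which are smooth, and the remainder inherits the $O(\varepsilon^\tau)$ bound up to two derivatives. We would verify the second derivatives the same way, noting that mixed derivatives on distinct peaks only involve separated, smooth Green-function terms.
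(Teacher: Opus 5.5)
Your proposal is correct and follows the paper's proof essentially step for step. It uses the same splitting into the Navier energy and the nonlocal term, and the same Lemmas \ref{lem:projection-expansion}, \ref{lem:pair-interaction} together with the scale identity to assemble $A_{m,\varepsilon}$ and $-M_\alpha\mathcal F_m(\boldsymbol\xi)$. For the position derivatives it uses the same device: rescale each core so that, by \eqref{eq:balance}, the leading integrand is parameter-independent and the linear terms vanish by \eqref{eq:moment-zero}. These two mechanisms work together rather than as the alternatives your write-up suggests. The only variation is that you obtain the $C^0$ and $\boldsymbol t$-derivative bounds of the nonlocal part by integrating the residual bounds of Proposition \ref{prop:residual} against $\int_\Omega\Theta\le C$, instead of splitting the double integral into same-core, distinct-core and exterior pieces; you rightly note that this shortcut loses a factor $\mu_i$ for $\boldsymbol\xi$-derivatives, where both arguments fall back on the rescaled core integrals.
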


\begin{proof}
We first expand the quadratic part.  For one projected bubble,
\begin{equation*}
\int_\Omega|\Delta PU_{\mu_i,\xi_i}|^2\,dx
=
\int_\Omega F_{\mu_i,\xi_i}PU_{\mu_i,\xi_i}\,dx.
\end{equation*}
Insert \eqref{eq:projection-expansion}.  The tail outside $\Omega$
is $O(\mu_i^{-4}\log\mu_i)$.  Lemma
\ref{lem:bubble-integrals} gives
\begin{equation}\label{eq:projected-self-energy-expansion}
\begin{aligned}
\int_\Omega|\Delta PU_{\mu_i,\xi_i}|^2\,dx
={}B_\alpha+2b_\alpha M_\alpha\log\mu_i
-b_\alpha M_\alpha\log C_\alpha+M_\alpha^2H(\xi_i,\xi_i)+O(\varepsilon^2).
\end{aligned}
\end{equation}
For $i\ne j$, Lemma \ref{lem:pair-interaction} gives
\begin{equation*}
\int_\Omega
\Delta PU_{\mu_i,\xi_i}\Delta PU_{\mu_j,\xi_j}\,dx
=
M_\alpha^2G(\xi_i,\xi_j)+O(\varepsilon^2).
\end{equation*}

We next treat the nonlocal part.  Put
$B_i=B_{\eta/4}(\xi_i)$.  If $x,y\in B_i$, use
\eqref{eq:balance}, \eqref{eq:core-exponential-expansion}, and
\eqref{eq:bubble-nonlocal}.  Then
\begin{equation*}
\varepsilon^{8-\alpha}
\int_{B_i}\int_{B_i}
\frac{K(x)K(y)e^{W(x)+W(y)}}{|x-y|^\alpha}\,dxdy
=
M_\alpha e^{2b_\alpha t_i}+O(\varepsilon^\tau).
\end{equation*}
If $x\in B_i$ and $y\in B_j$, $i\ne j$, then
\eqref{eq:separated-double} gives $O(\varepsilon^\alpha)$.
Terms with one variable outside the core balls are
$O(\varepsilon^4)$.  Therefore
\begin{equation}\label{eq:nonlocal-total-energy-expansion}
\varepsilon^{8-\alpha}
\int_\Omega\int_\Omega
\frac{K(x)K(y)e^{W(x)+W(y)}}{|x-y|^\alpha}\,dxdy
=
M_\alpha\sum_{i=1}^{m}e^{2b_\alpha t_i}
+O(\varepsilon^\tau).
\end{equation}

The scale identity is
\begin{equation}\label{eq:log-scale-energy-substitution}
\begin{aligned}
2b_\alpha\log\mu_i
={}&2b_\alpha\log C_\alpha
-2b_\alpha\log\varepsilon
-2\log K(\xi_i)\\
&-2M_\alpha H(\xi_i,\xi_i)
-2M_\alpha\sum_{j\ne i}G(\xi_i,\xi_j)
+2b_\alpha t_i.
\end{aligned}
\end{equation}
Insert \eqref{eq:log-scale-energy-substitution} in the quadratic
energy.  Each pair $G(\xi_i,\xi_j)$ occurs twice in the scale terms
and once in the quadratic cross term.  Its remaining coefficient is
$-M_\alpha^2$.  The remaining self terms are
$-M_\alpha\log K(\xi_i)$ and
$-M_\alpha^2H(\xi_i,\xi_i)/2$.  This gives
$-M_\alpha\mathcal F_m(\boldsymbol\xi)$.  Equations
\eqref{eq:projected-self-energy-expansion}--
\eqref{eq:nonlocal-total-energy-expansion} prove
\eqref{eq:energy-expansion} and \eqref{eq:energy-constant}.

We prove the derivative estimates.  It is enough to consider one
core integral.  Set
\[
x=\xi_i+\frac z{\mu_i},
\qquad
y=\xi_i+\frac w{\mu_i}.
\]
After this change of variables, the leading integrand is independent
of $(\boldsymbol t,\boldsymbol\xi)$.  A $t_i$ derivative acts on the
factor $e^{2b_\alpha t_i}$ and on a uniformly integrable rational
profile.  A center derivative acts on
\[
K\left(\xi_i+\frac z{\mu_i}\right),
\quad
H\left(\xi_i+\frac z{\mu_i},\xi_i\right),
\quad
G\left(\xi_i+\frac z{\mu_i},\xi_j\right).
\]
The zero first moment in \eqref{eq:moment-zero} removes all
terms which are linear in $z$ or $w$.  The remaining terms contain
either $\mu_i^{-2}(|z|^2+|w|^2)$, an exterior tail, or a separated-core
factor $\varepsilon^\alpha$.  Lemmas
\ref{lem:weighted-Riesz} and \ref{lem:separated-core-estimates}
give an integrable majorant.  The same argument applies after two
parameter derivatives.  In particular,
\begin{equation}\label{eq:energy-core-C2-estimate}
\left|
\partial_{\boldsymbol t}^{a}
\partial_{\boldsymbol\xi}^{b}
\mathcal E_{i,\varepsilon}
\right|
\leq C\varepsilon^\tau,
\qquad |a|+|b|\leq2,
\end{equation}
for the difference between the $i$-th core integral and its displayed
leading term.

For an interaction of two distinct cores, the kernel is smooth on
$B_i\times B_j$.  Two center derivatives are therefore harmless.
Equation \eqref{eq:separated-double} gives
\begin{equation}\label{eq:energy-cross-C2-estimate}
\left|
\partial_{\boldsymbol t}^{a}
\partial_{\boldsymbol\xi}^{b}
\mathcal E_{ij,\varepsilon}
\right|
\leq C\varepsilon^\alpha,
\qquad |a|+|b|\leq2.
\end{equation}
The exterior terms satisfy the same estimate with
$\varepsilon^\alpha$ replaced by $\varepsilon^4|\log\varepsilon|$.
Since $\tau<\min\{1,\alpha\}$, equations
\eqref{eq:energy-core-C2-estimate}--
\eqref{eq:energy-cross-C2-estimate} imply
\eqref{eq:energy-remainder-C1} and
\eqref{eq:energy-remainder-C2}.
\end{proof}

The raw scale Hessian is negative:
\begin{equation}\label{eq:raw-scale-Hessian}
\left.\frac{\partial^2}{\partial t_i\partial t_j}
J_\varepsilon(W_{\boldsymbol t,\boldsymbol\xi})\right|_{\boldsymbol t=0}
=-2M_\alpha b_\alpha^2\delta_{ij}+O(\varepsilon^\tau).
\end{equation}
This is not the scale Hessian of the reduced manifold. The derivative of the projected correction has an $O(1)$ component. It removes the negative constant mode in \eqref{eq:raw-scale-Hessian}.

\subsection{Dilation equations and the capacity correction}

Let
\[
u_{\boldsymbol t,\boldsymbol\xi}
:=W_{\boldsymbol t,\boldsymbol\xi}
+\phi_\varepsilon(\boldsymbol t,\boldsymbol\xi).
\]
At $\boldsymbol t=0$, set
\[
\Phi_{i,\varepsilon}:=
\partial_{t_i}\phi_\varepsilon(0,\boldsymbol\xi),
\qquad
Y_{i,\varepsilon}:=
\partial_{t_i}u_{\boldsymbol t,\boldsymbol\xi}|_{\boldsymbol t=0}.
\]

Differentiation in a scale direction produces the following core correction and cancellation.

\begin{lemma}\label{lem:scale-corrector}
For every $0<\beta<1$, the following convergences hold in the
$i$-th rescaled variables:
\begin{equation}\label{eq:scale-corrector-limit}
\Phi_{i,\varepsilon}(\xi_i+z/\mu_i)
\longrightarrow-b_\alpha
\quad\hbox{in }C^{3,\beta}_{\rm loc}(\mathbb R^4),
\end{equation}
and
\begin{equation}\label{eq:scale-tangent-limit}
Y_{i,\varepsilon}(\xi_i+z/\mu_i)
\longrightarrow Z_0(z)
\quad\hbox{in }C^{3,\beta}_{\rm loc}(\mathbb R^4).
\end{equation}
At every other core, both rescaled limits are zero. Moreover,
\begin{equation}\label{eq:core-cancellation}
\begin{aligned}
\mathcal Q_\infty(Z_0+b_\alpha,Z_0+b_\alpha)
+2\mathcal Q_\infty(Z_0+b_\alpha,-b_\alpha)
+\mathcal Q_\infty(-b_\alpha,-b_\alpha)=0.
\end{aligned}
\end{equation}
The three terms are
\begin{equation*}
\begin{aligned}
\mathcal Q_\infty(Z_0+b_\alpha,Z_0+b_\alpha)
&=-2M_\alpha b_\alpha^2,\\
2\mathcal Q_\infty(Z_0+b_\alpha,-b_\alpha)
&=4M_\alpha b_\alpha^2,\\
\mathcal Q_\infty(-b_\alpha,-b_\alpha)
&=-2M_\alpha b_\alpha^2.
\end{aligned}
\end{equation*}
\end{lemma}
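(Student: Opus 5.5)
We first differentiate the projected nonlinear problem \eqref{eq:projected-nonlinear-problem} in $t_i$ at $\boldsymbol t=0$. Since $\partial_{t_i}W=\partial_{t_i}PU_{\mu_i,\xi_i}$, Lemma \ref{lem:projection-expansion} gives, in the $i$-th rescaled variables,
\[
\partial_{t_i}W(\xi_i+z/\mu_i)=Z_0(z)+b_\alpha+O(\mu_i^{-2}),
\]
because $\partial_{\log\mu}U_{\mu,\xi}(\xi+z/\mu)=Z_0(z)$ and the additive constant $b_\alpha\log\mu$ contributes $b_\alpha$. At every other core, $\partial_{t_i}W=O(\mu_i^{-2})$ by \eqref{eq:pair-projection}. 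Differentiating the residual, we use Proposition \ref{prop:residual}. The explicit part $(1-e^{2b_\alpha t_i})F_{\mu_i,\xi_i}$ has $t_i$-derivative $-2b_\alpha F_{\mu_i,\xi_i}$ at $t=0$, while $\partial_{t_i}E$ is $O(\varepsilon^\tau)$ in $\|\cdot\|_{**}$. Thus $\Phi_{i,\varepsilon}$ solves a projected linear problem
\[
L_\varepsilon\Phi_{i,\varepsilon}=2b_\alpha F_{\mu_i,\xi_i}+o(1)_{**}+\sum c'_{j,k}\chi_jF_{\mu_j,\xi_j}Z_{j,k}+(\text{differentiated constraints}),
\]
with $\|\Phi_{i,\varepsilon}\|_*+\|\Phi_{i,\varepsilon}\|_{\mathcal H}\le C$ by Proposition \ref{prop:nonlinear-correction}. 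Here $\partial_{t_i}Q_\varepsilon$ is $O(\varepsilon^\tau|\log\varepsilon|)$ since $\phi_\varepsilon$ is small. The differentiated orthogonality conditions give $\int\chi_iF_{\mu_i,\xi_i}Z_{i,k}\Phi_{i,\varepsilon}=o(1)$. Note that $\partial_{t_i}$ of the weight $F_{\mu_i,\xi_i}Z_{i,k}$ paired with $\phi_\varepsilon$ is small.

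Next we pass to the limit in the rescaled variables, using the local $W^{4,p}$ compactness and Lemma \ref{lem:weighted-Riesz} exactly as in the proof of Proposition \ref{prop:linear-estimate}. A subsequence gives a bounded $\widetilde\Phi\in\mathcal E_\alpha$ with
\[
\mathcal L_\infty\widetilde\Phi=2b_\alpha F,\qquad \int_{\mathbb R^4}FZ_k\widetilde\Phi\,dz=0,\quad 0\le k\le4.
\]
The multipliers vanish in the limit, since testing against $\widehat Z_{j,\ell}$ as in \eqref{eq:coefficient-preestimate} shows they are $o(1)$. The constant $-b_\alpha$ is a particular solution, because $\mathcal L_\infty 1=-F-F=-2F$ by \eqref{eq:bubble-nonlocal} and the symmetry of the kernel. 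It also satisfies the orthogonality conditions: $\int FZ_k=0$ for $k\ge1$ by oddness, and for $k=0$ this follows from the identity $\int FZ_0=0$, which is the derivative in $\mu$ of the mass identity \eqref{eq:bubble-mass}. Theorem \ref{thm:limit-input} shows that the difference lies in $\operatorname{span}\{Z_k\}$, and orthogonality together with Lemma \ref{lem:kernel-Gram} kills it. Uniqueness of the limit then upgrades subsequential convergence to full convergence, proving \eqref{eq:scale-corrector-limit}. Adding $\partial_{t_i}W$ gives \eqref{eq:scale-tangent-limit}. At the cores $j\ne i$ the source is $o(1)$, so the same argument yields the zero limit.

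For the identity \eqref{eq:core-cancellation}, we note that the sum equals $\mathcal Q_\infty(Z_0,Z_0)$ by bilinearity, and this vanishes because $Z_0$ lies in the kernel (Theorem \ref{thm:limit-input}). It remains to verify the three values. From \eqref{eq:negative-mode-value},
\[
\mathcal Q_\infty(-b_\alpha,-b_\alpha)=b_\alpha^2\mathcal Q_\infty(1,1)=-2M_\alpha b_\alpha^2.
\]
Since $\mathcal Q_\infty(Z_0,1)=0$ by the kernel property, bilinearity gives
\[
\mathcal Q_\infty(Z_0+b_\alpha,1)=b_\alpha\mathcal Q_\infty(1,1)=-2M_\alpha b_\alpha,
\]
so $2\mathcal Q_\infty(Z_0+b_\alpha,-b_\alpha)=4M_\alpha b_\alpha^2$. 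Finally,
\[
\mathcal Q_\infty(Z_0+b_\alpha,Z_0+b_\alpha)=0+0+b_\alpha^2\mathcal Q_\infty(1,1)=-2M_\alpha b_\alpha^2.
\]
The kernel identity $\mathcal Q_\infty(Z_0,\psi)=0$ for $\psi\in\mathcal E_\alpha$ requires an integration by parts justified by the decay of $\Delta Z_0$ and $F$; this is a routine check.

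The main obstacle is the first step: controlling the derivative of the moving orthogonality constraints and of the multipliers, and checking that the $\partial_{t_i}$ source is genuinely $2b_\alpha F_{\mu_i,\xi_i}$ up to $o(1)$ in $\|\cdot\|_{**}$. In particular, the differentiated $E$ term and the term $\partial_{t_i}L_\varepsilon$ acting on $\phi_\varepsilon$ must be small. We would rely on Proposition \ref{prop:linear-solvability} for these bounds.
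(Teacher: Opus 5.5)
Your proposal is correct and follows the same route as the paper. You differentiate the projected problem at $\boldsymbol t=0$, pass to the rescaled limit $\mathcal L_\infty\widetilde\Phi=2b_\alpha F$, use $\mathcal L_\infty(-b_\alpha)=2b_\alpha F$ with the orthogonality conditions to remove the kernel component, and expand $\mathcal Q_\infty$ bilinearly. Your observation that the three terms sum to $\mathcal Q_\infty(Z_0,Z_0)=0$ is a tidy shortcut.

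One small correction concerns the multipliers. \eqref{eq:coefficient-preestimate} alone only gives bounded differentiated multipliers, because the source $2b_\alpha F_{\mu_i,\xi_i}$ is of order one in $\|\cdot\|_{**}$. Their vanishing comes from $\int_{\mathbb R^4}FZ_k\,dz=0$. You can use it either through $\int_\Omega F_{\mu_i,\xi_i}\widehat Z_{j,\ell}\,dx=o(1)$, or, as the paper does, by keeping the limits $d_k$ in $\mathcal L_\infty\widetilde\Phi=2b_\alpha F+\sum_k d_kFZ_k$ and testing against $Z_\ell$.
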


\begin{proof}
Differentiate the projected nonlinear problem at $\boldsymbol t=0$. In the $i$-th core, Proposition \ref{prop:residual} gives, after passage to the limit,
\[
\mathcal L_\infty\widetilde\Phi_i
=2b_\alpha F+\sum_{k=0}^{4}d_kFZ_k.
\]
Differentiated orthogonality gives
\[
\int_{\mathbb R^4}FZ_k\widetilde\Phi_i=0,
\qquad 0\leq k\leq4,
\]
up to terms tending to zero. Test the limiting equation with every $Z_\ell$. Self-adjointness and Lemma \ref{lem:kernel-Gram} imply $d_k=0$. The constant function satisfies
\[
\mathcal L_\infty(-b_\alpha)=2b_\alpha F.
\]
The difference $\widetilde\Phi_i+b_\alpha$ belongs to the limiting kernel. The five orthogonality conditions remove all its components. This proves \eqref{eq:scale-corrector-limit}. At another peak the limiting right-hand side is zero. The same argument gives zero.

The projection expansion gives
\[
\partial_{t_i}PU_{\mu_i,\xi_i}
=Z_{i,0}+b_\alpha+o(1)
\]
in core variables. Formula \eqref{eq:scale-tangent-limit} follows.

Finally, $\mathcal L_\infty1=-2F$ and
$\mathcal L_\infty Z_0=0$. Hence
\[
\mathcal Q_\infty(1,1)=-2M_\alpha,
\qquad
\mathcal Q_\infty(Z_0,v)=0.
\]
Substitute $Z_0+b_\alpha$ and $-b_\alpha$. The three displayed values follow. Their sum is zero. This is the exact cancellation of the $O(1)$ negative term in \eqref{eq:raw-scale-Hessian}.
\end{proof}

We need a uniform lifting estimate for Cauchy data on the shrinking inner sphere.

\begin{lemma}\label{lem:small-sphere-lifting}
Let $\xi$ remain in a compact subset of $\Omega$ and let $\rho\to0$.
For data $g_0$ and $g_1$ on $\partial B_\rho(\xi)$, set
\[
g_0^\rho(\theta)=g_0(\xi+\rho\theta),
\qquad
g_1^\rho(\theta)=\rho g_1(\xi+\rho\theta),
\qquad \theta\in\mathbb S^3,
\]
and let $\overline g_0^\rho$ be the average of $g_0^\rho$ on
$\mathbb S^3$.  There is a unique biharmonic function $h$ satisfying
\begin{equation*}
\begin{cases}
\Delta^2h=0&\text{in }\Omega\setminus\overline{B_\rho(\xi)},\\
h=g_0,\quad \partial_\nu h=g_1&\text{on }\partial B_\rho(\xi),\\
h=\Delta h=0&\text{on }\partial\Omega.
\end{cases}
\end{equation*}
Moreover,
\begin{equation}\label{eq:small-sphere-lifting-estimate}
\begin{aligned}
\|h\|_{H^2(\Omega\setminus B_\rho(\xi))}^2
\leq C\Bigg[
&\frac{|\overline g_0^\rho|^2}{|\log\rho|}
+\|g_0^\rho-\overline g_0^\rho\|_{H^{3/2}(\mathbb S^3)}^2+\|g_1^\rho\|_{H^{1/2}(\mathbb S^3)}^2
\Bigg].
\end{aligned}
\end{equation}
The constant is uniform in $\xi$.
\end{lemma}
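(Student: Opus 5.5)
Existence and uniqueness come first. The difference of two admissible functions vanishes with its normal derivative on $\partial B_\rho(\xi)$ and vanishes on $\partial\Omega$. As in the proof of Lemma \ref{lem:biharmonic-capacity}, the energy $\int|\Delta v|^2$ is therefore coercive on this affine class. Any $H^2$ extension of the data gives a nonempty class, and the direct method produces a unique minimizer $h$. Its Euler equations are exactly the stated boundary value problem, including the natural condition $\Delta h=0$ on $\partial\Omega$. The estimate then reduces to exhibiting one admissible competitor whose energy is bounded by the right-hand side. One also needs $\|h\|_{H^2}^2\le C\int|\Delta h|^2$ uniformly in $\rho$, which I plan to obtain at the end from a Poincaré/elliptic argument on the region $|x-\xi|\ge r_1$ together with the annulus estimate below.

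The competitor splits the data into its mean and its oscillation. For the mean I take $\overline g_0^\rho\cdot(-V_{\rho,\xi}/b_\alpha)$. Its value is $\overline g_0^\rho$, its normal derivative vanishes on the inner sphere, and $V_{\rho,\xi}=\Delta V_{\rho,\xi}=0$ on $\partial\Omega$. By \eqref{eq:capacity-asymptotic} its energy is $C|\overline g_0^\rho|^2|\log\rho|^{-1}$. For the remaining data $(g_0-\overline g_0,\,g_1)$, which has zero mean in the value component, I work in the rescaled variable $x=\xi+\rho y$ on the fixed annulus $1\le|y|\le2$. There I take a bounded $H^2$ lifting $w(y)$ with $w=g_0^\rho-\overline g_0^\rho$ and $\partial_\nu w=g_1^\rho$ on $|y|=1$, vanishing near $|y|=2$, with $\|w\|_{H^2}\le C(\|g_0^\rho-\overline g_0^\rho\|_{H^{3/2}}+\|g_1^\rho\|_{H^{1/2}})$ by the trace theorem. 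Setting $\widetilde h(x)=w((x-\xi)/\rho)$, scale invariance of $\int|\Delta\cdot|^2$ in dimension four gives $\int|\Delta\widetilde h|^2=\int|\Delta w|^2$. Adding the two pieces yields an admissible function, and the energy bound follows by minimality.

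The main obstacle is passing from the energy to the full $H^2$ norm uniformly in $\rho$. Lower-order terms cannot be controlled on $B_2(\xi)\setminus B_\rho(\xi)$ with constants independent of $\rho$ by naive means. The scaled competitor does have uniformly bounded $L^2$ and gradient norms, since these rescale by powers $\rho^2$ and $\rho$. For the minimizer itself, I would argue through the Hilbert structure. On $\{|x-\xi|\ge r_1\}$, the function $h$ is biharmonic with $h=\Delta h=0$ on $\partial\Omega$, so interior and boundary estimates bound its $H^2$ norm there by its energy plus its values on a fixed sphere. Those values I would control by a spherical-mean decomposition: radial biharmonic functions of the form \eqref{eq:radial-biharmonic-form} handle the mean, and the non-radial modes have uniform Hardy-type inequalities because the relevant weight avoids the $\ell=0$ logarithm. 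If this proves delicate, the fallback is to interpret $\|\cdot\|_{H^2}$ as the energy norm $\int|\Delta h|^2$, which is the quantity actually used in the later gluing.
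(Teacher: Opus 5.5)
Your existence and uniqueness argument and your bound for $\int|\Delta h|^2$ are correct. For the non-mean data you take a different route from the paper. The paper works on a fixed annulus $B_R(\xi)\setminus B_\rho(\xi)$. It solves the radial biharmonic ODE in each spherical-harmonic sector (factors $r^k,r^{-k-2},r^{k+2},r^{-k}$, and $1,\log r,r^2,r^{-2}$ in degree zero), asserts a degree-uniform boundary-to-energy bound after rescaling, and glues to a fixed exterior extension on $\Omega\setminus B_R(\xi)$. You instead lift the oscillation of $g_0$ together with the whole normal datum inside $B_{2\rho}(\xi)$, by rescaling a single fixed trace lifting and using the dilation invariance of $\int|\Delta\cdot|^2$ in dimension four. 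This removes the sector-by-sector elimination and the question of uniformity in the degree. You also make explicit the minimality step that the paper uses tacitly: its glued function matches only value and normal derivative across $\partial B_R(\xi)$, so it too is a competitor rather than $h$ itself.

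The gap is the final step, the full $H^2$ norm on the left of \eqref{eq:small-sphere-lifting-estimate}. The inequality you say you need, $\|h\|_{H^2}^2\leq C\int|\Delta h|^2$, is false even for fixed $\rho$. Let $\mathfrak h$ be harmonic in $\Omega\setminus\overline{B_\rho(\xi)}$ with $\mathfrak h=1$ on $\partial B_\rho(\xi)$ and $\mathfrak h=0$ on $\partial\Omega$. For the data $g_0=1$, $g_1=\partial_\nu\mathfrak h$, the unique solution is $h=\mathfrak h$, which has zero energy but is not zero. So lower-order control must come from the data. Your spherical-mean and Hardy-type route is only sketched, and the fallback changes the statement.

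The missing idea is a zero extension. Write $h=h_c+d$, with $h_c$ your competitor. Then $d$ has zero Cauchy data on $\partial B_\rho(\xi)$ and vanishes on $\partial\Omega$. Its extension by zero therefore lies in $\mathcal H$ and has $\Delta d=0$ in $B_\rho(\xi)$. On $\mathcal H$, $\|\cdot\|_{\mathcal H}$ is equivalent to the $H^2$ norm with a constant depending only on $\Omega$, so minimality gives
\[
\|d\|_{H^2(\Omega\setminus B_\rho(\xi))}
\leq C_\Omega\|\Delta d\|_{L^2(\Omega\setminus B_\rho(\xi))}
\leq 2C_\Omega\|\Delta h_c\|_{L^2(\Omega\setminus B_\rho(\xi))}.
\]
For $h_c$ itself, the oscillation piece satisfies $\|\widetilde h\|_{H^2}\leq\|w\|_{H^2}$ for $\rho\leq1$ by scaling, as you noted. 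The mean piece is handled by the same device: $V_{\rho,\xi}$ extended by $-b_\alpha$ lies in $\mathcal H$ with Laplacian supported outside $B_\rho(\xi)$, so \eqref{eq:capacity-asymptotic} gives $\|V_{\rho,\xi}\|_{H^2}^2\leq C|\log\rho|^{-1}$. These three estimates complete the proof, uniformly in $\xi$.
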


\begin{proof}
Fix $R>0$ so that $B_{4R}(\xi)\Subset\Omega$ uniformly for the
allowed centers. Decompose $g_0^\rho$ and $g_1^\rho$ into spherical
harmonics. The mean of the Dirichlet datum with zero normal datum is
lifted by $-\overline g_0^\rho b_\alpha^{-1}V_{\rho,\xi}$, where
$V_{\rho,\xi}$ is the capacity profile in Lemma
\ref{lem:biharmonic-capacity}. After the harmless normalization by
$b_\alpha$, its squared $H^2$ norm is bounded by
\[
C\frac{|\overline g_0^\rho|^2}{|\log\rho|}.
\]

It remains to lift the zero-mean Dirichlet datum and the full normal
datum. On $B_R(\xi)\setminus B_\rho(\xi)$, solve the biharmonic
ordinary differential equation in every spherical harmonic sector.
For degree $k\geq1$, the radial factors are
$r^k,r^{-k-2},r^{k+2},r^{-k}$. In the degree-zero sector the remaining
normal datum is lifted by the factors $1,\log r,r^2,r^{-2}$. Solving
the four boundary equations at $r=\rho$ and $r=R$ gives a linear
boundary-to-energy map whose norm is uniform after the inner traces
are rescaled to the unit sphere. Consequently the annular lifting
$h_{\rm ann}$ satisfies
\[
\|h_{\rm ann}\|_{H^2(B_R(\xi)\setminus B_\rho(\xi))}^2
\leq C\left(
\|g_0^\rho-\overline g_0^\rho\|_{H^{3/2}(\mathbb S^3)}^2
+\|g_1^\rho\|_{H^{1/2}(\mathbb S^3)}^2\right).
\]
The value and normal derivative of $h_{\rm ann}$ on $\partial B_R$
are controlled by the same right-hand side. A fixed-domain
biharmonic extension on $\Omega\setminus B_R(\xi)$ imposes
$h=\Delta h=0$ on $\partial\Omega$ and has uniformly bounded energy.
Adding the mean lifting gives
\eqref{eq:small-sphere-lifting-estimate}.

If the data vanish, testing the biharmonic equation with $h$ gives
$\int|\Delta h|^2=0$: the inner Cauchy data remove the inner boundary
terms and the outer Navier data remove the outer ones. Hence $h=0$.
This proves uniqueness and completes the proof.
\end{proof}

The passage from fixed core radii to a growing core radius uses the
following quantitative form of the limiting inverse. It also records
the contribution of the nonlocal tail outside the expanding ball.

\begin{lemma}\label{lem:expanding-core-scale}
Fix $C_0>0$. There are $p>4$, $q_0>0$, and $\sigma>0$ such that the
following estimate holds. Let
\[
\Phi_{i,\varepsilon}^{\boldsymbol t}
:=\partial_{t_i}\phi_\varepsilon(\boldsymbol t,\boldsymbol\xi),
\qquad
\Psi_{i,\varepsilon}^{\boldsymbol t}(z)
:=\Phi_{i,\varepsilon}^{\boldsymbol t}
\left(\xi_i+\frac z{\mu_i}\right)+b_\alpha,
\]
where $|\boldsymbol t|\leq C_0\varepsilon^\tau
|\log\varepsilon|$. Uniformly for
$2\leq R\leq\varepsilon^{-\sigma/(8q_0)}$,
\begin{equation}\label{eq:expanding-core-estimate}
\begin{aligned}
\sup_{|z|\leq2R}\Big(&|\Psi_{i,\varepsilon}^{\boldsymbol t}(z)|
+(1+|z|)|\nabla\Psi_{i,\varepsilon}^{\boldsymbol t}(z)|\\
&+(1+|z|)^2|D^2\Psi_{i,\varepsilon}^{\boldsymbol t}(z)|\Big)
\leq C\left[(1+R)^{q_0}
(\varepsilon^\sigma+|\boldsymbol t|)+R^{-2}\right].
\end{aligned}
\end{equation}
\end{lemma}

\begin{proof}
We give the localization argument because the operator is nonlocal.
After the change of variables $x=\xi_i+z/\mu_i$, differentiation of
the projected equation gives, in $B_{8R}$,
\begin{equation}\label{eq:expanding-core-equation}
\mathcal L_\infty\Psi_{i,\varepsilon}^{\boldsymbol t}
=h_{i,\varepsilon}^{\boldsymbol t}
+\mathcal E_{i,\varepsilon}^{\boldsymbol t}
[\Psi_{i,\varepsilon}^{\boldsymbol t}]
+\mathcal T_{i,\varepsilon,R}^{\boldsymbol t}
+\sum_{k=0}^4d_{i,k,\varepsilon}^{\boldsymbol t}FZ_k.
\end{equation}
Here $h_{i,\varepsilon}^{\boldsymbol t}$ contains the variation of
$K$, the regular Green terms, and the differentiated residual;
$\mathcal E_{i,\varepsilon}^{\boldsymbol t}$ is the difference
between the rescaled local coefficients and those of
$\mathcal L_\infty$; and $\mathcal T_{i,\varepsilon,R}^{\boldsymbol t}$
is the part of the Riesz integral with $|y|>8R$. Propositions
\ref{prop:residual} and \ref{prop:linear-solvability}, together with
Lemma \ref{lem:weighted-Riesz}, give
\begin{align}
\|h_{i,\varepsilon}^{\boldsymbol t}\|_{L^p(B_{8R})}
+\|\mathcal E_{i,\varepsilon}^{\boldsymbol t}
[\Psi_{i,\varepsilon}^{\boldsymbol t}]\|_{L^p(B_{8R})}
&\leq C(1+R)^{q_0}
(\varepsilon^\sigma+|\boldsymbol t|),
\notag\\
\|\mathcal T_{i,\varepsilon,R}^{\boldsymbol t}\|_{L^p(B_{8R})}
&\leq CR^{-4}.
\label{eq:expanding-core-tail}
\end{align}
For the last estimate, use the boundedness of the differentiated
correction and
\[
\int_{|y|>8R}|y|^{-\alpha}e^{U(y)}\,dy\leq CR^{-4}.
\]
The estimates in \eqref{eq:expanding-core-tail}, the differentiated
orthogonality conditions, and $\int FZ_k=0$ imply
\begin{equation}\label{eq:expanding-core-moments}
\left|\int_{\mathbb R^4}FZ_k
\Psi_{i,\varepsilon}^{\boldsymbol t}\,dz\right|
+|d_{i,k,\varepsilon}^{\boldsymbol t}|
\leq C\left[(1+R)^{q_0}
(\varepsilon^\sigma+|\boldsymbol t|)+R^{-4}\right].
\end{equation}

For completeness, we record the localized inverse estimate used here.
If $\psi$ is bounded and $\chi_R$ equals one in $B_{4R}$ and vanishes
outside $B_{8R}$, with $|D^j\chi_R|\leq CR^{-j}$, the global spectral
inverse in Theorem \ref{thm:limit-input}, applied to
$\chi_R\psi$ after its five kernel components are removed, and the
interior $W^{4,p}$ estimate give
\begin{equation}\label{eq:localized-limiting-inverse}
\begin{aligned}
&\sup_{|z|\leq2R}
\bigl(|\psi|+(1+|z|)|\nabla\psi|
+(1+|z|)^2|D^2\psi|\bigr)\\
&\quad\leq C(1+R)^{q_0}
\left(\|\mathcal L_\infty\psi\|_{L^p(B_{8R})}
+\sum_{k=0}^4\left|\int FZ_k\psi\right|\right)
+CR^2\|[\mathcal L_\infty,\chi_R]\psi\|_{H^{-2}}
+CR^{-2}\|\psi\|_\infty.
\end{aligned}
\end{equation}
We spell out the nonlocal part of the commutator.  Write
\[
\mathcal R_\alpha\psi(z)
=e^{U(z)}\int_{\mathbb R^4}
\frac{e^{U(y)}\psi(y)}{|z-y|^\alpha}\,dy.
\]
The multiplication term in $\mathcal L_\infty$ commutes with
$\chi_R$, while
\[
[\mathcal R_\alpha,\chi_R]\psi(z)
=e^{U(z)}\int_{\mathbb R^4}
\frac{e^{U(y)}(\chi_R(z)-\chi_R(y))\psi(y)}
{|z-y|^\alpha}\,dy.
\]
Split the last integral according to
$B_{4R}$, $B_{8R}\setminus B_{4R}$, and
$\mathbb R^4\setminus B_{8R}$.  Since
$e^U(z)\leq C(1+|z|)^{-(8-\alpha)}$ and
$\|\nabla\chi_R\|_\infty\leq C/R$, the near part is estimated by the
mean value theorem and the far part by
$\int_{|y|>4R}|y|^{-\alpha}e^{U(y)}\,dy\leq CR^{-4}$.
Consequently, for the exponent $p>4$ fixed above,
\[
\|[\mathcal R_\alpha,\chi_R]\psi\|_{L^p(\mathbb R^4)}
\leq CR^{-4}\|\psi\|_\infty.
\]
The differential commutator
$[\Delta^2,\chi_R]\psi$ contains derivatives of $\chi_R$ of orders
one through four and is supported in
$B_{8R}\setminus B_{4R}$.  By duality and interpolation,
\[
\|[\Delta^2,\chi_R]\psi\|_{H^{-2}}
\leq C\left(
R^{-1}\|\psi\|_{H^2(B_{8R}\setminus B_{4R})}
+R^{-2}\|\psi\|_{L^2(B_{8R}\setminus B_{4R})}\right).
\]
To make the absorption explicit, let
\[
E_j=\|\psi\|_{H^2(B_{2^{j+1}R}\setminus B_{2^jR})}.
\]
A rescaled interior estimate on two neighboring annuli, followed by
Young's inequality, gives
\[
E_j\leq \frac14E_{j+1}
+C(1+2^jR)^{q_0}
\|\mathcal L_\infty\psi\|_{L^p(B_{2^{j+2}R})}
+C(2^jR)^{-2}\|\psi\|_\infty,
\]
with the five kernel coefficients added to the middle term.  The
constant $1/4$ is obtained after enlarging the outer annulus once;
it is independent of $j$ and $R$.  Iteration up to the last annulus
meeting the support of $\chi_R$, and then summation of the resulting
geometric series, absorbs the annular terms and leaves
$CR^{-2}\|\psi\|_\infty$.  Together with the preceding estimate for
the Riesz commutator, this proves
\eqref{eq:localized-limiting-inverse} with a fixed power $q_0$.

Apply \eqref{eq:localized-limiting-inverse} to
$\Psi_{i,\varepsilon}^{\boldsymbol t}$ and use
\eqref{eq:expanding-core-equation}--
\eqref{eq:expanding-core-moments}. After increasing $q_0$ once, the
terms containing $R^{-4}$ and the cutoff commutator are bounded by
$CR^{-2}$. This proves \eqref{eq:expanding-core-estimate}.
\end{proof}

The following comparison identifies the capacitary tail of a scale
tangent and supplies the precise order of its exterior energy.

\begin{lemma}\label{lem:scale-tangent-exterior}
There are $\sigma>0$ and a choice of $R_\varepsilon$ satisfying
\eqref{eq:R-epsilon-choice} such that, uniformly for separated
configurations,
\begin{equation}\label{eq:scale-tangent-growing-ball}
\begin{aligned}
\omega_\varepsilon:=\sup_{|z|\leq2R_\varepsilon}
\Bigg(&\left|\Phi_{i,\varepsilon}
\left(\xi_i+\frac z{\mu_i}\right)+b_\alpha\right|\\
&+(1+|z|)\left|\nabla_z\Phi_{i,\varepsilon}
\left(\xi_i+\frac z{\mu_i}\right)\right|\\
&+(1+|z|)^2\left|D_z^2\Phi_{i,\varepsilon}
\left(\xi_i+\frac z{\mu_i}\right)\right|\Bigg)
=o(|\log\varepsilon|^{-1/2}).
\end{aligned}
\end{equation}
For this choice, let $\widehat Z_{i,0}$ be defined by
\eqref{eq:dilation-test}. Then
\begin{equation}\label{eq:scale-tangent-exterior-comparison}
\left\|
Y_{i,\varepsilon}-\widehat Z_{i,0}
\right\|_{H^2(\Omega\setminus
B_{2\rho_{i,\varepsilon}}(\xi_i))}
=o(|\log\varepsilon|^{-1/2}).
\end{equation}
Consequently,
\begin{equation}\label{eq:scale-tangent-exterior-energy}
\int_{\Omega\setminus
B_{2\rho_{i,\varepsilon}}(\xi_i)}
|\Delta Y_{i,\varepsilon}|^2\,dx
=
\frac{8\pi^2b_\alpha^2}{|\log\varepsilon|}
+o(|\log\varepsilon|^{-1}),
\end{equation}
and, for $i\neq j$,
\begin{equation}\label{eq:scale-tangent-cross-energy}
\int_\Omega
\Delta Y_{i,\varepsilon}\Delta Y_{j,\varepsilon}\,dx
=O(|\log\varepsilon|^{-2})
+o(|\log\varepsilon|^{-1}).
\end{equation}
For every fixed $C_0>0$, the same conclusions hold uniformly when the
base point $\boldsymbol t=0$ is replaced by
$|\boldsymbol t|\leq C_0\varepsilon^\tau|\log\varepsilon|$, with
$\Phi_{i,\varepsilon}$, $Y_{i,\varepsilon}$, $\mu_i$, and
$\rho_{i,\varepsilon}$ replaced by their values at that base point.
\end{lemma}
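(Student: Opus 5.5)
Three steps: choose $R_\varepsilon$ from Lemma \ref{lem:expanding-core-scale}, compare $Y_{i,\varepsilon}$ with the capacity profile outside the core by a biharmonic lifting, then read off the energies.

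\emph{Choice of $R_\varepsilon$ and the core estimate.} We apply Lemma \ref{lem:expanding-core-scale} at the base point $\boldsymbol t$ with $|\boldsymbol t|\le C_0\varepsilon^\tau|\log\varepsilon|$. Its right-hand side is $C[(1+R)^{q_0}(\varepsilon^\sigma+\varepsilon^\tau|\log\varepsilon|)+R^{-2}]$. We take
\[
R_\varepsilon=|\log\varepsilon|^{\kappa},\qquad \kappa>\tfrac14 .
\]
Then $R_\varepsilon^{-2}=o(|\log\varepsilon|^{-1/2})$, and $(1+R_\varepsilon)^{q_0}\varepsilon^{\min\{\sigma,\tau\}}|\log\varepsilon|$ is $O(\varepsilon^{\sigma'})$ for some $\sigma'>0$. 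This choice also gives $R_\varepsilon\le\varepsilon^{-\sigma/(8q_0)}$, $\log R_\varepsilon=o(|\log\varepsilon|)$ and $\varepsilon R_\varepsilon\to0$, so \eqref{eq:R-epsilon-choice} holds and \eqref{eq:scale-tangent-growing-ball} follows. Combined with the projection expansion (Lemma \ref{lem:projection-expansion}, $\partial_{t_i}PU_{\mu_i,\xi_i}=Z_{i,0}+b_\alpha+O(\mu_i^{-2})$ in $C^2$ after rescaling), this gives $Y_{i,\varepsilon}=Z_{i,0}+O(\omega_\varepsilon)$ in the scaled $C^2$ sense on $|z|\le 2R_\varepsilon$. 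By \eqref{eq:dilation-core-mismatch}, the Cauchy data of $Y_{i,\varepsilon}$ on $\partial B_{2\rho_{i,\varepsilon}}(\xi_i)$ are therefore value $=-b_\alpha+O(\omega_\varepsilon+R_\varepsilon^{-2})$ and $\rho\,\partial_\nu=O(\omega_\varepsilon+R_\varepsilon^{-2})$. The data of $\widehat Z_{i,0}=V_{\rho,\xi_i}$ on the same sphere are controlled by \eqref{eq:capacity-boundary-layer}. Note that $V_{\rho,\xi}$ evaluated at radius $2\rho$ differs from $-b_\alpha$ only by $O(|\log\rho|^{-1})$, and this discrepancy must be tracked exactly.

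\emph{Exterior comparison (the main obstacle).} Outside $B_{2\rho}$ the function $Y_{i,\varepsilon}$ is not biharmonic. Its equation is $\Delta^2 Y_{i,\varepsilon}=D\mathcal N_\varepsilon(u)[Y_{i,\varepsilon}]+\partial_{t_i}\mathcal N$-terms $+\sum c\,\chi F Z$-terms. On the exterior of the $i$-th core this source is small in the $**$ norm, except on the other cores, where $Y_{i,\varepsilon}$ rescales to zero by Lemma \ref{lem:scale-corrector}. I would write $Y_{i,\varepsilon}=h+w$ on $\Omega\setminus B_{2\rho}$, where $h$ is the biharmonic lifting of the Cauchy data from Lemma \ref{lem:small-sphere-lifting} and $w$ has zero Cauchy data and solves the inhomogeneous problem. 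The $H^2$ norm of $w$ is bounded by duality: we pair its source with $w$, using the $\Theta$-weighted bounds from Propositions \ref{prop:residual} and \ref{prop:linear-solvability} and the $L^\infty$ bound of $\Phi$. The delicate point is that $\Theta$ restricted to $|x-\xi_i|>2\rho$ has mass $O(R_\varepsilon^{-4+\tau})$, which is $o(|\log\varepsilon|^{-1/2})$ for our choice of $\kappa$. The difference $h-V_{2\rho,\xi_i}$ (after rescaling the profile) is estimated by \eqref{eq:small-sphere-lifting-estimate}. Its mean datum is $O(\omega_\varepsilon+R_\varepsilon^{-2})$, contributing $o(|\log\varepsilon|^{-1})$ to the squared norm, and the nonradial and normal data are $O(\omega_\varepsilon+R_\varepsilon^{-2})$. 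Finally, replacing $V_{2\rho}$ by $V_\rho$ on this region costs $O(|\log\rho|^{-1})$ in energy terms, since the two capacitary problems differ by a mean datum of size $|\log\rho|^{-1}$; this yields \eqref{eq:scale-tangent-exterior-comparison}.

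\emph{Energies.} Expanding the square in \eqref{eq:scale-tangent-exterior-comparison} against \eqref{eq:dilation-test-energy} (whose main term is of size $|\log\varepsilon|^{-1/2}$ in norm) gives \eqref{eq:scale-tangent-exterior-energy}, using $|\log\rho_{i,\varepsilon}|=|\log\varepsilon|(1+o(1))$ because $\log R_\varepsilon=o(|\log\varepsilon|)$. For the cross term, split $\Omega$ into $B_{2\rho_i}(\xi_i)$, $B_{2\rho_j}(\xi_j)$ and the rest. On the rest, replace both $Y$'s by $\widehat Z$'s with error $o(|\log\varepsilon|^{-1/2})\cdot O(|\log\varepsilon|^{-1/2})$ and invoke \eqref{eq:dilation-test-cross-energy}. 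Inside the $i$-th core, integrate by parts: $\Delta Y_{j,\varepsilon}$ there is the small capacity tail, of size $|\log\varepsilon|^{-1}$, and the core pairing is bounded, which gives the $O(|\log\varepsilon|^{-2})+o(|\log\varepsilon|^{-1})$ bound. Uniformity in $\boldsymbol t$ follows because every input lemma is already stated uniformly on that range.
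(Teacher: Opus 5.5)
Your outline follows the paper: $R_\varepsilon$ is a power of $|\log\varepsilon|$ taken from Lemma~\ref{lem:expanding-core-scale}, the Cauchy data on $\partial B_{2\rho_{i,\varepsilon}}(\xi_i)$ are lifted by Lemma~\ref{lem:small-sphere-lifting}, the remainder gets a mixed-boundary energy estimate, and the energies are read off at the end. Lifting the data of $Y_{i,\varepsilon}$ and passing through $V_{2\rho}$ is equivalent to the paper's lifting of the data of $Y_{i,\varepsilon}-\widehat Z_{i,0}$. Note, however, that the $V_{2\rho}$--$V_\rho$ mismatch is $O(|\log\rho|^{-1})$ in the $H^2$ norm, not in the energy. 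The reason is that the normal datum $\rho\,\partial_\nu V_\rho\sim|\log\rho|^{-1}$ enters \eqref{eq:small-sphere-lifting-estimate} without a logarithmic gain.

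The genuine gap is that two of your steps need \emph{quantitative} control of a scale tangent at a core other than its own, and nothing you invoke supplies it.

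\emph{Exterior estimate.} The source $D\mathcal N_\varepsilon(u)[Y_{i,\varepsilon}]$ on $B_{\eta/4}(\xi_j)$, $j\neq i$, has $\Theta$-mass of order one. Lemma~\ref{lem:scale-corrector} only says that $Y_{i,\varepsilon}$ rescaled at $\xi_j$ tends to zero, with no rate. Since you must show $\|w\|_{\mathcal H}^2=\int fw=o(|\log\varepsilon|^{-1})$, an unquantified $o(1)$ gives nothing. Moreover, in dimension four a unit vector of $\mathcal H$ can be of size $|\log\varepsilon|^{1/2}$ on a core, so mass does not control the $H^{-2}$ norm.

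\emph{Cross energy.} The piece $\int_{B_{2\rho_i}(\xi_i)}\Delta Y_{i,\varepsilon}\Delta Y_{j,\varepsilon}$ pairs a function whose $L^2$ norm on that core is of order one with $\Delta Y_{j,\varepsilon}$. For the latter, the comparison estimate for $Y_{j,\varepsilon}$ gives only an $L^2$ bound $o(|\log\varepsilon|^{-1/2})$ there. Cauchy--Schwarz then yields $o(|\log\varepsilon|^{-1/2})$, not $o(|\log\varepsilon|^{-1})$. Your assertion that $\Delta Y_{j,\varepsilon}$ is a capacity tail of size $|\log\varepsilon|^{-1}$ inside the $i$-th core is not proved. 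Pointwise it is also not what one expects: near $\xi_i$, $Y_{j,\varepsilon}$ must pass logarithmically from the outer value $V_{\rho_{j,\varepsilon},\xi_j}(\xi_i)=O(|\log\varepsilon|^{-1})$ to a rescaled core value tending to zero. So $\Delta Y_{j,\varepsilon}$ should behave like $|\log\varepsilon|^{-2}|x-\xi_i|^{-2}$ there.

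The missing ingredient is the analogue of Lemma~\ref{lem:expanding-core-scale} at the other cores. Rescale $\Phi_{i,\varepsilon}$ at $\xi_j$. There the differentiated residual has no $2b_\alpha F$ forcing, and the differentiated orthogonality conditions at $\xi_j$ hold exactly. Then \eqref{eq:localized-limiting-inverse} gives, on $|z|\leq2R$ and without the shift by $b_\alpha$, the weighted $C^2$ bound
$C[(1+R)^{q_0}(\varepsilon^\sigma+|\boldsymbol t|)+R^{-2}]$ for $\Phi_{i,\varepsilon}$ itself. For $R_\varepsilon=|\log\varepsilon|^{3/4}$ this is $O(|\log\varepsilon|^{-3/2})$. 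It makes the $j$-th core source $o(|\log\varepsilon|^{-1/2})$ in $H^{-2}$. In the cross energy it gives $|\Delta_z[Y_{j,\varepsilon}(\xi_i+z/\mu_i)]|\lesssim R_\varepsilon^{-2}(1+|z|)^{-2}$, which bounds the core piece by $O(R_\varepsilon^{-2}\log R_\varepsilon)=o(|\log\varepsilon|^{-1})$. The paper's own proof is equally brief at both points: it files the other-core terms under ``separated-core interaction'' and derives the cross energy from ``Cauchy's inequality''. So this estimate is what completes either version.

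A smaller issue: on the $i$-th exterior you replace an $H^{-2}$ bound by a mass bound. The paper avoids this for the leading term by identifying the profile $\Delta^2Z_{i,0}$ and integrating by parts. If you pair against $\|w\|_{L^\infty}$ instead, you control $\|w\|_{\mathcal H}^2$ rather than $\|w\|_{\mathcal H}$. The mass $R_\varepsilon^{-(4-\tau)}$ must then be $o(|\log\varepsilon|^{-1})$, which needs $\kappa(4-\tau)>1$ rather than $\kappa>1/4$; for instance $\kappa=3/4$, as in the paper, works.
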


\begin{proof}
Lemma \ref{lem:expanding-core-scale}, with
$\boldsymbol t=0$, gives for $2\leq R\leq
\varepsilon^{-\sigma/(8q_0)}$
\[
\begin{aligned}
\sup_{|z|\leq2R}\Bigg(&
\left|\Phi_{i,\varepsilon}
\left(\xi_i+\frac z{\mu_i}\right)+b_\alpha\right|
+(1+|z|)\left|\nabla_z\Phi_{i,\varepsilon}
\left(\xi_i+\frac z{\mu_i}\right)\right|\\
&+(1+|z|)^2\left|D_z^2\Phi_{i,\varepsilon}
\left(\xi_i+\frac z{\mu_i}\right)\right|
\Bigg)
\leq C\left[(1+R)^{q_0}\varepsilon^\sigma+R^{-2}\right].
\end{aligned}
\]
We now fix
\begin{equation}\label{eq:diagonal-radius-rate}
R_\varepsilon:=|\log\varepsilon|^{3/4}.
\end{equation}
Then $R_\varepsilon\to\infty$,
$\varepsilon R_\varepsilon\to0$, and
$\log R_\varepsilon=o(|\log\varepsilon|)$. Moreover,
\[
R_\varepsilon^{-2}|\log\varepsilon|^{1/2}
=|\log\varepsilon|^{-1}\to0
\]
and, for every fixed $q_0$,
\[
(1+R_\varepsilon)^{q_0}\varepsilon^\sigma
|\log\varepsilon|^{1/2}\to0.
\]
Thus \eqref{eq:scale-tangent-growing-ball} follows, and this choice
also satisfies \eqref{eq:R-epsilon-choice}. The uniform statement in
the lemma follows from the $\boldsymbol t$-uniform estimate in Lemma
\ref{lem:expanding-core-scale}.

Put
\[
D_{i,\varepsilon}
=\Omega\setminus B_{2\rho_{i,\varepsilon}}(\xi_i),
\qquad
w_{i,\varepsilon}
=Y_{i,\varepsilon}-\widehat Z_{i,0}.
\]
Let $g_{0,i,\varepsilon}$ and $g_{1,i,\varepsilon}$ be the Dirichlet
and normal traces of $w_{i,\varepsilon}$ on the inner sphere. By
\eqref{eq:scale-tangent-growing-ball},
\begin{equation*}
\begin{aligned}
&|\overline g_{0,i,\varepsilon}|
+\|g_{0,i,\varepsilon}^{\rho}-
\overline g_{0,i,\varepsilon}\|_{H^{3/2}(\mathbb S^3)}
+\|g_{1,i,\varepsilon}^{\rho}\|_{H^{1/2}(\mathbb S^3)}
=o(|\log\varepsilon|^{-1/2}).
\end{aligned}
\end{equation*}
Let $h_{i,\varepsilon}$ be the lifting from Lemma
\ref{lem:small-sphere-lifting}. Then
\begin{equation}\label{eq:scale-tangent-boundary-lifting}
\|h_{i,\varepsilon}\|_{H^2(D_{i,\varepsilon})}
=o(|\log\varepsilon|^{-1/2}).
\end{equation}

We next estimate the equation for the remaining difference. Let
\[
A_\varepsilon(x)
:=\varepsilon^{8-\alpha}K(x)e^{u_{0,\boldsymbol\xi}(x)}
\int_\Omega
\frac{K(y)e^{u_{0,\boldsymbol\xi}(y)}}{|x-y|^\alpha}\,dy
\]
and
\[
\mathcal K_\varepsilon[\psi](x)
:=\varepsilon^{8-\alpha}K(x)e^{u_{0,\boldsymbol\xi}(x)}
\int_\Omega
\frac{K(y)e^{u_{0,\boldsymbol\xi}(y)}\psi(y)}
{|x-y|^\alpha}\,dy.
\]
Since $\widehat Z_{i,0}$ and $h_{i,\varepsilon}$ are biharmonic in
$D_{i,\varepsilon}$, the function
$q_{i,\varepsilon}:=w_{i,\varepsilon}-h_{i,\varepsilon}$ satisfies
\begin{equation}\label{eq:scale-tangent-difference-equation}
\Delta^2q_{i,\varepsilon}
=f_{i,\varepsilon}^{\rm lin}
+f_{i,\varepsilon}^{\rm mult}
\quad\text{in }D_{i,\varepsilon},
\end{equation}
where
\[
f_{i,\varepsilon}^{\rm lin}
=A_\varepsilon Y_{i,\varepsilon}
+\mathcal K_\varepsilon[Y_{i,\varepsilon}],
\qquad
f_{i,\varepsilon}^{\rm mult}
=\sum_{a=1}^{5m}
\left[(\partial_{t_i}c_a)\Psi_a
+c_a\partial_{t_i}\Psi_a\right].
\]
On the inner boundary,
$q_{i,\varepsilon}=\partial_\nu q_{i,\varepsilon}=0$; on
$\partial\Omega$,
$q_{i,\varepsilon}=\Delta q_{i,\varepsilon}=0$. These are the mixed
conditions used in the energy identity. The $H^{-2}$ norm below is
the dual norm of this mixed energy space.

The two terms in $f_{i,\varepsilon}^{\rm lin}$ must be kept together.
After the change of variables $x=\xi_i+z/\mu_i$, their leading
profile is
\[
FZ_0+e^U\int_{\mathbb R^4}
\frac{e^U(y)Z_0(y)}{|z-y|^\alpha}\,dy
=\Delta^2Z_0.
\]
Let $v$ satisfy the same homogeneous mixed boundary conditions as
$q_{i,\varepsilon}$. After two integrations by parts, the inner
boundary terms vanish because $v=\partial_\nu v=0$. On the outer
boundary, $v=0$, while
$\Delta Z_{i,0}=O(\mu_i^{-2})$ because the center remains a fixed
positive distance from $\partial\Omega$. The trace theorem gives
\[
\left|\int_{D_{i,\varepsilon}}\Delta^2Z_{i,0}\,v\,dx\right|
\leq
\left(CR_\varepsilon^{-2}+C\mu_i^{-2}\right)
\|v\|_{H^2(D_{i,\varepsilon})}.
\]
The difference between the exact pair and this limiting tail contains
the error in \eqref{eq:scale-tangent-growing-ball}, a derivative of
$K$, a regular Green term, a truncated Riesz tail, or a separated-core
interaction. Lemmas \ref{lem:weighted-Riesz} and
\ref{lem:separated-core-estimates} therefore give
\begin{equation*}
\|f_{i,\varepsilon}^{\rm lin}\|_{H^{-2}(D_{i,\varepsilon})}
\leq C\bigl(R_\varepsilon^{-2}+\mu_i^{-2}+\omega_\varepsilon
+\varepsilon^\tau\bigr)
=o(|\log\varepsilon|^{-1/2}).
\end{equation*}
The differentiated multiplier system and Proposition
\ref{prop:linear-solvability} give
$|c_a|+|\partial_{t_i}c_a|\leq C\varepsilon^\tau$. The test functions
and their scale derivatives act uniformly on $H^2$, and hence
\begin{equation}\label{eq:scale-difference-multiplier-estimate}
\|f_{i,\varepsilon}^{\rm mult}\|_{H^{-2}(D_{i,\varepsilon})}
\leq C\varepsilon^\tau
=o(|\log\varepsilon|^{-1/2}).
\end{equation}

Testing \eqref{eq:scale-tangent-difference-equation} with
$q_{i,\varepsilon}$, using the mixed-boundary coercivity and
\eqref{eq:scale-tangent-boundary-lifting}--
\eqref{eq:scale-difference-multiplier-estimate}, gives
\eqref{eq:scale-tangent-exterior-comparison}. Equations
\eqref{eq:dilation-test-energy},
\eqref{eq:dilation-test-cross-energy}, and Cauchy's inequality then
give \eqref{eq:scale-tangent-exterior-energy} and
\eqref{eq:scale-tangent-cross-energy}.
\end{proof}

We next control the residual scale equations together with their
center derivatives. This estimate is needed in the mixed block of the
reduced Hessian.

\begin{lemma}\label{lem:scale-defect-C1}
Define
\begin{equation*}
\boldsymbol r_\varepsilon(\boldsymbol\xi)
:=\nabla_{\boldsymbol t}\widetilde J_\varepsilon
(0,\boldsymbol\xi).
\end{equation*}
Then
\begin{equation}\label{eq:scale-defect-C1}
\|\boldsymbol r_\varepsilon\|_{C^1(\mathcal C_{m,\eta}(\Omega))}
\leq C\varepsilon^\tau.
\end{equation}
The derivative is taken with respect to the original center
variables.
\end{lemma}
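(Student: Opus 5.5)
We need to show that $\boldsymbol r_\varepsilon(\boldsymbol\xi)=\nabla_{\boldsymbol t}\widetilde J_\varepsilon(0,\boldsymbol\xi)$ and its first center derivatives are $O(\varepsilon^\tau)$. The plan is to write $\widetilde J_\varepsilon=J_\varepsilon(W)+[J_\varepsilon(W+\phi_\varepsilon)-J_\varepsilon(W)]$ and treat the two pieces separately.

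\textbf{The approximate energy.} Proposition \ref{prop:energy-expansion} gives
\[
\partial_{t_i}J_\varepsilon(W_{\boldsymbol t,\boldsymbol\xi})
=M_\alpha b_\alpha-M_\alpha b_\alpha e^{2b_\alpha t_i}
+\partial_{t_i}\mathcal R_\varepsilon^{\rm app}.
\]
The explicit part vanishes identically at $\boldsymbol t=0$ for every $\boldsymbol\xi$, so its $\xi$-derivative is zero as well. The remainder is controlled in $C^1$ by \eqref{eq:energy-remainder-C1}, and its mixed derivative by \eqref{eq:energy-remainder-C2}, both in the original variables. Hence this part of $\boldsymbol r_\varepsilon$ is $O(\varepsilon^\tau)$ in $C^1$.

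\textbf{The correction term: structural identity.} This part needs care, because $\|\partial_{t_i}\phi_\varepsilon\|$ is $O(1)$ by Lemma \ref{lem:scale-corrector}, so naive bounds fail. Since $\phi_\varepsilon$ solves the projected problem, $J_\varepsilon'(W+\phi_\varepsilon)=\sum_{k,a}c_{k,a}\chi_kF_{\mu_k,\xi_k}Z_{k,a}$. Therefore
\[
\partial_{t_i}\widetilde J_\varepsilon
=J_\varepsilon'(W+\phi_\varepsilon)[\partial_{t_i}W+\partial_{t_i}\phi_\varepsilon]
=\sum c_{k,a}\int\chi_kF_kZ_{k,a}\,(\partial_{t_i}W+\partial_{t_i}\phi_\varepsilon).
\]
Differentiating the orthogonality conditions gives
\[
\int\chi_kF_kZ_{k,a}\,\partial_{t_i}\phi_\varepsilon=-\int\partial_{t_i}(\chi_kF_kZ_{k,a})\,\phi_\varepsilon=O(\|\phi_\varepsilon\|_*).
\]
Then $\partial_{t_i}\widetilde J_\varepsilon=\sum c_{k,a}\bigl[\int\chi_kF_kZ_{k,a}\partial_{t_i}W+O(\|\phi_\varepsilon\|_*)\bigr]$. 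At $\boldsymbol t=0$, \eqref{eq:correction-estimate} gives $|c|+\|\phi_\varepsilon\|_*\le C\varepsilon^\tau$, and the bracket is $O(1)$ by the parameter bounds on $PU$. This yields $|\boldsymbol r_\varepsilon|\le C\varepsilon^\tau$. The same identity is an alternative to the energy expansion for the $C^0$ bound.

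\textbf{Center derivative.} Differentiate this identity in $\xi_{j,\ell}$ at $\boldsymbol t=0$. Each term has one of two forms. In the first, the derivative falls on $c_{k,a}$ and is multiplied by an $O(1)$ pairing. In the second, it falls on the pairing and is multiplied by $c_{k,a}=O(\varepsilon^\tau)$. Proposition \ref{prop:nonlinear-correction} bounds $\mu_j^{-1}\partial_{\xi_j}(c,\phi_\varepsilon)$ by $C\varepsilon^\tau$. The rescaled pairings $\mu_j^{-1}\partial_{\xi_j}\int\chi_kF_kZ_{k,a}\partial_{t_i}W$ are $O(1)$ by \eqref{eq:bubble-parameter-derivative-bounds}.

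\textbf{Main obstacle: the unrescaled derivative.} The claim is stated in the original variables, so the raw factor $\mu_j\sim\varepsilon^{-1}$ from a center derivative must not appear. To remove it, I would avoid differentiating core-localized quantities directly. Instead I would differentiate the equality of the two expressions for $\partial_{t_i}\widetilde J_\varepsilon$, namely the identity above and the energy route $\partial_{t_i}J_\varepsilon(W)+\partial_{t_i}[\widetilde J_\varepsilon-J_\varepsilon(W)]$. Then I would use that $\widetilde J_\varepsilon-J_\varepsilon(W)=-\tfrac12\langle L_\varepsilon\phi_\varepsilon,\phi_\varepsilon\rangle+O(\|\phi_\varepsilon\|^3)+\langle R,\phi_\varepsilon\rangle$ is quadratically small. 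Its mixed $t\xi$-derivative involves $\partial_\xi\phi_\varepsilon$ only through pairings with $R$ or with $L_\varepsilon\phi_\varepsilon$. In these pairings the translation-mode part of $\partial_\xi\phi_\varepsilon$, which carries the $\mu_j$, is orthogonal to the leading residual up to the zero first moment \eqref{eq:moment-zero}. That cancellation, the same one used in Proposition \ref{prop:energy-expansion}, is what must absorb the $\mu_j$ factor. It is the step I expect to be hardest.
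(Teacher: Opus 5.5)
There is a genuine gap: the bound on the center derivative in the original variables is never proved, and the plan you sketch for it would not work.

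\textbf{What is fine.} Your $C^0$ bound is correct. The product identity $r_{i,\varepsilon}=\sum_a c_a\int_\Omega\Psi_a\,\partial_{t_i}u$ with $c_a(0,\boldsymbol\xi)=O(\varepsilon^\tau)$ is exactly where the paper starts. The $J_\varepsilon(W)$ part of the $C^1$ bound is also fine given Proposition~\ref{prop:energy-expansion}.

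\textbf{Where it fails.} Your ``center derivative'' paragraph only gives $|\partial_{\xi_{j,\ell}}r_{i,\varepsilon}|\le C\mu_j\varepsilon^\tau\sim\varepsilon^{\tau-1}$. Your last paragraph is a plan, not an argument, and it would not close for four reasons.

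First, your expansion has the wrong sign on the quadratic term. Since $J_\varepsilon'(W)[\phi]=\langle R,\phi\rangle$, the projected equation and orthogonality give $\widetilde J_\varepsilon-J_\varepsilon(W)=\langle R,\phi\rangle+\frac12\langle L_\varepsilon\phi,\phi\rangle+O(\|\phi\|_*^3)=\frac12\langle R,\phi\rangle+O(\|\phi\|_*^3)$.

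Second, and more seriously, quadratic smallness does not survive the $t$-derivative. By Lemma~\ref{lem:scale-corrector}, $\partial_{t_i}\phi_\varepsilon\to-b_\alpha$, which is of order one. At $\boldsymbol t=0$ one also has $\partial_{t_i}R_{\boldsymbol t,\boldsymbol\xi}=-2b_\alpha F_{\mu_i,\xi_i}+\partial_{t_i}E_{\boldsymbol t,\boldsymbol\xi}$. The mixed derivative therefore contains $-b_\alpha\int_\Omega F_{\mu_i,\xi_i}\,\partial_{\xi_{j,\ell}}\phi_\varepsilon$ and $\frac12\int_\Omega\partial_{\xi_{j,\ell}}R\,\partial_{t_i}\phi_\varepsilon$, each a priori of size $\mu_j\varepsilon^\tau$. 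The weight $F_{\mu_i,\xi_i}$ is the mass mode, not one of the orthogonality weights, so differentiated orthogonality does not control the first pairing. Nothing in your sketch shows that the $\mu_j$-sized part of $\partial_{\xi_{j,\ell}}\phi_\varepsilon$ is orthogonal to it.

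Third, even the cubic remainder is only bounded by $\|\phi\|_*\|\partial_t\phi\|_*\|\partial_\xi\phi\|_*\lesssim\varepsilon^{2\tau-1}$, which is not $O(\varepsilon^\tau)$ for $\tau<1$.

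Fourth, equating your two expressions for $\partial_{t_i}\widetilde J_\varepsilon$ is a tautology and produces no cancellation.

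\textbf{The missing idea.} Stay with the finite multiplier system and use the parity between the dilation and translation modes there. The paper tests the projected equation with the $\widehat Z_{p,k}$, obtaining $\mathcal A_\varepsilon\boldsymbol c=\boldsymbol d_\varepsilon$, and differentiates this system in $\xi_{j,\ell}$. In the $i$-th scale row, every term carrying the factor $\mu_j$ has leading form $\mu_j\int FZ_0Z_\ell$ or $\mu_j\mathcal Q_\infty(Z_0,Z_\ell)$. Both vanish by Lemma~\ref{lem:kernel-Gram} and $\mathcal L_\infty Z_\ell=0$. The center dependence of the matching scale $\overline\mu_i(\varepsilon,\boldsymbol\xi)$ produces a dilation profile. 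Its order-one contribution vanishes by $\mathcal Q_\infty(Z_0,Z_0)=0$ together with the constant-mode cancellation \eqref{eq:core-cancellation}. Inverting the uniformly invertible Gram matrix then shows that the scale components of $\partial_{\xi_{j,\ell}}\boldsymbol c$ are $O(\varepsilon^\tau)$ in the original variables, not merely $O(\mu_j\varepsilon^\tau)$.

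Finally, the paper differentiates $r_{i,\varepsilon}=\sum_a c_a\int_\Omega\Psi_aY_{i,\varepsilon}$. The $\mu_j$-sized terms again have these odd leading forms, which vanish, and the remaining terms are bounded by $C\varepsilon^\tau$. In particular, the translation entries $\int_\Omega\chi_iF_{\mu_i,\xi_i}Z_{i,\ell}Y_{i,\varepsilon}$ of the scale row tend to $\int FZ_\ell Z_0=0$. So the possibly large translation components of $\partial_{\boldsymbol\xi}\boldsymbol c$ enter only through vanishing leading coefficients.

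You already had the right identity. What it needs is this row-by-row parity bookkeeping on $\boldsymbol c$ and $\partial_{\boldsymbol\xi}\boldsymbol c$, not a return to the energy.
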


\begin{proof}
At $\boldsymbol t=0$, the projected equation has the form
\begin{equation}\label{eq:scale-defect-projected-equation}
J_\varepsilon'(u_{0,\boldsymbol\xi})
=\sum_{a=1}^{5m}c_a(0,\boldsymbol\xi)
\Psi_a(0,\boldsymbol\xi).
\end{equation}
Propositions \ref{prop:residual} and
\ref{prop:nonlinear-correction} give
$|c_a(0,\boldsymbol\xi)|\leq C\varepsilon^\tau$. Since
$r_{i,\varepsilon}=J_\varepsilon'(u_{0,\boldsymbol\xi})
[Y_{i,\varepsilon}]$, the scale rows of the Gram matrix and Lemma
\ref{lem:scale-corrector} imply
$|r_{i,\varepsilon}|\leq C\varepsilon^\tau$.

We now differentiate in an original center variable. Put
\[
X_{j,\ell,\varepsilon}
:=\partial_{\xi_{j,\ell}}u_{0,\boldsymbol\xi}.
\]
The chain rule gives the exact identity
\begin{equation*}
\partial_{\xi_{j,\ell}}r_{i,\varepsilon}
=J_\varepsilon''(u_{0,\boldsymbol\xi})
[X_{j,\ell,\varepsilon},Y_{i,\varepsilon}]
+J_\varepsilon'(u_{0,\boldsymbol\xi})
[\partial_{\xi_{j,\ell}}Y_{i,\varepsilon}].
\end{equation*}
The two terms must be expanded together. We describe the cancellation
in the finite multiplier system, which also keeps track of the
unnormalized factor $\mu_j$ in $X_{j,\ell,\varepsilon}$.

Test \eqref{eq:scale-defect-projected-equation} with all functions
$\widehat Z_{p,k}$. This gives
\[
\mathcal A_\varepsilon\boldsymbol c
=\boldsymbol d_\varepsilon,
\qquad
(\mathcal A_\varepsilon)_{(p,k),a}
=\int_\Omega\Psi_a\widehat Z_{p,k}\,dx,
\]
where $\boldsymbol d_\varepsilon$ is obtained by pairing the
projected residual with the same tests. Differentiate this system
with respect to $\xi_{j,\ell}$. In the $i$-th scale row, the terms
which carry a factor $\mu_j$ have, after rescaling, one of the two
leading forms
\[
\mu_j\int_{\mathbb R^4}FZ_0Z_\ell\,dz,
\qquad
\mu_j\mathcal Q_\infty(Z_0,Z_\ell).
\]
Both vanish. If the derivative acts on the dependence of
$\mu_i(\varepsilon,\boldsymbol\xi)$ on the center, the leading
profile is a dilation mode. Its order-one contribution vanishes by
$\mathcal Q_\infty(Z_0,Z_0)=0$ together with the constant-mode
cancellation in Lemma \ref{lem:scale-corrector}. Thus the terms that
remain in a scale row contain at least one of the following factors:
\[
\mu_i^{-1},\qquad
\varepsilon^\tau,\qquad
\varepsilon^\alpha,
\]
or a derivative of a smooth Green coefficient multiplied by an odd
core moment. The last contribution is zero. Propositions
\ref{prop:residual} and \ref{prop:linear-solvability}, together with
Lemmas \ref{lem:weighted-Riesz} and
\ref{lem:separated-core-estimates}, therefore give
\begin{equation*}
\left|
\partial_{\xi_{j,\ell}}d_{i,0}
-(\partial_{\xi_{j,\ell}}\mathcal A_\varepsilon
\boldsymbol c)_{i,0}
\right|
\leq C\varepsilon^\tau.
\end{equation*}
The normalized Gram matrix is uniformly invertible. Its off-diagonal
blocks are $o(1)$, and its diagonal blocks converge to
$(\int FZ_kZ_\ell)_{0\leq k,\ell\leq4}$. Solving the differentiated system
shows that every scale component of
$\partial_{\xi_{j,\ell}}\boldsymbol c$ is
$O(\varepsilon^\tau)$.

Finally differentiate
$r_{i,\varepsilon}=
\sum_a c_a\int_\Omega\Psi_aY_{i,\varepsilon}\,dx$. The derivatives
of the scale row have the same leading odd terms displayed above.
They vanish, while all remaining terms are bounded by
$C\varepsilon^\tau$. Hence
$|\partial_{\xi_{j,\ell}}r_{i,\varepsilon}|
\leq C\varepsilon^\tau$, uniformly in all indices. This proves
\eqref{eq:scale-defect-C1}.
\end{proof}

Before computing the reduced Hessian, we isolate the contribution of
the orthogonal correction in the position variables.

\begin{lemma}\label{lem:position-correction-C2}
Let
\[
\mathcal R_\varepsilon(\boldsymbol\xi)
:=J_\varepsilon(W_{0,\boldsymbol\xi}
+\phi_\varepsilon(0,\boldsymbol\xi))
-J_\varepsilon(W_{0,\boldsymbol\xi}).
\]
Then, uniformly on compact subsets of
$\mathcal C_{m,\eta}(\Omega)$,
\begin{equation}\label{eq:position-correction-C2}
\|\mathcal R_\varepsilon\|_{C^2}=o(1).
\end{equation}
\end{lemma}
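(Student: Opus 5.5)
Write $W=W_{0,\boldsymbol\xi}$ and $\phi=\phi_\varepsilon(0,\boldsymbol\xi)$. The plan is to Taylor-expand $J_\varepsilon$ at $W$ to second order and use the projected equation to kill the linear term up to multiplier contributions:
\[
\mathcal R_\varepsilon(\boldsymbol\xi)
=J_\varepsilon'(W)[\phi]+\int_0^1(1-s)J_\varepsilon''(W+s\phi)[\phi,\phi]\,ds .
\]
Since $J_\varepsilon'(W)[\phi]=\int_\Omega R_{0,\boldsymbol\xi}\phi$ and $\int_\Omega\Theta\le C$ by \eqref{eq:theta-L1}, Proposition \ref{prop:residual} and \eqref{eq:correction-estimate} with $\boldsymbol t=0$ give $|J_\varepsilon'(W)[\phi]|\le C\|R_{0,\boldsymbol\xi}\|_{**}\|\phi\|_*\le C\varepsilon^{2\tau}$. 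The bound \eqref{eq:linear-nonlocal-energy-bound} (uniform nonlinear mass, HLS) gives $|J''_\varepsilon(W+s\phi)[\phi,\phi]|\le C(\|\phi\|_{\mathcal H}^2+\|\phi\|_*^2)\le C\varepsilon^{2\tau}$. This settles the $C^0$ bound.

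For the derivatives in the original center variables, I would differentiate the same representation. A single $\xi_{j,\ell}$-derivative of $W$ or of $\phi$ costs a factor $\mu_j\sim\varepsilon^{-1}$. By Propositions \ref{prop:residual} and \ref{prop:nonlinear-correction}, $\mu_j^{-1}\|\partial_{\xi_{j,\ell}}\phi\|_{*,\mathcal H}\le C\varepsilon^\tau$ and $\mu_j^{-1}\|\partial_{\xi_{j,\ell}}R\|_{**}\le C\varepsilon^\tau$. A crude count therefore yields $|\nabla_{\boldsymbol\xi}\mathcal R_\varepsilon|\le C\varepsilon^{2\tau-1}$ and $|D^2_{\boldsymbol\xi}\mathcal R_\varepsilon|\le C\varepsilon^{2\tau-2}$. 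This is not $o(1)$ because $\tau<1$. The main obstacle is to remove the $\mu_j$ factors, and I would do this with the standard reduction identity rather than by brute force.

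\textbf{First derivatives.} Using $J_\varepsilon'(W+\phi)=\sum_a c_a\Psi_a$, write
\[
\partial_{\xi_{j,\ell}}\mathcal R_\varepsilon
=\sum_a c_a\int_\Omega\Psi_a\,\partial_{\xi_{j,\ell}}(W+\phi)
-J_\varepsilon'(W)[\partial_{\xi_{j,\ell}}W].
\]
For the multiplier term, differentiating the orthogonality relations \eqref{eq:orthogonality} moves the derivative off $\phi$: $\int\Psi_a\partial\phi=-\int\phi\,\partial\Psi_a$, which is $O(\mu_j\varepsilon^\tau)$. The leading pairings $\int\Psi_a\partial_{\xi_{j,\ell}}W$ are $\mu_j$ times Gram entries $\int FZ_kZ_\ell$. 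So the multiplier term is $\mu_j$ times a combination of $c_a$. The $\mu_j$-order contributions must then cancel against $J_\varepsilon'(W)[\partial W]$.

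To see this cancellation, I would rather compare with the already-known expansion. Testing the projected equation with $\widehat Z_{j,\ell}$ expresses $c_{j,\ell}$ as $-\mu_j^{-1}$ times the $\xi$-gradient of the energy of $W$ plus $O(\varepsilon^\tau)$. Then $\partial_{\xi}\widetilde J_\varepsilon=\partial_\xi J_\varepsilon(W)+O(\varepsilon^{\tau})+O(\text{quadratic in }\phi\text{ with one }\mu\text{ factor})$. The crucial quadratic term is $J''_\varepsilon[\phi,\partial\phi]$. After rescaling in the $j$-th core it becomes $\mu_j$ times $\mathcal Q_\infty$ pairings involving $Z_\ell$. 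These vanish at leading order by oddness (compare the argument in Lemma \ref{lem:scale-defect-C1}), leaving $O(\varepsilon^{2\tau}\cdot\mu_j\cdot\varepsilon^{\tau'})$.

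\textbf{Second derivatives.} I would repeat the argument once more, using the second-derivative bounds of Propositions \ref{prop:linear-solvability} and \ref{prop:nonlinear-correction} and Lemma \ref{lem:nonlinear-estimates}. The odd-moment cancellations \eqref{eq:moment-zero} absorb one power of $\mu_j$ each time.

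If these cancellations do not close to $o(1)$ directly, the fallback is to sharpen $\|\phi_\varepsilon\|_*$ at $\boldsymbol t=0$ using the even/odd decomposition of the core error. The even leading part of $E$ at $\boldsymbol t=0$ is $O(\varepsilon^{\min(2,\alpha)})$ and the odd part $O(\varepsilon)$. This would give $\|\phi\|\le C\varepsilon^{\tau_1}$ with $2\tau_1>2$ in the translation-sensitive components, which suffices to beat the $\mu_j^{2}$ loss.
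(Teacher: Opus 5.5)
Your zero-order bound is correct and coincides with the paper's first step: the same Taylor formula in the normal variable, together with $\|R_{0,\boldsymbol\xi}\|_{**}\le C\varepsilon^\tau$, $\int_\Omega\Theta\le C$, and $\|\phi_\varepsilon(0,\boldsymbol\xi)\|\le C\varepsilon^\tau$.

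The gap is in the center derivatives, which carry the whole content of the lemma. Each unnormalized $\partial_{\xi_{j,\ell}}$ costs $\mu_j\sim\varepsilon^{-1}$. You therefore need what survives the leading cancellations to be $o(\mu_j^{-1})$ for $\nabla_{\boldsymbol\xi}\mathcal R_\varepsilon$ and $o(\mu_j^{-1}\mu_k^{-1})$ for $D^2_{\boldsymbol\xi}\mathcal R_\varepsilon$. Nothing in the proposal produces this.
\begin{itemize}
\item After $\mu_j\mathcal Q_\infty(Z_\ell,\widetilde\phi)=0$ is used (this comes from $\mathcal L_\infty Z_\ell=0$, not oddness, since $\phi_\varepsilon$ has no parity), the remainder is $\mu_j$ times a coefficient error times $\phi_\varepsilon$. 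With the bounds available this is $O(\mu_j\varepsilon^{2\tau})=O(\varepsilon^{2\tau-1})$. Your factor $\varepsilon^{\tau'}$ is never identified, and without it the estimate fails whenever $\tau\le\frac12$, which is forced when $\alpha\le\frac12$.
\item The multiplier step has the same defect. A relation $c_{j,\ell}=-\mu_j^{-1}(\cdots)+O(\varepsilon^\tau)$ carries no information: the error dominates the $O(\mu_j^{-1})$ main term and becomes $O(\varepsilon^{\tau-1})$ after multiplication by $\mu_j$.
\item The quadratic term does not vanish by oddness. With $\partial_{\xi_{j,\ell}}\phi_\varepsilon\approx-\mu_j\partial_{z_\ell}\widetilde\phi$, integrating by parts in the diagonal translation gives $\mathcal Q_\infty(\widetilde\phi,\partial_{z_\ell}\widetilde\phi)=-\frac14\iint\frac{e^{U(x)+U(y)}}{|x-y|^\alpha}(Z_\ell(x)+Z_\ell(y))(\widetilde\phi(x)+\widetilde\phi(y))^2\,dx\,dy$. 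This is not zero, and controlling $\mu_j$ times it already requires a parity splitting of $\phi_\varepsilon$ that you have not established.
\item The second derivatives, where the naive count $\varepsilon^{2\tau-2}$ diverges for every admissible $\tau$, are not treated beyond ``repeat once more''.
\end{itemize}

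The fallback cannot close this either. At $\boldsymbol t=0$ the odd part of the core error is $\mu_i^{-1}$ times a fixed odd profile. Its coefficient vector is $\nabla\log K(\xi_i)+M_\alpha\nabla_x[H(x,\xi_i)+\sum_{j\ne i}G(x,\xi_j)]_{x=\xi_i}=\nabla_{\xi_i}\mathcal F_m(\boldsymbol\xi)$, which does not vanish on a general compact set of configurations. The profile is not a multiple of $FZ_\ell$, so the multipliers do not absorb it. Hence the translation-sensitive part of $\phi_\varepsilon$ is of exact order $\varepsilon$, and $2\tau_1>2$ is impossible there. Size estimates alone then give $\mu_j\mu_k\varepsilon^2=O(1)$ for the Hessian, not $o(1)$. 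Inside the $\|\cdot\|_{**}$ framework you would not even reach $\varepsilon$, since $|x-\xi_i|F_{\mu_i,\xi_i}$ is only $O(\varepsilon^\tau)$ in that norm.

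What is missing is a structural cancellation: the large parts of $\partial_{\xi_{i,\ell}}W$ and $\partial_{\xi_{i,\ell}}\phi_\varepsilon$ are pure translations. One way to use this:
\begin{itemize}
\item Differentiate the rescaled projected problem at fixed $z=\mu_i(x-\xi_i)$. There $\boldsymbol\xi$ enters only through slowly varying data ($K$, $H$, $G$, $\overline\mu_i$, the cutoffs).
\item Show that $\partial_{\xi_{i,\ell}}\phi_\varepsilon+\partial_{x_\ell}\phi_\varepsilon$ carries no factor $\mu_i$ in the $i$-th core, treating the necks and exterior at fixed $x$.
\item Use that the core integrals making up $\mathcal R_\varepsilon$ are invariant under the translation part.
\end{itemize}

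The paper takes the same Taylor route, differentiates twice, and isolates $I^{(1)}_{ab},\dots,I^{(4)}_{ab}$. It removes their order-$\mu_i$ parts by three mechanisms: $\mathcal L_\infty Z_\ell=0$, oddness when a derivative falls on a radial coefficient, and cancellation of two translation factors between the two parts of the Choquard Hessian. These are the infinitesimal form of the translation covariance above. The paper then reaches the remainder bound $C(\varepsilon^\tau|\log\varepsilon|^2+\mu_i^{-1})$ in a single step. On either route, the quantitative control of the post-cancellation remainders at the level $o(\mu_j^{-1}\mu_k^{-1})$ is exactly what has to be supplied.
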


\begin{proof}
Taylor's formula in the normal variable gives
\begin{equation}\label{eq:position-correction-Taylor}
\begin{aligned}
\mathcal R_\varepsilon
={}&J_\varepsilon'(W_{0,\boldsymbol\xi})
[\phi_\varepsilon]
+\int_0^1(1-s)
J_\varepsilon''(W_{0,\boldsymbol\xi}+s\phi_\varepsilon)
[\phi_\varepsilon,\phi_\varepsilon] \,ds.
\end{aligned}
\end{equation}
The zero-order estimate follows from Propositions \ref{prop:residual}
and \ref{prop:nonlinear-correction}. We discuss the center derivatives,
where the normalization requires care.

Let $\xi_a=\xi_{i(a),\ell(a)}$ and
$\xi_b=\xi_{i(b),\ell(b)}$ denote two original center coordinates, and
set
\[
D_a:=\mu_{i(a)}^{-1}\partial_{\xi_a},
\qquad
D_b:=\mu_{i(b)}^{-1}\partial_{\xi_b}.
\]
Propositions \ref{prop:linear-solvability} and
\ref{prop:nonlinear-correction} give uniform bounds for one and two
normalized derivatives of $\phi_\varepsilon$. An unnormalized
derivative of the ansatz has the decomposition
\[
\partial_{\xi_{i,\ell}}W_{0,\boldsymbol\xi}
=-\mu_i Z_{i,\ell}+B_{i,\ell},
\]
where $B_{i,\ell}$ contains the derivative of the matching scale and
the smooth Green terms and is uniformly bounded in the weighted
parameter norm.

Differentiate \eqref{eq:position-correction-Taylor} twice. After the
terms containing derivatives of the integration parameter are
combined, the potentially non-small contributions are linear
combinations of
\begin{align*}
I_{ab}^{(1)}&=J_\varepsilon''(W)
[\partial_{\xi_a}W,\partial_{\xi_b}\phi],\qquad
I_{ab}^{(2)}=J_\varepsilon'(W)
[\partial_{\xi_a\xi_b}^2\phi],\\
I_{ab}^{(3)}&=J_\varepsilon''(W)
[\partial_{\xi_a}\phi,\partial_{\xi_b}\phi],\qquad
I_{ab}^{(4)}=\int_0^1J_\varepsilon'''(W+s\phi)
[\partial_{\xi_a}W,\phi,
\partial_{\xi_b}\phi]\,ds,
\end{align*}
together with the symmetric terms obtained by interchanging $a$ and
$b$. We estimate these terms core by core.

Suppose first that both derivatives act at the same core. Insert
$\partial_{\xi_a}W=-\mu_iZ_{i,\ell}+B_{i,\ell}$ and rescale.
The coefficient of the apparent order-$\mu_i$ term in
$I_{ab}^{(1)}$ is
\[
\mathcal Q_\infty
(Z_\ell,D_b\phi_\varepsilon),
\]
which vanishes after the differentiated orthogonality condition is
used, because $\mathcal L_\infty Z_\ell=0$. If a derivative falls
on a radial coefficient, the resulting integrand is odd in the
$\ell$-th core variable. Terms with two leading translation factors
are differentiated versions of the same kernel identity and cancel
between the two parts of the Choquard Hessian. The remaining
same-core terms contain either a normalized derivative of
$\phi_\varepsilon$, a coefficient error from Proposition
\ref{prop:residual}, or one factor $\mu_i^{-1}$. Hence
\[
|I_{ab}^{(1)}|+|I_{ab}^{(2)}|
\leq C\left(\varepsilon^\tau|\log\varepsilon|^2
+\mu_i^{-1}\right).
\]
The quadratic terms $I_{ab}^{(3)}$ and $I_{ab}^{(4)}$ are bounded by
the Hardy--Littlewood--Sobolev inequality, the Adams estimate, and
\eqref{eq:correction-sharp}; they satisfy the same bound.

If the derivatives act on distinct cores, every term contains a
separated Riesz interaction or a smooth Green interaction after one
odd first moment has vanished. Lemma
\ref{lem:separated-core-estimates} and Proposition
\ref{prop:residual} give
\[
\sum_{j=1}^4|I_{ab}^{(j)}|
\leq C\left(\varepsilon^\alpha
+\varepsilon^\tau|\log\varepsilon|^2\right).
\]
The neck and exterior pieces are estimated by Lemma
\ref{lem:weighted-Riesz}; they contain either a bubble tail of order
$\max_i\mu_i^{-1}$ or the residual factor $\varepsilon^\tau$.
Combining all regions, for every multi-index $|a|\leq2$,
\[
|\partial_{\boldsymbol\xi}^a
\mathcal R_\varepsilon|
\leq C\left(
\varepsilon^\tau|\log\varepsilon|^2
+\varepsilon^\alpha
+\max_i\mu_i^{-1}
\right)=o(1).
\]
This proves \eqref{eq:position-correction-C2}.
\end{proof}

The next proposition gives all Hessian blocks required by the reduced
equations and by the inertia calculation.

\begin{proposition}\label{prop:scale-Hessian}
Put $\gamma_\alpha:=8\pi^2b_\alpha^2$. Uniformly for separated
configurations,
\begin{equation}\label{eq:scale-Hessian}
\frac{\partial^2\widetilde J_\varepsilon}
{\partial t_i\partial t_j}(0,\boldsymbol\xi)
=\frac{\gamma_\alpha}{|\log\varepsilon|}\delta_{ij}
+o(|\log\varepsilon|^{-1}),
\end{equation}
\begin{equation}\label{eq:mixed-Hessian-zero-scale}
\frac{\partial^2\widetilde J_\varepsilon}
{\partial t_i\partial\xi_{j,\ell}}(0,\boldsymbol\xi)
=O(\varepsilon^\tau),
\end{equation}
and
\begin{equation}\label{eq:position-Hessian}
D_{\boldsymbol\xi}^2\widetilde J_\varepsilon(0,\boldsymbol\xi)
=-M_\alpha D^2\mathcal F_m(\boldsymbol\xi)+o(1).
\end{equation}
In addition, for every fixed $C>0$,
\begin{equation}\label{eq:uniform-scale-Hessian}
\sup_{|\boldsymbol t|\leq
C\varepsilon^\tau|\log\varepsilon|}
\left\|D_{\boldsymbol t\boldsymbol t}^2
\widetilde J_\varepsilon(\boldsymbol t,\boldsymbol\xi)
-\frac{\gamma_\alpha}{|\log\varepsilon|}I_m\right\|
=o(|\log\varepsilon|^{-1}).
\end{equation}
On the same shrinking box,
\begin{equation}\label{eq:mixed-Hessian}
\|D_{\boldsymbol t\boldsymbol\xi}^2
\widetilde J_\varepsilon(\boldsymbol t,\boldsymbol\xi)\|
\leq \frac{C}{|\log\varepsilon|}+C\varepsilon^\tau.
\end{equation}
All derivatives are taken in the original variables
$(\boldsymbol t,\boldsymbol\xi)$.
\end{proposition}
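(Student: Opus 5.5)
The plan is to treat each Hessian block by differentiating the reduced energy through the critical-point structure of the projected problem, and to read off the leading terms from the lemmas already in place. The starting identity is
\[
\partial_{t_i}\widetilde J_\varepsilon=J_\varepsilon'(u_{\boldsymbol t,\boldsymbol\xi})[Y_i],
\qquad
\partial_{t_j}\partial_{t_i}\widetilde J_\varepsilon
=J_\varepsilon''(u)[Y_j,Y_i]+J_\varepsilon'(u)[\partial_{t_j}Y_i].
\]
Here $J_\varepsilon'(u)=\sum_a c_a\Psi_a$ with $|c_a|\le C(\varepsilon^\tau+|\boldsymbol t|)$ by Proposition \ref{prop:nonlinear-correction}. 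By Proposition \ref{prop:linear-solvability}, the pairing of the multiplier functions with $\partial_{t_j}Y_i$ is bounded, and its $O(1)$ part is killed by the differentiated orthogonality conditions. So the second term is $O(\varepsilon^\tau|\log\varepsilon|)=o(|\log\varepsilon|^{-1})$. It therefore remains to compute $J_\varepsilon''(u)[Y_i,Y_j]$.

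For \eqref{eq:scale-Hessian}, I split $\Omega$ into the core $B_{2\rho_{i,\varepsilon}}(\xi_i)$ and its complement, with $R_\varepsilon=|\log\varepsilon|^{3/4}$ as in Lemma \ref{lem:scale-tangent-exterior}.

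\textbf{Core contribution.} Rescale and write $Y_i=(Z_0+b_\alpha)+(\Phi_i)$. By \eqref{eq:scale-tangent-growing-ball}, $\Phi_i=-b_\alpha+O(\omega_\varepsilon)$ uniformly on $|z|\le 2R_\varepsilon$, together with two weighted derivatives. The core quadratic form is then $\mathcal Q_\infty$ evaluated on $Z_0$, truncated at radius $2R_\varepsilon$, plus errors. The exact cancellation \eqref{eq:core-cancellation} says $\mathcal Q_\infty(Z_0,Z_0)=0$. The truncation errors are governed by the tail of $|\Delta Z_0|^2$, which is $O(R_\varepsilon^{-4})$, and by nonlocal tails of order $R_\varepsilon^{-4}$. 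The cross errors involve $\omega_\varepsilon$ paired against the $\Theta$-weighted, integrable coefficients. The crude bound for these is $O(\omega_\varepsilon)$, and that is not good enough.

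\textbf{Main obstacle.} The difficulty is to show the core piece is $o(|\log\varepsilon|^{-1})$ and not merely $o(|\log\varepsilon|^{-1/2})$. I would handle it by using the equation rather than the pointwise bound. Write $J''_\varepsilon(u)[Y_i,Y_i]=\int Y_i\,\mathscr L Y_i$. Differentiating the projected equation in $t_i$ gives $\mathscr L Y_i=\sum_a\partial_{t_i}(c_a\Psi_a)$ plus terms of size $\varepsilon^\tau$, which are exact up to multiplier terms. Pairing with $Y_i$ restricted to the core, the multiplier contribution vanishes up to $\varepsilon^\tau$. The core part of the form then reduces to the boundary flux through $\partial B_{2\rho_i}$, obtained from Green's formula for $\Delta^2$. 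On that sphere, $Y_i$ agrees with $\widehat Z_{i,0}$ to $o(|\log\varepsilon|^{-1/2})$. The flux of $\widehat Z_{i,0}$ is $O(|\log\varepsilon|^{-1})$, because $\nabla\Delta V\sim B/r^3$ and $\Delta V\sim B/r^2$ with $B\sim b_\alpha/|\log\varepsilon|$. Combining the two, the flux mismatch is $O(|\log\varepsilon|^{-1/2})\cdot o(|\log\varepsilon|^{-1/2})=o(|\log\varepsilon|^{-1})$.

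\textbf{Exterior contribution.} On the complement of the core the nonlocal coefficients are $O(\varepsilon^\tau)$ in the weighted sense, so the form is $\int|\Delta Y_i|^2$ up to negligible terms. Equation \eqref{eq:scale-tangent-exterior-energy} evaluates this as $\gamma_\alpha|\log\varepsilon|^{-1}+o(|\log\varepsilon|^{-1})$.

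\textbf{Off-diagonal scale terms.} For $i\ne j$, \eqref{eq:scale-tangent-cross-energy} together with the separated-core estimates gives $o(|\log\varepsilon|^{-1})$.

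Uniformity on the box $|\boldsymbol t|\le C\varepsilon^\tau|\log\varepsilon|$ follows because every ingredient above, including the last clause of Lemma \ref{lem:scale-tangent-exterior}, holds uniformly there. This gives \eqref{eq:uniform-scale-Hessian}.

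\textbf{Mixed block.} At $\boldsymbol t=0$, \eqref{eq:mixed-Hessian-zero-scale} is exactly $\partial_{\xi_{j,\ell}}r_{i,\varepsilon}$, which Lemma \ref{lem:scale-defect-C1} bounds by $C\varepsilon^\tau$. On the shrinking box, I write
\[
\partial_{t_i}\widetilde J_\varepsilon(\boldsymbol t,\cdot)=r_{i,\varepsilon}+\int_0^1\sum_k t_k\,\partial_{t_k}\partial_{t_i}\widetilde J_\varepsilon(s\boldsymbol t,\cdot)\,ds,
\]
and differentiate in $\xi$. The $\xi$-derivative of the $t$-Hessian is bounded by $C|\log\varepsilon|^{-1}+C$ after normalization, using the same flux representation differentiated once in $\xi$ together with the $C^2$ bounds of Proposition \ref{prop:nonlinear-correction}. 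Multiplying by $|\boldsymbol t|\le C\varepsilon^\tau|\log\varepsilon|$ gives \eqref{eq:mixed-Hessian}. Alternatively, one can repeat the proof of Lemma \ref{lem:scale-defect-C1} at base point $\boldsymbol t$, where the extra terms involve $(1-e^{2b_\alpha t_i})=O(|\boldsymbol t|)$.

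\textbf{Position block.} For \eqref{eq:position-Hessian}, write $\widetilde J_\varepsilon(0,\boldsymbol\xi)=J_\varepsilon(W_{0,\boldsymbol\xi})+\mathcal R_\varepsilon(\boldsymbol\xi)$. Proposition \ref{prop:energy-expansion} at $\boldsymbol t=0$ gives $J_\varepsilon(W_{0,\boldsymbol\xi})=A_{m,\varepsilon}-M_\alpha\mathcal F_m(\boldsymbol\xi)+\mathcal R^{\rm app}_\varepsilon$, with $\|D^2_{\boldsymbol\xi\boldsymbol\xi}\mathcal R^{\rm app}_\varepsilon\|\le C\varepsilon^\tau$. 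Lemma \ref{lem:position-correction-C2} gives $\|\mathcal R_\varepsilon\|_{C^2}=o(1)$. Adding the two yields \eqref{eq:position-Hessian}.
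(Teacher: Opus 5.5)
Your handling of $J_\varepsilon'(u)[\partial^2_{t_it_j}u]$, the exterior energy via \eqref{eq:scale-tangent-exterior-energy}, the mixed block at $\boldsymbol t=0$ via Lemma~\ref{lem:scale-defect-C1}, and the position block via Proposition~\ref{prop:energy-expansion} and Lemma~\ref{lem:position-correction-C2} matches the paper. The gap is in the step you call the main obstacle. Your flux argument for the core piece is wrong at exactly order $|\log\varepsilon|^{-1}$, for two reasons.

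\textbf{The multiplier contribution does not vanish.} Differentiating $J_\varepsilon'(u)=\sum_a c_a\Psi_a$ gives the exact identity
\[
J_\varepsilon''(u)[Y_i,Y_i]=\sum_a\int_\Omega \partial_{t_i}(c_a\Psi_a)\,Y_i\,dx .
\]
The $\Psi_{(i,k)}$ carry all but $O(R_\varepsilon^{-4})$ of their mass inside $B_{2\rho_i}$. So if the core multiplier pairing were $O(\varepsilon^\tau)$, the Hessian entry you are computing would essentially be that small too. In fact the term $(\partial_{t_i}c_{i,0})\int_{\mathbb R^4}FZ_0^2$ is what carries $\gamma_\alpha/|\log\varepsilon|$. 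Hence $\partial_{t_i}c_{i,0}$ has exact order $|\log\varepsilon|^{-1}$, and no bound $|\partial_{t_i}c_a|\le C\varepsilon^\tau$ can be used here.

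\textbf{The flux of $\widehat Z_{i,0}=V_{\rho_i,\xi_i}$ through $\partial B_{2\rho_i}$ is not negligible.} $V$ is biharmonic outside the ball with $V=\Delta V=0$ on $\partial\Omega$. Green's formula, with $\nu$ the outer normal of $B_{2\rho_i}$, then gives
\[
\int_{\partial B_{2\rho_i}}\bigl(\Delta V\,\partial_\nu V-V\,\partial_\nu\Delta V\bigr)\,dS=-\int_{\Omega\setminus B_{2\rho_i}}|\Delta V|^2\,dx=-\frac{\gamma_\alpha}{|\log\varepsilon|}+o(|\log\varepsilon|^{-1}).
\]
Computed directly, this is $-8\pi^2b_\alpha B$ with $B\sim b_\alpha/|\log\varepsilon|$. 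So, done correctly, your decomposition is circular: the flux cancels the exterior energy, and the Hessian is just the multiplier pairing you discarded. Taken as stated, your two claims would make the core piece $\approx-\gamma_\alpha/|\log\varepsilon|$ and the Hessian entry $o(|\log\varepsilon|^{-1})$. In addition, the flux mismatch involves third derivatives of $Y_i-\widehat Z_{i,0}$, which $\omega_\varepsilon$ does not control.

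\textbf{The repair.} Integrate by parts onto $Z_{i,0}$, not onto $Y_i$; this is the route of the paper's short argument (core cancellation, then $\mathcal Q_\infty(Z_0,Z_0)=0$, then Lemma~\ref{lem:scale-tangent-exterior}). On $B_{2\rho_i}$ write $Y_i=Z_{i,0}+\delta_i$, where $\delta_i=\Phi_i+b_\alpha+O(\mu_i^{-2})$ is controlled by $\omega_\varepsilon$. Let $q_{\rm core}$ be the form restricted to $B_{2\rho_i}$; its nonlocal part lives on $B_{2\rho_i}\times B_{2\rho_i}$, and the remainder is $O(R_\varepsilon^{-4})$. Then:
\begin{itemize}
\item $q_{\rm core}(Z_{i,0},Z_{i,0})=O(R_\varepsilon^{-4}+\varepsilon^\tau)$.
\item In $2q_{\rm core}(Z_{i,0},\delta_i)$, put $\Delta^2$ on $Z_{i,0}$. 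Since $\mathcal L_\infty Z_0=0$ and $\Delta Z_0=-16b_\alpha(1+|z|^2)^{-3}$, both the interior term and the flux at $|z|=2R_\varepsilon$ are $O(\omega_\varepsilon(R_\varepsilon^{-4}+\varepsilon^\tau))$, not $O(\omega_\varepsilon)$.
\item The quadratic remainder satisfies $q_{\rm core}(\delta_i,\delta_i)\le C\omega_\varepsilon^2\log R_\varepsilon$. This needs slightly more than the stated $\omega_\varepsilon=o(|\log\varepsilon|^{-1/2})$, and \eqref{eq:expanding-core-estimate} at $R=R_\varepsilon$ supplies it.
\end{itemize}

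\textbf{A smaller point on the mixed block.} On the box, the bound on $D_{\boldsymbol\xi}D^2_{\boldsymbol t\boldsymbol t}\widetilde J_\varepsilon$ must hold in the original $\boldsymbol\xi$ variables. A bound stated after normalization by $\mu_i^{-1}$ loses a factor $\mu_i\sim\varepsilon^{-1}$. Multiplying by $|\boldsymbol t|\le C\varepsilon^\tau|\log\varepsilon|$ cannot absorb that factor, since $\tau<1$.
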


\begin{proof}
Let
\[
q_{\varepsilon,\boldsymbol\xi}(v,w)
:=J_\varepsilon''(u_{0,\boldsymbol\xi})[v,w].
\]
The order-one core contribution in the scale--scale block vanishes by
\eqref{eq:core-cancellation}. The remaining core profile is $Z_0$,
and $\mathcal Q_\infty(Z_0,Z_0)=0$. Lemma
\ref{lem:scale-tangent-exterior} gives
\begin{align*}
q_{\varepsilon,\boldsymbol\xi}
(Y_{i,\varepsilon},Y_{i,\varepsilon})
&=\frac{\gamma_\alpha}{|\log\varepsilon|}
+o(|\log\varepsilon|^{-1}),
\\
q_{\varepsilon,\boldsymbol\xi}
(Y_{i,\varepsilon},Y_{j,\varepsilon})
&=o(|\log\varepsilon|^{-1}),
\qquad i\ne j.
\end{align*}
Differentiating the reduced energy twice gives
\begin{equation}\label{eq:scale-Hessian-identity}
\partial_{t_it_j}^2\widetilde J_\varepsilon
=q_{\varepsilon,\boldsymbol\xi}
(Y_{i,\varepsilon},Y_{j,\varepsilon})
+J_\varepsilon'(u_{0,\boldsymbol\xi})
[\partial_{t_it_j}^2u].
\end{equation}
The last term is $O(\varepsilon^\tau)$ by the projected equation,
Proposition \ref{prop:nonlinear-correction}, and the twice
differentiated orthogonality conditions. This proves
\eqref{eq:scale-Hessian}.

We next prove the uniform version. Fix
$|\boldsymbol t|\leq
C\varepsilon^\tau|\log\varepsilon|$ and set
$Y_{i,\varepsilon}^{\boldsymbol t}
:=\partial_{t_i}u_{\boldsymbol t,\boldsymbol\xi}$. The $\boldsymbol t$-uniform statement in Lemma
\ref{lem:scale-tangent-exterior}, based on Lemma
\ref{lem:expanding-core-scale}, applies at this base parameter. All
residual and inverse estimates are uniform because
$\boldsymbol t=o(1)$ and
$e^{2b_\alpha t_i}=1+O(\varepsilon^\tau|\log\varepsilon|)$. The
corresponding capacity radius satisfies
\[
|\log\rho_{i,\varepsilon}(\boldsymbol t)|
=|\log\varepsilon|-\log R_\varepsilon+O(1)+t_i
=|\log\varepsilon|+o(|\log\varepsilon|).
\]
Consequently,
\[
\frac{8\pi^2b_\alpha^2}
{|\log\rho_{i,\varepsilon}(\boldsymbol t)|}
=\frac{\gamma_\alpha}{|\log\varepsilon|}
+o(|\log\varepsilon|^{-1})
\]
uniformly in the shrinking box. The core cancellation differs from
the one at $\boldsymbol t=0$ by
$O(|\boldsymbol t|)=o(|\log\varepsilon|^{-1})$, and the projected
residual term is $O(\varepsilon^\tau)=o(|\log\varepsilon|^{-1})$.
The off-diagonal capacity and distinct-core terms are also
$o(|\log\varepsilon|^{-1})$. Repeating
\eqref{eq:scale-Hessian-identity} at the base point
$\boldsymbol t$ proves \eqref{eq:uniform-scale-Hessian}.

By definition,
$D_{\boldsymbol\xi}\boldsymbol r_\varepsilon
=D_{\boldsymbol t\boldsymbol\xi}^2
\widetilde J_\varepsilon(0,\boldsymbol\xi)$. Hence
\eqref{eq:mixed-Hessian-zero-scale} follows from Lemma
\ref{lem:scale-defect-C1}. The same differentiated capacity and
residual estimates, now based at $s\boldsymbol t$, give
\[
\sup_{|\boldsymbol t|\leq
C\varepsilon^\tau|\log\varepsilon|}
\|D_{\boldsymbol\xi}D_{\boldsymbol t\boldsymbol t}^2
\widetilde J_\varepsilon(\boldsymbol t,\boldsymbol\xi)\|
\leq \frac{C}{|\log\varepsilon|}+C\varepsilon^\tau.
\]
Taylor's formula and \eqref{eq:mixed-Hessian-zero-scale} prove
\eqref{eq:mixed-Hessian}.

Finally, Proposition \ref{prop:energy-expansion} gives
\[
D_{\boldsymbol\xi}^2J_\varepsilon(W_{0,\boldsymbol\xi})
=-M_\alpha D^2\mathcal F_m(\boldsymbol\xi)
+O(\varepsilon^\tau).
\]
Lemma \ref{lem:position-correction-C2} shows that the orthogonal
correction changes this block by $o(1)$. This proves
\eqref{eq:position-Hessian}.
\end{proof}

\subsection{Reduced equations and multiplier elimination}

The reduced gradient is related to the multiplier vector by the following matrix identity.

\begin{lemma}\label{lem:multipliers-gradient}
There is a $5m\times5m$ matrix
$\mathcal M_\varepsilon(\boldsymbol t,\boldsymbol\xi)$ such that
\begin{equation}\label{eq:gradient-multiplier}
\nabla_{(\boldsymbol t,\boldsymbol\xi)}
\widetilde J_\varepsilon
=\mathcal M_\varepsilon\boldsymbol c.
\end{equation}
After the position rows are divided by the corresponding $\mu_i$, the matrix converges to a block diagonal matrix with positive definite blocks
\[
\left(\int_{\mathbb R^4}FZ_kZ_\ell\,dz\right)_{0\leq k,\ell\leq4}.
\]
It is invertible for small $\varepsilon$. Consequently,
\begin{equation}\label{eq:gradient-multiplier-equivalence}
\nabla\widetilde J_\varepsilon=0
\quad\Longleftrightarrow\quad
c_{i,k}=0\ \hbox{for all }i,k.
\end{equation}
\end{lemma}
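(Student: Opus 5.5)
The identity \eqref{eq:gradient-multiplier} follows from the projected equation and the chain rule. Write $u=u_{\boldsymbol t,\boldsymbol\xi}=W_{\boldsymbol t,\boldsymbol\xi}+\phi_\varepsilon(\boldsymbol t,\boldsymbol\xi)$. Equation \eqref{eq:projected-nonlinear-problem} says exactly that, in weak form,
\[
J_\varepsilon'(u)[v]=\sum_{i=1}^m\sum_{k=0}^4 c_{i,k}\int_\Omega\chi_iF_{\mu_i,\xi_i}Z_{i,k}\,v\,dx
\qquad\hbox{for all }v\in\mathcal H.
\]
For a parameter $p\in\{t_j,\xi_{j,\ell}\}$ we then have $\partial_p\widetilde J_\varepsilon=J_\varepsilon'(u)[\partial_pu]$. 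This gives \eqref{eq:gradient-multiplier} with
\[
(\mathcal M_\varepsilon)_{p,(i,k)}=\int_\Omega\chi_iF_{\mu_i,\xi_i}Z_{i,k}\,\partial_pu\,dx,
\qquad \partial_pu=\partial_pW+\partial_p\phi_\varepsilon.
\]
Differentiating the orthogonality conditions \eqref{eq:orthogonality} in $p$ gives
\[
\int\chi_iF_{\mu_i,\xi_i}Z_{i,k}\,\partial_p\phi_\varepsilon
=-\int\partial_p(\chi_iF_{\mu_i,\xi_i}Z_{i,k})\,\phi_\varepsilon .
\]
By \eqref{eq:bubble-parameter-derivative-bounds} and \eqref{eq:correction-estimate}, this is $O(\|\phi_\varepsilon\|_*)=o(1)$ for a scale derivative, and $O(\mu_j\|\phi_\varepsilon\|_*)$ for a position derivative. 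So the correction $\phi_\varepsilon$ contributes only $o(1)$ after the position rows are divided by $\mu_j$. The matrix is therefore determined at leading order by $\partial_pW$.

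Next compute the leading part of $\partial_pW$ in each core. Lemma \ref{lem:projection-expansion} and the local expansion \eqref{eq:local-W} give, in the $j$-th rescaled variables,
\[
\partial_{t_j}W=Z_{j,0}+b_\alpha+o(1),\qquad
\mu_j^{-1}\partial_{\xi_{j,\ell}}W=c\,Z_{j,\ell}+o(1),
\]
with a fixed nonzero constant $c$ (up to sign and the normalization $8-\alpha$ in $Z_\ell$). The derivative of the matching scale $\overline\mu_j(\boldsymbol\xi)$ in a position direction is $O(1)$. After division by $\mu_j$ it is $O(\mu_j^{-1})$. On the $i$-th core with $i\neq j$, Lemma \ref{lem:pair-interaction} shows that $\partial_pW$ is a bounded smooth function plus $O(\varepsilon^2)$. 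Since $\chi_iF_{\mu_i,\xi_i}Z_{i,k}$ has integral tending to zero against bounded smooth functions when $k\ge1$ (odd moments vanish, and the linear Taylor term is $O(\mu_i^{-1})$), and for $k=0$ because $\int FZ_0=0$, the off-diagonal peak blocks are $o(1)$. In the diagonal block, rescaling gives the entries $\int_{\mathbb R^4}FZ_kZ_\ell\,dz$ up to the fixed constants. The extra constant $b_\alpha$ in the scale row pairs with $\int FZ_k=0$ for all $k$ (radial oddness for $k\ge1$; the explicit computation $\int FZ_0=0$ for $k=0$, which is implicit in Lemma \ref{lem:kernel-Gram}) and drops out. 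The limit matrix is thus block diagonal, with blocks equal to the Gram matrix $(\int FZ_kZ_\ell)$ up to a diagonal rescaling by nonzero constants. This Gram matrix is positive definite because $Z_0,\dots,Z_4$ are linearly independent, and it is in fact diagonal by parity. Invertibility for small $\varepsilon$ follows by continuity. The equivalence \eqref{eq:gradient-multiplier-equivalence} is then immediate: $\nabla\widetilde J_\varepsilon=\mathcal M_\varepsilon\boldsymbol c$ with $\mathcal M_\varepsilon$ invertible after the row scaling.

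The main obstacle is the position rows. The raw entries carry the large factor $\mu_j$, so every error must be shown to be $o(\mu_j)$, including $\int\partial_{\xi_{j,\ell}}(\chi_jF_{\mu_j,\xi_j}Z_{j,k})\,\phi_\varepsilon$. I would handle this with the normalized derivative bounds from Proposition \ref{prop:nonlinear-correction} together with $\|\phi_\varepsilon\|_*=O(\varepsilon^\tau+|\boldsymbol t|)$. The boundedness of $t$ ($|\boldsymbol t|\le t_0$ small) then suffices for the $o(1)$ statement, which I would make precise by taking $t_0$ small if needed. A second point of care is the cutoff $\chi_i$ and the $H$-terms in $\partial_pW$. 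Both contribute only through $F_{\mu_i,\xi_i}$ away from the core, which is $O(\mu_i^{-4})$, so they are harmless.
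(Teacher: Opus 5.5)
Your proposal is correct and follows essentially the same route as the paper: you define $\mathcal M_\varepsilon$ by the chain rule from the projected equation, obtain the Gram blocks from rescaled core limits after dividing the position rows by $\mu_j$, make the off-diagonal blocks small by separation, and get invertibility from Lemma \ref{lem:kernel-Gram}. The one variation is in the scale rows: instead of quoting the limit $\partial_{t_j}u\to Z_0$ from Lemma \ref{lem:scale-corrector}, you remove the $\partial_p\phi_\varepsilon$ contribution with the exact differentiated orthogonality identity and kill the constant $b_\alpha$ in $\partial_{t_j}W$ with $\int FZ_k=0$. This is slightly more self-contained and gives an explicit error $O(\varepsilon^\tau+|\boldsymbol t|)$ on the whole box. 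That is why, as you note yourself, invertibility there comes from taking $t_0$ small rather than from a literal $o(1)$.
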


\begin{proof}
The projected equation gives, in $\mathcal H'$,
\begin{equation*}
J_\varepsilon'(u_{\boldsymbol t,\boldsymbol\xi})
=\sum_{i=1}^m\sum_{k=0}^4
c_{i,k}\chi_iF_{\mu_i,\xi_i}Z_{i,k}.
\end{equation*}
Let $p_a$ be one of the variables $t_i$ or $\xi_{i,\ell}$.
Differentiate \eqref{eq:reduced-energy}:
\begin{equation*}
\partial_{p_a}\widetilde J_\varepsilon
=\sum_{i,k}c_{i,k}\mathcal M_{a,(i,k)},
\qquad
\mathcal M_{a,(i,k)}
=\int_\Omega\chi_iF_{\mu_i,\xi_i}Z_{i,k}
\partial_{p_a}u_{\boldsymbol t,\boldsymbol\xi}\,dx.
\end{equation*}
This defines $\mathcal M_\varepsilon$ and proves
\eqref{eq:gradient-multiplier}.

Consider first $p_a=t_j$. By Lemma \ref{lem:scale-corrector},
\[
\partial_{t_j}u(\xi_j+z/\mu_j)=Z_0(z)+o(1)
\]
locally and the rescaling at every other core tends to zero. Hence
\begin{equation*}
\mathcal M_{t_j,(i,k)}
=\delta_{ij}\int_{\mathbb R^4}FZ_0Z_k\,dz+o(1).
\end{equation*}
For $p_a=\xi_{j,\ell}$, divide the row by $\mu_j$. Differentiation of
the bubble gives $-Z_\ell$ up to the fixed parameter orientation. The
differentiated orthogonality conditions imply that the correction has
no extra limiting kernel component. Therefore
\begin{equation*}
\mu_j^{-1}\mathcal M_{\xi_{j,\ell},(i,k)}
=\pm\delta_{ij}\int_{\mathbb R^4}FZ_\ell Z_k\,dz+o(1).
\end{equation*}
The off-diagonal terms are $o(1)$ by separation and the weighted
estimates. Lemma \ref{lem:kernel-Gram} shows that the limiting block is
diagonal and positive. Thus the normalized matrix is invertible for
small $\varepsilon$. Equation
\eqref{eq:gradient-multiplier-equivalence} follows from
\eqref{eq:gradient-multiplier}.
\end{proof}

The preceding estimates yield the reduced equations and their Hessian blocks.

\begin{proposition}\label{prop:reduced-equations}
For
$|\boldsymbol t|\leq C\varepsilon^\tau|\log\varepsilon|$,
\begin{equation}\label{eq:reduced-scale-equation}
\nabla_{\boldsymbol t}\widetilde J_\varepsilon
=\boldsymbol r_\varepsilon(\boldsymbol\xi)
+\mathcal A_\varepsilon(\boldsymbol t,\boldsymbol\xi)
\boldsymbol t,
\end{equation}
where
\begin{equation}\label{eq:scale-remainder}
\|\boldsymbol r_\varepsilon\|_{C^1}
\leq C\varepsilon^\tau
\end{equation}
and
\begin{equation}\label{eq:scale-matrix-uniform}
\mathcal A_\varepsilon(\boldsymbol t,\boldsymbol\xi)
=\frac{\gamma_\alpha}{|\log\varepsilon|}I_m
+o(|\log\varepsilon|^{-1})
\end{equation}
uniformly in the shrinking box. Moreover,
\begin{equation}\label{eq:reduced-position-equation}
\nabla_{\boldsymbol\xi}\widetilde J_\varepsilon
=-M_\alpha\nabla\mathcal F_m(\boldsymbol\xi)+o(1)
\end{equation}
uniformly in $C^1$ on compact subsets of the configuration space.
In the original variables
$(\boldsymbol t,\boldsymbol\xi)$, the Hessian has the block form
\begin{equation}\label{eq:reduced-Hessian-block}
D^2\widetilde J_\varepsilon
=\begin{pmatrix}
\displaystyle\frac{\gamma_\alpha}{|\log\varepsilon|}I_m
+o(|\log\varepsilon|^{-1})
& O(|\log\varepsilon|^{-1})+O(\varepsilon^\tau)\\[1.5ex]
O(|\log\varepsilon|^{-1})+O(\varepsilon^\tau)
&-M_\alpha D^2\mathcal F_m(\boldsymbol\xi)+o(1)
\end{pmatrix}.
\end{equation}
\end{proposition}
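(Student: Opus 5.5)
The plan is to derive everything from the already established blocks. First, the scale equation \eqref{eq:reduced-scale-equation} comes from the fundamental theorem of calculus in $\boldsymbol t$ at fixed $\boldsymbol\xi$:
\[
\nabla_{\boldsymbol t}\widetilde J_\varepsilon(\boldsymbol t,\boldsymbol\xi)
=\nabla_{\boldsymbol t}\widetilde J_\varepsilon(0,\boldsymbol\xi)
+\left(\int_0^1 D^2_{\boldsymbol t\boldsymbol t}\widetilde J_\varepsilon(s\boldsymbol t,\boldsymbol\xi)\,ds\right)\boldsymbol t .
\]
We set $\boldsymbol r_\varepsilon(\boldsymbol\xi)=\nabla_{\boldsymbol t}\widetilde J_\varepsilon(0,\boldsymbol\xi)$ and let $\mathcal A_\varepsilon$ be the averaged Hessian. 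Then \eqref{eq:scale-remainder} is exactly Lemma \ref{lem:scale-defect-C1}. For $|\boldsymbol t|\le C\varepsilon^\tau|\log\varepsilon|$, every point $s\boldsymbol t$ lies in the same shrinking box. Hence the uniform estimate \eqref{eq:uniform-scale-Hessian} of Proposition \ref{prop:scale-Hessian} passes under the integral and gives \eqref{eq:scale-matrix-uniform}. Both $C^2$ regularity of $\widetilde J_\varepsilon$ (Proposition \ref{prop:nonlinear-correction}) and the uniformity in $\boldsymbol\xi$ are inherited from those statements.

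For the position equation \eqref{eq:reduced-position-equation}, we write
\[
\widetilde J_\varepsilon(\boldsymbol t,\boldsymbol\xi)=J_\varepsilon(W_{\boldsymbol t,\boldsymbol\xi})+\big[\widetilde J_\varepsilon(\boldsymbol t,\boldsymbol\xi)-J_\varepsilon(W_{\boldsymbol t,\boldsymbol\xi})\big].
\]
We then apply Proposition \ref{prop:energy-expansion}. Its $\boldsymbol t$-dependent part $\sum_i[M_\alpha b_\alpha t_i-\frac{M_\alpha}{2}(e^{2b_\alpha t_i}-1)]$ does not depend on $\boldsymbol\xi$, and $A_{m,\varepsilon}$ is constant. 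So $\nabla_{\boldsymbol\xi}J_\varepsilon(W)=-M_\alpha\nabla\mathcal F_m+O(\varepsilon^\tau)$ in $C^1$, by \eqref{eq:energy-remainder-C1}--\eqref{eq:energy-remainder-C2}. The bracket at $\boldsymbol t=0$ is $\mathcal R_\varepsilon$, and Lemma \ref{lem:position-correction-C2} bounds it by $o(1)$ in $C^2$, hence its gradient by $o(1)$ in $C^1$. To move from $\boldsymbol t=0$ to the shrinking box, I would use the mixed bound \eqref{eq:mixed-Hessian}:
\[
|\nabla_{\boldsymbol\xi}\widetilde J_\varepsilon(\boldsymbol t,\cdot)-\nabla_{\boldsymbol\xi}\widetilde J_\varepsilon(0,\cdot)|\le C|\boldsymbol t|\,(|\log\varepsilon|^{-1}+\varepsilon^\tau)=o(1).
\]
For the $C^1$ part in $\boldsymbol\xi$ in this transport step, I would also need $D^2_{\boldsymbol\xi\boldsymbol\xi}\widetilde J_\varepsilon(\boldsymbol t,\cdot)-D^2_{\boldsymbol\xi\boldsymbol\xi}\widetilde J_\varepsilon(0,\cdot)=o(1)$. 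I would first try stating the conclusion at base point $\boldsymbol t$: the argument of Lemma \ref{lem:position-correction-C2} only uses \eqref{eq:correction-sharp} and residual bounds that are uniform in the box, so it repeats verbatim.

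The Hessian block form \eqref{eq:reduced-Hessian-block} then assembles directly. The $\boldsymbol t\boldsymbol t$ block is \eqref{eq:uniform-scale-Hessian}, and the off-diagonal blocks are \eqref{eq:mixed-Hessian}. For the $\boldsymbol\xi\boldsymbol\xi$ block we take \eqref{eq:position-Hessian} at $\boldsymbol t=0$ and transport it to the box as above. Symmetry of the Hessian gives the lower-left block. Lemma \ref{lem:multipliers-gradient} is not needed for these identities, but it will be used afterwards to identify zeros of \eqref{eq:reduced-scale-equation}--\eqref{eq:reduced-position-equation} with true solutions.

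The main obstacle is the uniformity of the $\boldsymbol\xi\boldsymbol\xi$ block, and of the $C^1$ position estimate, away from $\boldsymbol t=0$. Lemma \ref{lem:position-correction-C2} is only stated at $\boldsymbol t=0$, and a third-derivative Taylor bound in $\boldsymbol t$ is not available. My plan is to re-run that proof at an arbitrary base point of the shrinking box. The cancellations it uses (vanishing of $\mathcal Q_\infty(Z_\ell,\cdot)$ on the orthogonal complement, odd moments) are structural. The size inputs are $\|\phi_\varepsilon\|\le C\varepsilon^\tau|\log\varepsilon|$ and the residual bound \eqref{eq:residual-small}, and both hold uniformly there. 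This should give an $o(1)$ error uniformly in $|\boldsymbol t|\le C\varepsilon^\tau|\log\varepsilon|$.
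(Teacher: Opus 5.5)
Your proposal is correct and follows the paper's proof. The scale equation comes from the integral Taylor formula in $\boldsymbol t$, with $\boldsymbol r_\varepsilon=\nabla_{\boldsymbol t}\widetilde J_\varepsilon(0,\cdot)$ bounded by Lemma~\ref{lem:scale-defect-C1} and the averaged Hessian controlled by \eqref{eq:uniform-scale-Hessian}. The position equation comes from Proposition~\ref{prop:energy-expansion} plus Lemma~\ref{lem:position-correction-C2}, and the block form is assembled from \eqref{eq:uniform-scale-Hessian}, \eqref{eq:mixed-Hessian} and \eqref{eq:position-Hessian}.

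Your extra transport step goes beyond what the paper writes: you use \eqref{eq:mixed-Hessian} for the position gradient and re-run Lemma~\ref{lem:position-correction-C2} at a base point $\boldsymbol t$ of the shrinking box for the $\boldsymbol\xi\boldsymbol\xi$ block. This makes explicit a uniformity in $\boldsymbol t$ that the paper uses silently, since it quotes \eqref{eq:position-Hessian} and Lemma~\ref{lem:position-correction-C2} only at $\boldsymbol t=0$, yet later applies the block form at $(\boldsymbol t_\varepsilon,\boldsymbol\xi_\varepsilon)$.
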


\begin{proof}
At $\boldsymbol t=0$, Proposition \ref{prop:scale-Hessian} gives
the scale Hessian and the mixed derivatives. The projected residual
equation gives
\begin{equation*}
\nabla_{\boldsymbol t}\widetilde J_\varepsilon(0,\boldsymbol\xi)
=\boldsymbol r_\varepsilon(\boldsymbol\xi),
\qquad
\|\boldsymbol r_\varepsilon\|_{C^1}\leq C\varepsilon^\tau.
\end{equation*}
For $\boldsymbol t$ in the shrinking box, use the integral form of
Taylor's formula:
\begin{equation*}
\nabla_{\boldsymbol t}\widetilde J_\varepsilon(\boldsymbol t,\boldsymbol\xi)
=
\boldsymbol r_\varepsilon(\boldsymbol\xi)
+\left[
\int_0^1
D_{\boldsymbol t}^2\widetilde J_\varepsilon
(s\boldsymbol t,\boldsymbol\xi)\,ds
\right]\boldsymbol t.
\end{equation*}
Equation \eqref{eq:uniform-scale-Hessian} gives
\eqref{eq:scale-matrix-uniform}. This proves
\eqref{eq:reduced-scale-equation}.

The position equation follows by differentiating
\eqref{eq:energy-expansion}. The correction in the position variables is controlled by Lemma
\ref{lem:position-correction-C2}. Its contribution is $o(1)$ in
$C^1$. The Hessian block expansion follows from
\eqref{eq:uniform-scale-Hessian}, \eqref{eq:mixed-Hessian}, and
\eqref{eq:position-Hessian}.
\end{proof}

\subsection{Critical points of the reduced functional and proof of Theorem \ref{thm:existence}}

\begin{proof}[Proof of Theorem \ref{thm:existence}]
Fix an admissible $\boldsymbol\xi$. Define
\begin{equation}\label{eq:scale-fixed-point-map}
\mathcal T_{\varepsilon,\boldsymbol\xi}(\boldsymbol t)
:=
\boldsymbol t
-\frac{|\log\varepsilon|}{\gamma_\alpha}
\nabla_{\boldsymbol t}\widetilde J_\varepsilon
(\boldsymbol t,\boldsymbol\xi).
\end{equation}
By \eqref{eq:scale-matrix-uniform},
\begin{equation*}
\|D_{\boldsymbol t}\mathcal T_{\varepsilon,\boldsymbol\xi}\|
=o(1)
\end{equation*}
uniformly in the shrinking box. Equation
\eqref{eq:scale-remainder} shows that this map sends a ball of radius
$C\varepsilon^\tau|\log\varepsilon|$ into itself when $C$ is large.
It is a contraction for small $\varepsilon$. Therefore there is a
unique solution
\begin{equation}\label{eq:t-solution}
\boldsymbol t_\varepsilon(\boldsymbol\xi)
=O(\varepsilon^\tau|\log\varepsilon|)
\end{equation}
of the scale equations. The parameter dependence in Proposition
\ref{prop:nonlinear-correction} and the implicit function theorem show
that $\boldsymbol t_\varepsilon$ is $C^1$ in $\boldsymbol\xi$.

Set
\begin{equation*}
\widehat J_\varepsilon(\boldsymbol\xi)
:=
\widetilde J_\varepsilon
(\boldsymbol t_\varepsilon(\boldsymbol\xi),\boldsymbol\xi).
\end{equation*}
Since the scale gradient vanishes,
\eqref{eq:reduced-position-equation} gives
\begin{equation}\label{eq:position-reduced-gradient}
\nabla\widehat J_\varepsilon
=-M_\alpha\nabla\mathcal F_m+o(1)
\end{equation}
uniformly on the set $D$ in Definition
\ref{def:stable-critical-point}. Degree invariance gives a critical
point $\boldsymbol\xi_\varepsilon\in D$. Shrinking $D$ around the
prescribed isolated stable point gives
\eqref{eq:center-convergence-intro}.

Lemma \ref{lem:multipliers-gradient} implies that all multipliers
vanish. Hence
\begin{equation*}
u_\varepsilon
=W_{\boldsymbol t_\varepsilon,\boldsymbol\xi_\varepsilon}
+\phi_\varepsilon
\end{equation*}
solves \eqref{eq:perturbed-problem}. The right-hand side is positive.
Set $v_\varepsilon=-\Delta u_\varepsilon$. Then
$-\Delta v_\varepsilon>0$ with $v_\varepsilon=0$ on
$\partial\Omega$. The Dirichlet maximum principle gives
$v_\varepsilon>0$. Applying it once more to
$-\Delta u_\varepsilon=v_\varepsilon$ gives
$u_\varepsilon>0$. We record the regularity argument. Lemma
\ref{lem:Adams-consequence} gives $e^{u_\varepsilon}\in L^q(\Omega)$
for every finite $q$. Choose $q>4/(4-\alpha)$. H\"older's inequality
shows that
\[
\sup_{x\in\Omega}\int_\Omega
\frac{K(y)e^{u_\varepsilon(y)}}{|x-y|^\alpha}\,dy<\infty.
\]
Hence the right-hand side of \eqref{eq:perturbed-problem} belongs to
$L^p(\Omega)$ for every finite $p$. Navier $L^p$ estimates give
$u_\varepsilon\in W^{4,p}(\Omega)$. Taking $p>4$ yields
$u_\varepsilon\in C^{3,\beta}(\overline\Omega)$. The density $K e^{u_\varepsilon}$ belongs to
$C^{0,\beta}(\overline\Omega)$ for some $\beta\in(0,1)$.
Lemma \ref{lem:boundary-Riesz-holder} therefore shows that its Riesz
potential belongs to $C^{0,\gamma}(\overline\Omega)$ for every
$\gamma<\min\{1,4-\alpha\}$. The right-hand side of
\eqref{eq:perturbed-problem} has the same regularity. Boundary
Schauder estimates for the Navier problem then yield
$u_\varepsilon\in C^{4,\gamma}(\overline\Omega)$. Thus the solution is
classical.

The scale formula follows from
\eqref{eq:leading-scale}, \eqref{eq:modulated-scale}, and
\eqref{eq:t-solution}. Estimate \eqref{eq:solution-expansion-intro}
follows from \eqref{eq:correction-sharp}.

The estimate \eqref{eq:solution-expansion-intro} also implies
$e^{u_\varepsilon}=e^{W_{\boldsymbol t_\varepsilon,
\boldsymbol\xi_\varepsilon}}(1+o(1))$ uniformly in $\Omega$. Using
$|e^s-1|\leq C|s|$ for small $s$, the bounded total nonlinear mass,
and the Hardy--Littlewood--Sobolev inequality, we obtain
\[
\|\mathcal N_\varepsilon(u_\varepsilon)
-\mathcal N_\varepsilon(W_{\boldsymbol t_\varepsilon,
\boldsymbol\xi_\varepsilon})\|_{L^1(\Omega)}=o(1).
\]
Let $\zeta\in C(\overline\Omega)$. In the $i$-th core, use
$x=\xi_{i,\varepsilon}+z/\mu_{i,\varepsilon}$ and
\eqref{eq:core-nonlinearity}. Dominated convergence gives
\begin{equation}\label{eq:source-core-limit}
\int_{B_{\eta/4}(\xi_{i,\varepsilon})}
\zeta(x)\mathcal N_\varepsilon(u_\varepsilon)(x)\,dx
\longrightarrow
M_\alpha\zeta(\xi_i^*).
\end{equation}
The neck and outer integrals tend to zero by
\eqref{eq:residual-small} and \eqref{eq:theta-L1}. Summing
\eqref{eq:source-core-limit} proves
\eqref{eq:measure-convergence-intro}.

Green representation gives
\begin{equation*}
u_\varepsilon(x)
=\int_\Omega
G(x,y)\mathcal N_\varepsilon(u_\varepsilon)(y)\,dy.
\end{equation*}
Fix a compact set disjoint from the limiting centers. Split the
integral into small peak balls and their complement. On a peak ball,
the $x$-derivatives of $G(x,y)$ are smooth and the first moment of the
rescaled source tends to zero. On the complement, the weighted
estimate gives convergence to zero in $L^p$ for every finite $p$.
Differentiation in $x$, followed by the local Hölder estimate for the
Riesz potential, gives \eqref{eq:outer-convergence-intro} for the
range \eqref{eq:outer-beta-range}. Near $\partial\Omega$ the same
argument uses the Navier boundary estimates.

Finally, evaluate \eqref{eq:projection-expansion} at
$x=\xi_{i,\varepsilon}$. The self term is
$2b_\alpha\log\mu_{i,\varepsilon}
+M_\alpha H(\xi_{i,\varepsilon},\xi_{i,\varepsilon})+o(1)$.
By \eqref{eq:pair-projection}, every other bubble contributes
$M_\alpha G(\xi_{i,\varepsilon},\xi_{j,\varepsilon})+o(1)$.
Insert \eqref{eq:scales-intro}. This proves
\eqref{eq:peak-height-intro}.
\end{proof}

\section{Local uniqueness and nondegeneracy}

\subsection{Uniform modulation}

We first establish a uniform modulation decomposition near the bubble manifold.

\begin{lemma}\label{lem:modulation}
Fix a compact subset of $\mathcal C_{m,\eta}(\Omega)$. There is $r_0>0$, independent of $\varepsilon$, such that every function $u$ satisfying
\[
\inf_{\boldsymbol t,\boldsymbol\xi}
\left(\|u-W_{\boldsymbol t,\boldsymbol\xi}\|_*
+\|u-W_{\boldsymbol t,\boldsymbol\xi}\|_\mathcal H\right)<r_0
\]
has, after a permutation of the peaks, a unique decomposition
\[
u=W_{\boldsymbol t,\boldsymbol\xi}+\phi
\]
with $\phi$ satisfying \eqref{eq:orthogonality}. The parameter map is $C^2$. The natural position coordinates are
$\mu_i(\eta_i-\xi_i)$.
\end{lemma}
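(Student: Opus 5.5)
The plan is to apply the implicit function theorem to the $5m$ orthogonality equations, viewed as equations for the modulation parameters, in the normalized coordinates $(\boldsymbol t,\mu_i\xi_i)$. For $u$ close to the bubble manifold, define
\[
\mathcal G_{i,k}(u;\boldsymbol t,\boldsymbol\xi)
:=\int_\Omega\chi_iF_{\mu_i,\xi_i}Z_{i,k}\,(u-W_{\boldsymbol t,\boldsymbol\xi})\,dx,
\qquad 1\le i\le m,\ 0\le k\le4.
\]
A decomposition $u=W_{\boldsymbol t,\boldsymbol\xi}+\phi$ with $\phi$ satisfying \eqref{eq:orthogonality} is exactly a zero of $\mathcal G(u;\cdot)$. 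First fix a near-minimizing parameter $(\boldsymbol t^0,\boldsymbol\xi^0)$ with $\|u-W_{\boldsymbol t^0,\boldsymbol\xi^0}\|_*+\|u-W_{\boldsymbol t^0,\boldsymbol\xi^0}\|_{\mathcal H}<r_0$. Then $|\mathcal G(u;\boldsymbol t^0,\boldsymbol\xi^0)|\le Cr_0$, because $\chi_iF_{\mu_i,\xi_i}Z_{i,k}$ has uniformly bounded $L^1$ norm and we pair it with an $L^\infty$ function.

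Next, compute the Jacobian of $\mathcal G$ in the variables $(t_j,\mu_j^{-1}\partial_{\xi_{j,\ell}})$. The derivative of $-W$ produces $-\partial_{t_j}W$ and $-\mu_j^{-1}\partial_{\xi_{j,\ell}}W$. By Lemma \ref{lem:projection-expansion} and \eqref{eq:local-W}, in the $j$-th core these rescale to $Z_0+b_\alpha$ and $\mp Z_\ell$, up to $o(1)$. The constant $b_\alpha$ drops out of every row $k$ because $\int FZ_k=0$. Derivatives that fall on the weights $\chi_iF_{\mu_i,\xi_i}Z_{i,k}$ are paired with $u-W$ and are therefore $O(r_0)$, by \eqref{eq:bubble-parameter-derivative-bounds}. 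Hence the normalized Jacobian equals the block diagonal Gram matrix $(\int_{\mathbb R^4}FZ_kZ_\ell\,dz)$ plus $O(r_0)+o(1)$. Off-diagonal peak blocks are $o(1)$ by separation, exactly as in Lemma \ref{lem:multipliers-gradient}. By Lemma \ref{lem:kernel-Gram} this Jacobian is uniformly invertible.

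The main obstacle is uniformity. The implicit function theorem has to be applied on a ball of radius independent of $\varepsilon$ in the rescaled parameters, so we need $\varepsilon$-independent bounds on the second derivatives of $\mathcal G$ in normalized variables. These come from the second-derivative statements in Proposition \ref{prop:residual} and Lemma \ref{lem:projection-expansion}: each $\xi$-derivative carries a factor $\mu_i^{-1}$, so all bounds are uniform. A quantitative inverse function theorem (a Newton contraction on a ball of fixed radius $\delta$ in $(\boldsymbol t,\mu_i\xi_i)$) then gives a unique zero within distance $Cr_0$ of $(\boldsymbol t^0,\boldsymbol\xi^0)$ once $r_0\ll\delta$. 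The zero depends $C^2$ on $u$, since $\mathcal G$ is affine in $u$ and $C^2$ in the parameters.

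Finally, handle uniqueness up to permutation. Suppose two decompositions exist. The bound $\|u-W\|_*<r_0$ forces the peaks to match: $e^{W}$ concentrates at the centers, and $W$ is about $2b_\alpha\log\mu_i$ there but bounded away from the centers. After relabeling by a permutation $\sigma$, the matched centers satisfy $\mu_i|\eta_{\sigma(i)}-\xi_i|\le C r_0$ and $|s_{\sigma(i)}-t_i|\le Cr_0$. Both parameter sets therefore lie in the contraction ball, so they coincide. This is also why the natural position coordinates are $\mu_i(\eta_i-\xi_i)$.
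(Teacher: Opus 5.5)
Your proposal is correct and follows the paper's own argument. Both apply the implicit function theorem to the orthogonality map $\mathcal G_{i,k}$ in the scaled variables $(t_j,\mu_j\xi_{j,\ell})$, whose Jacobian converges uniformly in $\varepsilon$ to the block-diagonal Gram matrix of Lemma \ref{lem:kernel-Gram}, and both use separation to reduce the remaining non-uniqueness to permutations. You make explicit several points the paper leaves implicit: the constant $b_\alpha$ drops out because $\int_{\mathbb R^4}FZ_k\,dz=0$; the weight derivatives are controlled by $\|u-W\|_*$; a fixed-radius contraction in scaled coordinates gives $\varepsilon$-uniformity; and uniqueness is meant among decompositions with small remainder.
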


\begin{proof}
Let $\mathcal G_{i,k}(\boldsymbol t,\boldsymbol\xi,u)$ be the left-hand side of
\eqref{eq:orthogonality} with
$\phi=u-W_{\boldsymbol t,\boldsymbol\xi}$. Differentiate with respect to $t_j$ and the scaled coordinate
$\mu_j\xi_{j,\ell}$. At a bubble configuration, the Jacobian converges to a block diagonal matrix. Each block is, up to fixed signs,
\[
\left(\int_{\mathbb R^4}FZ_kZ_\ell\,dz\right)_{0\leq k,\ell\leq4}.
\]
It is invertible by Lemma \ref{lem:kernel-Gram}. The convergence is uniform. The implicit function theorem gives the parameters in a ball whose radius is independent of $\varepsilon$. Separation makes different labelings disjoint. Only permutations remain.
\end{proof}

An exact solution in the modulation neighborhood automatically satisfies the sharper smallness estimate below.

\begin{lemma}\label{lem:uniqueness-bootstrap}
Let
\[
v_\varepsilon
=W_{\boldsymbol s,\boldsymbol\eta}+\psi
\]
be an exact solution of \eqref{eq:perturbed-problem}. Assume that
$\psi$ satisfies \eqref{eq:orthogonality},
$|\boldsymbol s|\leq
C_0\varepsilon^\tau|\log\varepsilon|$, and
\[
\|\psi\|_*+\|\psi\|_{\mathcal H}\leq r_1.
\]
If $r_1$ is sufficiently small, then
\begin{equation}\label{eq:uniqueness-bootstrap}
\|\psi\|_*+\|\psi\|_{\mathcal H}
\leq
C\bigl(\varepsilon^\tau+|\boldsymbol s|\bigr)
\leq C\varepsilon^\tau|\log\varepsilon|.
\end{equation}
\end{lemma}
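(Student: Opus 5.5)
Since $v_\varepsilon$ is an exact solution, $\psi$ satisfies the projected problem \eqref{eq:projected-nonlinear-problem} at the parameters $(\boldsymbol s,\boldsymbol\eta)$, with all multipliers $c_{i,k}=0$:
\[
L_\varepsilon\psi=-R_{\boldsymbol s,\boldsymbol\eta}+Q_\varepsilon(\psi)\quad\text{in }\Omega,\qquad \psi=\Delta\psi=0\ \text{on }\partial\Omega,
\]
together with the orthogonality conditions \eqref{eq:orthogonality}. The plan is to apply the a priori estimate of Proposition \ref{prop:linear-estimate} directly to this equation, with $h=-R_{\boldsymbol s,\boldsymbol\eta}+Q_\varepsilon(\psi)$. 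The admissibility of the parameters is needed here. Since $|\boldsymbol s|\le C_0\varepsilon^\tau|\log\varepsilon|\le t_0$ for small $\varepsilon$, and $\boldsymbol\eta$ lies in a separated configuration set, the linear estimate applies with uniform constants.

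The linear estimate gives
\[
\|\psi\|_*+\|\psi\|_{\mathcal H}\le C\|R_{\boldsymbol s,\boldsymbol\eta}\|_{**}+C\|Q_\varepsilon(\psi)\|_{**}.
\]
For the residual, write $R=E+\sum_i(1-e^{2b_\alpha s_i})F_{\mu_i,\eta_i}$. Proposition \ref{prop:residual} gives $\|E\|_{**}\le C\varepsilon^\tau$. Since $F_{\mu_i,\eta_i}\le C\Theta$, we also have $\|F_{\mu_i,\eta_i}\|_{**}\le C$. Together with $|1-e^{2b_\alpha s_i}|\le C|s_i|$, this yields $\|R\|_{**}\le C(\varepsilon^\tau+|\boldsymbol s|)$. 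For the quadratic term, we may assume $r_1\le1$. Then \eqref{eq:quadratic-estimate} gives $\|Q_\varepsilon(\psi)\|_{**}\le C\|\psi\|_*^2\le Cr_1\|\psi\|_*$. If $r_1$ is chosen so that $Cr_1\le 1/2$, this term is absorbed into the left-hand side, and
\[
\|\psi\|_*+\|\psi\|_{\mathcal H}\le C(\varepsilon^\tau+|\boldsymbol s|)\le C\varepsilon^\tau|\log\varepsilon|,
\]
where the last inequality uses the hypothesis on $|\boldsymbol s|$. This is \eqref{eq:uniqueness-bootstrap}.

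The main point to check is that Proposition \ref{prop:linear-estimate} applies to $\psi$ as an \emph{a priori} bound for any solution satisfying the orthogonality conditions, not only to the one constructed by the fixed-point argument. This holds because the proposition is stated for every solution of \eqref{eq:projected-linear-problem}. A second point is the uniformity of the constant in Lemma \ref{lem:nonlinear-estimates}. This holds because the bound $\mathcal N_\varepsilon(W_{\boldsymbol s,\boldsymbol\eta})\le C\Theta$ follows from Proposition \ref{prop:residual} uniformly over the admissible parameter box. No compactness argument is needed. Smallness of $r_1$, independent of $\varepsilon$, is used only for the absorption step.
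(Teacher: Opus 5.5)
Your proposal is correct and follows essentially the same route as the paper. You write the equation for $\psi$ with vanishing multipliers, apply the a priori bound of Proposition~\ref{prop:linear-estimate} with $h=-R_{\boldsymbol s,\boldsymbol\eta}+Q_\varepsilon(\psi)$, control the two pieces by Proposition~\ref{prop:residual} and Lemma~\ref{lem:nonlinear-estimates}, and absorb the quadratic term using the smallness of $r_1$; your explicit splitting of $R_{\boldsymbol s,\boldsymbol\eta}$ into $E_{\boldsymbol s,\boldsymbol\eta}$ plus the terms $(1-e^{2b_\alpha s_i})F_{\mu_i,\eta_i}$ simply spells out the bound $C(\varepsilon^\tau+|\boldsymbol s|)$ that the paper quotes directly.
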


\begin{proof}
Since $v_\varepsilon$ is an exact solution, its orthogonal remainder
satisfies
\[
\mathcal L_\varepsilon\psi
=-\mathcal R_{\boldsymbol s,\boldsymbol\eta}
+\mathcal Q_\varepsilon(\psi)
\]
with no multiplier term. Proposition \ref{prop:linear-estimate},
Proposition \ref{prop:residual}, and Lemma
\ref{lem:nonlinear-estimates} give
\[
\|\psi\|_*+\|\psi\|_{\mathcal H}
\leq
C\bigl(\varepsilon^\tau+|\boldsymbol s|\bigr)
+C\bigl(\|\psi\|_*+\|\psi\|_{\mathcal H}\bigr)^2.
\]
Choose $r_1$ so that $Cr_1\leq1/2$. Absorb the quadratic term.
This proves the first inequality in
\eqref{eq:uniqueness-bootstrap}. The scale assumption gives the
second one.
\end{proof}

The modulation and reduced equations imply local uniqueness in the natural bubbling class.

\begin{proposition}\label{prop:local-uniqueness}
If $D^2\mathcal F_m(\boldsymbol\xi^*)$ is nonsingular, the solution
from Theorem \ref{thm:existence} is locally unique among the
$m$-bubble solutions satisfying
\eqref{eq:uniqueness-scale-class}--\eqref{eq:branch-distance}.
\end{proposition}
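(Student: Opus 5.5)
Given another solution $v_\varepsilon=W_{\boldsymbol s,\boldsymbol\eta}+\psi$ in the class \eqref{eq:uniqueness-scale-class}--\eqref{eq:branch-distance}, the plan is to show that it lies on the reduced branch and that the reduced critical point is unique near $(\boldsymbol t_\varepsilon,\boldsymbol\xi_\varepsilon)$. First relabel the peaks by the optimal permutation $\sigma$ in \eqref{eq:branch-distance}. Because $r$ is small and the position discrepancy is measured at scale $\mu_{i,\varepsilon}^{-1}$, each center $\eta_i$ lies within $r\mu_{i,\varepsilon}^{-1}$ of $\xi_{i,\varepsilon}$. Hence $\boldsymbol\eta$ lies in the fixed compact neighborhood $D$ of $\boldsymbol\xi^*$ used in the existence proof, and $|\boldsymbol s|\leq C_0\varepsilon^\tau|\log\varepsilon|$. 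From the second estimate in \eqref{eq:solution-expansion-intro}, $\|\psi\|_*+\|\psi\|_{\mathcal H}\leq r+C\varepsilon^\tau|\log\varepsilon|$. For $r\leq r_1/2$, Lemma \ref{lem:uniqueness-bootstrap} then gives the sharper bound $\|\psi\|_*+\|\psi\|_{\mathcal H}\leq C\varepsilon^\tau|\log\varepsilon|$. Thus $\psi$ lies in the ball of Proposition \ref{prop:nonlinear-correction} at the parameter $(\boldsymbol s,\boldsymbol\eta)$, after enlarging the radius constant there, which is allowed since $|\boldsymbol s|\leq C_0\varepsilon^\tau|\log\varepsilon|$.

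Next, $\psi$ solves \eqref{eq:projected-nonlinear-problem} with all multipliers equal to zero, because $v_\varepsilon$ is an exact solution. Uniqueness of the fixed point in Proposition \ref{prop:nonlinear-correction} then gives $\psi=\phi_\varepsilon(\boldsymbol s,\boldsymbol\eta)$. Lemma \ref{lem:multipliers-gradient} turns the vanishing multipliers into $\nabla\widetilde J_\varepsilon(\boldsymbol s,\boldsymbol\eta)=0$. So $(\boldsymbol s,\boldsymbol\eta)$ is a critical point of $\widetilde J_\varepsilon$ in the box $\{|\boldsymbol t|\leq C_0\varepsilon^\tau|\log\varepsilon|\}\times D$. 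By the uniqueness of the scale fixed point \eqref{eq:scale-fixed-point-map}, $\boldsymbol s=\boldsymbol t_\varepsilon(\boldsymbol\eta)$, and therefore $\boldsymbol\eta$ is a critical point of $\widehat J_\varepsilon$ in $D$.

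The remaining step, which I expect to be the main point, is the uniqueness of critical points of $\widehat J_\varepsilon$ in $D$. The $C^0$ estimate \eqref{eq:position-reduced-gradient} is not enough for this; a $C^1$ statement is needed. Differentiating $\widehat J_\varepsilon(\boldsymbol\xi)=\widetilde J_\varepsilon(\boldsymbol t_\varepsilon(\boldsymbol\xi),\boldsymbol\xi)$ and eliminating $D\boldsymbol t_\varepsilon$ through the scale equation gives a Schur complement:
\[
D^2\widehat J_\varepsilon
=D^2_{\boldsymbol\xi\boldsymbol\xi}\widetilde J_\varepsilon
-D^2_{\boldsymbol\xi\boldsymbol t}\widetilde J_\varepsilon
\bigl(D^2_{\boldsymbol t\boldsymbol t}\widetilde J_\varepsilon\bigr)^{-1}
D^2_{\boldsymbol t\boldsymbol\xi}\widetilde J_\varepsilon .
\]
By \eqref{eq:reduced-Hessian-block}, the inverse scale block has size $O(|\log\varepsilon|)$ and the mixed blocks have size $O(|\log\varepsilon|^{-1}+\varepsilon^\tau)$. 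The correction term is therefore $O(|\log\varepsilon|^{-1}+\varepsilon^{2\tau}|\log\varepsilon|)=o(1)$, and
\[
D^2\widehat J_\varepsilon=-M_\alpha D^2\mathcal F_m+o(1)
\]
uniformly on $D$. The mixed block estimate \eqref{eq:mixed-Hessian} is exactly what makes this elimination work, so it should be checked carefully.

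To finish, shrink $D$ to a small ball around $\boldsymbol\xi^*$ on which $D^2\mathcal F_m$ is uniformly nonsingular. The map $\nabla\widehat J_\varepsilon$ is then a $C^1$-small perturbation of the injective map $-M_\alpha\nabla\mathcal F_m$ on a convex set. Writing the difference of the gradients at two critical points as the integrated Hessian applied to their difference shows that there is at most one critical point. Hence $\boldsymbol\eta=\boldsymbol\xi_\varepsilon$, then $\boldsymbol s=\boldsymbol t_\varepsilon$ and $\psi=\phi_\varepsilon$, so $v_\varepsilon=u_\varepsilon$ after the permutation $\sigma$. The decomposition used throughout is unique by Lemma \ref{lem:modulation}, which requires $r\leq r_0$.
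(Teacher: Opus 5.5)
Your proposal is correct and follows the paper's proof step by step. You bootstrap $\psi$ into the fixed-point ball, identify $\psi=\phi_\varepsilon(\boldsymbol s,\boldsymbol\eta)$, turn the vanishing multipliers into $\nabla\widetilde J_\varepsilon(\boldsymbol s,\boldsymbol\eta)=0$, obtain $\boldsymbol s=\boldsymbol t_\varepsilon(\boldsymbol\eta)$ from the scale contraction, and conclude from nondegeneracy of $D^2\mathcal F_m(\boldsymbol\xi^*)$. The only addition is that you derive, via the Schur complement and the block bounds \eqref{eq:reduced-Hessian-block}, the $C^1$-closeness of the scale-eliminated gradient to $-M_\alpha\nabla\mathcal F_m$, which the paper simply asserts before applying the inverse function theorem.
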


\begin{proof}
Let
$v_\varepsilon=W_{\boldsymbol s,\boldsymbol\eta}+\psi$ be a second
solution in the class of the proposition. Lemma
\ref{lem:modulation} fixes its parameters, up to a permutation, by
the same orthogonality conditions as the constructed branch.
The distance assumption and
\eqref{eq:solution-expansion-intro} imply
$\|\psi\|_*+\|\psi\|_{\mathcal H}<r_1$ after $r$ is reduced.
Lemma \ref{lem:uniqueness-bootstrap} places $\psi$ in the shrinking
fixed-point ball. Proposition \ref{prop:nonlinear-correction} now
gives uniqueness of the orthogonal correction for fixed parameters.
Thus
\begin{equation}\label{eq:uniqueness-correction-identification}
\psi=\phi_\varepsilon(\boldsymbol s,\boldsymbol\eta).
\end{equation}
Since $v_\varepsilon$ is an exact solution, all its multipliers
vanish. Lemma \ref{lem:multipliers-gradient} implies
\begin{equation*}
\nabla\widetilde J_\varepsilon
(\boldsymbol s,\boldsymbol\eta)=0.
\end{equation*}

The scale assumption \eqref{eq:uniqueness-scale-class} places
$\boldsymbol s$ in the shrinking box. The contraction argument in
\eqref{eq:scale-fixed-point-map} gives the unique scale vector
$\boldsymbol t_\varepsilon(\boldsymbol\eta)$. Hence
\begin{equation}\label{eq:uniqueness-scale-identification}
\boldsymbol s=\boldsymbol t_\varepsilon(\boldsymbol\eta).
\end{equation}
After the scales are eliminated, the position gradient is the
$C^1$ perturbation in \eqref{eq:position-reduced-gradient}.
Nonsingularity of $D^2\mathcal F_m(\boldsymbol\xi^*)$ and the inverse
function theorem give a unique zero in the fixed neighborhood
selected by \eqref{eq:branch-distance}. Thus
$\boldsymbol\eta=\boldsymbol\xi_\varepsilon$, after the same
permutation. Equations
\eqref{eq:uniqueness-correction-identification} and
\eqref{eq:uniqueness-scale-identification} now give
$v_\varepsilon=u_\varepsilon$.
\end{proof}

\subsection{Normal and tangent blocks}

Let $u_\varepsilon$ be the exact solution. Its quadratic form is
\begin{equation*}
\begin{aligned}
\mathcal Q_\varepsilon(v,w)
={}&\int_\Omega\Delta v\Delta w\,dx\\
&-\frac{\varepsilon^{8-\alpha}}2
\int_\Omega\int_\Omega
\frac{K(x)K(y)e^{u_\varepsilon(x)+u_\varepsilon(y)}}{|x-y|^\alpha}
(v(x)+v(y))(w(x)+w(y))\,dxdy.
\end{aligned}
\end{equation*}
The two coefficient functions in the local part belong to every
finite $L^q(\Omega)$, uniformly along the constructed branch. Since
$\mathcal H\hookrightarrow L^q(\Omega)$ is compact for every finite
$q$, multiplication by these coefficients is compact from
$\mathcal H$ to $\mathcal H'$. The same compactness for the Riesz
term follows by first using the compact embedding in each variable
and then applying Lemma \ref{lem:HLS-domain}. Thus the exact
linearized realization is a compact perturbation of the Navier
biharmonic operator; it is self-adjoint with compact resolvent, and
its variational Morse index equals its spectral Morse index.
Let $\mathcal X_\varepsilon$ be the subspace of $\mathcal H$ satisfying the five orthogonality conditions at each peak. Let $\mathcal T_\varepsilon$ be the span of the $5m$ tangent
vectors
\[
Y_{i,\varepsilon}=\partial_{t_i}u,
\qquad
X_{i,\ell,\varepsilon}=\partial_{\xi_{i,\ell}}u.
\]
The normalized vectors
$\mu_i^{-1}X_{i,\ell,\varepsilon}$ are used only in the
independence argument. The finite-dimensional Hessian is written in
the original variables $(\boldsymbol t,\boldsymbol\xi)$.

At an exact solution, the tangent and normal spaces are exactly orthogonal for the quadratic form.

\begin{lemma}\label{lem:tangent-normal-reduction}
At the exact parameter
$p_\varepsilon=(\boldsymbol t_\varepsilon,\boldsymbol\xi_\varepsilon)$,
one has
\begin{equation}\label{eq:tangent-normal-direct-sum}
\mathcal H=\mathcal X_\varepsilon\oplus\mathcal T_\varepsilon.
\end{equation}
The restriction of $\mathcal Q_\varepsilon$ to
$\mathcal X_\varepsilon$ is nondegenerate and defines an isomorphism
from $\mathcal X_\varepsilon$ to its dual. Moreover,
\begin{equation}\label{eq:exact-tangent-normal-orthogonality}
\mathcal Q_\varepsilon(z,x)=0
\qquad
\text{for every }z\in\mathcal T_\varepsilon,\quad
x\in\mathcal X_\varepsilon.
\end{equation}
In the tangent basis,
\begin{equation}\label{eq:tangent-Hessian-exact}
\mathcal Q_\varepsilon
(\partial_{p_a}u_\varepsilon,\partial_{p_b}u_\varepsilon)
=
\partial_{p_ap_b}^2
\widetilde J_\varepsilon(p_\varepsilon).
\end{equation}
\end{lemma}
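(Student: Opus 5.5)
The plan is to derive all four assertions from the structure of the reduction at the exact parameter $p_\varepsilon$, where every multiplier vanishes. Write $u_p:=W_p+\phi_\varepsilon(p)$ for $p=(\boldsymbol t,\boldsymbol\xi)$ near $p_\varepsilon$.

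\emph{Step 1: the identity $J'_\varepsilon(u_p)=\sum_{i,k}c_{i,k}(p)\chi_iF_{\mu_i,\xi_i}Z_{i,k}$ in $\mathcal H'$.} This holds for all $p$ near $p_\varepsilon$, with $\boldsymbol c(p_\varepsilon)=0$ by Lemma \ref{lem:multipliers-gradient}. Differentiate it in $p_a$ at $p_\varepsilon$ and pair with $x\in\mathcal X_\varepsilon$. The terms $c_{i,k}\,\partial_{p_a}(\chi_iF_{\mu_i,\xi_i}Z_{i,k})$ vanish because $\boldsymbol c(p_\varepsilon)=0$. The terms $(\partial_{p_a}c_{i,k})\chi_iF_{\mu_i,\xi_i}Z_{i,k}$ pair to zero with $x$ by the orthogonality conditions \eqref{eq:orthogonality}. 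This gives $\mathcal Q_\varepsilon(\partial_{p_a}u_\varepsilon,x)=J''_\varepsilon(u_\varepsilon)[\partial_{p_a}u,x]=0$, which is \eqref{eq:exact-tangent-normal-orthogonality}. For \eqref{eq:tangent-Hessian-exact}, differentiate $\partial_{p_b}\widetilde J_\varepsilon=J'_\varepsilon(u_p)[\partial_{p_b}u_p]$ once more. The result is $J''_\varepsilon[\partial_{p_a}u,\partial_{p_b}u]+J'_\varepsilon(u_\varepsilon)[\partial^2_{p_ap_b}u]$, and the last term vanishes since $J'_\varepsilon(u_\varepsilon)=0$. The $C^2$ regularity of $p\mapsto u_p$ from Proposition \ref{prop:nonlinear-correction} justifies these differentiations.

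\emph{Step 2: the direct sum \eqref{eq:tangent-normal-direct-sum}.} $\mathcal X_\varepsilon$ has codimension $5m$, because the $5m$ functionals $\phi\mapsto\int\chi_iF_{\mu_i,\xi_i}Z_{i,k}\phi$ are independent; their Gram matrix with the $\widehat Z_{j,\ell}$ is invertible by Lemma \ref{lem:kernel-Gram}. It therefore suffices to show that the $5m\times5m$ matrix pairing these functionals with the tangent vectors $Y_{i,\varepsilon}$ and $\mu_i^{-1}X_{i,\ell,\varepsilon}$ is invertible. This matrix is exactly $\mathcal M_\varepsilon$ from Lemma \ref{lem:multipliers-gradient}, with position rows normalized, and that lemma shows it converges to a block diagonal positive definite matrix. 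Invertibility then gives $\mathcal T_\varepsilon\cap\mathcal X_\varepsilon=\{0\}$ and $\dim\mathcal T_\varepsilon=5m$, so the sum is direct and fills $\mathcal H$.

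\emph{Step 3: nondegeneracy on $\mathcal X_\varepsilon$.} This is the main point. $\mathcal Q_\varepsilon$ restricted to $\mathcal X_\varepsilon$ is a compact perturbation of the coercive form $\int|\Delta v|^2$, so the induced map $\mathcal X_\varepsilon\to\mathcal X_\varepsilon'$ is Fredholm of index zero, and it suffices to prove injectivity. Suppose $\mathcal Q_\varepsilon(x_0,x)=0$ for all $x\in\mathcal X_\varepsilon$. Then the linearization $\mathscr L_{\varepsilon,u_\varepsilon}x_0$ annihilates $\mathcal X_\varepsilon$, so it lies in the span of the functionals $\chi_iF_{\mu_i,\xi_i}Z_{i,k}$. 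This means $x_0$ solves the projected linear problem with $h=0$, but with the operator linearized at $u_\varepsilon$ instead of at $W$.

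The remaining task is to transfer Proposition \ref{prop:linear-estimate} to this operator. The difference between the two operators is controlled by $\|\phi_\varepsilon\|_*\leq C\varepsilon^\tau|\log\varepsilon|$, and the coefficient difference is bounded by $C\|\phi_\varepsilon\|_*\Theta$ pointwise, exactly as in Lemma \ref{lem:nonlinear-estimates}. Applying Proposition \ref{prop:linear-estimate} with $h=(\mathscr L_{\varepsilon,u_\varepsilon}-L_\varepsilon)x_0$ then gives $\|x_0\|_*+\|x_0\|_{\mathcal H}\leq C\varepsilon^\tau|\log\varepsilon|\,\|x_0\|_*$, and hence $x_0=0$ for small $\varepsilon$. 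The technical check I expect to need care is the nonlocal part of the difference: the term $\varepsilon^{8-\alpha}Ke^{u}\int Ke^{u}(e^{\phi_\varepsilon(y)}-1)x_0(y)|x-y|^{-\alpha}dy$ must be bounded by $C\varepsilon^\tau|\log\varepsilon|\,\|x_0\|_*\,\Theta$. This follows from the same core–neck–exterior splitting as in Proposition \ref{prop:residual}, together with the fact that $\mathcal N_\varepsilon(W)\leq C\Theta$.
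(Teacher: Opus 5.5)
Your proof is correct. For the direct sum, the exact tangent--normal orthogonality, and the Hessian identity, it coincides with the paper's argument. The paper invokes Lemma~\ref{lem:kernel-Gram} directly for the pairing matrix, which is the normalized $\mathcal M_\varepsilon$ you cite.

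You genuinely differ in proving that $\mathcal Q_\varepsilon$ is nondegenerate on $\mathcal X_\varepsilon$.

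\emph{The paper's route.} It uses a Neumann series. It takes a uniform inverse of the projected form at $W_{\boldsymbol t_\varepsilon,\boldsymbol\xi_\varepsilon}$ from Proposition~\ref{prop:linear-estimate}. It then notes that the exact coefficients converge to those at $W$ in $\mathcal L(\mathcal H,\mathcal H')$.

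\emph{Your route.} You apply the Fredholm alternative on $\mathcal X_\varepsilon$ and prove only injectivity. You move $(L_\varepsilon-\mathscr L_{\varepsilon,u_\varepsilon})x_0$ into the source $h$ and absorb it.

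\emph{Comparison.} Your route uses Proposition~\ref{prop:linear-estimate} in exactly the norms in which it is stated, namely $\|\cdot\|_{**}\to\|\cdot\|_*+\|\cdot\|_{\mathcal H}$. The Neumann series instead needs an inverse bound from $\mathcal X_\varepsilon'$ to $\mathcal X_\varepsilon$, which that proposition does not literally provide. In exchange, the paper's route, once that bound is supplied, yields a quantitative inverse. Yours yields only a qualitative isomorphism, but that is all the lemma asserts.

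Two small points:
\begin{itemize}
\item The check you flag as delicate is immediate. Since $e^{u(x)+u(y)}-e^{W(x)+W(y)}=e^{W(x)+W(y)}\bigl(e^{\phi_\varepsilon(x)+\phi_\varepsilon(y)}-1\bigr)$, one gets the pointwise bound $|(\mathscr L_{\varepsilon,u_\varepsilon}-L_\varepsilon)x_0|\le C\|\phi_\varepsilon\|_*\|x_0\|_*\mathcal N_\varepsilon(W)\le C\|\phi_\varepsilon\|_*\|x_0\|_*\Theta$. No core--neck--exterior splitting is needed.
\item You should record that $x_0\in L^\infty(\Omega)$, which follows from $W^{4,p}$ regularity of the weak equation. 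This makes $\|x_0\|_*$ and $\|h\|_{**}$ finite before the estimate is applied.
\end{itemize}
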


\begin{proof}
The matrix obtained by applying the $5m$ orthogonality functionals
to the $5m$ tangent vectors converges, after normalization of the
position columns, to the block diagonal Gram matrix in Lemma
\ref{lem:kernel-Gram}. It is invertible. This proves
\eqref{eq:tangent-normal-direct-sum}.

For the normal block, regard the projected operator as the map from
$\mathcal X_\varepsilon$ to $\mathcal X_\varepsilon'$ induced by its
bilinear form. Proposition \ref{prop:linear-estimate} gives a uniform
inverse for the corresponding form at
$W_{\boldsymbol t_\varepsilon,\boldsymbol\xi_\varepsilon}$. Since
$\|u_\varepsilon-W_{\boldsymbol t_\varepsilon,
\boldsymbol\xi_\varepsilon}\|_{L^\infty}\to0$, the two local
coefficients and the nonlocal kernel in the exact second variation
converge in operator norm from $\mathcal H$ to $\mathcal H'$ by the
Hardy--Littlewood--Sobolev inequality. A Neumann-series argument
therefore gives an isomorphism on $\mathcal X_\varepsilon$.

Write the projected equation along the solution manifold as
\begin{equation}\label{eq:projected-first-variation}
J_\varepsilon'(u(p))
=\sum_{\beta=1}^{5m}c_\beta(p)\Psi_\beta(p)
\quad\hbox{in }\mathcal H',
\end{equation}
where $\Psi_\beta$ are the functions
$\chi_iF_{\mu_i,\xi_i}Z_{i,k}$. At the exact parameter,
$c_\beta(p_\varepsilon)=0$. Differentiate
\eqref{eq:projected-first-variation} with respect to $p_a$ and pair
the result with $x\in\mathcal X_\varepsilon$. We obtain
\begin{equation*}
\begin{aligned}
\mathcal Q_\varepsilon(\partial_{p_a}u_\varepsilon,x)
={}&\sum_\beta
\partial_{p_a}c_\beta(p_\varepsilon)
\int_\Omega\Psi_\beta(p_\varepsilon)x\,dx +\sum_\beta c_\beta(p_\varepsilon)
\int_\Omega\partial_{p_a}\Psi_\beta(p_\varepsilon)x\,dx=0.
\end{aligned}
\end{equation*}
The first sum vanishes by the definition of
$\mathcal X_\varepsilon$. The second vanishes because all multipliers
are zero. This proves
\eqref{eq:exact-tangent-normal-orthogonality}. Finally,
\eqref{eq:tangent-Hessian-exact} is Lemma
\ref{lem:Hessian-identity}.
\end{proof}

The block decomposition now gives nondegeneracy of the constructed solution.

\begin{proposition}\label{prop:nondegenerate-branch}
If $D^2\mathcal F_m(\boldsymbol\xi^*)$ is nonsingular, then the exact linearized operator at $u_\varepsilon$ has trivial kernel for all sufficiently small $\varepsilon$.
\end{proposition}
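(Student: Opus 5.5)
The plan is to use the block decomposition of Lemma \ref{lem:tangent-normal-reduction}. Let $\phi\in\mathcal H$ satisfy $\mathscr L_{\varepsilon,u_\varepsilon}\phi=0$, i.e. $\mathcal Q_\varepsilon(\phi,v)=0$ for all $v\in\mathcal H$. By \eqref{eq:tangent-normal-direct-sum} we write $\phi=x+z$ with $x\in\mathcal X_\varepsilon$ and $z=\sum_a\lambda_a\partial_{p_a}u_\varepsilon\in\mathcal T_\varepsilon$.

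First test against $v\in\mathcal X_\varepsilon$. By \eqref{eq:exact-tangent-normal-orthogonality}, $\mathcal Q_\varepsilon(z,v)=0$, so $\mathcal Q_\varepsilon(x,v)=0$ for every $v\in\mathcal X_\varepsilon$. The normal block is an isomorphism $\mathcal X_\varepsilon\to\mathcal X_\varepsilon'$, hence $x=0$. Next test against $v=\partial_{p_b}u_\varepsilon$. Orthogonality again removes the cross term, and \eqref{eq:tangent-Hessian-exact} gives
\[
\sum_a\partial^2_{p_ap_b}\widetilde J_\varepsilon(p_\varepsilon)\lambda_a=0,\qquad 1\le b\le5m .
\]
Thus $\phi=0$ as soon as the reduced Hessian $D^2\widetilde J_\varepsilon(p_\varepsilon)$ is invertible and the tangent vectors are independent. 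Independence holds because the orthogonality functionals applied to them give the invertible Gram matrix in the proof of Lemma \ref{lem:tangent-normal-reduction}.

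It remains to prove invertibility of the reduced Hessian, using \eqref{eq:reduced-Hessian-block} at $p_\varepsilon$, where $\boldsymbol t_\varepsilon=O(\varepsilon^\tau|\log\varepsilon|)$ lies in the shrinking box and $\boldsymbol\xi_\varepsilon\to\boldsymbol\xi^*$. Write the Hessian as $\begin{pmatrix}A&B\\B^T&D\end{pmatrix}$. Then
\[
A=\frac{\gamma_\alpha}{|\log\varepsilon|}\bigl(I_m+o(1)\bigr),\qquad D=-M_\alpha D^2\mathcal F_m(\boldsymbol\xi^*)+o(1).
\]
Since $D^2\mathcal F_m(\boldsymbol\xi^*)$ is nonsingular, $D$ is invertible with $\|D^{-1}\|\le C$. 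The Schur complement is $A-BD^{-1}B^T$.

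This step is the main obstacle. The bound $\|B\|\le C|\log\varepsilon|^{-1}+C\varepsilon^\tau$ only gives $BD^{-1}B^T=O(|\log\varepsilon|^{-2})$. That is still $o(|\log\varepsilon|^{-1})$, so it is negligible against $A$, and the Schur complement equals $\frac{\gamma_\alpha}{|\log\varepsilon|}(I_m+o(1))$, which is invertible. The whole argument therefore relies on the sharpness of the scale block \eqref{eq:uniform-scale-Hessian}: the mixed bound must be quadratically smaller than the dilation capacity energy. As a safer alternative, first conjugate by $\operatorname{diag}(|\log\varepsilon|^{1/2}I_m,I_{4m})$. The scaled matrix has diagonal blocks $\gamma_\alpha I_m+o(1)$ and $D$, with off-diagonal blocks $O(|\log\varepsilon|^{-1/2})=o(1)$. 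It therefore converges to a nonsingular block-diagonal limit. That limit also records the inertia later needed for the Morse index: $m$ positive scale directions, plus the inertia of $-M_\alpha D^2\mathcal F_m(\boldsymbol\xi^*)$.
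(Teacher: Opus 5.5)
Your proposal is correct and follows essentially the paper's proof. Both arguments use the exact $\mathcal Q_\varepsilon$-orthogonal splitting $\mathcal H=\mathcal X_\varepsilon\oplus\mathcal T_\varepsilon$ and the invertibility of the normal block, and both obtain invertibility of the reduced Hessian from a Schur complement in which $BD^{-1}B^T=O(|\log\varepsilon|^{-2}+\varepsilon^{2\tau})=o(|\log\varepsilon|^{-1})$; you only write out the kernel-testing step explicitly and add an equivalent rescaled-conjugation variant.
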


\begin{proof}
By Lemma \ref{lem:tangent-normal-reduction}, the exact quadratic form
is block diagonal with respect to
$\mathcal H=\mathcal X_\varepsilon\oplus\mathcal T_\varepsilon$.
The normal block is invertible. It remains to examine the tangent
Hessian.

At $p_\varepsilon$, Proposition \ref{prop:reduced-equations} gives
\begin{equation*}
D^2\widetilde J_\varepsilon(p_\varepsilon)
=\begin{pmatrix}
A_\varepsilon&B_\varepsilon\\
B_\varepsilon^T&C_\varepsilon
\end{pmatrix},
\end{equation*}
where
\begin{equation*}
\begin{aligned}
A_\varepsilon
&=\frac{\gamma_\alpha}{|\log\varepsilon|}I_m
+o(|\log\varepsilon|^{-1}),\\
\|B_\varepsilon\|
&=O(|\log\varepsilon|^{-1})+O(\varepsilon^\tau),\\
C_\varepsilon
&=-M_\alpha D^2\mathcal F_m(\boldsymbol\xi^*)+o(1).
\end{aligned}
\end{equation*}
The position block is invertible with uniformly bounded inverse. Its
Schur complement satisfies
\begin{equation*}
A_\varepsilon-B_\varepsilon C_\varepsilon^{-1}B_\varepsilon^T
=
\frac{\gamma_\alpha}{|\log\varepsilon|}I_m
+o(|\log\varepsilon|^{-1}).
\end{equation*}
It is positive definite. Thus the tangent block is invertible. Exact
orthogonality of the two blocks proves that the full linearized
operator has trivial kernel.
\end{proof}

\section{Morse index}

\subsection{Inertia of the normal block}

For the rest of this subsection, set
\begin{equation}\label{eq:positive-normal-subspace}
\mathcal Y_\varepsilon
:=\left\{
v\in\mathcal X_\varepsilon:
\int_\Omega\chi_iF_{\mu_i,\xi_i}v\,dx=0,
\quad 1\leq i\leq m
\right\}.
\end{equation}
Thus functions in $\mathcal Y_\varepsilon$ are orthogonal to the
constant negative mode and to the five conformal modes at every core.

The normal Morse index requires a localization estimate adapted to
the critical four-dimensional scale. We use a logarithmically slow
partition rather than a Hardy--Rellich inequality.

\begin{lemma}\label{lem:quadratic-localization}
Let $\varepsilon_n\to0$ and let
$v_n\in\mathcal Y_{\varepsilon_n}$ satisfy
$\|v_n\|_{\mathcal H}\leq1$. For every large $R$, there are disjoint
logarithmic cutoffs $\eta_{i,n,R}$ such that
\[
\eta_{i,n,R}=1
\quad\text{in }B_{R/\mu_{i,n}}(\xi_{i,n}),
\qquad
\eta_{i,n,R}=0
\quad\text{outside }B_{R^2/\mu_{i,n}}(\xi_{i,n}),
\]
and
\begin{equation}\label{eq:quadratic-localization-cutoff}
|\nabla^k\eta_{i,n,R}(x)|
\leq
\frac{C}{|\log R|\,|x-\xi_{i,n}|^k},
\qquad k=1,2.
\end{equation}
Put
\[
v_{i,n}=\eta_{i,n,R}v_n,
\qquad
v_{0,n}=v_n-\sum_{i=1}^m v_{i,n}.
\]
Then
\begin{equation}\label{eq:quadratic-localization}
\begin{aligned}
\mathcal Q_{\varepsilon_n}(v_n,v_n)
\geq{}&
\sum_{i=1}^m
\mathcal Q_{\varepsilon_n}(v_{i,n},v_{i,n})
+(1-\delta_R)\|v_{0,n}\|_{\mathcal H}^2 -\delta_R\|v_n\|_{\mathcal H}^2-o_n(1),
\end{aligned}
\end{equation}
where $\delta_R\to0$ as $R\to\infty$. Moreover, after rescaling the
$i$-th core,
\[
\widetilde v_{i,n}(z)
:=v_{i,n}\left(\xi_{i,n}+\frac z{\mu_{i,n}}\right),
\]
one has
\begin{equation}\label{eq:quadratic-core-limit}
\mathcal Q_{\varepsilon_n}(v_{i,n},v_{i,n})
=
\mathcal Q_\infty(\widetilde v_{i,n},\widetilde v_{i,n})
+o_n(1)+o_R(1).
\end{equation}
\end{lemma}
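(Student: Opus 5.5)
We will build the cutoffs explicitly and then split the quadratic form into a local part and a nonlocal part.

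\emph{Cutoffs.} Put $r=\mu_{i,n}|x-\xi_{i,n}|$ and take
\[
\eta_{i,n,R}(x)=\lambda\!\left(\frac{\log r-\log R}{\log R}\right),
\]
where $\lambda$ is smooth, $\lambda=1$ on $(-\infty,0]$ and $\lambda=0$ on $[1,\infty)$. Then $\eta_{i,n,R}=1$ for $r\le R$ and $\eta_{i,n,R}=0$ for $r\ge R^2$. Differentiating in $x$ gives \eqref{eq:quadratic-localization-cutoff}, because each derivative of $\log r$ produces $|x-\xi_{i,n}|^{-1}$ and each derivative of $\lambda$ produces $(\log R)^{-1}$. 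Since $R^2/\mu_{i,n}\to0$ and the centers are separated, the supports are disjoint for large $n$.

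\emph{Local part.} Expand
\[
\Delta v_n=\sum_i\Delta v_{i,n}+\Delta v_{0,n}.
\]
The cross terms between $v_{i,n}$ and $v_{0,n}$ live only where $0<\eta_{i,n,R}<1$, in the annulus $A_{i,n}=\{R/\mu_{i,n}\le|x-\xi_{i,n}|\le R^2/\mu_{i,n}\}$. On $A_{i,n}$ we have $\eta(1-\eta)\ge0$, so the cross term equals $2\int\eta(1-\eta)|\Delta v_n|^2\ge0$ plus commutator terms. The commutators are of the form
\[
\int \nabla\eta\cdot\nabla v_n\,\Delta v_n
\quad\text{and}\quad
\int v_n\,\Delta\eta\,\Delta v_n .
\]
To bound them, use \eqref{eq:quadratic-localization-cutoff} together with the four-dimensional scale-invariant inequalities
\[
\int_{A}\frac{|\nabla v|^2}{|x-\xi|^2}+\int_A\frac{(v-\bar v)^2}{|x-\xi|^4}\le C\int |\Delta v|^2 ,
\]
applied dyadically on each annulus. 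The commutators then carry a factor $(\log R)^{-1}$, and are bounded by $C(\log R)^{-1}\|v_n\|_{\mathcal H}^2$ up to contributions of spherical means.

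The main obstacle is the mean (degree-zero) part. In dimension four, $\int v^2/|x|^4$ is not controlled by $\|\Delta v\|^2$ for constants: the critical Rellich inequality fails with a logarithm. This is exactly why the cutoff is logarithmically slow. I would handle it as follows. Write $v_n=\bar v_n(r)+(v_n-\bar v_n)$ on $A_{i,n}$. The nonradial part obeys the Rellich inequality with the logarithmic gain. For the radial part, note that the $\log R$ dyadic shells carry total energy at most $1$, so the oscillation of $\bar v_n$ across $A_{i,n}$ is $O((\log R)^{-1/2})$ by the $\log$-capacity estimate of Lemma~\ref{lem:biharmonic-capacity}. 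The constant part of $\bar v_n$ is annihilated by $\nabla\eta$ in the first commutator. In the second commutator it is paired with $\Delta\eta$, and integrating by parts moves this to $\nabla\eta\cdot\nabla\Delta$-type terms, which are again bounded by $(\log R)^{-1}$ times the energy. This yields $\delta_R\sim(\log R)^{-1/2}$.

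\emph{Nonlocal part.} The coefficient terms are split by the partition as well. The weights $\varepsilon^{8-\alpha}Ke^{u}$ on the support of $v_{0,n}$ (where $r\ge R$) have rescaled mass $O(R^{-4})$ near the cores and $O(\varepsilon^4)$ outside. By the Hardy--Littlewood--Sobolev inequality and $\mathcal H\hookrightarrow L^q$, every term involving $v_{0,n}$ is therefore $O(R^{-c})\|v_n\|_{\mathcal H}^2+o_n(1)$. Terms coupling two different cores are $O(\varepsilon^\alpha)$ by Lemma~\ref{lem:separated-core-estimates}. What remains is $\sum_i\mathcal Q_{\varepsilon_n}(v_{i,n},v_{i,n})$ together with $\|v_{0,n}\|_{\mathcal H}^2$, which gives \eqref{eq:quadratic-localization}.

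\emph{Core limit.} For \eqref{eq:quadratic-core-limit}, rescale. The biharmonic energy is conformally invariant in dimension four, and the coefficients converge to those of $\mathcal Q_\infty$ in the weighted sense. Using $\|u_{\varepsilon_n}-W\|_\infty\to0$ and \eqref{eq:core-nonlinearity}, the error is $o_n(1)$ on $B_{R^2}$. The Riesz tail beyond $R^2$ contributes $o_R(1)$ by Lemma~\ref{lem:weighted-Riesz}.
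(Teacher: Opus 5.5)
There is a genuine gap, and it sits at the degree-zero (spherical-mean) part, in both your local step and your nonlocal step.

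\textbf{The local step.} Write $\eta=\eta_{i,n,R}$ and $E=2\nabla\eta\cdot\nabla v_n+v_n\Delta\eta$. Then exactly
\[
\|v_n\|_{\mathcal H}^2=\|\eta v_n\|_{\mathcal H}^2+\|(1-\eta)v_n\|_{\mathcal H}^2+2\int_\Omega\eta(1-\eta)|\Delta v_n|^2+2\int_\Omega(1-2\eta)E\,\Delta v_n-2\int_\Omega E^2 .
\]
Besides the linear commutators you list, there is the term $-2\int E^2$, which contains $-2\int v_n^2|\Delta\eta|^2$.
\begin{itemize}
\item For the constant part of $\bar v_n$, this term cannot be integrated by parts away. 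Since $\int|\Delta\eta|^2\,dx\asymp(\log R)^{-1}$, it has size $A^2/\log R$, where $A$ is the level of $\bar v_n$ on the transition annulus.
\item Your oscillation bound is inverted. By the capacity computation, a radial function that changes by $h$ across $\log R$ logarithmic scales has energy $\gtrsim h^2/\log R$. So unit energy allows $h\sim(\log R)^{1/2}$, not $(\log R)^{-1/2}$.
\item Nothing in your argument bounds $A$. The condition $\|v_n\|_{\mathcal H}\le1$ alone permits $A\sim|\log\varepsilon_n|^{1/2}$ (a Moser-type profile that is constant on the core). You never use the mass and kernel moments that define $\mathcal Y_{\varepsilon_n}$.
\end{itemize}
Your nonradial step is fine if you use the global Hardy--Rellich inequality. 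The annular version you quote fails for the harmonic modes $r^kY_k$.

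\textbf{The nonlocal step.} This fails for the same reason. The weights concentrate, so their $O(R^{-4})$ rescaled mass cannot be paired with $\mathcal H\hookrightarrow L^q$: their $L^p$ norms blow up for every $p>1$. For the Moser profile, your justification would give $v_{0,n}$-terms of order $R^{-4}|\log\varepsilon_n|$. What is needed is the weighted bound $\int_{B_{i,n}}F_{\mu_{i,n},\xi_{i,n}}v_n^2\,dx\le C$. The paper derives this from the vanishing moments and Lemma \ref{lem:weighted-low-mode-control}. It uses the same moments to control the constants $A_{0,n},A_{1,n,\ell}$ in its variation-of-constants formula, which is how it reaches \eqref{eq:low-mode-variation-bound}.

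\textbf{The moments alone do not close the gap.} A cutoff over the single range $R\le\mu_{i,n}|x-\xi_{i,n}|\le R^2$ cannot work even with the moments, and \eqref{eq:low-mode-variation-bound} is itself off by a factor $\log R$. Here is an example.

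Take $m=1$, $L=\log R$, $s=\log(\mu_{1,n}|x-\xi_{1,n}|)$, and $c>0$ small. Let $v_n$ be radial with profile $f$ equal to $0$ for $s\le0$, equal to $\sqrt{c/L}\,s$ on $[1,2L+1]$, and then decreasing linearly to $0$ at $s=\log(\mu_{1,n}r_0)$, with corners smoothed. Add a radial bump of size $O(L^{-1/2})$ supported in $\{s\le-1\}$ that makes the six moments vanish, so $v_n\in\mathcal Y_{\varepsilon_n}$.

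For radial $h$ one has $\int|\Delta h|^2\,dx=2\pi^2\int(h_{ss}^2+4h_s^2)\,ds$. This gives $\|v_n\|_{\mathcal H}^2=16\pi^2c+O(c/L)+o_n(1)$, and all nonlocal terms are $O(c/L)+o_n(1)$. For every radial cutoff $g(s)$ equal to $1$ for $s\le L$ and to $0$ for $s\ge2L$,
\[
\|gv_n\|_{\mathcal H}^2+\|(1-g)v_n\|_{\mathcal H}^2-\|v_n\|_{\mathcal H}^2\ \ge\ \frac{16\pi^2c}{L}\int_L^{2L}g'(s)^2s^2\,ds\ \ge\ 16\pi^2c .
\]
The reason is that the mixed terms $\int[(2g-1)g'ff'-g(1-g)f'^2]$ vanish for an affine profile $f$.

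Consequences of the example:
\begin{itemize}
\item The right side of \eqref{eq:quadratic-localization} exceeds the left side by about $\|v_n\|_{\mathcal H}^2$, for every large $R$.
\item Correspondingly $\int_{I_R}|a_{0,n}|^2\,ds\asymp cL^2$, while the right side of \eqref{eq:low-mode-variation-bound} is $O(cL)$.
\end{itemize}

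So the missing idea is not only the moments: the plateau of $\bar v_n$ has to be cut more slowly. One option is a cutoff linear in $\log\log(\mu_{i,n}|x-\xi_{i,n}|)$ on $R\le\mu_{i,n}|x-\xi_{i,n}|\le e^{(\log R)^2}$. With the core level fixed by the moments, the same computation then gives an error of order $(\log\log R)^{-1/2}$. This requires changing the cutoff class in the statement.
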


\begin{proof}
We first localize the biharmonic energy. In the cylinder associated
with the $i$-th center, set
\[
s=\log(\mu_{i,n}|x-\xi_{i,n}|),
\qquad
\theta=\frac{x-\xi_{i,n}}{|x-\xi_{i,n}|},
\qquad
\widetilde v(s,\theta)=v(x).
\]
On every annulus,
\begin{equation}\label{eq:Emden-Fowler-biharmonic}
\int |\Delta v|^2\,dx
=\int
\left|\left(\partial_s^2+2\partial_s
+\Delta_{\mathbb S^3}\right)\widetilde v\right|^2
\,dsd\theta.
\end{equation}
Choose a fixed smooth function $\zeta$ with $\zeta=1$ on
$(-\infty,0]$ and $\zeta=0$ on $[1,\infty)$, and define
\[
\eta_{i,n,R}(x)
=\zeta\left(
\frac{\log(\mu_{i,n}|x-\xi_{i,n}|)-\log R}
{\log R}
\right).
\]
This gives \eqref{eq:quadratic-localization-cutoff}. For large $n$ the
supports are disjoint.

We record the one-dimensional estimate used for the logarithmic
cutoff. Expand in an orthonormal spherical-harmonic basis. We suppress
the multiplicity index and write
$\widetilde v_n=\sum_{k\geq0}a_{k,n}(s)Y_k(\theta)$. Put
$P_k=\partial_s^2+2\partial_s-k(k+2)$. Direct expansion gives
\[
P_k(\eta a)=\eta P_ka+2\eta'a'
+(\eta''+2\eta')a.
\]
For $k\geq2$, the roots of the characteristic polynomial are
separated from the imaginary axis uniformly in $k$. Integration by
parts therefore gives
\begin{equation}\label{eq:high-mode-logarithmic-cutoff}
\begin{aligned}
&\|P_k(\eta a_{k,n})\|_{L^2}^2
+\|P_k((1-\eta)a_{k,n})\|_{L^2}^2\leq
\|P_ka_{k,n}\|_{L^2}^2
+\frac{C}{\log R}\|P_ka_{k,n}\|_{L^2}^2.
\end{aligned}
\end{equation}

We treat $k=0,1$ separately.  The homogeneous solutions of the
pure biharmonic operators are
\[
1,\ e^{-2s}\quad(k=0),
\qquad
e^s,\ e^{-3s}\quad(k=1).
\]
They must not be confused with the conformal kernel of
$\mathcal L_\infty$.  Put
\[
I_R=(\log R,2\log R),
\qquad
I_R^*=\left(\frac12\log R,\frac52\log R\right).
\]
The following one-dimensional estimate is the low-mode substitute
for the spectral gap.  For $k=1$, the four multiplicity indices are
included in all sums:
\begin{equation}\label{eq:low-mode-variation-bound}
\begin{aligned}
&\sum_{k=0}^1\int_{I_R}
\bigl(|a_{k,n}'|^2+(1+k)^2|a_{k,n}|^2\bigr)\,ds\\
&\leq C\log R\Bigg(
\sum_{k=0}^1\|P_ka_{k,n}\|_{L^2(I_R^*)}^2
+|\mathfrak m_{*,n}|^2
+\sum_{\nu=0}^4|\mathfrak m_{\nu,n}|^2
+o_R(1)\|v_n\|_{\mathcal H}^2\Bigg)+o_n(1).
\end{aligned}
\end{equation}
where
\[
\mathfrak m_{*,n}=\int_\Omega
\chi_iF_{\mu_{i,n},\xi_{i,n}}v_n,
\qquad
\mathfrak m_{\nu,n}=\int_\Omega
\chi_iF_{\mu_{i,n},\xi_{i,n}}Z_{i,\nu}v_n,
\quad0\leq\nu\leq4.
\]

For completeness, we justify this estimate.  Variation of constants
on $I_R^*$ writes
\[
\begin{aligned}
a_{0,n}(s)
&=A_{0,n}+B_{0,n}e^{-2s}
+\int_{I_R^*}K_0(s,\sigma)P_0a_{0,n}(\sigma)\,d\sigma+r_{0,n}(s),\\
a_{1,n,\ell}(s)
&=A_{1,n,\ell}e^s+B_{1,n,\ell}e^{-3s}
+\int_{I_R^*}K_1(s,\sigma)P_1a_{1,n,\ell}(\sigma)\,d\sigma
+r_{1,n,\ell}(s),
\qquad 1\leq\ell\leq4,
\end{aligned}
\]
where the remainders are determined by the energy on the two
adjacent cylinder strips.  The Green kernels satisfy
\[
|K_0|+|\partial_sK_0|\leq C(1+|s-\sigma|),
\qquad
|K_1|+|\partial_sK_1|
\leq Ce^{-c|s-\sigma|}
\]
after the growing $e^s$ component has been separated.  The
coefficients of $e^{-2s}$ and $e^{-3s}$ are controlled by the
$H^2$-energy on the inner strip.  The coefficient of $e^s$ is
$O(\mu_{i,n}^{-1})\|v_n\|_{\mathcal H}$, because in the original
variables it is an affine harmonic function and
$\mathcal H\hookrightarrow H_0^1(\Omega)$.  Finally, evaluating the
constant and the remaining degree-zero and degree-one components
against the six weighted test functions gives
\[
|A_{0,n}|+\sum_{\ell=1}^4|A_{1,n,\ell}|
\leq C\left(
|\mathfrak m_{*,n}|+\sum_{\nu=0}^4|\mathfrak m_{\nu,n}|
+\sum_{k=0}^1\|P_ka_{k,n}\|_{L^2(I_R^*)}\right)
+o_R(1)\|v_n\|_{\mathcal H}+o_n(1).
\]
Here Lemma \ref{lem:weighted-low-mode-control} is used to pass from
the weighted core norms to the coefficients.  Substitution in the
two variation-of-constants formulas and integration over $I_R$
proves \eqref{eq:low-mode-variation-bound}.  The factor $\log R$ is
unavoidable for the constant plateau.

For $v_n\in\mathcal Y_{\varepsilon_n}$ the mass moment
$\mathfrak m_{*,n}$ and all the moments $\mathfrak m_{\nu,n}$ vanish. Since
$|\eta'|\leq C/(\log R)$ and
$|\eta''|\leq C/(\log R)^2$, equations
\eqref{eq:high-mode-logarithmic-cutoff} and
\eqref{eq:low-mode-variation-bound} imply
\begin{equation}\label{eq:cylindrical-transition-control}
\mathcal E_{i,n,R}^{\rm cut}
\leq
\frac{C}{(\log R)^{1/2}}\|v_n\|_{\mathcal H}^2
+o_n(1)+o_R(1),
\end{equation}
where $\mathcal E_{i,n,R}^{\rm cut}$ is the sum of the commutator
and cross terms produced by $\Delta(\eta_{i,n,R}v_n)$.  Indeed, the
high modes give the stronger factor $(\log R)^{-1}$ by
\eqref{eq:high-mode-logarithmic-cutoff}.  For the low modes,
Cauchy's inequality, the factor $(\log R)^{-1}$ carried by
$\eta'$, and \eqref{eq:low-mode-variation-bound} give the displayed
$(\log R)^{-1/2}$ bound.  This loss is immaterial and avoids any
unjustified sharp cutoff rate.
Summing the spherical harmonics in
\eqref{eq:Emden-Fowler-biharmonic} gives
\begin{equation}\label{eq:quadratic-local-energy}
\begin{aligned}
\int_\Omega|\Delta v_n|^2\,dx
\geq{}&
\sum_i\int_\Omega|\Delta v_{i,n}|^2\,dx
+(1-\delta_R)\int_\Omega|\Delta v_{0,n}|^2\,dx -\delta_R\|v_n\|_{\mathcal H}^2-o_n(1),
\end{aligned}
\end{equation}
with $\delta_R\to0$. This is the required critical logarithmic
localization.

We next localize the nonlocal form. Write
\[
d\Gamma_n(x,y)
:=\varepsilon_n^{8-\alpha}
\frac{K(x)K(y)e^{u_{\varepsilon_n}(x)+u_{\varepsilon_n}(y)}}
{|x-y|^\alpha}\,dxdy
\]
and
\[
\mathcal B_n(v,w)
:=\frac12\int_{\Omega\times\Omega}
(v(x)+v(y))(w(x)+w(y))\,d\Gamma_n(x,y).
\]
Then
$\mathcal Q_{\varepsilon_n}(v,w)
=\int_\Omega\Delta v\Delta w-\mathcal B_n(v,w)$. Let
\[
B_{i,n}=B_{R^2/\mu_{i,n}}(\xi_{i,n}),
\qquad
B_{i,n}^{\rm in}=B_{R/\mu_{i,n}}(\xi_{i,n}).
\]
The core expansion and Lemma
\ref{lem:separated-core-estimates} give
\begin{align}
\Gamma_n(B_{i,n}\times B_{j,n})
&\leq C\varepsilon_n^\alpha,
\qquad i\ne j,
\notag\\
\Gamma_n\left(
\left(\Omega\setminus\bigcup_iB_{i,n}^{\rm in}\right)
\times\Omega\right)
&\leq\delta_R+o_n(1).
\label{eq:Gamma-outer-mass}
\end{align}
The moment conditions and Lemma
\ref{lem:weighted-low-mode-control} give
\begin{equation*}
\sum_{i=1}^m
\int_{B_{i,n}}F_{\mu_{i,n},\xi_{i,n}}v_n^2\,dx
\leq C\|v_n\|_{\mathcal H}^2+o_n(1).
\end{equation*}
Cauchy's inequality with respect to $d\Gamma_n$ now shows that the
sum of the distinct-core, core--exterior, and transition errors is
bounded by
\begin{equation}\label{eq:quadratic-nonlocal-error}
|\mathcal E_{n,R}^{\rm nl}|
\leq(\delta_R+o_n(1))\|v_n\|_{\mathcal H}^2.
\end{equation}
For transition terms we also use
\eqref{eq:cylindrical-transition-control}. Combining
\eqref{eq:quadratic-local-energy} and
\eqref{eq:quadratic-nonlocal-error} proves
\eqref{eq:quadratic-localization}.

For \eqref{eq:quadratic-core-limit}, set
$x=\xi_{i,n}+z/\mu_{i,n}$. The local coefficients converge on every
fixed ball and the projected correction converges in
$C^{3,\beta}_{\rm loc}$. The part with $|z|>R$ is $o_R(1)$ by
\eqref{eq:Gamma-outer-mass}, and a distinct-core interaction is
$O(\varepsilon_n^\alpha)$. This proves
\eqref{eq:quadratic-core-limit}.
\end{proof}

The normal block has exactly one negative direction for each bubble.

\begin{proposition}\label{prop:normal-inertia}
For all sufficiently small $\varepsilon$, the restriction of
$\mathcal Q_\varepsilon$ to $\mathcal X_\varepsilon$ is
nondegenerate and has Morse index $m$.  The space
$\mathcal Y_\varepsilon$ in \eqref{eq:positive-normal-subspace} has
codimension $m$ in $\mathcal X_\varepsilon$, and there is $c>0$ such
that
\begin{equation}\label{eq:positive-normal-coercivity}
\mathcal Q_\varepsilon(v,v)
\geq c\|v\|_{\mathcal H}^2
\qquad
\text{for every }v\in\mathcal Y_\varepsilon.
\end{equation}
Consequently there is a
$\mathcal Q_\varepsilon$-orthogonal decomposition
\begin{equation}\label{eq:normal-inertia-decomposition}
\mathcal X_\varepsilon
=\mathcal E_\varepsilon^-\oplus\mathcal E_\varepsilon^+,
\qquad
\dim\mathcal E_\varepsilon^-=m,
\end{equation}
on which the quadratic form is uniformly negative and uniformly
positive, respectively.
\end{proposition}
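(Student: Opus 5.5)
The plan is to prove the coercivity \eqref{eq:positive-normal-coercivity} on $\mathcal Y_\varepsilon$ first, by contradiction and blow-up. Then we exhibit $m$ explicit negative directions in $\mathcal X_\varepsilon$ and conclude with a codimension count.

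\textbf{Codimension.} The functionals $v\mapsto\int_\Omega\chi_iF_{\mu_i,\xi_i}v$, $1\le i\le m$, are independent on $\mathcal X_\varepsilon$. To see this, take approximate constant-mode test functions $\widehat Z_{i,0}+b_\alpha$, suitably cut off near $\xi_i$, and project them onto $\mathcal X_\varepsilon$ using the invertible Gram matrix of Lemma \ref{lem:kernel-Gram}. The resulting $m\times m$ matrix of masses converges to $M_\alpha I_m$. Hence $\operatorname{codim}_{\mathcal X_\varepsilon}\mathcal Y_\varepsilon=m$.

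\textbf{Coercivity.} Suppose \eqref{eq:positive-normal-coercivity} fails. Then there are $\varepsilon_n\to0$ and $v_n\in\mathcal Y_{\varepsilon_n}$ with $\|v_n\|_{\mathcal H}=1$ and $\limsup\mathcal Q_{\varepsilon_n}(v_n,v_n)\le0$. We apply Lemma \ref{lem:quadratic-localization} with a large fixed $R$. The rescaled cores $\widetilde v_{i,n}$ are bounded in $\mathcal E_\alpha$: the Laplacian part is bounded by $1$, and the weighted part is controlled by Lemma \ref{lem:weighted-low-mode-control}, since the six moments vanish up to $o_n(1)+o_R(1)$ coming from the cutoff. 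By \eqref{eq:limit-coercivity} the six moment conditions hold up to small errors, so
\[
\mathcal Q_\infty(\widetilde v_{i,n},\widetilde v_{i,n})\ge c_\alpha\int|\Delta\widetilde v_{i,n}|^2-o(1).
\]
Inserting this in \eqref{eq:quadratic-localization} and \eqref{eq:quadratic-core-limit} gives
\[
\mathcal Q_{\varepsilon_n}(v_n,v_n)\ge \min(c_\alpha,\tfrac12)\Bigl(\sum_i\|v_{i,n}\|_{\mathcal H}^2+\|v_{0,n}\|^2_{\mathcal H}\Bigr)-\delta_R-o(1).
\]
The energy localization \eqref{eq:quadratic-local-energy}, with the cutoff errors bounded by \eqref{eq:cylindrical-transition-control}, gives $\sum_i\|v_{i,n}\|^2+\|v_{0,n}\|^2\ge 1-\delta_R-o(1)$. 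Letting $n\to\infty$ and then $R\to\infty$ yields a contradiction. The constant $c$ is uniform because every estimate used is uniform in the admissible parameters.

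\textbf{Negative directions and conclusion.} For each $i$, let $w_i$ be the $\mathcal X_\varepsilon$-projection of the truncated constant $\eta_{i}\cdot 1$, with $\eta_i$ a logarithmic cutoff equal to $1$ on $B_{R/\mu_i}(\xi_i)$. Its biharmonic energy is $O(1/\log R)$, by the capacity estimate of Lemma \ref{lem:biharmonic-capacity} at scale $R^2/\mu_i$. Its nonlocal part tends to $\mathcal Q_\infty(1,1)=-2M_\alpha$ by \eqref{eq:negative-mode-value}. The projection changes $w_i$ only by kernel components, whose contribution is small since $\mathcal Q_\infty(Z_k,\cdot)=0$. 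Distinct cores interact at order $O(\varepsilon^\alpha)$. Hence $\mathcal Q_\varepsilon$ is uniformly negative definite on $\operatorname{span}\{w_i\}$, which is $m$-dimensional.

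Since $\mathcal Q_\varepsilon$ is positive on a subspace of codimension $m$, its index on $\mathcal X_\varepsilon$ is at most $m$; the $m$ negative directions give index at least $m$, so the index is exactly $m$. Nondegeneracy on $\mathcal X_\varepsilon$ is already known from Lemma \ref{lem:tangent-normal-reduction}. The decomposition \eqref{eq:normal-inertia-decomposition} then follows from the spectral theorem for the bounded self-adjoint operator on $\mathcal X_\varepsilon$ induced by $\mathcal Q_\varepsilon$. A uniform gap away from zero holds: it comes from the uniform inverse, and on the negative side from uniform negativity on $\operatorname{span}\{w_i\}$ combined with the coercivity on $\mathcal Y_\varepsilon$.

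\textbf{Main obstacle.} The hardest step is passing from $\|v_n\|_{\mathcal H}\le1$ to a genuine $\mathcal E_\alpha$ bound with asymptotically vanishing moments for the truncated rescaled cores, because in dimension four a constant plateau costs only logarithmic energy. I would handle this with the low-mode variation bound \eqref{eq:low-mode-variation-bound}, and verify that the cutoff perturbs the six moments only by $o_R(1)$ because $F$ decays like $|z|^{-8}$.
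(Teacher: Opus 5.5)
Your argument is correct, and its core is the paper's own. That core is the contradiction proof of \eqref{eq:positive-normal-coercivity}: Lemma~\ref{lem:quadratic-localization}, then the weighted bound of Lemma~\ref{lem:weighted-low-mode-control}, then the limiting coercivity \eqref{eq:limit-coercivity} after removing the six small moment components. You differ in the choice of negative directions and, more substantially, in how you conclude.

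\emph{The paper's route.} The paper takes the capacity profiles $\Pi_{i,\varepsilon}=-b_\alpha^{-1}V_{\rho_{i,\varepsilon},\xi_i}$, whose energy is $O(|\log\varepsilon|^{-1})$. It projects them to $P_{i,\varepsilon}\in\mathcal X_\varepsilon$ and reuses them for the codimension count. It then builds $\mathcal E_\varepsilon^-$ by hand as a graph $\{n+T_\varepsilon n\}$ over $\operatorname{span}\{P_{i,\varepsilon}\}$, with $\mathcal E_\varepsilon^+=\mathcal Y_\varepsilon$. This needs an extra cross estimate, $\sup|\mathcal Q_\varepsilon(n,y)|=o(1)$ over unit vectors $n\in\mathcal N_\varepsilon$, $y\in\mathcal Y_\varepsilon$. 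That estimate is delicate because the capacity profiles have norm of order $|\log\varepsilon|^{-1/2}$. It is closed only through the capacity-tail bounds and the diagonal choice $R_\varepsilon=|\log\varepsilon|^{3/4}$.

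\emph{Your route.} You use fixed-$R$ logarithmic cutoffs, whose energy $O(1/\log R)$ suffices once $R$ is fixed large. You then apply min--max to the operator $I-(\text{compact})$ that represents $\mathcal Q_\varepsilon$ on $\mathcal X_\varepsilon$.
\begin{itemize}
\item Negativity on an $m$-dimensional space gives $\lambda_m\le -c'$.
\item Coercivity on the codimension-$m$ space $\mathcal Y_\varepsilon$ gives $\lambda_{m+1}\ge c$.
\item The spectral subspaces then give \eqref{eq:normal-inertia-decomposition} with uniform constants.
\end{itemize}
This bypasses the cross estimate entirely. It also yields nondegeneracy on its own, so you can drop the appeal to Lemma~\ref{lem:tangent-normal-reduction}. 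That is preferable, since the uniform inverse there is deduced from a $\|\cdot\|_{**}$-type estimate rather than one in $\mathcal H'$. What the paper's route buys is more structure: $\mathcal E_\varepsilon^+$ is exactly $\mathcal Y_\varepsilon$, and $\mathcal E_\varepsilon^-$ is an $o(1)$-perturbation of the localized constant modes.

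A few small corrections:
\begin{itemize}
\item In the codimension step, $\chi_i(\widehat Z_{i,0}+b_\alpha)$ is not an approximate constant in the core; there it equals $2b_\alpha/(1+|z|^2)$. Its mass matrix therefore tends to $b_\alpha M_\alpha I_m$, not $M_\alpha I_m$. Invertibility is unaffected, but simply reusing your projected cutoffs $w_i$ is cleaner.
\item The $O(1/\log R)$ energy of $\eta_i$ is a direct computation from \eqref{eq:quadratic-localization-cutoff}, not an instance of Lemma~\ref{lem:biharmonic-capacity}.
\item The decay of $F$ alone does not make the cutoff's effect on the six moments $o_R(1)$. A unit-energy function may carry a logarithmic plateau. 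You need the weighted bound $\int_{B_{i,n}}F_{\mu_{i,n},\xi_{i,n}}v_n^2\le C$ from the proof of Lemma~\ref{lem:quadratic-localization}, combined with Cauchy--Schwarz on $\{|z|>R\}$, exactly as the paper does.
\end{itemize}
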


\begin{proof}
We first construct $m$ negative directions.  Let
$V_{\rho_{i,\varepsilon},\xi_i}$ be the capacity minimizer in Lemma
\ref{lem:biharmonic-capacity}, extended by $-b_\alpha$ in the inner
ball, and put
\begin{equation*}
\Pi_{i,\varepsilon}
:=-b_\alpha^{-1}V_{\rho_{i,\varepsilon},\xi_i}.
\end{equation*}
Then $\Pi_{i,\varepsilon}=1$ in
$B_{\rho_{i,\varepsilon}}(\xi_i)$ and
\begin{equation}\label{eq:negative-capacity-energy}
\int_\Omega|\Delta\Pi_{i,\varepsilon}|^2\,dx
=O(|\log\varepsilon|^{-1}).
\end{equation}
The core nonlocal term converges to the whole-space constant mode.
Using \eqref{eq:negative-mode-value},
\eqref{eq:negative-capacity-energy}, and the separated-core
estimates, we obtain
\begin{equation*}
\mathcal Q_\varepsilon
(\Pi_{i,\varepsilon},\Pi_{j,\varepsilon})
=-2M_\alpha\delta_{ij}+o(1).
\end{equation*}
Project these functions onto $\mathcal X_\varepsilon$.  Choose
$a_{i;j,k}$ so that
\begin{equation*}
P_{i,\varepsilon}
:=\Pi_{i,\varepsilon}
-\sum_{j=1}^{m}\sum_{k=0}^{4}
a_{i;j,k}\widehat Z_{j,k}
\in\mathcal X_\varepsilon.
\end{equation*}
The matrix is \eqref{eq:linear-multiplier-matrix}, and
\begin{equation*}
\int_\Omega
\chi_jF_{\mu_j,\xi_j}Z_{j,k}\Pi_{i,\varepsilon}\,dx
=\delta_{ij}\int_{\mathbb R^4}FZ_k\,dz+o(1)=o(1).
\end{equation*}
Hence $a_{i;j,k}=o(1)$, and
\begin{equation*}
\mathcal Q_\varepsilon
(P_{i,\varepsilon},P_{j,\varepsilon})
=-2M_\alpha\delta_{ij}+o(1).
\end{equation*}
Thus the normal index is at least $m$.

The matrix
$\int\chi_iF_{\mu_i,\xi_i}P_{j,\varepsilon}$ converges to
$M_\alpha I_m$.  Hence the $m$ mass functionals are independent on
$\mathcal X_\varepsilon$, and
$\mathcal Y_\varepsilon$ has codimension $m$.

Suppose that \eqref{eq:positive-normal-coercivity} fails.  Then there
are $\varepsilon_n\to0$ and
$v_n\in\mathcal Y_{\varepsilon_n}$ such that
\begin{equation}\label{eq:normal-coercivity-contradiction}
\|v_n\|_{\mathcal H}=1,
\qquad
\mathcal Q_{\varepsilon_n}(v_n,v_n)\leq o_n(1).
\end{equation}
Apply Lemma \ref{lem:quadratic-localization}.  Let $v_{i,n}$ be the
localized core pieces and set
\begin{equation*}
w_{i,n}(z)
:=v_{i,n}\left(\xi_{i,n}+\frac z{\mu_{i,n}}\right).
\end{equation*}
Lemma \ref{lem:weighted-low-mode-control}, the six global moment
conditions, and the cutoff construction give
\begin{equation*}
\int_{\mathbb R^4}Fw_{i,n}^2\,dz\leq C.
\end{equation*}
Define
\begin{equation*}
a_{i,n}
:=\frac1{M_\alpha}\int_{\mathbb R^4}Fw_{i,n}\,dz,
\qquad
b_{i,k,n}
:=\frac{\int_{\mathbb R^4}FZ_kw_{i,n}\,dz}
{\int_{\mathbb R^4}FZ_k^2\,dz}.
\end{equation*}
The coefficients are small.  For example,
\begin{equation*}
\begin{aligned}
|a_{i,n}|
&\leq o_n(1)
+C\left(\int_{|z|>R}F\,dz\right)^{1/2}
\left(\int_{\mathbb R^4}Fw_{i,n}^2\,dz\right)^{1/2},
\end{aligned}
\end{equation*}
and, for $0\leq k\leq4$,
\begin{equation*}
\begin{aligned}
|b_{i,k,n}|
&\leq o_n(1)
+C\left(\int_{|z|>R}FZ_k^2\,dz\right)^{1/2}
\left(\int_{\mathbb R^4}Fw_{i,n}^2\,dz\right)^{1/2}.
\end{aligned}
\end{equation*}
Thus
\begin{equation*}
|a_{i,n}|+\sum_{k=0}^{4}|b_{i,k,n}|
\leq o_R(1)+o_n(1).
\end{equation*}
This is the required tail control; no $L^\infty$ bound is used.

Set
\begin{equation*}
\widehat w_{i,n}
:=w_{i,n}-a_{i,n}-\sum_{k=0}^{4}b_{i,k,n}Z_k.
\end{equation*}
Then the six moments of $\widehat w_{i,n}$ vanish.  Theorem
\ref{thm:limit-input} gives
\begin{equation*}
\mathcal Q_\infty(\widehat w_{i,n},\widehat w_{i,n})
\geq c_\alpha
\int_{\mathbb R^4}|\Delta\widehat w_{i,n}|^2\,dz.
\end{equation*}
Since the $Z_k$ are kernel functions and
$\mathcal Q_\infty(1,1)=-2M_\alpha$, the smallness of the coefficients
implies
\begin{equation}\label{eq:localized-direct-coercivity}
\mathcal Q_\infty(w_{i,n},w_{i,n})
\geq
\frac{c_\alpha}{2}
\int_{\mathbb R^4}|\Delta w_{i,n}|^2\,dz
-o_R(1)-o_n(1).
\end{equation}

Use \eqref{eq:quadratic-core-limit} and
\eqref{eq:localized-direct-coercivity} in
\eqref{eq:quadratic-localization}.  We obtain
\begin{equation}\label{eq:normal-global-coercivity-sequence}
\begin{aligned}
\mathcal Q_{\varepsilon_n}(v_n,v_n)
\geq
c\sum_{i=1}^{m}\|v_{i,n}\|_{\mathcal H}^2
+(1-\delta_R)\|v_{0,n}\|_{\mathcal H}^2-\delta_R-o_R(1)-o_n(1).
\end{aligned}
\end{equation}
The cylindrical localization also gives
\begin{equation}\label{eq:normal-energy-recovery}
1\leq
\sum_{i=1}^{m}\|v_{i,n}\|_{\mathcal H}^2
+\|v_{0,n}\|_{\mathcal H}^2
+\delta_R+o_n(1).
\end{equation}
First choose $R$ large.  Then let $n\to\infty$.
Equations \eqref{eq:normal-global-coercivity-sequence}--
\eqref{eq:normal-energy-recovery} contradict
\eqref{eq:normal-coercivity-contradiction}.  Hence
\eqref{eq:positive-normal-coercivity} holds.

It remains to exclude a zero mode and to obtain an orthogonal
splitting. Put
$\mathcal N_\varepsilon=\operatorname{span}
\{P_{1,\varepsilon},\ldots,P_{m,\varepsilon}\}$. The invertibility of
the mass matrix shows that
$\mathcal X_\varepsilon=\mathcal N_\varepsilon\oplus
\mathcal Y_\varepsilon$ as a topological direct sum. Moreover,
\[
\sup_{\substack{n\in\mathcal N_\varepsilon,
 y\in\mathcal Y_\varepsilon\\
\|n\|_{\mathcal H}=\|y\|_{\mathcal H}=1}}
|\mathcal Q_\varepsilon(n,y)|=o(1).
\]
Indeed, after localization at the $i$-th core,
$P_{i,\varepsilon}$ converges to the constant mode $1$, while the
mass condition defining $\mathcal Y_\varepsilon$ gives
$\int F\widetilde y_i=0$. Hence the limiting cross term is
$\mathcal Q_\infty(1,\widetilde y_i)=-2\int F\widetilde y_i=0$.
The neck, outer, and distinct-core terms tend to zero by Lemma
\ref{lem:quadratic-localization}. More precisely, the same estimates
used in Lemma \ref{lem:scale-tangent-exterior}, together with
\eqref{eq:negative-capacity-energy}, give an upper bound of the form
\[
C|\log\varepsilon|^{1/2}
\left(R_\varepsilon^{-2}+|\log\varepsilon|^{-1}
+\varepsilon^\tau\right)=o(1)
\]
for the displayed supremum. Here the factor
$|\log\varepsilon|^{1/2}$ compensates for the normalization of the
capacity profiles, and the diagonal choice in
\eqref{eq:diagonal-radius-rate} makes the bound tend to zero.

By \eqref{eq:positive-normal-coercivity}, for each
$n\in\mathcal N_\varepsilon$ there is a unique
$T_\varepsilon n\in\mathcal Y_\varepsilon$ such that
\[
\mathcal Q_\varepsilon(n+T_\varepsilon n,y)=0
\qquad\text{for every }y\in\mathcal Y_\varepsilon.
\]
The preceding cross estimate gives
$\|T_\varepsilon n\|_{\mathcal H}=o(1)
\|n\|_{\mathcal H}$. Therefore the form on the graph
\[
\mathcal E_\varepsilon^-
=\{n+T_\varepsilon n:n\in\mathcal N_\varepsilon\}
\]
has matrix $-2M_\alpha I_m+o(1)$ in the basis generated by the
$P_{i,\varepsilon}$. It is uniformly negative. Taking
$\mathcal E_\varepsilon^+=\mathcal Y_\varepsilon$ gives the
$\mathcal Q_\varepsilon$-orthogonal decomposition
\eqref{eq:normal-inertia-decomposition}. In particular the normal
block is nondegenerate and has Morse index exactly $m$.
\end{proof}

\subsection{Inertia of the tangent block}

The inertia of the tangent block is determined by the reduced Hessian.

\begin{proposition}\label{prop:tangent-inertia}
At the critical parameter, the tangent Hessian has the block expansion
\eqref{eq:reduced-Hessian-block}. Its scale Schur complement is positive definite. Therefore
\begin{equation}\label{eq:tangent-index}
\operatorname{ind}
(D^2\widetilde J_\varepsilon)
=\operatorname{ind}
(-D^2\mathcal F_m(\boldsymbol\xi^*)).
\end{equation}
The scale block of the reduced Hessian has $m$ positive eigenvalues
of order $|\log\varepsilon|^{-1}$. The translation block has the
same inertia as $-D^2\mathcal F_m(\boldsymbol\xi^*)$ after
multiplication by positive normalization factors. No assertion about
the numerical eigenvalues of the full infinite-dimensional
linearized realization is made here.
\end{proposition}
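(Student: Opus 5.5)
The plan is to read off the inertia of the $5m\times5m$ matrix $D^2\widetilde J_\varepsilon(p_\varepsilon)$ from the block expansion \eqref{eq:reduced-Hessian-block} of Proposition \ref{prop:reduced-equations}. That expansion holds at the critical parameter $p_\varepsilon=(\boldsymbol t_\varepsilon,\boldsymbol\xi_\varepsilon)$, since $\boldsymbol t_\varepsilon=O(\varepsilon^\tau|\log\varepsilon|)$ lies in the shrinking box of \eqref{eq:t-solution}. Write the matrix as
\[
\begin{pmatrix}A_\varepsilon&B_\varepsilon\\B_\varepsilon^T&C_\varepsilon\end{pmatrix},
\qquad
C_\varepsilon=-M_\alpha D^2\mathcal F_m(\boldsymbol\xi_\varepsilon)+o(1).
\]
Since $\boldsymbol\xi_\varepsilon\to\boldsymbol\xi^*$ and $\mathcal F_m$ is $C^2$ near $\boldsymbol\xi^*$, we get $C_\varepsilon=-M_\alpha D^2\mathcal F_m(\boldsymbol\xi^*)+o(1)$. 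Nondegeneracy of $\boldsymbol\xi^*$ then makes $C_\varepsilon$ invertible with $\|C_\varepsilon^{-1}\|\le C$. Its eigenvalues stay a fixed distance from zero, so by continuity of eigenvalues
\[
\operatorname{ind}(C_\varepsilon)=\operatorname{ind}\bigl(-D^2\mathcal F_m(\boldsymbol\xi^*)\bigr),
\]
using $M_\alpha>0$.

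Next I apply the block congruence
\[
\begin{pmatrix}I&-B_\varepsilon C_\varepsilon^{-1}\\0&I\end{pmatrix}
D^2\widetilde J_\varepsilon
\begin{pmatrix}I&0\\-C_\varepsilon^{-1}B_\varepsilon^T&I\end{pmatrix}
=\begin{pmatrix}S_\varepsilon&0\\0&C_\varepsilon\end{pmatrix},
\qquad S_\varepsilon=A_\varepsilon-B_\varepsilon C_\varepsilon^{-1}B_\varepsilon^T .
\]
By Sylvester's law of inertia, $\operatorname{ind}(D^2\widetilde J_\varepsilon)=\operatorname{ind}(S_\varepsilon)+\operatorname{ind}(C_\varepsilon)$. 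It therefore suffices to show that $S_\varepsilon$ is positive definite, and this is the main obstacle. The cross term satisfies $\|B_\varepsilon C_\varepsilon^{-1}B_\varepsilon^T\|\le C(|\log\varepsilon|^{-1}+\varepsilon^\tau)^2=O(|\log\varepsilon|^{-2})$, which is $o(|\log\varepsilon|^{-1})$. Hence
\[
S_\varepsilon=\frac{\gamma_\alpha}{|\log\varepsilon|}I_m+o(|\log\varepsilon|^{-1}),
\]
which is positive definite for small $\varepsilon$ because $\gamma_\alpha=8\pi^2b_\alpha^2>0$. The delicate point is that the bound \eqref{eq:mixed-Hessian} on the mixed block is only $O(|\log\varepsilon|^{-1})$, not $o$. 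Positivity therefore rests on squaring that bound, so I will check carefully that the mixed estimate of Proposition \ref{prop:scale-Hessian} really holds at $p_\varepsilon$ and not only at $\boldsymbol t=0$. It does, because \eqref{eq:mixed-Hessian} is stated uniformly on the shrinking box. This proves \eqref{eq:tangent-index}.

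For the remaining descriptive claims, the scale block $A_\varepsilon$ has eigenvalues $\gamma_\alpha|\log\varepsilon|^{-1}(1+o(1))$ by Weyl's inequality, which gives $m$ positive eigenvalues of order $|\log\varepsilon|^{-1}$. For the translation block, pass to the normalized coordinates $\mu_i(\xi_i-\xi_i^0)$, that is, conjugate by $\operatorname{diag}(\mu_i^{-1}I_4)$. This is a congruence by a positive diagonal matrix, so it preserves the inertia of $C_\varepsilon$, and Sylvester's law shows the normalized block has the same inertia as $-D^2\mathcal F_m(\boldsymbol\xi^*)$. Finally, the statement only concerns the inertia of this finite matrix, which through \eqref{eq:tangent-Hessian-exact} equals that of $\mathcal Q_\varepsilon$ restricted to $\mathcal T_\varepsilon$. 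No claim about spectral values of $\mathscr L_{\varepsilon,u_\varepsilon}$ is needed, since Sylvester's law concerns signs only.
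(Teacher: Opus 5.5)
Your proposal is correct and follows the paper's proof. You use the same Schur-complement congruence and Sylvester's law. Positivity of $A_\varepsilon-B_\varepsilon C_\varepsilon^{-1}B_\varepsilon^T$ comes, as in the paper, from squaring the $O(|\log\varepsilon|^{-1})+O(\varepsilon^\tau)$ mixed block to get $o(|\log\varepsilon|^{-1})$; this is the computation recorded in Proposition \ref{prop:nondegenerate-branch}. The inertia of $C_\varepsilon$ is then read off from its limit $-M_\alpha D^2\mathcal F_m(\boldsymbol\xi^*)$. Your added remarks on the scale-block eigenvalues (via Weyl) and on the positive diagonal rescaling of the position block fill in descriptive claims that the paper states without separate argument.
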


\begin{proof}
The block expansion is Proposition \ref{prop:reduced-equations}. The position block is invertible. Sylvester's law of inertia applies. The Schur-complement factorization is
\[
\begin{aligned}
&\begin{pmatrix}I&-B_\varepsilon C_\varepsilon^{-1}\\0&I\end{pmatrix}
\begin{pmatrix}A_\varepsilon&B_\varepsilon\\B_\varepsilon^T&C_\varepsilon\end{pmatrix}
\begin{pmatrix}I&0\\-C_\varepsilon^{-1}B_\varepsilon^T&I\end{pmatrix} =\begin{pmatrix}
A_\varepsilon-B_\varepsilon C_\varepsilon^{-1}B_\varepsilon^T&0\\0&C_\varepsilon
\end{pmatrix}.
\end{aligned}
\]
The first block is positive. The inertia is therefore the inertia of
$C_\varepsilon$. This matrix converges to
$-M_\alpha D^2\mathcal F_m(\boldsymbol\xi^*)$. Formula
\eqref{eq:tangent-index} follows.
\end{proof}

\begin{proof}[Proof of the Morse index formula in Theorem \ref{thm:qualitative}]
The decomposition in Lemma \ref{lem:tangent-normal-reduction} is
exactly orthogonal for $\mathcal Q_\varepsilon$. Proposition
\ref{prop:normal-inertia} contributes $m$ negative normal
directions. Proposition \ref{prop:tangent-inertia} contributes
$\operatorname{ind}(-D^2\mathcal F_m(\boldsymbol\xi^*))$ tangent
directions. Therefore
\begin{equation}\label{eq:Morse-formula-proof}
\operatorname{ind}(u_\varepsilon)
=m+\operatorname{ind}
(-D^2\mathcal F_m(\boldsymbol\xi^*)).
\end{equation}
Since $D^2\mathcal F_m$ acts on $4m$ variables and is nonsingular,
\begin{equation}\label{eq:Morse-sign-reversal}
\operatorname{ind}(-D^2\mathcal F_m)
=4m-\operatorname{ind}(D^2\mathcal F_m).
\end{equation}
Equations \eqref{eq:Morse-formula-proof} and
\eqref{eq:Morse-sign-reversal} prove
\eqref{eq:Morse-formula-intro}--\eqref{eq:Morse-equivalent}.
Propositions \ref{prop:nondegenerate-branch} and
\ref{prop:local-uniqueness} complete the proof of Theorem
\ref{thm:qualitative}.
\end{proof}

\begin{proof}[Proof of Corollary \ref{cor:single-peak}]
For $m=1$, a strict local maximum of $\mathcal F_1$ has negative definite Hessian. Therefore
$-D^2\mathcal F_1$ is positive and contributes no negative tangent direction. The total index is one. At a strict local minimum,
$-D^2\mathcal F_1$ has four negative eigenvalues. The total index is five.
\end{proof}

\section{Technical estimates}

We collect the weighted Riesz-potential bound used throughout the proof.

\begin{lemma}\label{lem:weighted-Riesz}
Let $0<\alpha<4$ and $q>4$. For every $\mu\geq1$, $\xi,x\in\mathbb R^4$, and $s\in\mathbb R$,
\begin{equation}\label{eq:weighted-Riesz-estimate}
\int_{\mathbb R^4}
\frac{\mu^s}{|x-y|^\alpha(1+\mu|y-\xi|)^q}\,dy
\leq
\frac{C\mu^{s+\alpha-4}}
{(1+\mu|x-\xi|)^\alpha}.
\end{equation}
In particular,
\begin{equation}\label{eq:bubble-Riesz-bound}
\int_{\mathbb R^4}
\frac{e^{U_{\mu,\xi}(y)}}{|x-y|^\alpha}\,dy
\leq
\frac{C\mu^{\alpha/2}}
{(1+\mu|x-\xi|)^\alpha},
\end{equation}
and
\begin{equation}\label{eq:bubble-L1}
\int_{\mathbb R^4}e^{U_{\mu,\xi}(y)}\,dy
=C\mu^{-\alpha/2}.
\end{equation}
\end{lemma}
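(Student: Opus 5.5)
The plan is to reduce everything to the scalar integral by translation and scaling, and then split the domain of integration. Substituting $w=\mu(y-\xi)$ and writing $X=\mu(x-\xi)$ gives
\[
\int_{\mathbb R^4}\frac{\mu^s\,dy}{|x-y|^\alpha(1+\mu|y-\xi|)^q}
=\mu^{s+\alpha-4}\int_{\mathbb R^4}\frac{dw}{|X-w|^\alpha(1+|w|)^q},
\]
because $dy=\mu^{-4}dw$ and $|x-y|^{-\alpha}=\mu^{\alpha}|X-w|^{-\alpha}$. It therefore suffices to prove the single bound
\[
I(X):=\int_{\mathbb R^4}\frac{dw}{|X-w|^\alpha(1+|w|)^q}\leq\frac{C}{(1+|X|)^\alpha},
\]
uniformly in $X\in\mathbb R^4$, for fixed $0<\alpha<4$ and $q>4$.

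I will split into the cases $|X|\leq2$ and $|X|>2$.

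\emph{Case $|X|\leq2$.} Split the integral into $|X-w|\leq1$ and its complement. On $|X-w|\leq1$ the weight is bounded, and $|X-w|^{-\alpha}$ is locally integrable since $\alpha<4$. On the complement, $|X-w|^{-\alpha}\leq1$ and $(1+|w|)^{-q}$ is integrable since $q>4$. Hence $I(X)\leq C$.

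\emph{Case $|X|>2$.} Split $\mathbb R^4$ into three regions.
\begin{enumerate}
\item On $A_1=\{|w|\leq|X|/2\}$ we have $|X-w|\geq|X|/2$. The contribution is at most $C|X|^{-\alpha}\int(1+|w|)^{-q}\,dw\leq C|X|^{-\alpha}$.
\item On $A_2=\{|w-X|\leq|X|/2\}$ the weight satisfies $(1+|w|)^{-q}\leq C|X|^{-q}$. The contribution is at most $C|X|^{-q}\int_{|w-X|\leq|X|/2}|X-w|^{-\alpha}\,dw=C|X|^{4-\alpha-q}\leq C|X|^{-\alpha}$, using $q>4$.
\item On the remaining region $A_3$ both $|w|\geq|X|/2$ and $|X-w|\geq|X|/2$ hold. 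The contribution is at most $C|X|^{-\alpha}\int_{|w|\geq|X|/2}(1+|w|)^{-q}\,dw\leq C|X|^{-\alpha}|X|^{4-q}\leq C|X|^{-\alpha}$.
\end{enumerate}
Summing the three pieces and combining with the first case gives $I(X)\leq C(1+|X|)^{-\alpha}$, which is \eqref{eq:weighted-Riesz-estimate}.

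For \eqref{eq:bubble-Riesz-bound}, note that
\[
e^{U_{\mu,\xi}(y)}=C_\alpha^{b_\alpha}\mu^{b_\alpha}(1+\mu^2|y-\xi|^2)^{-b_\alpha}\leq C\mu^{b_\alpha}(1+\mu|y-\xi|)^{-2b_\alpha},
\]
using $1+\mu^2r^2\geq\tfrac12(1+\mu r)^2$. Apply \eqref{eq:weighted-Riesz-estimate} with $s=b_\alpha=(8-\alpha)/2$ and $q=2b_\alpha=8-\alpha$; this $q$ exceeds $4$ precisely because $\alpha<4$. The resulting exponent is $s+\alpha-4=(8-\alpha)/2+\alpha-4=\alpha/2$, which is the claimed power.

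For \eqref{eq:bubble-L1}, the same change of variables gives the exact identity
\[
\int e^{U_{\mu,\xi}}\,dy=\mu^{b_\alpha-4}\int C_\alpha^{b_\alpha}(1+|w|^2)^{-b_\alpha}\,dw=C\mu^{-\alpha/2},
\]
since $b_\alpha-4=-\alpha/2$. The remaining integral is finite because $2b_\alpha=8-\alpha>4$.

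No step here is a real obstacle. The only points to watch are that $q>4$ keeps the near-singularity region $A_2$ and the far tail $A_3$ at order $|X|^{-\alpha}$, and that $\alpha<4$ is exactly what makes the bubble weight exponent $8-\alpha$ admissible.
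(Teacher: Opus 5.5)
Your proof is correct and follows the paper's argument essentially verbatim: the same rescaling $w=\mu(y-\xi)$, the same split at $|X|=2$, and the same three regions $|w|\le|X|/2$, $|w-X|\le|X|/2$, and the remainder. Your explicit check of the bubble bounds (taking $s=b_\alpha$ and $q=8-\alpha>4$, and using the exact scaling $\mu^{b_\alpha-4}=\mu^{-\alpha/2}$) simply spells out what the paper summarizes as inserting the explicit bubble and changing variables.
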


\begin{proof}
Set $z=\mu(x-\xi)$ and $w=\mu(y-\xi)$. The left-hand side of
\eqref{eq:weighted-Riesz-estimate} equals
\[
\mu^{s+\alpha-4}
\int_{\mathbb R^4}
\frac{dw}{|z-w|^\alpha(1+|w|)^q}.
\]
For $|z|\leq2$, the last integral is bounded. For $|z|>2$, split into
$|w|\leq|z|/2$, $|z-w|\leq|z|/2$, and the remaining region. The first and third parts are bounded by
$C|z|^{-\alpha}$. In the second region, $|w|\geq|z|/2$, so the integral is bounded by
$C|z|^{-q}|z|^{4-\alpha}\leq C|z|^{-\alpha}$. This proves the first estimate. The other two follow by inserting the explicit bubble and changing variables.
\end{proof}

The next estimates quantify the small interaction of distinct cores.

\begin{lemma}\label{lem:separated-core-estimates}
If $i\ne j$ and $x\in B_{\eta/4}(\xi_i)$, then
\begin{equation*}
\int_{B_{\eta/4}(\xi_j)}
\frac{K(y)e^{W(y)}}{|x-y|^\alpha}\,dy
\leq C\mu_j^{4-\alpha}.
\end{equation*}
The self contribution in the $i$-th core is comparable with
\[
\mu_i^4(1+\mu_i|x-\xi_i|)^{-\alpha}.
\]
Consequently the relative cross error is $O(\varepsilon^\alpha)$ on bounded core scales. Moreover,
\begin{equation}\label{eq:separated-double}
\varepsilon^{8-\alpha}
\int_{B_{\eta/4}(\xi_i)}
\int_{B_{\eta/4}(\xi_j)}
\frac{K(x)K(y)e^{W(x)+W(y)}}{|x-y|^\alpha}\,dxdy
=O(\varepsilon^\alpha).
\end{equation}
\end{lemma}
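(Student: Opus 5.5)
The statement to prove is Lemma \ref{lem:separated-core-estimates}. The plan is to reduce everything to pointwise bounds on $e^{W}$ in each core, and then to apply the weighted Riesz estimate of Lemma \ref{lem:weighted-Riesz}.

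\textbf{Step 1: pointwise bound on $e^{W}$ in a core.} For $y\in B_{\eta/4}(\xi_j)$, expansion \eqref{eq:local-W} gives $W(y)=U_{\mu_j,\xi_j}(y)+A_j+O(1)$. Relation \eqref{eq:balance} with $|t_j|\le t_0$ gives $e^{A_j}\le C\varepsilon^{-(8-\alpha)/2}$. Since $e^{U_{\mu_j,\xi_j}(y)}=C_\alpha^{b_\alpha}\mu_j^{b_\alpha}(1+\mu_j|y-\xi_j|)^{-(8-\alpha)}$ and $\mu_j\sim\varepsilon^{-1}$, we get
\[
K(y)e^{W(y)}\le C\mu_j^{8-\alpha}(1+\mu_j|y-\xi_j|)^{-(8-\alpha)},\qquad y\in B_{\eta/4}(\xi_j).
\]
Here $8-\alpha>4$, so this profile is integrable.

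\textbf{Step 2: the cross potential.} For $x\in B_{\eta/4}(\xi_i)$ and $y\in B_{\eta/4}(\xi_j)$, separation gives $|x-y|\ge\eta/2$, so the kernel is bounded. Hence
\[
\int_{B_{\eta/4}(\xi_j)}\frac{K e^{W}}{|x-y|^\alpha}\,dy\le C\mu_j^{8-\alpha}\int(1+\mu_j|y-\xi_j|)^{-(8-\alpha)}dy=C\mu_j^{4-\alpha}.
\]
Lemma \ref{lem:weighted-Riesz} with $q=8-\alpha$ and $s=8-\alpha$ gives the same bound directly.

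\textbf{Step 3: the self potential.} The same pointwise expression, now with matching lower bounds inside $B_{\eta/4}(\xi_i)$, is inserted into \eqref{eq:weighted-Riesz-estimate}. The upper bound is $C\mu_i^{4}(1+\mu_i|x-\xi_i|)^{-\alpha}$. For the lower bound, restrict $y$ to the unit core $|y-\xi_i|\le\mu_i^{-1}$, where $|x-y|\le |x-\xi_i|+\mu_i^{-1}$. This gives $c\mu_i^{8-\alpha}\mu_i^{-4}(|x-\xi_i|+\mu_i^{-1})^{-\alpha}=c\mu_i^4(1+\mu_i|x-\xi_i|)^{-\alpha}$, so the two sides are comparable. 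Dividing the cross bound by the self bound gives
\[
C\mu_j^{4-\alpha}\mu_i^{-4}(1+|z|)^\alpha\le C\varepsilon^\alpha(1+|z|)^\alpha,
\]
which is $O(\varepsilon^\alpha)$ for bounded $|z|$.

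\textbf{Step 4: the double integral \eqref{eq:separated-double}.} Use Step 2 for the inner integral and Step 1 for the outer mass. The outer mass is $\int_{B_{\eta/4}(\xi_i)}Ke^{W}\le C\mu_i^{4-\alpha}$. The double integral is therefore at most
\[
C\varepsilon^{8-\alpha}\mu_i^{4-\alpha}\mu_j^{4-\alpha}\le C\varepsilon^{8-\alpha}\varepsilon^{-(8-2\alpha)}=C\varepsilon^{\alpha}.
\]

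\textbf{Main obstacle.} The only delicate point is the uniform pointwise bound in Step 1 up to the edge $|y-\xi_j|=\eta/4$. The Taylor remainder in \eqref{eq:local-W} must stay $O(1)$ there, and it does, because the error is $O(|x-\xi_j|^2+\varepsilon^2)$ on a fixed ball. No smallness is needed, only boundedness, so this step should go through.
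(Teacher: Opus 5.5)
Your proof is correct and is essentially the paper's argument: the core expansion $W=U_{\mu_j,\xi_j}+A_j+O(1)$ with $e^{A_j}\le C\mu_j^{b_\alpha}$ from the balance relation, the bounded kernel on separated cores combined with the bubble mass $\int e^{U_{\mu_j,\xi_j}}=C\mu_j^{-\alpha/2}$, the weighted Riesz bound for the self potential, and the product of the two core masses for the double integral. Your only addition is the explicit lower bound obtained by restricting $y$ to $|y-\xi_i|\le\mu_i^{-1}$, which justifies the two-sided comparability that the paper only asserts via ``the matching factor''. Your displayed formula for $e^{U_{\mu_j,\xi_j}}$ should be read as a comparability up to constants rather than an identity, which is harmless.
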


\begin{proof}
On the $j$-th core,
$W=U_{\mu_j,\xi_j}+A_j+O(1)$ and
$e^{A_j}\leq C\mu_j^{b_\alpha}$. Separation makes the Riesz kernel bounded. Formula
\eqref{eq:bubble-L1} gives
\[
\int_{B_j}\frac{K(y)e^{W(y)}}{|x-y|^\alpha}\,dy
\leq C\mu_j^{b_\alpha}\mu_j^{-\alpha/2}
=C\mu_j^{4-\alpha}.
\]
The self estimate follows from \eqref{eq:bubble-Riesz-bound} and the matching factor. For the double integral, use
$\int_{B_i}e^W\leq C\mu_i^{4-\alpha}$ at both peaks and then
$\mu_i\asymp\varepsilon^{-1}$.
\end{proof}

The kernel moments form a diagonal positive Gram matrix.

\begin{lemma}\label{lem:kernel-Gram}
With $F=F_{1,0}$ as above,
\begin{equation*}
\int_{\mathbb R^4}FZ_k\,dz=0,
\qquad 0\leq k\leq4,
\end{equation*}
and
\begin{equation*}
\int_{\mathbb R^4}FZ_kZ_\ell\,dz=0\quad(k\ne\ell),
\qquad
\int_{\mathbb R^4}FZ_k^2\,dz>0.
\end{equation*}
\end{lemma}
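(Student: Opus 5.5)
The plan is to rely on parity and on one explicit radial integral; everything else is immediate. The density $F(z)=48(8-\alpha)(1+|z|^2)^{-4}$ is radial and strictly positive, and each $Z_k$ is bounded. Hence all the integrals in the statement converge absolutely, since $F\in L^1(\mathbb R^4)$ by \eqref{eq:bubble-mass}.

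\emph{Odd moments.} For $1\leq\ell\leq4$, the function $Z_\ell(z)=(8-\alpha)z_\ell/(1+|z|^2)$ is odd under the reflection $z_\ell\mapsto-z_\ell$ and even under every other coordinate reflection. The function $Z_0$ is radial. So for $\ell\geq1$ the integrand $FZ_\ell$ is odd in $z_\ell$, which gives $\int FZ_\ell=0$.

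For the mixed terms we use the same reflection:
\begin{itemize}
\item $FZ_0Z_\ell$ with $\ell\geq1$ is odd in $z_\ell$;
\item $FZ_kZ_\ell$ with $1\leq k\neq\ell$ is odd in $z_k$.
\end{itemize}
Therefore all off-diagonal entries of the Gram matrix vanish.

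\emph{The dilation moment.} The only integral that is not settled by symmetry is $\int FZ_0\,dz$. In polar coordinates it is a positive multiple of
\[
\int_0^\infty\frac{(1-r^2)r^3}{(1+r^2)^5}\,dr .
\]
The substitution $s=r^2$ turns this into one half of
\[
\int_0^\infty\frac{s}{(1+s)^5}\,ds-\int_0^\infty\frac{s^2}{(1+s)^5}\,ds=B(2,3)-B(3,2).
\]
These two Beta values are equal, both being $1/12$, so the moment is zero.

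As a cross-check, one can argue on the sphere. Under stereographic projection, $F\,dz$ is a constant multiple of the volume form of $\mathbb S^4$, and $Z_0$ corresponds to a first-order spherical harmonic (a coordinate function). Its integral over the sphere vanishes. This is consistent with the identification of the kernel with $\mathcal H_1$ in Theorem \ref{thm:limit-input}.

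\emph{Diagonal entries.} Since $F>0$ everywhere and each $Z_k$ is continuous and not identically zero, we get $\int FZ_k^2>0$.

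There is no real obstacle here. The only point requiring an actual computation is the vanishing of the $Z_0$ moment, and the Beta-function identity settles it.
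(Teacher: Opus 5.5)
Your proof is correct and follows the same route as the paper: parity removes the translation moments and all off-diagonal Gram entries, the substitution $s=r^2$ reduces the dilation moment to $B(2,3)-B(3,2)=\tfrac1{12}-\tfrac1{12}=0$, and positivity of $F$ gives the diagonal entries. Your stereographic cross-check ($F\,dz$ is a multiple of the round volume form and $Z_0$ is a multiple of a coordinate function on $\mathbb S^4$) is a valid consistency remark, but the argument does not need it.
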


\begin{proof}
The translation identities follow from oddness. The dilation mode is radial. With $t=r^2$,
\[
\int_{\mathbb R^4}FZ_0
=C\int_0^\infty\frac{r^3(1-r^2)}{(1+r^2)^5}\,dr
=\frac C2\left(
\int_0^\infty\frac{t}{(1+t)^5}\,dt
-\int_0^\infty\frac{t^2}{(1+t)^5}\,dt
\right)=0.
\]
Both beta integrals equal $1/12$. Mixed products vanish by parity and rotational symmetry. Diagonal products are positive and finite.
\end{proof}

The final linear estimate below rules out loss of compactness in the
annular necks between the core and the outer region.

\begin{lemma}\label{lem:remove-defects}
Let $\varepsilon_n\to0$ and
$\mu_{i,n}\asymp\varepsilon_n^{-1}$. Let $v_n$ be uniformly bounded
solutions of projected linear equations with
\begin{equation*}
\|h_n\|_{**}+\sum_{i,k}|c_{i,k,n}|\longrightarrow0.
\end{equation*}
Assume that every core rescaling tends to zero in
$C^{3,\beta}_{\rm loc}(\mathbb R^4)$ for some $\beta\in(0,1)$ and
that $v_n\to0$ in $C^2_{\rm loc}$ away from the limiting centers.
Then
\begin{equation}\label{eq:neck-conclusion}
\|v_n\|_{L^\infty(\Omega)}\longrightarrow0.
\end{equation}
\end{lemma}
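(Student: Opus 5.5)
The argument is by contradiction combined with a blow-up analysis in the annular necks $\mu_{i,n}^{-1}R\le|x-\xi_{i,n}|\le\delta$. Suppose $\|v_n\|_\infty\ge c>0$ along a subsequence. Choose maximizers $x_n$ with $|v_n(x_n)|\ge c/2$. Because the core rescalings vanish in $C^{3,\beta}_{\rm loc}$ and $v_n\to0$ in $C^2_{\rm loc}$ away from the limiting centers, after relabeling we must have $x_n$ in the $i$-th neck. In core variables this means $r_n:=\mu_{i,n}|x_n-\xi_{i,n}|\to\infty$ and $|x_n-\xi_{i,n}|\to0$.

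The main tool is the Green representation
\[
v_n(x)=\int_\Omega G(x,y)\bigl[f_n(y)+h_n(y)+\textstyle\sum c_{i,k,n}\chi_iF_{\mu_i,\xi_i}Z_{i,k}(y)\bigr]\,dy,
\]
where $f_n=\mathcal N'_{\varepsilon_n}(W)[v_n]$ is the linearized nonlocal term. By Proposition \ref{prop:residual}, Lemma \ref{lem:weighted-Riesz}, and $\|v_n\|_\infty\le C$, we have $|f_n|\le C\Theta_n$. The $h_n$ and multiplier contributions are $o(1)$ uniformly by \eqref{eq:theta-L1} and the boundedness of $\int|G(x,y)|\Theta(y)\,dy$. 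This last bound follows from splitting near/far and using the weight $(1+\mu|y-\xi|)^{-(8-\tau)}$ together with the logarithmic kernel.

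Next we split $f_n$ into its part on the core $|y-\xi_{i,n}|\le\rho\mu_{i,n}^{-1}$ and the rest. On the core, $f_n$ converges after rescaling to $\mathcal L_\infty$-type source terms of the limit function, which is zero. Hence $\int_{\rm core}f_n\to0$ and the first moments vanish as well. Expanding $G(x,y)=G(x,\xi_{i,n})+O(|y-\xi_{i,n}|/|x-\xi_{i,n}|)$ for $|x-\xi_{i,n}|\gg|y-\xi_{i,n}|$, the core contribution at $x_n$ is at most
\[
o(1)\,|\log|x_n-\xi_{i,n}||+C\int_{\rm core}|f_n|\,\frac{|y-\xi|}{|x_n-\xi|}\,dy .
\]
The obstacle is the factor $|\log|x_n-\xi||\sim|\log\varepsilon_n|$, which can be unbounded. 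To handle it I would sharpen the mass estimate quantitatively: integrate $f_n$ against the constant, using the equation tested with a cutoff of the capacity profile $V_{\rho,\xi}$ from Lemma \ref{lem:biharmonic-capacity}. Since $\mathcal L_\infty1=-2F$ and the core rescaling of $v_n$ is $o(1)$ on expanding balls (a diagonal argument with $R_n\to\infty$ slowly), this gives $|\int_{\rm core}f_n|=o(|\log\varepsilon_n|^{-1})$. Alternatively one can argue directly with the quadratic tail bound: the tail of the source satisfies $|f_n|\le C\mu^4(1+\mu|y-\xi|)^{-(8-\alpha)}$ with a bounded coefficient, so its mass beyond radius $R_n$ is $O(R_n^{-(4-\alpha)})$. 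I expect this mass-versus-logarithm matching to be the hard step. If it fails I would instead use the maximum-principle structure of the Navier problem: $w_n=-\Delta v_n$ solves a second-order equation whose Green kernel is $|x-y|^{-2}$, with no logarithm, so $w_n$ can be bounded first and $v_n$ then recovered.

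The neck part of $f_n$ is bounded by $C\int_{\rm neck}|G(x_n,y)|\Theta_n\,dy$. Since $\Theta_n$ carries the decay $(1+\mu|y-\xi|)^{-(8-\tau)}$, this is $O(r_n^{-(4-\tau)}\log\mu)+o(1)\to0$ after integrating in polar coordinates around $\xi_{i,n}$. The exterior part is $O(\varepsilon_n^{4-\tau}|\log\varepsilon_n|)$. Summing, $|v_n(x_n)|=o(1)$, contradicting $|v_n(x_n)|\ge c/2$, which proves \eqref{eq:neck-conclusion}.
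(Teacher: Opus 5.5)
You locate the real difficulty correctly, but the proposal does not resolve it, and the same logarithmic loss also sits in two steps you treat as routine. First, $\int_\Omega|G(x,y)|\Theta(y)\,dy$ is not bounded uniformly in $x$. Since $\Theta$ carries mass of order one at scale $\mu_{i,n}^{-1}$, this integral is of order $\log\mu_{i,n}$ near $\xi_{i,n}$. Hence the $h_n$-term at $x_n$ is only $O(\|h_n\|_{**}|\log\varepsilon_n|)$ in the worst case, and $\|h_n\|_{**}\to0$ comes with no rate. Second, your neck estimate fails. Fix a core radius $R/\mu_n$ and put $d_n=|x_n-\xi_n|$. The inner neck $\{R/\mu_n<|y-\xi_n|<d_n/2\}$ has source mass bounded only by $CR^{-(4-\tau)}$. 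The kernel $G(x_n,\cdot)$ sees this mass with weight $\log(1/d_n)$, which may be comparable to $|\log\varepsilon_n|$. Even the bound $r_n^{-(4-\tau)}\log\mu_n$ that you wrote need not tend to zero. So no estimate of the core mass alone can suffice. What must be shown is that $\log\mu_n$ times the total signed charge $q_n(\rho)=\int_{B_\rho(\xi_n)}\Delta^2v_n$ is small, with the core, neck and $h_n$ parts taken together. Individually, these pieces are not $o(|\log\varepsilon_n|^{-1})$.

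None of your three remedies delivers this.

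In (a), a capacity test at radius $R_n/\mu_n$ controls a logarithmically weighted total charge, not the core mass. Converting it into $|\int_{\mathrm{core}}f_n|=o(|\log\varepsilon_n|^{-1})$ would need $R_n^{-(4-\tau)}=o(|\log\varepsilon_n|^{-1})$. A diagonal choice of $R_n$ cannot guarantee this, because no convergence rate is available on expanding balls. Read correctly, the weighted-charge bound is close to what is needed, but you neither state it nor feed it into the neck estimate.

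In (b), you bound a tail and ignore the logarithm.

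In (c), the argument fails. $L_\varepsilon$ admits no maximum principle. Moreover, even the best bound $|\Delta v_n|\lesssim|x-\xi_n|^{-2}$ in the neck reproduces the logarithm when $-\Delta$ is inverted once more.

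The missing idea, which is the paper's key step, is to evaluate the Green representation at a fixed core point $x=\xi_n+z/\mu_n$, where $v_n(x)\to0$ is already known. There $-\log|x-y|=\log\mu_n-\log|z-\mu_n(y-\xi_n)|$. The second kernel grows only like $\log(2^\nu R)$ on the dyadic neck rings, which is summable against the $\Theta$-decay. This forces $|\log\mu_n|\,|q_n(\rho)|=o_n(1)+o_R(1)+o_\rho(1)$.

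The paper then handles the spherical mean by the radial flux identity, and the nonzero modes by annular Navier interpolation and an annular Green kernel. With this charge bound in hand, your direct representation at $x_n$ would also close. Write $\log\frac{1}{|x_n-y|}=\log\frac{1}{d_n}+\log\frac{d_n}{|x_n-y|}$. The first term contributes at most $\frac{1}{8\pi^2}\log\mu_n|q_n(\rho)|$, since $d_n\geq R/\mu_n$. The second kernel has weight $O(|y-\xi_n|/d_n)$ for $|y-\xi_n|<d_n/2$, and weight $O(1+k)$ on the shell $|y-\xi_n|\sim2^kd_n$. Against $\Theta$ this gives $O(r_n^{-1}+r_n^{-(4-\tau)})\to0$.
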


\begin{proof}
Fix one peak and suppress its index. For fixed $R>1$ and small
$\rho>0$, let
\[
A_n(R,\rho)
=\{R/\mu_n<|x-\xi_n|<\rho\}.
\]
Write $\Delta^2v_n=g_n$ there and decompose the annulus into the
dyadic rings
\[
A_{n,\nu}
=\{2^\nu R/\mu_n<|x-\xi_n|
<2^{\nu+1}R/\mu_n\}.
\]
The weighted source estimate gives
\begin{equation}\label{eq:neck-source-bound}
\int_{A_{n,\nu}}|g_n|\,dx
\leq
C2^{-\nu(4-\tau)}R^{-(4-\tau)}
+o_n(1)2^{-4\nu}.
\end{equation}
The same calculation after multiplication by
$|x-\xi_n|^j$, $j=1,2$, gives the corresponding moment bounds. For
the nonlocal linear terms, apply Lemma \ref{lem:weighted-Riesz} in
the integration variable and sum over the separated cores.

Define the logarithmic charge
\[
q_n(\rho):=\int_{B_\rho(\xi_n)}g_n(y)\,dy.
\]
Evaluate the global Green representation at
$x=\xi_n+z/\mu_n$, with $z$ fixed. On $B_\rho(\xi_n)$ use
\[
-\log|x-y|
=\log\mu_n-\log|z-\mu_n(y-\xi_n)|.
\]
For fixed $R$ and $\rho$, the second term is absolutely summable over
the dyadic rings by \eqref{eq:neck-source-bound}. The contribution of
the fixed rescaled core tends to zero by the limiting equation, and
the contribution of $\Omega\setminus B_\rho(\xi_n)$ tends to zero by
the outer convergence. Therefore
\begin{equation}\label{eq:neck-charge}
|\log\mu_n|\,|q_n(\rho)|
=o_n(1)+o_R(1)+o_\rho(1),
\end{equation}
where the limits are understood in the order
$n\to\infty$, $R\to\infty$, and $\rho\to0$.

Let $b_n$ solve
\begin{equation*}
\begin{cases}
\Delta^2b_n=0&\text{in }A_n(R,\rho),\\
b_n=v_n,\quad \Delta b_n=\Delta v_n
&\text{on both boundary spheres}.
\end{cases}
\end{equation*}
We give the interpolation estimate in detail. Put
$a=R/\mu_n$ and $b=\rho$. For a spherical harmonic of degree
$l\geq1$, write
\[
b_{n,l}(r)=A_lr^l+B_lr^{-l-2}+C_lr^{l+2}+D_lr^{-l}.
\]
Since
\[
\Delta(r^{l+2}Y_l)=4(l+2)r^lY_l,
\qquad
\Delta(r^{-l}Y_l)=-4lr^{-l-2}Y_l,
\]
the coefficient vector is determined by
\begin{equation}\label{eq:neck-interpolation-matrix}
\begin{pmatrix}
a^l&a^{-l-2}&a^{l+2}&a^{-l}\\
0&0&4(l+2)a^{l+2}&-4la^{-l}\\
b^l&b^{-l-2}&b^{l+2}&b^{-l}\\
0&0&4(l+2)b^{l+2}&-4lb^{-l}
\end{pmatrix}
\begin{pmatrix}A_l\\B_l\\C_l\\D_l\end{pmatrix}
=
\begin{pmatrix}
v_{n,l}(a)\\a^2\Delta v_{n,l}(a)\\
v_{n,l}(b)\\b^2\Delta v_{n,l}(b)
\end{pmatrix}.
\end{equation}
The lower right $2\times2$ determinant is
\[
16l(l+2)a^{-l}b^{-l}
\bigl(b^{2l+2}-a^{2l+2}\bigr),
\]
and is nonzero. Solving first for $C_l,D_l$ and then for $A_l,B_l$
gives, uniformly when $0<a<b$ and $b/a\geq2$,
\begin{equation}\label{eq:neck-interpolation-estimate}
\begin{aligned}
\sup_{a\leq r\leq b}|b_{n,l}(r)|
\leq C\Big(&|v_{n,l}(a)|+|v_{n,l}(b)|+(1+l)^{-2}
\bigl(|a^2\Delta v_{n,l}(a)|
+|b^2\Delta v_{n,l}(b)|\bigr)\Big).
\end{aligned}
\end{equation}
For fixed $R$ and $\rho$, the condition $b/a\geq2$ holds for all
large $n$.  Estimate \eqref{eq:neck-interpolation-estimate} and the
same elimination after tangential differentiation prove the
corresponding estimates in the spherical Sobolev norms. The
$C^{3,\beta}$ convergence of the inner rescaling and the $C^2$
outer convergence therefore imply that the sum of all nonzero
spherical harmonics in $b_n$ is
$o_n(1)+o_R(1)+o_\rho(1)$ in $L^\infty$.

It remains to control the spherical mean.  Set
$\overline v_n(r)=|\mathbb S^3|^{-1}\int_{\mathbb S^3}
v_n(\xi_n+r\theta)\,d\theta$.  The radial flux identity is
\begin{equation}\label{eq:neck-radial-flux}
|\mathbb S^3|r^3\partial_r\Delta\overline v_n(r)
=\int_{B_r(\xi_n)}g_n(y)\,dy.
\end{equation}
Integrating \eqref{eq:neck-radial-flux} twice between
$a=R/\mu_n$ and $b=\rho$, and using the boundary values at $a$ and
$b$, yields
\begin{equation}\label{eq:neck-zero-mode-estimate}
\begin{aligned}
\sup_{a\leq r\leq b}|\overline v_n(r)|
\leq C\Big(&|\overline v_n(a)|+|\overline v_n(b)|
+|a^2\Delta\overline v_n(a)|
+|b^2\Delta\overline v_n(b)|\\
&+|\log(b/a)|\,|q_n(\rho)|
+\sum_{\nu\geq0}(1+\nu)
\int_{A_{n,\nu}}|g_n|\,dx\Big).
\end{aligned}
\end{equation}
The fixed core convergence controls the first and third terms, and
the outer convergence controls the second and fourth terms.  By
\eqref{eq:neck-charge},
$|\log(b/a)|\,|q_n(\rho)|
=o_n(1)+o_R(1)+o_\rho(1)$.  The dyadic sum in
\eqref{eq:neck-zero-mode-estimate} is
$O(R^{-(4-\tau)})+o_n(1)$ by
\eqref{eq:neck-source-bound}.  Hence
\begin{equation}\label{eq:neck-mean-small}
\sup_{a\leq r\leq b}|\overline v_n(r)|
=o_n(1)+o_R(1)+o_\rho(1).
\end{equation}

Let $b_n^\perp$ be the sum of the nonzero spherical harmonics of
$b_n$.  Estimate \eqref{eq:neck-interpolation-estimate} and the same
elimination after tangential differentiation give the corresponding
bounds in the spherical Sobolev norms.  The
$C^{3,\beta}$ convergence of the inner rescaling and the $C^2$
outer convergence therefore imply
\begin{equation}\label{eq:neck-homogeneous-small}
\|b_n^\perp\|_{L^\infty(A_n(R,\rho))}
=o_n(1)+o_R(1)+o_\rho(1).
\end{equation}

Put
\[
w_n^\perp
=v_n-\overline v_n-b_n^\perp.
\]
This function has zero Navier data on both boundary spheres, has zero
spherical mean, and satisfies the nonzero-mode part of
$\Delta^2w_n^\perp=g_n$.  Let $G_{n,R,\rho}^\perp$ denote the
annular Navier Green kernel with its degree-zero component removed.
The same radial basis as in
\eqref{eq:neck-interpolation-matrix} gives the following quantitative
bound.  If $x$ and $y$ belong to the dyadic rings with indices
$\nu$ and $k$, respectively, the degree $l\geq1$ contribution is
bounded by
\[
C(1+l)^{-2}2^{-(l+1)|\nu-k|}.
\]
Summation in $l$ yields
\begin{equation}\label{eq:neck-Green-remainder}
|G_{n,R,\rho}^\perp(x,y)|
\leq C2^{-|\nu-k|},
\end{equation}
uniformly in $n$, $R$, and $\rho$.  Combining
\eqref{eq:neck-Green-remainder} with
\eqref{eq:neck-source-bound} and the first two moment bounds gives
\[
\sup_{A_n(R,\rho)}|w_n^\perp|
\leq o_n(1)+CR^{-(4-\tau)}+o_\rho(1).
\]
Together with \eqref{eq:neck-mean-small} and
\eqref{eq:neck-homogeneous-small},
\[
\sup_{A_n(R,\rho)}|v_n|
\leq o_n(1)+CR^{-(4-\tau)}+o_\rho(1).
\]
First let $n\to\infty$, then $R\to\infty$, and finally
$\rho\to0$.  Repeating the argument at every peak and combining it
with the core and outer convergence proves
\eqref{eq:neck-conclusion}.
\end{proof}

We finally record the standard Hessian identity on the projected solution manifold.

\begin{lemma}\label{lem:Hessian-identity}
Let $p\mapsto u(p)$ be the $C^2$ projected solution manifold and let
$\mathcal J(p)=J_\varepsilon(u(p))$. At an exact solution,
\begin{equation}\label{eq:abstract-Hessian-identity}
\partial_{p_ap_b}^2\mathcal J
=\mathcal Q_\varepsilon(\partial_{p_a}u,\partial_{p_b}u).
\end{equation}
The same identity holds after the scale variables have been eliminated.
\end{lemma}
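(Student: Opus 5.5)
We will prove \eqref{eq:abstract-Hessian-identity} by differentiating the composition $\mathcal J(p)=J_\varepsilon(u(p))$ twice with the chain rule and then using that $u$ is an exact solution at the base point. The smoothness needed is available: $J_\varepsilon\in C^\infty(\mathcal H,\mathbb R)$ by the variational lemma of Section~2, and $p\mapsto u(p)=W_p+\phi_\varepsilon(p)$ is $C^2$ into $\mathcal H$ by Proposition \ref{prop:nonlinear-correction}.

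\emph{First and second derivatives.} The first derivative is
\[
\partial_{p_a}\mathcal J(p)=J_\varepsilon'(u(p))[\partial_{p_a}u(p)].
\]
Differentiating once more, with $J_\varepsilon'$ viewed as a $C^1$ map into $\mathcal H'$ and $\partial_{p_a}u$ as a $C^1$ map into $\mathcal H$, gives
\[
\partial^2_{p_ap_b}\mathcal J
=J_\varepsilon''(u)[\partial_{p_a}u,\partial_{p_b}u]
+J_\varepsilon'(u)[\partial^2_{p_ap_b}u].
\]
We then identify $J_\varepsilon''(u)[v,w]$ with $\mathcal Q_\varepsilon(v,w)$. Differentiating \eqref{eq:first-variation} and symmetrizing in $x,y$ produces exactly the form $\mathcal Q_\varepsilon$ of Section~6: the Laplacian term plus the symmetric double integral with weight $(v(x)+v(y))(w(x)+w(y))$. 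At an exact solution $J_\varepsilon'(u)=0$ in $\mathcal H'$, so the second term vanishes. This gives \eqref{eq:abstract-Hessian-identity}.

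The one point needing care is that only $C^2$ regularity of $u$ is used, so the product rule must be applied in the correct spaces. The derivative of $p\mapsto J_\varepsilon'(u(p))\in\mathcal H'$ is $J_\varepsilon''(u)[\partial_{p_b}u,\cdot]$. The bilinear pairing $\mathcal H'\times\mathcal H\to\mathbb R$ is continuous, which justifies the product rule.

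\emph{Scale-eliminated version.} Set $\widehat{\mathcal J}(\boldsymbol\xi)=\mathcal J(\boldsymbol t_\varepsilon(\boldsymbol\xi),\boldsymbol\xi)$ with $\hat u(\boldsymbol\xi)=u(\boldsymbol t_\varepsilon(\boldsymbol\xi),\boldsymbol\xi)$. The same computation applies to $\widehat{\mathcal J}$, provided $\boldsymbol t_\varepsilon$ is $C^2$. This follows from the implicit function theorem applied to $\nabla_{\boldsymbol t}\widetilde J_\varepsilon=0$: the function $\widetilde J_\varepsilon$ is $C^2$, and its scale Hessian is invertible by \eqref{eq:uniform-scale-Hessian}. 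If only $C^1$ regularity is available, we argue differently. Since $\nabla_{\boldsymbol t}\widetilde J_\varepsilon\equiv0$ along the graph, we have $\nabla\widehat{\mathcal J}=\nabla_{\boldsymbol\xi}\widetilde J_\varepsilon(\boldsymbol t_\varepsilon(\boldsymbol\xi),\boldsymbol\xi)$. This expression is $C^1$, and differentiating it gives the Schur-complement expression. That expression coincides with $\mathcal Q_\varepsilon(\partial\hat u,\partial\hat u)$ by the first part and the chain rule $\partial_{\xi}\hat u=\partial_\xi u+\partial_{\boldsymbol t}u\,D\boldsymbol t_\varepsilon$.
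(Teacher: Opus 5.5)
Your proposal is correct and follows the paper's argument. Differentiate $\partial_{p_a}\mathcal J=J_\varepsilon'(u)[\partial_{p_a}u]$ once more, drop the term $J_\varepsilon'(u)[\partial^2_{p_ap_b}u]$ because $J_\varepsilon'(u)=0$ at the exact solution, and pass to the reduced graph by the chain rule. One correction: the implicit function theorem applied to the $C^1$ map $\nabla_{\boldsymbol t}\widetilde J_\varepsilon$ only gives $\boldsymbol t_\varepsilon\in C^1$ (which is also all the paper claims), so it is your fallback Schur-complement computation that actually proves the reduced-graph statement, and it does so more carefully than the paper's one-line appeal to the chain rule.
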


\begin{proof}
Differentiate
$\partial_{p_a}\mathcal J=J_\varepsilon'(u)[\partial_{p_a}u]$. At an exact solution the term containing
$J_\varepsilon'(u)$ vanishes after a second differentiation. This gives
\eqref{eq:abstract-Hessian-identity}. The chain rule gives the statement on the reduced graph.
\end{proof}

\noindent{\bf Acknowledgements}

The authors were supported by the National Natural Science Foundation of China (No. 12371121) and the Fundamental Research Funds for the Central Universities (No. SWU-KF26002).

\noindent{\bf Data availability statement}

Data availability is not applicable to this article as no datasets were generated or analysed during the current study.

\noindent{\bf Conflict of interest}

The authors declare that there is no conflict of interest.

\end{document}